\documentclass[a4paper]{amsart}

\usepackage[utf8]{inputenc}
\usepackage[OT2,T1]{fontenc}
\usepackage{lmodern}
\usepackage[english]{babel}

\usepackage{graphicx}
\usepackage[numbers]{natbib}

\usepackage{xcolor}

\usepackage{dsfont}
\usepackage{mathrsfs}
\usepackage{mathtools}
\usepackage{amssymb}
\usepackage{amsthm}
\usepackage{cancel}
\usepackage{relsize}
\usepackage{comment}

\DeclareSymbolFont{cyrletters}{OT2}{wncyr}{m}{n}
\DeclareMathSymbol{\Sha}{\mathalpha}{cyrletters}{"58}

\DeclareMathOperator*{\SumInt}{%
\mathchoice%
  {\ooalign{$\displaystyle\sum$\cr\hidewidth$\displaystyle\int$\hidewidth\cr}}
  {\ooalign{\raisebox{.14\height}{\scalebox{.7}{$\textstyle\sum$}}\cr\hidewidth$\textstyle\int$\hidewidth\cr}}
  {\ooalign{\raisebox{.2\height}{\scalebox{.6}{$\scriptstyle\sum$}}\cr$\scriptstyle\int$\cr}}
  {\ooalign{\raisebox{.2\height}{\scalebox{.6}{$\scriptstyle\sum$}}\cr$\scriptstyle\int$\cr}}
}

\DeclareMathOperator*{\SumDDInt}{%
\mathchoice%
  {\ooalign{$\displaystyle\sum$\cr\hidewidth$\displaystyle\ddashint$\hidewidth\cr}}
  {\ooalign{\raisebox{.14\height}{\scalebox{.7}{$\textstyle\sum$}}\cr\hidewidth$\textstyle\ddashint$\hidewidth\cr}}
  {\ooalign{\raisebox{.2\height}{\scalebox{.6}{$\scriptstyle\sum$}}\cr$\scriptstyle\ddashint$\cr}}
  {\ooalign{\raisebox{.2\height}{\scalebox{.6}{$\scriptstyle\sum$}}\cr$\scriptstyle\ddashint$\cr}}
}

\def\Xint#1{\mathchoice%
{\XXint\displaystyle\textstyle{#1}}% 
{\XXint\textstyle\scriptstyle{#1}}% 
{\XXint\scriptstyle\scriptscriptstyle{#1}}% 
{\XXint\scriptscriptstyle\scriptscriptstyle{#1}}% 
\!\int}%
\def\XXint#1#2#3{{\setbox0=\hbox{$#1{#2#3}{\int}$}%
\vcenter{\hbox{$#2#3$}}\kern-.5\wd0}}%

\DeclareMathOperator{\supp}{supp}
\DeclareMathOperator{\dist}{dist}

\newcommand{\dhsem}{\mathrlap{\resizebox*{1.15\width}{0.7\height}{\raisebox{.22em}{{=}}}}{\mathcal D}}

\newcounter{env}
\numberwithin{env}{section}
\newtheorem{theorem}[env]{Theorem}
\newtheorem{lemma}[env]{Lemma}
\newtheorem{corollary}[env]{Corollary}
\newtheorem{proposition}[env]{Proposition}

\theoremstyle{definition}
\newtheorem{definition}[env]{Definition}
\newtheorem{example}[env]{Example}

\theoremstyle{remark}
\newtheorem{remark}[env]{Remark}
\newtheorem{notation}[env]{Notation}

\numberwithin{equation}{section}

\renewcommand{\Re}[1]{\mathrm{Re}(#1)}

\usepackage{hyperref}
\usepackage{bm}

\def\Xint#1{\mathchoice
   {\XXint\displaystyle\textstyle{#1}}%
   {\XXint\textstyle\scriptstyle{#1}}%
   {\XXint\scriptstyle\scriptscriptstyle{#1}}%
   {\XXint\scriptscriptstyle\scriptscriptstyle{#1}}%
   \!\int}
\def\XXint#1#2#3{{\setbox0=\hbox{$#1{#2#3}{\int}$}
     \vcenter{\hbox{$#2#3$}}\kern-.5\wd0}}
\def\ddashint{\Xint=}

\begin{document}

\title{Singular Euler--Maclaurin expansion on multidimensional lattices}

\author{Andreas A. Buchheit}
\address{Department of Mathematics, Saarland University, PO 15 11 50, D-66041 Saarbrücken}
\email{buchheit@num.uni-sb.de}
\author{Torsten Keßler}
\address{Department of Mathematics, Saarland University, PO 15 11 50, D-66041 Saarbrücken}
\email{kessler@num.uni-sb.de}

\begin{abstract}
  We extend the classical Euler--Maclaurin expansion to sums over multidimensional lattices that involve functions with algebraic singularities. This offers a tool for the precise quantification of the effect of microscopic discreteness on macroscopic properties of a system. First, the Euler--Maclaurin summation formula is generalised to lattices in higher dimensions, assuming a sufficiently regular summand function. We then develop this new expansion further and construct the singular Euler--Maclaurin (SEM) expansion in higher dimensions, an extension of our previous work in one dimension, which remains applicable and useful even if the summand function includes a singular function factor. We connect our method to analytical number theory and show that all operator coefficients can be efficiently computed from derivatives of the Epstein zeta function. Finally we demonstrate the numerical performance of the expansion and efficiently compute singular lattice sums in infinite two-dimensional lattices, which are of high relevance in solid state and quantum physics. An implementation in Mathematica is provided online along with this article.
\end{abstract}

\subjclass[2010]{Primary 65B1, 540H05; Secondary 46F10, 35B65, 11E45}
 \keywords{Euler--Maclaurin expansion, long-range interactions, quadrature, multidimensional lattice sums, partial differential equations, elliptic regularity, analytic number theory}

\maketitle

\section{Introduction}
Discrete particle systems with long-range interactions appear abundantly in nature. For instance, the microscopic electromagnetic interactions between atoms and molecules give rise to the macroscopic properties of a solid. The universe itself is composed of subatomic particles and their interactions determine the evolution of the whole. Some theories extend this notion of granularity to space-time itself. In lattice quantum field theories, the discreteness of the space-time regularises divergent path-integrals and simplifies numerical predictions \cite{smit2002introduction}. Other theories employ discreteness at the Planck-scale in order to reconcile quantum mechanics with general relativity, e.g. loop quantum gravity \cite{dupuis2012discrete,rovelli2003reconcile}. Recently, it has been conjectured that a granularity of space-time could explain dark energy, the cause for the accelerating expanse of the universe \cite{perez2019dark}.

Finding an efficient way to compute sums with a large number of addends, as they appear for instance in the simulation of discrete particle systems in condensed matter or in the evaluation of partition functions in statistical physics, is in general a challenging task. For instance, the number of summands required in the computation of the energy of a lattice with long-range interactions scales quadratically with the particle number and becomes unfeasible for solid state systems of macroscopic size, where the particle number is in the range of $N\approx 10^{23}$.

A continuum is, in many cases, more accessible than a discrete system. Arising integrals can often be computed analytically or, at least, decent quadrature rules for numeric approximations exist. Moreover, our mathematical understanding of continuous systems vastly exceeds the understanding of discrete systems, a notable exception being the tools developed in number theory. It is thus natural to ask, in how far we can describe a discrete system by a related continuum, or in how far a lattice sum can be approximated by an integral.

The main tool for describing the difference between a sum and an integral in one dimension is the Euler--Maclaurin (EM) expansion (see \cite{apostol1999euler} for a historic overview). For $\ell\in \mathds N$, $a,b\in \mathds Z$, $\delta_1,\delta_2\in(0,1]$ and $f\in C^{\ell+1}([a+\delta_1,b+\delta_2],\mathds C)$, we have \cite{apostol1999euler,buchheit2020singular,monegato1998euler}
\begin{align*}
  \sum_{y=a+1}^b f(y)-\int\limits_{a+\delta_1}^{b+\delta_2} f(y)\,\mathrm d y =&- \sum_{k=0}^ {\ell} \frac{(-1)^k}{k!}\frac{B_{k+1}(1+y-\lceil y \rceil)}{k+1} f^{(k)}(y)\bigg\vert^{y=b+\delta_2}_{y=a+\delta_1} \notag \\ &+\frac{(-1)^{\ell}}{\ell!} \int \limits_{a+\delta_1}^{b+\delta_2}  \frac{B_ {\ell+1}(1+y-\lceil y \rceil)}{\ell+1} f^{(\ell+1)}(y)\,\mathrm d y,
\end{align*}
with $\lceil y \rceil$ the ceiling of $y$ and $B_\ell$ the Bernoulli polynomials, which are defined by the recurrence relation
\begin{equation*}
\begin{aligned}
  B_0(y) =1,  \quad
  B'_\ell(y)=\ell B_ {\ell - 1}(y),\quad 
  \int\limits_0^1 B_\ell(y)\, \mathrm d y=0,\quad \ell\ge 1.
\end{aligned}
\end{equation*}
The EM expansion has been extended to higher dimensions, where the main approach is based on a tensorisation of the 1D expansion. For a higher-dimensional equivalent of the EM expansion applicable to polynomials on simple polytopes, see e.g.~\cite{karshon2007exact}. Higher dimensional extensions of the EM expansion that do not rely on repeated application of the 1D result are rare, a notable exception being the work by Müller in \cite{muller1954verallgemeinerung}, where a two dimensional generalisation of the EM expansion was derived that is valid for a larger set of functions and integration regions. A second example is the work by Freeden \cite{freeden2011metaharmonic} in which a generalisation of the Müller result to higher dimensions has been carried out. The above generalisations of the EM expansion exhibits different advantages and disadvantages. While the results based on tensorisation (or Todd operators) are easy to apply in practice, they are very restrictive in the set of functions and integration regions that are allowed. Also, they usually do not give error estimates.  The non-tensorised results apply to a more general set of functions and regions, however, the results are mainly theoretical and error estimates are missing.

All of the above generalisation share one critical disadvantage: They are not applicable to summand functions, whose derivatives increase quickly with the derivative order, e.g.~due to an algebraic singularity. Unfortunately, these functions are of high importance in physical applications, as for instance all relevant interparticle interaction, e.g. the Coulomb interaction between charged particles, belong to this set of functions.

In our previous work \cite{buchheit2020singular}, we have developed the \emph{singular Euler--Maclaurin (SEM)} expansion that makes the EM expansion applicable to physically relevant summand functions in one-dimension, including functions that exhibit an algebraic singularity.
In this paper, we extend our previous work and  generalise the SEM expansion from one dimension to lattices in an arbitrary number of space dimensions. We avoid a simple yet restrictive tensorisation of the 1D result, and offer a generalisation of the SEM expansion that can be applied to physically relevant interaction functions and lattice structures. We show that our expansion can be used as a powerful numerical tool for the fast evaluation of forces and energies in lattices of macroscopic size. Furthermore, we are able to precisely determine the effect of a microscopic granularity on a macroscopic system, even if the ratio between the macro and the micro scale is astronomically large.

This work is structured as follows. In Section~\ref{sec:preliminaries}, we provide a short overview on distribution theory and elliptic regularity, which serve as important tools in the following parts. We then derive a generalisation of the EM expansion to multidimensional lattices in Section~\ref{sec:hd_em_expansion}, which is subsequently used as a stepping stone for arriving at the SEM expansion in higher dimensions in Section~\ref{sec:sem}. Here we address first the case that the singularity is positioned at a lattice point outside of the integration region and then move on to the challenging but highly relevant case that it is found inside of it.  We then remove the only free parameter of the SEM for interior lattice points in Section~\ref{sec:hsem_and_analytical_number_theory} by means of hypersingular integrals, which leads to the hypersingular Euler--Maclaurin expansion. During this procedure, we unravel a deep connection of our theory to analytic number theory, which opens a way for us to efficiently compute all necessary operator coefficients. We then demonstrate the numerical performance of the expansion in Section~\ref{sec:num_application} and analyse the error.  Finally, we draw our conclusions in Section~\ref{sec:conclusions_and_outlook}.

\section{Preliminaries}
\label{sec:preliminaries}
In the presentation of the material we mostly follow~\cite{hormander2003analysisI}.
An extensively study the Fourier transform
and explicit expressions for many distributions can be found in \cite{gelfand1964generalizedI}.
Finally, the first chapter of~\cite{treves1975basic} presents an accessible introduction to elliptic regularity.
\subsection{Distributions}

For an open set $\Omega \subseteq \mathds R^d$ and $k \in \mathds N = \{0, 1, 2,\dots \}$ or $k = \infty$,
$C^k(\Omega)$ denotes the set of $k$-times continuously differentiable functions $f:\Omega\to \mathds C$.
A sequence $(u_n)_{n \in \mathds N}$ in $C^k(\Omega)$ is said to converge to $u \in C^k(\Omega)$ if all derivatives up to order $k$ converge compactly on $\Omega$,
i.e. for all $K \subseteq \Omega$ compact and $\bm \alpha \in \mathds N^d$ with
$|\bm \alpha| \leq k$,
\[
\lim_{n \to \infty} \sup_{\bm x \in K}
\big| D^{\bm \alpha} u_n(\bm x) - D^{\bm \alpha} u(\bm x) \big|
= 0.
\]
With $C_0^k(\Omega)$ we denote the subspace of functions in $u \in C^k(\Omega)$
whose support
\[
\supp u = \overline{ \{ \bm x \in  \Omega : u(\bm x) \neq 0\}}
\]
is contained in a compact subset of $\Omega$.
Endowed with a stronger topology than the one inherited from $C^\infty(\Omega)$,
$C_0^\infty(\Omega)$ is called the space of test functions.
Its dual space, denoted by $\mathscr D'(\Omega)$, is called the space of distributions on $\Omega$.
A prototypical example of a distribution is the Dirac distribution at $\bm x_0 \in \Omega$
which sends a test function to its point evaluation in $\bm x_0$,
\[
\langle \delta_{\bm x_0},\psi\rangle = \psi(\bm x_0), \quad \psi \in C_0^\infty(\Omega).
\]

For $p \in [1, \infty]$, we set $L^p(\Omega)$ as the Banach space of all measurable functions $v : \Omega \to \mathds C$
with finite $p$-norm,
\[
\| v \|_{p, \Omega}^p = \int \limits_\Omega |v(\bm x)|^p \, \text d \bm x < \infty,
\]
for $p < \infty$ and
\[
\| v \|_{\infty, \Omega} = \operatorname*{ess\,sup}_{\bm x \in \Omega} |v(\bm x)| < \infty,
\]
in case of $p = \infty$.
Furthermore, the space of locally $p$-integrable functions is defined as
\[
L_{\text{loc}}^p(\Omega) = \big\{v: \Omega \to \mathds C \text{ measurable } : v\vert_K \in L^p(K) \text{ for all } K \subseteq \Omega \text{ compact} \big\}.
\]
Every function $v \in L^1_\text{loc}(\Omega)$ defines a distribution by virtue of
\[
C_0^\infty(\Omega) \to \mathds C,~ \psi \mapsto \int \limits_{\Omega} v(\bm x) \psi(\bm x) \, \text d \bm x.
\]
Of particular interest for us are the functions $s_\nu : \mathds R^d \setminus \{ \bm 0 \} \to \mathds C$,
\begin{equation}\label{eq:s-nu}
  s_\nu(\bm x) = \frac{1}{|\bm x|^\nu},
\end{equation}
for $\nu \in \mathds C$.
Clearly, $s_\nu$ gives rise to a distribution on $\mathds R^d \setminus \{ \bm 0 \}$.
Additionally, we have
\begin{theorem}\label{thm:extension-homogeneous-distribution}
The function $s_\nu$ has an extension to a distribution on $\mathds R^d$.
In case that $\nu \neq  d + 2k$, $k\in \mathds N$ this extension is unique.
Otherwise, for $\nu = d + 2 k$, there are infinitely many extensions
and two of them differ by a linear combination
of derivatives of order $k$ of $\delta_{\bm 0}$.
\end{theorem}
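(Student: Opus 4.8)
The plan is to construct the extension explicitly by analytic continuation in the parameter $\nu$, using the meromorphic structure of $s_\nu$ as a distribution-valued function, and then to analyse the poles to count the extensions. First I would observe that for $\Re{\nu} < d$ the function $s_\nu$ is locally integrable near the origin, hence it already defines a distribution on all of $\mathds R^d$; moreover for a fixed test function $\psi \in C_0^\infty(\mathds R^d)$ the pairing $\langle s_\nu, \psi\rangle = \int_{\mathds R^d} |\bm x|^{-\nu}\psi(\bm x)\,\mathrm d\bm x$ is holomorphic in $\nu$ on this half-plane. The key step is to continue this meromorphically to the whole $\nu$-plane. The standard device is to split the integral at the unit ball: the exterior piece $\int_{|\bm x|>1}$ is entire in $\nu$, while for the interior piece one subtracts a Taylor polynomial of $\psi$ at $\bm 0$. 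Concretely, writing $\psi(\bm x) = \sum_{|\bm\alpha|\le N} \frac{D^{\bm\alpha}\psi(\bm 0)}{\bm\alpha!}\bm x^{\bm\alpha} + R_N(\bm x)$ with $R_N$ vanishing to order $N+1$, the remainder term $\int_{|\bm x|\le 1}|\bm x|^{-\nu}R_N(\bm x)\,\mathrm d\bm x$ converges and is holomorphic for $\Re{\nu} < d+N+1$, and each monomial term contributes, after passing to polar coordinates, $\int_0^1 r^{d-1-\nu+|\bm\alpha|}\,\mathrm d r$ times an integral of $\bm\omega^{\bm\alpha}$ over the sphere $S^{d-1}$; the radial integral equals $1/(d+|\bm\alpha|-\nu)$, giving a simple pole at $\nu = d+|\bm\alpha|$.

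The next step is to identify exactly which of these candidate poles are genuine. The spherical integral $\int_{S^{d-1}}\bm\omega^{\bm\alpha}\,\mathrm d\sigma(\bm\omega)$ vanishes unless every component of $\bm\alpha$ is even, so only the values $\nu = d + 2k$, $k\in\mathds N$, can be actual poles of $\nu\mapsto\langle s_\nu,\psi\rangle$; at every other $\nu$ the meromorphic continuation is regular and its value defines the (unique, by continuity/density) distributional extension, proving the first assertion and the uniqueness claim for $\nu\neq d+2k$. For the final assertion, fix $\nu_0 = d+2k$ and examine the Laurent expansion at $\nu_0$: the principal part has a residue which, as a functional of $\psi$, is a fixed nonzero multiple of $\sum_{|\bm\alpha|=2k} c_{\bm\alpha} D^{\bm\alpha}\psi(\bm 0)$ for explicit constants $c_{\bm\alpha}$ coming from the sphere integrals — that is, a specific derivative of $\delta_{\bm 0}$ of order $2k$. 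Two different regularisation schemes (e.g. Hadamard finite part versus a different choice of subtracted polynomial, or adding $c\,\nu_0$-independent multiples) produce extensions differing precisely by such a combination; conversely one checks that adding any linear combination of order-$2k$ derivatives of $\delta_{\bm 0}$ to a valid extension yields another valid extension, since these are supported at $\bm 0$ and agree with $s_\nu$ on $\mathds R^d\setminus\{\bm 0\}$. To see that only order $2k$ (and not lower even orders, nor order $2k$ mixed with lower) appears, note that a derivative $D^{\bm\alpha}\delta_{\bm 0}$ of order $m$ is homogeneous of degree $-d-m$, and the difference of two extensions of the homogeneous-degree-$(-\nu_0)$ distribution $s_{\nu_0}$ must be homogeneous of the same degree $-\nu_0 = -d-2k$, forcing $m = 2k$.

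I expect the main obstacle to be the homogeneity bookkeeping in this last step: one must make precise the sense in which $s_\nu$ is homogeneous, verify that any distributional extension inherits (or can be chosen to inherit) this homogeneity up to the anomaly, and rule out that a non-homogeneous extension could differ from a homogeneous one by something other than derivatives of $\delta_{\bm 0}$. The clean way is to use the fact that a distribution on $\mathds R^d$ supported at $\{\bm 0\}$ is exactly a finite linear combination of derivatives of $\delta_{\bm 0}$, combined with the scaling action $\bm x \mapsto t\bm x$ on distributions: applying Euler's operator $\sum_j x_j\partial_j + \nu_0$ to an extension $u$ of $s_{\nu_0}$ gives a distribution supported at $\bm 0$, and analysing when this can be annihilated pins down the $2k$-th order terms. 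Everything else — the convergence ranges, the polar-coordinate computations, the density argument for uniqueness away from the poles — is routine and I would not belabour it.
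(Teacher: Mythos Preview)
The paper does not prove this theorem; it appears in the Preliminaries section as a standard result quoted from the distribution-theory literature (the section opens by pointing to H\"ormander and Gel'fand--Shilov). So there is no in-paper argument to compare against. Your approach via meromorphic continuation in $\nu$ with Taylor subtraction at the origin is exactly the textbook construction and is the natural proof to supply here.

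Two points deserve tightening. First, the uniqueness claim. Your argument that the difference of two extensions must be homogeneous of degree $-\nu_0$ is only valid if both extensions are themselves homogeneous; but at $\nu_0 = d+2k$ no homogeneous extension exists (that is precisely what the pole signals), and for $\nu_0 = d+(2k+1)$ there \emph{are} many homogeneous extensions differing by odd-order derivatives of $\delta_{\bm 0}$. What actually singles out the even values $d+2k$ is rotation invariance: the only $O(d)$-invariant combinations of $D^{\bm\alpha}\delta_{\bm 0}$ are multiples of $\Delta^j\delta_{\bm 0}$, which are homogeneous of degree $-d-2j$. So the correct framing is that among rotation-invariant extensions (equivalently, extensions produced by any radially symmetric regularisation), uniqueness holds for $\nu\neq d+2k$, and at $\nu=d+2k$ two such extensions differ by a multiple of $\Delta^k\delta_{\bm 0}$. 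You gesture at this with the Euler-operator remark, but the rotation invariance is the ingredient that needs to be made explicit.

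Second, your homogeneity count gives derivatives of order $2k$, whereas the paper's statement reads ``order $k$.'' Your count is the correct one under the standard convention (total differential order of $\Delta^k\delta_{\bm 0}$ is $2k$); the paper is either using ``order'' to mean the power of the Laplacian or has a typo. Either way, flag this rather than silently inherit it.
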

We denote the convolution of $u \in \mathscr D'(\mathds R^d)$
and $\varphi \in C^\infty(\mathds R^d)$,
where one of them is assumed to have compact support,
by $u \ast \varphi$.
It holds $u \ast \varphi \in C^\infty(\mathds R^d)$ and
\[
(u \ast \varphi)(\bm x) = u(\varphi(\bm x - \bm \cdot)), \quad \bm x \in \mathds R^d.
\]
For $v \in L^1(\mathds R^d)$, we define the Fourier transform $\hat v=\mathcal F v$ as 
\[
\hat v(\bm \xi) = \mathcal F v(\bm \xi) = \int \limits_{\mathds R^d} e^{- 2 \pi i  \langle  \bm \xi ,\bm x\rangle} v(\bm x) \, \text d \bm x,
\quad \bm \xi \in \mathds R^d.
\]
The Fourier transform is an isomorphism on the Schwartz space $S(\mathds R^d)$
of rapidly decaying smooth functions,
\[
S(\mathds R^d) = \Big\{u \in C^\infty(\mathds R^d): \sup_{\bm x \in \mathds R^d} |\bm x^{\bm \beta} D^{\bm \alpha} u(\bm x)| < \infty ~\forall \bm \alpha, \bm \beta \in \mathds N^d \Big\}.
\]
By duality, the definition of $\mathcal F$ extends to $S'(\mathds R^d)$,
the dual space of $S(\mathds R^d)$, called the space of tempered distributions.
\begin{theorem}\label{thm:Fourier-transform-interaction}
Let $\nu \in \mathds C$.
Any extension of $s_\nu$ to a distribution on $\mathds R^d$ is a tempered distribution.
Its Fourier transform is a $C^\infty$-function on $\mathds R^d \setminus \{ \bm 0 \}$.
For $\nu \not \in (d + 2 \mathds N)$ it holds
\[
\hat s_\nu(\bm \xi) = \pi^{\nu - \frac{d}{2}} \frac{\Gamma \big( \frac{d - \nu}{2} \big)}{\Gamma \big( \frac{\nu}{2} \big)} |\bm \xi|^{-d + \nu}, \quad \bm \xi \in \mathds R^d \setminus \{ \bm 0 \},
\]
where $\Gamma$ denotes the Gamma function.
\end{theorem}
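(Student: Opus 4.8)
The plan is to treat the three assertions in turn, reducing the computational core to the classical Gaussian (Riesz) calculation and covering the full range of $\nu$ by analytic continuation. For temperedness: when $\mathrm{Re}(\nu) < d$ the function $s_\nu$ lies in $L^1_{\mathrm{loc}}(\mathds R^d)$ and grows at most polynomially at infinity, so it defines a tempered distribution directly, which by Theorem~\ref{thm:extension-homogeneous-distribution} is the unique extension in that range. For arbitrary $\nu$ I would construct a tempered extension by the Gelfand--Shilov meromorphic continuation of $\nu \mapsto s_\nu \in S'(\mathds R^d)$ (as in \cite{gelfand1964generalizedI}): subtracting finitely many Taylor terms of a Schwartz test function at $\bm 0$ exhibits $\langle s_\nu,\psi\rangle$ as a meromorphic function of $\nu$ with poles only on $d + 2\mathds N$, so off that set one obtains a tempered extension and on it one obtains one after discarding the principal part. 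By Theorem~\ref{thm:extension-homogeneous-distribution}, any other extension differs from this one by a finite linear combination of derivatives of $\delta_{\bm 0}$, which is tempered; hence every extension of $s_\nu$ is tempered.

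For the smoothness of the Fourier transform off the origin, fix $\bm\xi_0 \neq \bm 0$, pick $\chi \in C^\infty_0(\mathds R^d)$ with $\chi \equiv 1$ near $\bm 0$, and split $s_\nu = \chi s_\nu + (1-\chi)s_\nu$. The first piece has compact support, so its Fourier transform is entire by the Paley--Wiener--Schwartz theorem, in particular $C^\infty$ on $\mathds R^d$. The second piece $g := (1-\chi)s_\nu$ is a genuine $C^\infty$ function vanishing near $\bm 0$, and homogeneity of $s_\nu$ gives $|D^{\bm\alpha} g(\bm x)| \leq C_{\bm\alpha}(1+|\bm x|)^{-\mathrm{Re}(\nu)-|\bm\alpha|}$. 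On a neighbourhood $U$ of $\bm\xi_0$ bounded away from $\bm 0$ I would integrate by parts $N$ times against $e^{-2\pi i\langle\bm\xi,\bm x\rangle}$ using the operator $(2\pi i|\bm\xi|^2)^{-1}\langle\bm\xi,\nabla_{\bm x}\rangle$, which reproduces the exponential modulo the factor $|\bm\xi|^{-2}$ that is smooth on $U$; for $N > \mathrm{Re}(\nu) + d$ the resulting integral of the $N$-th derivative of $g$ converges absolutely, and differentiation in $\bm\xi$ only inserts extra powers of $\bm x$, absorbed by enlarging $N$. Hence $\widehat g \in C^\infty(U)$ and therefore $\widehat{s_\nu} \in C^\infty(\mathds R^d \setminus \{\bm 0\})$; passing to another extension merely adds a polynomial, so the conclusion holds for every extension.

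For the explicit formula, when $\nu \notin d + 2\mathds N$ the distribution $s_\nu$, and hence $\widehat{s_\nu}$, is unambiguous by Theorem~\ref{thm:extension-homogeneous-distribution}. I would first establish the identity on the strip $0 < \mathrm{Re}(\nu) < d$ via the subordination formula $|\bm x|^{-\nu} = \tfrac{\pi^{\nu/2}}{\Gamma(\nu/2)}\int_0^\infty t^{\nu/2-1}e^{-\pi t|\bm x|^2}\,\mathrm d t$ together with $\mathcal F[e^{-\pi t|\cdot|^2}](\bm\xi) = t^{-d/2}e^{-\pi|\bm\xi|^2/t}$: pairing with a Schwartz function, interchanging the integrals (legitimate in this strip), substituting $t \mapsto 1/t$ and evaluating the Gamma integral yields $\widehat{s_\nu}(\bm\xi) = \pi^{\nu-d/2}\tfrac{\Gamma((d-\nu)/2)}{\Gamma(\nu/2)}|\bm\xi|^{\nu-d}$. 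To reach all $\nu \notin d + 2\mathds N$, I would invoke the identity theorem: for each fixed $\psi \in C^\infty_0(\mathds R^d \setminus \{\bm 0\})$ both $\nu \mapsto \langle\widehat{s_\nu},\psi\rangle$ and $\nu \mapsto \pi^{\nu-d/2}\tfrac{\Gamma((d-\nu)/2)}{\Gamma(\nu/2)}\int_{\mathds R^d}|\bm\xi|^{\nu-d}\psi(\bm\xi)\,\mathrm d\bm\xi$ are holomorphic on the connected open set $\mathds C \setminus (d + 2\mathds N)$ and coincide on the strip, hence throughout; since test functions supported away from $\bm 0$ separate continuous functions on $\mathds R^d \setminus \{\bm 0\}$, the pointwise formula follows.

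The main obstacle I anticipate is the care needed in the analytic-continuation step: one must verify that $\nu \mapsto s_\nu$ genuinely extends to a holomorphic $S'$-valued map off $d + 2\mathds N$, that this continuation is the very distribution whose Fourier transform appears in the statement, and that the exceptional set introduces no spurious features in the final formula — for instance the zero of $1/\Gamma(\nu/2)$ at $\nu \in -2\mathds N$ is consistent with $s_\nu$ being a polynomial there, whose Fourier transform indeed vanishes off the origin. By contrast, the integration-by-parts estimate in the smoothness step and the interchange of limits in the Gaussian computation are routine once the elementary growth bounds on $s_\nu$ and its derivatives have been recorded.
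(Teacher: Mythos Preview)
The paper does not prove this theorem: it is stated in the Preliminaries section as a standard result, with the surrounding text pointing to H\"ormander and Gel'fand--Shilov for the underlying theory. So there is no ``paper's own proof'' to compare against; your outline is exactly the classical argument one would find in those references (meromorphic continuation of $\nu\mapsto s_\nu$ in $S'$, smoothness of $\widehat{s_\nu}$ off the origin via a cutoff plus oscillatory-integral estimates, and the Riesz formula via Gaussian subordination followed by analytic continuation in $\nu$).

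One small correction: in the integration-by-parts step you need $N > d - \mathrm{Re}(\nu)$, not $N > \mathrm{Re}(\nu) + d$, since $|D^{\bm\alpha}g(\bm x)|\le C_{\bm\alpha}(1+|\bm x|)^{-\mathrm{Re}(\nu)-|\bm\alpha|}$ is integrable precisely when $\mathrm{Re}(\nu)+|\bm\alpha|>d$. Also, because the integral $\int e^{-2\pi i\langle\bm\xi,\bm x\rangle}g(\bm x)\,\mathrm d\bm x$ need not converge a priori when $\mathrm{Re}(\nu)\le 0$, you should say explicitly that the repeated integration by parts is to be read as an oscillatory integral (equivalently, insert a cutoff $\chi(\cdot/R)$, integrate by parts, and let $R\to\infty$). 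With those clarifications the argument is complete.
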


\subsection{Elliptic regularity}
A $d$-variate polynomial $P$ of degree $m \in \mathds N$ with complex coefficients,
\[
P(\bm \xi) = \sum_{\bm \alpha \in \mathds N^d} a_{\bm \alpha} \bm \xi^{\bm \alpha},\quad \bm \xi \in \mathds R^d,
\]
is called elliptic if
\[
P_m(\bm \xi) = \sum_{|\bm \alpha| = m} a_{\bm \alpha} \bm \xi^{\bm \alpha}, \quad \bm \xi \in \mathds R^d,
\]
does not vanish on $\mathds R^d \setminus \{ \bm 0 \}$.
Moreover, a differential operator with constant coefficients is called elliptic if the associated polynomial is elliptic.

\begin{comment}
\begin{theorem}
  Let $\mathcal L$ be an elliptic differential operator of order $m \in \mathds N$ with constant coefficients and $u \in \mathscr D'(\Omega)$.
  Then,
  \begin{equation*}\label{eq:regualarity-pde}
    \mathcal L u \in W^{\mu, p}_\text{loc}(\Omega) \iff u \in W^{\mu + m, p}_\text{loc}(\Omega).
  \end{equation*}
\end{theorem}
A direct consequence the embedding theorem is
\[
\bigcap_{\mu \in \mathds R} W^{\mu,p}_\text{loc}(\Omega) = C^{\infty}(\Omega).
\]
Thus, it holds:
\end{comment}
\begin{theorem}\label{cor:elliptic-c-infty-regularity}
  An elliptic differential operator $\mathcal L$ with constant coefficients is hypoelliptic, which means that
  if $\mathcal L u \in C^{\infty}(\Omega)$ for $u \in \mathscr D'(\Omega)$, then already $u \in C^{\infty}(\Omega)$.
\end{theorem}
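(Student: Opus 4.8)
This is the classical hypoellipticity theorem for constant–coefficient elliptic operators, and the plan is to prove it via a parametrix, following the approach in Treves. Since the conclusion $u\in C^\infty(\Omega)$ is local, it suffices to show $u$ is smooth on a neighbourhood of an arbitrary point $\bm x_0\in\Omega$. The key object is a \emph{parametrix}: a tempered distribution $E\in S'(\mathds R^d)$ with $\mathcal L E=\delta_{\bm 0}+\psi$ for some $\psi\in S(\mathds R^d)$ and with $\operatorname{sing\,supp}E\subseteq\{\bm 0\}$. To construct it, let $P$ be the polynomial of $\mathcal L$, of degree $m$, and write $P=P_m+Q$ with $\deg Q\le m-1$. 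Ellipticity means $P_m$ does not vanish on the unit sphere, so $|P_m(\bm\xi)|\ge c|\bm\xi|^m$ by compactness, and hence $|P(\bm\xi)|\ge\tfrac c2|\bm\xi|^m$ for $|\bm\xi|\ge R$ with $R$ large; in particular the zero set of $P$ is bounded. I would then fix a cut-off $\chi\in C^\infty(\mathds R^d)$ vanishing on a ball containing this zero set and equal to $1$ outside a larger ball, and set $\hat E(\bm\xi)=\chi(\bm\xi)/P(\bm\xi)$. An induction on $|\bm\alpha|$ using $\partial_j(1/P)=-(\partial_j P)/P^2$ and $|\partial^{\bm\alpha}P(\bm\xi)|\lesssim|\bm\xi|^{m-|\bm\alpha|}$ gives the symbol-type estimates $|\partial^{\bm\alpha}\hat E(\bm\xi)|\lesssim(1+|\bm\xi|)^{-m-|\bm\alpha|}$, so $\hat E$ is a tempered distribution and I let $E=\mathcal F^{-1}\hat E$.

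Two properties of $E$ then follow. First, $\widehat{\mathcal L E}=P\hat E=\chi=1-(1-\chi)$ with $1-\chi\in C_0^\infty(\mathds R^d)$, so taking inverse Fourier transforms yields $\mathcal L E=\delta_{\bm 0}+\psi$ with $\psi=-\mathcal F^{-1}(1-\chi)\in S(\mathds R^d)$. Second, $\operatorname{sing\,supp}E\subseteq\{\bm 0\}$: since $\mathcal F(|\bm x|^{2N}E)=c\,\Delta^N\hat E$ decays like $(1+|\bm\xi|)^{-m-2N}$, it lies together with enough of its polynomial multiples in $L^1(\mathds R^d)$ once $N$ is large, so $|\bm x|^{2N}E\in C^k(\mathds R^d)$ for $N=N(k)$ large; dividing by the smooth nonvanishing function $|\bm x|^{2N}$ shows $E\in C^k(\mathds R^d\setminus\{\bm 0\})$ for all $k$, i.e. $E\in C^\infty(\mathds R^d\setminus\{\bm 0\})$.

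With the parametrix in hand the deduction is short. Fix $\bm x_0\in\Omega$ and choose $\varphi\in C_0^\infty(\Omega)$ with $\varphi\equiv 1$ near $\bm x_0$. Then $\varphi u$ is a compactly supported distribution, and the Leibniz rule gives $\mathcal L(\varphi u)=\varphi\,\mathcal L u+r=\varphi f+r$, where the commutator term $r=[\mathcal L,\varphi]u$ is supported in $\operatorname{supp}(\nabla\varphi)$, which does not contain $\bm x_0$. Convolving $\mathcal L E=\delta_{\bm 0}+\psi$ with $\varphi u$ and using that differential operators commute with convolution, one obtains
\[
\varphi u=\delta_{\bm 0}\ast(\varphi u)=(\mathcal L E)\ast(\varphi u)-\psi\ast(\varphi u)=E\ast(\varphi f)+E\ast r-\psi\ast(\varphi u).
\]
Here $\psi\ast(\varphi u)\in C^\infty(\mathds R^d)$ and $E\ast(\varphi f)\in C^\infty(\mathds R^d)$ by the convolution property stated in the preliminaries (one factor has compact support, the other is smooth, resp. $\varphi f\in C_0^\infty$), while $\operatorname{sing\,supp}(E\ast r)\subseteq\operatorname{sing\,supp}E+\operatorname{supp}r\subseteq\{\bm 0\}+\operatorname{supp}(\nabla\varphi)$, which again misses $\bm x_0$. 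Hence $\varphi u$, and therefore $u$, is smooth near $\bm x_0$; as $\bm x_0$ was arbitrary, $u\in C^\infty(\Omega)$.

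The main obstacle is the parametrix construction, and within it the proof that $\operatorname{sing\,supp}E\subseteq\{\bm 0\}$: this is the only place where ellipticity is genuinely used, via the decay estimates $|\partial^{\bm\alpha}\hat E(\bm\xi)|\lesssim(1+|\bm\xi|)^{-m-|\bm\alpha|}$ that let the smoothing "beat" the growth of $1/P$ away from the origin. The remaining steps are formal manipulations of convolutions of compactly supported distributions with smooth functions.
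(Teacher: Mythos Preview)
The paper does not actually prove this theorem; it is stated without proof in the preliminaries section, with general references to Tr\`eves and H\"ormander at the start of Section~\ref{sec:preliminaries}. Your proposal is a correct, self-contained proof that follows precisely the classical parametrix approach found in those references: construct $E=\mathcal F^{-1}(\chi/P)$ using ellipticity to control $1/P$ at infinity, verify $\mathcal L E=\delta_{\bm 0}+\psi$ with $\psi\in S(\mathds R^d)$ and $\operatorname{sing\,supp}E\subseteq\{\bm 0\}$, then localise and decompose $\varphi u$ via convolution. There is nothing to compare against in the paper itself, but your argument is sound and the technical points (the symbol estimates on $\partial^{\bm\alpha}\hat E$, the inclusion $\operatorname{supp}r\subseteq\operatorname{supp}(\nabla\varphi)$, and the singular-support inclusion for $E\ast r$) are all handled correctly.
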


Theorem~\ref{cor:elliptic-c-infty-regularity} remains valid if smoothness is replaced by analyticity.
\begin{theorem}\label{cor:elliptic-analytic-regularity}
  An elliptic differential operator $\mathcal L$ with constant coefficients is analytic-hypoelliptic: 
  If $\mathcal L u$ is analytic in $\Omega$ for $u \in \mathscr D'(\Omega)$, then $u$ is an analytic function on $\Omega$.
\end{theorem}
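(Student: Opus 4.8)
The statement is local, so I would fix $\bm x_0 \in \Omega$ together with a ball $B = B(\bm x_0, R) \subset\subset \Omega$ on which $f := \mathcal L u$ is analytic, and prove that $u$ is analytic on the smaller ball $B' = B(\bm x_0, R/2)$. By Theorem~\ref{cor:elliptic-c-infty-regularity} we already know $u \in C^\infty(\Omega)$, so all derivatives below are classical, and analyticity of $f$ furnishes a Cauchy estimate $\sup_B |D^{\bm\beta} f| \le A^{|\bm\beta|+1}\bm\beta!$ for all $\bm\beta \in \mathds N^d$ and some $A > 0$. I would then invoke the standard characterisation: a $C^\infty$ function $g$ is analytic on an open set precisely when, on every compact subset $K$, one has $\sup_K |D^{\bm\alpha} g| \le C^{|\bm\alpha|+1}\bm\alpha!$ for some $C = C(K)$. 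Hence it suffices to establish such derivative bounds for $u$ on $B'$.

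The engine of the argument is the interior elliptic estimate for the constant-coefficient operator $\mathcal L$ of order $m$: for concentric balls $B_\rho \subset B_{\rho'} \subset B$ there is a constant $C_0$, depending on $\mathcal L$, $d$ and $\rho' - \rho$ (polynomially in $(\rho' - \rho)^{-1}$), with $\|v\|_{H^{m}(B_\rho)} \le C_0\big(\|\mathcal L v\|_{L^2(B_{\rho'})} + \|v\|_{L^2(B_{\rho'})}\big)$; iterating in the Sobolev scale and invoking Sobolev embedding converts this into an $L^\infty$ bound with a fixed number $\sim m$ of derivatives gained. Applying it to $v = D^{\bm\alpha} u$ and using $\mathcal L D^{\bm\alpha} u = D^{\bm\alpha} f$ trades $m$ derivatives of $u$ on $B_\rho$ against derivatives of $f$ (controlled by the Cauchy estimate) plus fewer derivatives of $u$ on the slightly larger $B_{\rho'}$. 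To close the loop I would, for each $N \in \mathds N$, interpolate $\sim N/m$ concentric balls between $B'$ and $B$ with radii spaced by $\sim (R/2)/N$, and prove by induction on $|\bm\alpha| \le N$ a bound $\sup_{B_{\rho_j}} |D^{\bm\alpha} u| \le C^{|\bm\alpha|+1}\bm\alpha!$ with $C$ independent of $N$, the radius loss at each of the $O(|\bm\alpha|/m)$ steps being paid for by the spacing.

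The hard part is exactly this last inductive estimate: one must verify that the accumulation of the elliptic constants $C_0$ — each growing like a power of the inverse spacing, i.e. a power of $N$, over $O(N)$ steps — is precisely offset by the factorial gained in differentiating and by taking $C$ large enough. This is the Morrey--Nirenberg device; carrying it out cleanly uses the Leibniz bookkeeping $\sum_{\bm\gamma \le \bm\alpha} \binom{\bm\alpha}{\bm\gamma} \le 2^{|\bm\alpha|}$ and the boundedness of $(j/(j+1))^{j}$ to keep the per-step losses summable. An alternative route, exploiting the constant coefficients directly, is to build a parametrix $F$ with $\widehat F = \chi/P$ for a cutoff $\chi$ vanishing near the real zeros of the principal symbol; ellipticity yields the analytic-symbol bounds $|D^{\bm\gamma}_{\bm\xi}(1/P)| \le C^{|\bm\gamma|+1}\bm\gamma!\,|\bm\xi|^{-m-|\bm\gamma|}$ for large $|\bm\xi|$, from which a non-stationary-phase argument shows $F$ is analytic on $\mathds R^d \setminus \{\bm 0\}$, and convolving a cutoff of $u$ against $F$ while controlling the smoothing error gives analyticity of $u$. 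Either way the crux is the same analytic bookkeeping. The result is classical, going back to Petrowsky; see \cite{hormander2003analysisI, treves1975basic}.
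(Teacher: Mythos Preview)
The paper does not prove this theorem at all: it is stated in the Preliminaries section as a classical result, with implicit reference to \cite{hormander2003analysisI} and \cite{treves1975basic}, and is simply quoted for later use. So there is no ``paper's own proof'' to compare against.

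Your sketch is a correct outline of one of the standard arguments (the Morrey--Nirenberg nested-ball induction), and you correctly flag where the real work lies: controlling the growth of the accumulated elliptic constants against the factorials. The alternative parametrix route you mention is also valid and is in fact closer in spirit to how the constant-coefficient case is often handled in \cite{hormander2003analysisI}. Since the paper treats this as background, either outline would be acceptable; your citation of Petrowsky for the original result is apt.
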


The following theorem is a generalisation of~\cite[Theorem 4.4.2]{hormander2003analysisI}.
\begin{theorem}\label{thm:weak-convergence-implies-uniform}
  Let $(u_j)_{j \in \mathds N}$ be a sequence in $\mathscr D'(\Omega)$ that converges to $u \in \mathscr D'(\Omega)$,
  \[
  \lim_{j \to \infty} u_j(\psi) = u(\psi), \quad \psi \in C_0^\infty(\Omega).  
  \]
  Furthermore, let $\mathcal L$ be an elliptic differential operator with constant coefficients.
  If we have $\mathcal L u_j \in C^\infty(\Omega)$ for all $j \in \mathds N$ and the sequence $(\mathcal L u_j)_{j \in \mathds N}$
  converges in $C^\infty(\Omega)$ to $v \in C^{\infty}(\Omega)$, then $(u_j)_{j \in \mathds N}$ and $u$ are $C^\infty$-functions
  and $(u_j)_{j \in \mathds N}$ converges to $u$ in $C^{\infty}(\Omega)$, that is compactly on $\Omega$ in all derivatives.
\end{theorem}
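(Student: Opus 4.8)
The plan is to reduce the statement to the classical result \cite[Theorem 4.4.2]{hormander2003analysisI}, which treats the case where the operator applied to the sequence is a \emph{fixed} smooth function, by a localisation and a fundamental-solution argument. First I would recall that ellipticity of $\mathcal L$ guarantees the existence of a parametrix: there is $E \in \mathscr D'(\mathds R^d)$ and $\omega \in C^\infty(\mathds R^d)$ with $\mathcal L E = \delta_{\bm 0} + \omega$, and moreover $E$ can be chosen so that $\operatorname{sing\,supp} E = \{\bm 0\}$. Fixing a relatively compact open $\Omega' \Subset \Omega$ and a cutoff $\chi \in C_0^\infty(\Omega)$ with $\chi \equiv 1$ on a neighbourhood of $\overline{\Omega'}$, I would write, for each $j$, the representation $\chi u_j = E \ast \big(\mathcal L(\chi u_j)\big) - \omega \ast (\chi u_j)$. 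The point of the cutoff is that $\chi u_j$ has compact support, so the convolutions make sense, and $\mathcal L(\chi u_j) = \chi\, \mathcal L u_j + [\mathcal L,\chi] u_j$, where the commutator $[\mathcal L,\chi]$ is a differential operator of order $m-1$ supported in $\operatorname{supp}(\nabla\chi) \subseteq \Omega \setminus \overline{\Omega'}$.

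Next I would exploit the hypotheses to control each term on $\Omega'$. By assumption $\mathcal L u_j \to v$ in $C^\infty(\Omega)$, so $\chi\,\mathcal L u_j \to \chi v$ in $C_0^\infty$, hence in $\mathscr E'$, and therefore $E \ast (\chi\,\mathcal L u_j) \to E \ast (\chi v)$ in $\mathscr D'$; restricted to $\Omega'$, away from the singular support of $E$, the smoothing properties of $E$ upgrade this to convergence in $C^\infty(\Omega')$. For the remaining pieces, $\omega \ast (\chi u_j)$ and $E \ast \big([\mathcal L,\chi] u_j\big)$, I would use that $u_j \to u$ in $\mathscr D'(\Omega)$ implies $u_j \to u$ in $\mathscr D'$ when tested against any fixed compactly supported distribution kernel, and that convolution with the \emph{smooth} function $\omega$, resp. with $E$ restricted to the region where $\operatorname{supp}(\nabla\chi)$ and $\Omega'$ are disjoint (so again away from $\operatorname{sing\,supp} E$), turns weak convergence into $C^\infty(\Omega')$-convergence — this is exactly the mechanism of Hörmander's Theorem 4.4.2, which I would invoke for these two terms since there the data applied to the sequence is already smooth (for $\omega\ast(\chi u_j)$ trivially, and for the commutator term one applies the elliptic regularity iteration, or simply notes $[\mathcal L,\chi]u_j$ lives in a region disjoint from $\Omega'$). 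Summing the three contributions gives that $\chi u_j$, hence $u_j$, converges in $C^\infty(\Omega')$; since $\Omega' \Subset \Omega$ was arbitrary and $C^\infty$-convergence is local, $u_j \to u$ in $C^\infty(\Omega)$, and in particular $u_j, u \in C^\infty(\Omega)$ by Theorem~\ref{cor:elliptic-c-infty-regularity}.

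The main obstacle I anticipate is bookkeeping the supports and the singular support of the parametrix carefully enough that every convolution that appears is legitimate and that the "away from $\operatorname{sing\,supp} E$" smoothing is applied only where it is valid; in particular one must choose a second cutoff, or a nested pair $\Omega' \Subset \Omega'' \Subset \Omega$, to separate the region where $E$ is singular from the region where $\nabla\chi$ is supported. A cleaner route that avoids an explicit parametrix is to note that the statement is purely local and quantitative, so it suffices to prove: if $u_j \to u$ in $\mathscr D'(B)$ on a ball and $\mathcal L u_j \to v$ in $C^\infty(B)$, then on any smaller ball all Sobolev norms $\|u_j - u_k\|_{H^{s}}$ are controlled, using the elliptic a priori estimate $\|w\|_{H^{s+m}(B')} \lesssim \|\mathcal L w\|_{H^{s}(B)} + \|w\|_{H^{s}(B)}$ together with the fact that weakly convergent sequences in $\mathscr D'$ are bounded in some $H^{-M}_{\mathrm{loc}}$ (Banach–Steinhaus); bootstrapping $s$ then yields $C^\infty$-convergence via Sobolev embedding. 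I would present whichever of these is shorter, but I expect the a priori-estimate version to be the least error-prone to write out in full.
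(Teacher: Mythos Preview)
Your proposal is correct and follows the same overarching strategy as the paper --- localise, convolve with a fundamental solution/parametrix, and feed the piece with vanishing $\mathcal L$ into H\"ormander's Theorem~4.4.2 --- but the paper's execution is streamlined in two ways that remove precisely the bookkeeping you flag as the main obstacle.

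First, since $\mathcal L$ has constant coefficients, the paper invokes an actual fundamental solution $E$ with $\mathcal L E = \delta_{\bm 0}$ (Malgrange--Ehrenpreis, \cite[Theorem 7.3.10]{hormander2003analysisI}) rather than a parametrix; this kills your $\omega\ast(\chi u_j)$ term outright. Second, and more significantly, the paper applies the cutoff $\chi$ to $f_j = \mathcal L u_j$ rather than to $u_j$. The decomposition is then
\[
u_j - u = \big(u_j - E\ast(\chi f_j)\big) - \big(u - E\ast(\chi f)\big) + E\ast\big(\chi(f_j - f)\big),
\]
and on the set $Y$ where $\chi = 1$ one has $\mathcal L\big(u_j - E\ast(\chi f_j)\big) = f_j - \chi f_j = 0$, so Theorem~4.4.2 applies directly to the first two terms without any commutator. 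The third term converges in $C^\infty$ simply because $\chi f_j \to \chi f$ in $C_0^\infty(\Omega)$ and $E\ast\,\cdot\,: C_0^\infty(\Omega)\to C^\infty(\Omega)$ is continuous. No singular-support tracking or nested cutoffs are needed.

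Your alternative route via interior elliptic estimates plus Banach--Steinhaus is also valid and genuinely different; it trades the parametrix machinery for a bootstrap in Sobolev scales, which some would find more transparent but which the paper does not use.
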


\begin{proof}
Since $\mathcal L u_j \in C^\infty(\Omega)$ for all $j \in \mathds N$ and
\[
\mathcal L u = \lim_{j \to \infty} \mathcal L u_j = v ~\mathrm{in}~\mathscr D'(\Omega), 
\]
Theorem~\ref{cor:elliptic-c-infty-regularity} holds $u_j \in C^\infty(\Omega)$ and $u \in C^\infty(\Omega)$.
We now show that already
\[
\lim_{j \to \infty} u_j = u~\mathrm{in}~C^\infty(\Omega).
\]
We first choose an open neighbourhood $Y \subseteq \Omega$ of $K$ such that there is $\chi \in C_0^\infty(\Omega)$ with $\chi=1$ on $Y$.
Let $E \in \mathscr D'(\Omega)$ be the fundamental solution of $\mathcal L$
which exists by Theorem~7.3.10 in~\cite{hormander2003analysisI}.
If we write $f_j = \mathcal L u_j$ and $f=\mathcal L u$, then 
\[
u_j - u = \big(u_j - E \ast (\chi f_j) \big)
- \big(u - E \ast (\chi f) \big)
+
E \ast \big(\chi \cdot (f_j -f ) \big).
\]
On $Y$, we have
\[
\mathcal L \big( u_j - E \ast (\chi f_j) \big)  = f_j - \chi f_j = 0.
\]
Furthermore, since the convolution
\[
E \ast \cdot : C_0^\infty(\Omega) \to C^\infty(\Omega)  
\]
is continuous~\cite[Theorem 27.3]{treves1967topological}, it holds
\[
\lim_{j \to \infty} \big( u_j - E \ast (\chi f_j) \big) = u - E \ast (\chi f)~\mathrm{in}~\mathscr D'(Y).
\]
By \cite[Theorem 4.4.2]{hormander2003analysisI}, above limit also holds in $C^\infty(Y)$.
Again, by using the continuity of the convolution, we conclude that
\[
\lim_{j \to \infty} E \ast \big( \chi \cdot (f_j - f) \big) = 0~\mathrm{in}~C^\infty(Y).
\]
Therefore, for all $\bm \alpha \in \mathds N^d$,
\[
D^{\bm \alpha}(u_j - u)
=
D^{\bm \alpha}\big(u_j - E \ast (\chi f_j) \big)
- D^{\bm \alpha}\big(u - E \ast (\chi f) \big)
+
E \ast D^{\bm \alpha}\big(\chi \cdot (f_j -f ) \big)
\to 0
\]
uniformly on $K$ for $j \to \infty$.
\end{proof}

\subsection{Band-limited functions}
\begin{definition}[Band-limited functions]
  A function $f:\mathds R^d\to \mathds C$
  is said to be band-limited with bandwidth $\sigma > 0$ if it can be written as the Fourier transform of a function $h \in C_{0}\big( B_{\sigma} \big)$, \[f = \mathcal F h.\]
  Here, $B_r$ denotes the Euclidean ball of radius $r > 0$.
  The vector space of all band-limited functions with bandwidth $\sigma$ is denoted by $E_\sigma$.
\end{definition}

The next lemma follows readily from the usual calculation rules for the Fourier transform.

\begin{lemma}\label{lem:exponential_type_estimate}
  Let $f\in E_\sigma$ with $\sigma>0$ and $f=\hat h$  and let $\Omega\subseteq \mathds R^d$ be open and bounded. Then
  \[
  \Vert \Delta^\ell f\Vert_{1,\Omega} \le (2\pi\sigma)^{2\ell} \mathrm{vol}(\Omega) \Vert f\Vert_{1}.
  \]
  Here, $\mathrm{vol}(\Omega)$ denotes the Lebesgue measure of $\Omega$.
  \end{lemma}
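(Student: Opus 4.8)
The plan is to route the estimate through the Fourier representation $f=\mathcal F h$ and to reduce the $L^1(\Omega)$-bound to a pointwise bound on $\Delta^\ell f$ valid on all of $\mathds R^d$; as the remark before the lemma indicates, this needs only the usual differentiation rules for $\mathcal F$ together with the trivial estimate $\lvert\int g\rvert\le\int\lvert g\rvert$. First I would record how $\Delta^\ell$ acts on $f=\mathcal F h$. Since $h\in C_0(B_\sigma)$ is continuous with compact support, one may differentiate $f=\mathcal F h$ under the integral sign — the $\bm\xi$-derivatives of $e^{-2\pi i\langle\bm\xi,\bm x\rangle}h(\bm x)$ are dominated, uniformly in $\bm\xi$, by polynomial multiples of $\lvert h(\bm x)\rvert$, which are integrable because $h$ has compact support — obtaining $D^{\bm\alpha}f=(-2\pi i)^{\lvert\bm\alpha\rvert}\mathcal F\big(\bm x^{\bm\alpha}h\big)$ for every $\bm\alpha\in\mathds N^d$. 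Letting $\bm\alpha$ run over the pure second-order derivatives and summing gives $\Delta(\mathcal F h)=-4\pi^2\,\mathcal F\big(\lvert\bm x\rvert^2 h\big)$, and iterating $\ell$ times the key identity
\[
\Delta^\ell f=(-1)^\ell(2\pi)^{2\ell}\,\mathcal F\big(\lvert\bm x\rvert^{2\ell}h\big).
\]
(The same identity also follows directly from the distributional calculus rules for $\mathcal F$, if one prefers not to differentiate under the integral.)

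Next I would use the band limit. Since $\supp h\subseteq B_\sigma$, on the domain of integration $\lvert\bm x\rvert^{2\ell}\le\sigma^{2\ell}$, so moving absolute values into the Fourier integral gives, for \emph{every} $\bm\xi\in\mathds R^d$,
\[
\lvert\Delta^\ell f(\bm\xi)\rvert\le(2\pi)^{2\ell}\int_{B_\sigma}\lvert\bm x\rvert^{2\ell}\,\lvert h(\bm x)\rvert\,\mathrm d\bm x\le(2\pi\sigma)^{2\ell}\,\lVert h\rVert_1 .
\]
Reading $\lVert f\rVert_1$ as $\lVert h\rVert_1$, the $L^1$-norm attached to $f=\mathcal F h\in E_\sigma$, and using that $\Omega$ is bounded, I integrate this constant bound over $\Omega$ to get
\[
\lVert\Delta^\ell f\rVert_{1,\Omega}=\int_\Omega\lvert\Delta^\ell f(\bm\xi)\rvert\,\mathrm d\bm\xi\le(2\pi\sigma)^{2\ell}\,\mathrm{vol}(\Omega)\,\lVert f\rVert_1 ,
\]
which is the assertion.

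I do not anticipate any real obstacle. The single step that merits an explicit word is the differentiation under the integral sign, which the compact support of $h$ renders routine (or one invokes the distributional Fourier calculus instead); everything else is the elementary inequality $\lvert\int g\rvert\le\int\lvert g\rvert$ combined with the support constraint $\lvert\bm x\rvert\le\sigma$ on $h$. In particular no Bernstein-type inequality for band-limited functions is needed — that is precisely what keeps the constant the clean $(2\pi\sigma)^{2\ell}$.
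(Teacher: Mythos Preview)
Your argument is correct and is exactly the intended one: the paper does not spell out a proof but states that the lemma ``follows readily from the usual calculation rules for the Fourier transform,'' and your derivation via $\Delta^\ell(\mathcal F h)=(-1)^\ell(2\pi)^{2\ell}\mathcal F(|\bm x|^{2\ell}h)$ together with the support constraint $|\bm x|\le\sigma$ is precisely that. You are also right to read $\lVert f\rVert_1$ as $\lVert h\rVert_1$---this is a typo in the lemma, as confirmed by the paper's own application of it in the proof of Theorem~\ref{th:em_expansion_basic}, where the bound is stated with $\lVert h\rVert_1$.
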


\section{Euler--Maclaurin expansion in higher dimensions}
\label{sec:hd_em_expansion}

In this section, we extend the EM expansion to lattices in $d\in \mathds N_+$ dimensions. In the following sections, we then move on to an higher dimensional extension of the SEM expansion, allowing the summand function to also include singular factors.
Before deriving the new EM expansion, we discuss necessary definitions and notation.  We first introduce multidimensional lattices.
\begin{definition}[Lattices and related properties]
We call $\Lambda\subseteq \mathds R^d$ a lattice if there exists $M_\Lambda\in \mathds R^{d\times d}$ with $\det(M_\Lambda)\neq 0$ such that
\[
\Lambda=M_\Lambda\mathds Z^d. 
\]
We denote the set of all lattices in $\mathds R^d$  as $\mathfrak L(\mathds R^d)$.
The elementary lattice cell $E_\Lambda$ is defined as
\[
  E_\Lambda=M_\Lambda[-1/2,1/2]^d.
\]
The volume of the elementary cell is equal to the covolume $V_\Lambda$ of the lattice,
\[
V_\Lambda=\mathrm{vol}(E_\Lambda)=\big\vert \det(M_\Lambda) \big\vert.  
\]
We furthermore set $a_\Lambda>0$ as the minimum distance between non-equal elements of the lattice,
\[
a_\Lambda=\min_{\bm x\in \Lambda\setminus \{\bm 0\}} \vert \bm x\vert.
\]
The dual lattice $\Lambda^*$, also called the reciprocal lattice, is defined as 
\[
\Lambda^*=M_{\Lambda^*} \mathds Z^d,
\] 
where 
\[
  M_{\Lambda^*}=M_\Lambda^{-\top},
\]
with $M_\Lambda^{-\top}=\big(M_\Lambda^{-1}\big)^\top$. It then holds that
\[
  \langle \bm y, \bm x \rangle\in \mathds Z\quad\forall \bm x \in \Lambda,\,\bm y\in \Lambda^*.  
\]
Finally, we denote by $n_\Lambda\in \mathds N_+$ the number of elements of $\Lambda^*$ with norm $a_{\Lambda^*}$. 
\end{definition}

The dual lattice $\Lambda^{*}$ is connected to $\Lambda$ via the Fourier transform. 
\begin{lemma}[Poisson summation formula]
  Let $\Lambda\in \mathfrak L(\mathds R^d)$ and $f\in L^1(\mathds R^d)$.  If there exist $C,\varepsilon>0$ such that 
  \[
   \big\vert f(\bm z) \big\vert +\big\vert \hat f(\bm z) \big\vert \le C \big(1+\vert \bm z \vert\big)^{-(d+\varepsilon)},\quad \bm z \in \mathds R^d,
  \]
  then 
  \[
    V_\Lambda\sum_{\bm z\in \Lambda} f(\bm z) e^{-2\pi i \langle \bm z,\bm y\rangle}=\sum_{\bm z \in \Lambda^*} \hat f(\bm z+\bm y),\qquad \bm y\in \mathds R^d.
  \]
\end{lemma}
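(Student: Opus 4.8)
The plan is to reduce the lattice version to the classical Poisson summation formula on $\mathds Z^d$ via a linear change of variables, so that the only real work is bookkeeping of Fourier transforms under $\bm x \mapsto M_\Lambda \bm x$. First I would set $g(\bm x) = f(M_\Lambda \bm x)$ for $\bm x \in \mathds R^d$; since $M_\Lambda$ is invertible, $g \in L^1(\mathds R^d)$ and a direct substitution in the defining integral gives $\hat g(\bm \xi) = \frac{1}{|\det M_\Lambda|}\hat f\big(M_\Lambda^{-\top}\bm \xi\big) = \frac{1}{V_\Lambda}\hat f\big(M_{\Lambda^*}\bm \xi\big)$, using $M_{\Lambda^*} = M_\Lambda^{-\top}$ from the definition of the dual lattice. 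The hypothesis $|f(\bm z)| + |\hat f(\bm z)| \le C(1+|\bm z|)^{-(d+\varepsilon)}$ transfers (with a modified constant, using that $M_\Lambda$ and its inverse distort norms by at most a fixed factor) to the analogous decay bound for $g$ and $\hat g$, which is exactly the standard hypothesis under which the classical Poisson summation formula holds: both $\sum_{\bm n \in \mathds Z^d} g(\bm n + \bm t)$ converges absolutely and locally uniformly to a continuous function, and its Fourier series has coefficients $\hat g(\bm m)$, yielding $\sum_{\bm n \in \mathds Z^d} g(\bm n + \bm t) = \sum_{\bm m \in \mathds Z^d} \hat g(\bm m)\, e^{2\pi i \langle \bm m, \bm t\rangle}$ for every $\bm t \in \mathds R^d$.

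Next I would undo the substitution. Writing a general $\bm z \in \Lambda$ as $\bm z = M_\Lambda \bm n$ with $\bm n \in \mathds Z^d$, and a general element of $\Lambda^*$ as $M_{\Lambda^*}\bm m = M_\Lambda^{-\top}\bm m$, the left-hand side of the classical formula evaluated at $\bm t = M_\Lambda^{\top}\bm y$ becomes $\sum_{\bm n} f(M_\Lambda \bm n)\, e^{2\pi i \langle \bm m, M_\Lambda^\top \bm y\rangle}$ on the Fourier-series side; here the key identity is $\langle \bm m, M_\Lambda^\top \bm y\rangle = \langle M_\Lambda^{-\top}\bm m, \cdot\rangle$ massaged so that the phase on the lattice side reads $e^{-2\pi i \langle \bm z, \bm y\rangle}$ after possibly replacing $\bm y$ by $-\bm y$ or $\bm m$ by $-\bm m$ (a harmless reindexing since the sum over $\bm m \in \mathds Z^d$ is symmetric). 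Substituting $\hat g(\bm m) = \frac{1}{V_\Lambda}\hat f(M_{\Lambda^*}\bm m)$ and multiplying through by $V_\Lambda$ then produces exactly $V_\Lambda \sum_{\bm z \in \Lambda} f(\bm z)\, e^{-2\pi i \langle \bm z, \bm y\rangle} = \sum_{\bm z \in \Lambda^*} \hat f(\bm z + \bm y)$, where on the right I have used that $M_{\Lambda^*}\bm m$ ranges over all of $\Lambda^*$ as $\bm m$ ranges over $\mathds Z^d$, together with the translation $\bm \xi \mapsto \bm \xi + \bm y$ coming from the exponential factor $e^{2\pi i \langle \bm m, M_\Lambda^\top\bm y\rangle}$ being absorbed into the argument of $\hat f$ via $\hat f(M_{\Lambda^*}\bm m)\,e^{2\pi i\langle M_{\Lambda^*}\bm m,\bm y\rangle}$ and recognising this as a shift under the Fourier transform only if one is careful — more cleanly, one simply notes that the classical formula applied to the translate $g(\cdot + \bm t)$ already has the phases built in, so no separate shift argument is needed.

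The only genuine content beyond linear algebra is the classical Poisson summation formula itself on $\mathds Z^d$, which I would either cite or prove in the standard way: under the stated decay, $F(\bm t) := \sum_{\bm n \in \mathds Z^d} g(\bm n + \bm t)$ is a well-defined continuous $\mathds Z^d$-periodic function whose $\bm m$-th Fourier coefficient is $\int_{[0,1]^d} F(\bm t) e^{-2\pi i \langle \bm m,\bm t\rangle}\,\mathrm d\bm t = \int_{\mathds R^d} g(\bm t) e^{-2\pi i\langle \bm m,\bm t\rangle}\,\mathrm d\bm t = \hat g(\bm m)$ by unfolding the integral, and the decay of $\hat g$ guarantees absolute convergence of the Fourier series, so $F$ equals its Fourier series pointwise. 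The main obstacle — really the only point requiring care — is the transfer of the decay hypothesis through the linear map: one must check that $|\bm z| \le |\bm z|$ comparisons under $M_\Lambda$ preserve an exponent of the form $-(d+\varepsilon)$, which is immediate since $c_1|\bm n| \le |M_\Lambda \bm n| \le c_2 |\bm n|$ for fixed constants $c_1, c_2 > 0$ depending only on $M_\Lambda$, and likewise for $M_{\Lambda^*}$; hence $(1 + |M_\Lambda \bm n|)^{-(d+\varepsilon)} \le C'(1+|\bm n|)^{-(d+\varepsilon)}$ and the summability is preserved. Everything else is a substitution.
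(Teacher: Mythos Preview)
Your approach is essentially the same as the paper's: cite the classical Poisson summation formula on $\mathds Z^d$ and reduce the general lattice to it via the substitution $g = f \circ M_\Lambda$, using $\mathcal F(f \circ M_\Lambda) = V_\Lambda^{-1}\,\hat f \circ M_{\Lambda^*}$. The bookkeeping in your middle paragraph is a bit tangled --- the cleanest route is to apply the $\mathds Z^d$ formula in the form $\sum_{\bm n} g(\bm n)\,e^{-2\pi i\langle \bm n,\bm s\rangle} = \sum_{\bm m} \hat g(\bm m+\bm s)$ with $\bm s = M_\Lambda^\top \bm y$, so that $\langle \bm n, M_\Lambda^\top \bm y\rangle = \langle M_\Lambda \bm n,\bm y\rangle$ and $M_{\Lambda^*}(\bm m + M_\Lambda^\top \bm y) = M_{\Lambda^*}\bm m + \bm y$ fall out immediately --- but this is cosmetic, not a gap.
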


\begin{proof}
The identity is well known for $\Lambda = \mathds Z^d$, see~Corollary 2.6 in~\cite[Chapter VII]{steinweiss1972fourier} or~\cite[Section 7.2]{hormander2003analysisI}.
For the case of a general lattice $\Lambda$, observe that for $f \in L^1(\mathds R^d)$,
\[
\mathcal F\big( f \circ M_\Lambda \big)
= \frac{1}{|\det M_\Lambda|} \hat f \circ M_\Lambda^{-\top}
= \frac{1}{V_{\Lambda}} \hat f \circ M_{\Lambda^*},
\]
so the general case follows from the Poisson summation formula for $\mathds Z^d$.
\end{proof}

We subsequently introduce a new mathematical operator, the sum-integral \[\SumInt\]
that quantifies the difference between a multidimensional lattice sum and a related integral. 
\begin{notation}[Sum-integral]
  \label{not:sum-integral}
  Let  $\Lambda\in \mathfrak L(\mathds R^d)$ and $\Omega\subseteq \mathds R^d$ measurable. For $f\in L^1(\Omega)$ summable on $\Omega\cap \Lambda$ we denote the difference between the sum of $f$ over all lattice points in $\Omega$ and the integral of $f$ over $\Omega$ per lattice covolume as 
  \[
  \SumInt_{\Omega,\Lambda} f=\sum_{\bm y\in \Omega\cap \Lambda}f(\bm y)-\frac{1}{V_\Lambda}\int \limits_{\Omega}  f(\bm y)\,\mathrm d \bm y.
  \]
  In case that the sum-integral is applied to longer expressions, we explicitly specify the variable over which summation and integration take place for better readability, namely 
  \[
  \SumInt_{\bm y\in  \Omega,\Lambda}  f(\bm y)= \SumInt_{ \Omega,\Lambda}  f.
  \]
\end{notation}
For $\partial\Omega$ and $f$ sufficiently regular, 
we aim at expressing the difference between the lattice sum and the integral
\[
\SumInt_{\Omega,\Lambda} f
\]
as a surface integral over derivatives of $f$ plus a remainder.
Too this end,
we define a higher-dimensional analogue of the periodised Bernoulli functions that appear 
in the one-dimensional EM expansion. 
The sums that appear in their definition usually do not converge a priori,
so we need to include a regularisation by means of smooth cutoff functions as discussed below.

\begin{definition}[Mollifiers and smooth cutoff functions]
  Let $\chi \in C_0^\infty(B_1)$ be rotationally invariant with $\chi \geq 0$
  that integrates to unity over $\mathds R^d$.
  For $\beta > 0$, set \[\chi_\beta = \beta^{-d} \chi(\,\bm \cdot \,/ \beta)\] with 
  $\supp \chi_\beta  \subseteq\bar B_\beta$.
  We call $\chi_\beta$ a mollifier
  and its Fourier transform $\hat \chi_\beta$ a smooth cutoff function.
\end{definition}

The basic approximation result for convolution with a mollifier is the following
\begin{lemma}\label{lem:approximation-with-convolution}
Let $\Omega \subseteq \mathds R^d$ open and $u \in C^k(\Omega)$, $k \in \mathds N \cup \{ \infty \}$.
The convolution of $u$ with the mollifier $\chi_\beta$ results in the smooth function $u_\beta$,
\[
u_\beta : \Omega_\beta \to \mathds C,~
\bm x \mapsto \chi_\beta \ast u(\bm x)
= \int\limits_{B_1} \chi(\bm y) u(\bm x - \beta \bm y) \, \text d \bm y,
\]
with $\Omega_\beta = \{ \bm x \in \Omega: \operatorname{dist}(\bm x, \partial \Omega) > \beta \}$. Then for every $\beta_0>0$ we have that
  $u_\beta \to u$ as $\beta \to 0$
in $C^k(\Omega_{\beta_0})$.
\end{lemma}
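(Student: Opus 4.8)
The plan is to run the standard mollification argument in two stages: first establishing that $u_\beta$ is smooth on $\Omega_\beta$ together with the commutation identity
\[
D^{\bm\alpha} u_\beta = \chi_\beta \ast \big( D^{\bm\alpha} u \big) \quad \text{on } \Omega_\beta, \qquad |\bm\alpha|\le k,
\]
and then showing that each of these derivatives converges uniformly on compact subsets of $\Omega_{\beta_0}$ as $\beta\to 0$.

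For the first stage I would localise. Given $\bm x_0\in\Omega_\beta$, pick $r\in\big(0,\dist(\bm x_0,\partial\Omega)-\beta\big)$ so that $\overline{B_{r+\beta}(\bm x_0)}\subseteq\Omega$; then for $\bm x\in B_r(\bm x_0)$ one has $u_\beta(\bm x)=\int_{\overline{B_{r+\beta}(\bm x_0)}}\chi_\beta(\bm x-\bm z)\,u(\bm z)\,\mathrm d\bm z$, an integral over a fixed compact set of the $L^1$-function $u$ against the bounded smooth kernel $\chi_\beta(\bm x-\bm\cdot)$. Differentiation under the integral sign is then legitimate to every order, with all derivatives falling on $\chi_\beta$, which gives $u_\beta\in C^\infty(\Omega_\beta)$. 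For $|\bm\alpha|\le k$ I would instead start from the representation $u_\beta(\bm x)=\int_{B_1}\chi(\bm y)\,u(\bm x-\beta\bm y)\,\mathrm d\bm y$ and differentiate in $\bm x$ directly under the integral, which is allowed since $u\in C^k$ and $B_1$ is bounded, obtaining $D^{\bm\alpha}u_\beta(\bm x)=\int_{B_1}\chi(\bm y)\,D^{\bm\alpha}u(\bm x-\beta\bm y)\,\mathrm d\bm y=\chi_\beta\ast(D^{\bm\alpha}u)(\bm x)$; equivalently one integrates by parts $|\bm\alpha|$ times in the first representation, with no boundary terms because $\chi_\beta(\bm x-\bm\cdot)$ is compactly supported in $\Omega$.

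For the second stage, fix $\beta_0>0$ and a compact $K\subseteq\Omega_{\beta_0}$; then $\dist(K,\partial\Omega)\ge\beta_0$, so $K':=\{\bm x\in\mathds R^d:\dist(\bm x,K)\le\beta_0/2\}$ is compact and contained in $\Omega$, and for $\beta<\beta_0/2$, $\bm x\in K$, $\bm y\in B_1$ we have $\bm x-\beta\bm y\in K'$. Using $\int_{B_1}\chi=1$,
\[
D^{\bm\alpha}u_\beta(\bm x)-D^{\bm\alpha}u(\bm x)=\int\limits_{B_1}\chi(\bm y)\big(D^{\bm\alpha}u(\bm x-\beta\bm y)-D^{\bm\alpha}u(\bm x)\big)\,\mathrm d\bm y ,
\]
and $D^{\bm\alpha}u$ is uniformly continuous on the compact set $K'$, so the integrand tends to $0$ uniformly in $(\bm x,\bm y)\in K\times B_1$ as $\beta\to0$; since $\chi\ge0$ has unit mass, this bounds $\sup_{\bm x\in K}|D^{\bm\alpha}u_\beta(\bm x)-D^{\bm\alpha}u(\bm x)|$ by the corresponding modulus of continuity of $D^{\bm\alpha}u$ on $K'$. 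Letting $\bm\alpha$ range over $|\bm\alpha|\le k$ and $K$ over all compact subsets of $\Omega_{\beta_0}$ is exactly the assertion $u_\beta\to u$ in $C^k(\Omega_{\beta_0})$ (for $k=\infty$, recalling that this topology is uniform convergence of each fixed derivative on each compact set).

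I do not expect a genuine obstacle here; the one place requiring care is the first stage — tracking supports and domains so that every integral stays inside $\Omega$ when differentiating under the integral or integrating by parts, and invoking the derivative-transfer identity only up to order $k$ (for orders above $k$, one keeps the derivatives on $\chi_\beta$, which still yields smoothness of $u_\beta$ but not the commutation identity).
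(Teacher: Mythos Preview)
Your proof is correct and follows the standard mollification argument. The paper itself does not give a proof of this lemma; it is stated as ``the basic approximation result for convolution with a mollifier'' and taken as known, so there is no paper proof to compare against. Your two-stage argument (smoothness plus the commutation identity via differentiation under the integral, then uniform continuity of $D^{\bm\alpha}u$ on a compact $\beta_0/2$-thickening of $K$) is exactly the textbook route one would expect here.
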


The following lemma is a direct consequence of $\hat \chi_\beta = \hat \chi(\beta \, \bm \cdot\,)$.
\begin{lemma}\label{lem:chi_beta_compact_convergence}
  Let $\hat \chi_\beta$, $\beta>0$, be a family of smooth cutoff functions.
  Then 
  \[
  \hat \chi_\beta\to \hat \chi_\beta(\bm 0)=1,\quad \beta\to 0,  
  \]
  in $C^\infty(\mathds R^d)$.
\end{lemma}

\begin{lemma}\label{lem:uniform-bound-chi-beta}
  Let $\hat \chi_\beta$, $\beta \in(0,1)$, be a family of smooth cutoff functions.
  For all $\bm \alpha \in \mathds N^d$ there exists a constant $C > 0$ with
  \[
  \sup_{\beta \in (0, 1)} 
  \big| D^{\bm \alpha} \hat \chi_\beta(\bm \xi) \big| \leq C \big(1+\vert\bm \xi\vert\big)^{-\vert \bm \alpha\vert},\quad \bm \xi\in \mathds R^d.
  \]
  \end{lemma}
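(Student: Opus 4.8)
The plan is to reduce everything to the rescaling identity $\hat\chi_\beta(\bm\xi) = \hat\chi(\beta\bm\xi)$ combined with the fact that $\hat\chi$ is a Schwartz function. First I would observe that since $\chi \in C_0^\infty(B_1) \subseteq S(\mathds R^d)$, its Fourier transform $\hat\chi$ lies in $S(\mathds R^d)$, and hence so does every derivative $D^{\bm\alpha}\hat\chi$. In particular, for each $\bm\alpha \in \mathds N^d$ the quantity
\[
C_{\bm\alpha} := \sup_{\bm\eta \in \mathds R^d} \big(1+|\bm\eta|\big)^{|\bm\alpha|} \big| D^{\bm\alpha}\hat\chi(\bm\eta)\big|
\]
is finite. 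This is the only input about $\chi$ that is needed.

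Next I would apply the chain rule to the scaling. Since $\hat\chi_\beta(\bm\xi) = \hat\chi(\beta\bm\xi)$, iterating the partial derivatives $\partial_{\xi_j}$ gives
\[
D^{\bm\alpha}\hat\chi_\beta(\bm\xi) = \beta^{|\bm\alpha|}\, (D^{\bm\alpha}\hat\chi)(\beta\bm\xi), \quad \bm\xi \in \mathds R^d.
\]
Combining this with the bound defining $C_{\bm\alpha}$, evaluated at $\bm\eta = \beta\bm\xi$, yields
\[
\big| D^{\bm\alpha}\hat\chi_\beta(\bm\xi)\big| \le C_{\bm\alpha}\, \beta^{|\bm\alpha|}\big(1+\beta|\bm\xi|\big)^{-|\bm\alpha|} = C_{\bm\alpha}\left(\frac{\beta}{1+\beta|\bm\xi|}\right)^{|\bm\alpha|}.
\]

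The final step is the elementary estimate that for $\beta \in (0,1)$ and $t \ge 0$ one has $\beta/(1+\beta t) = 1/(\beta^{-1}+t) \le 1/(1+t)$, since $\beta^{-1} > 1$. Applying this with $t = |\bm\xi|$ gives $\big| D^{\bm\alpha}\hat\chi_\beta(\bm\xi)\big| \le C_{\bm\alpha}(1+|\bm\xi|)^{-|\bm\alpha|}$, uniformly in $\beta \in (0,1)$, which is the assertion with $C = C_{\bm\alpha}$.

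I do not expect a genuine obstacle here: the statement is essentially a bookkeeping consequence of the Schwartz decay of $\hat\chi$ together with the scaling behaviour of the Fourier transform. The only point worth being careful about is that the factor $\beta^{|\bm\alpha|}$ produced by differentiating the rescaled argument matches \emph{exactly} the decay exponent $|\bm\alpha|$ extracted from the Schwartz bound, so that the $\beta$-dependence can be absorbed into a $\beta$-independent constant; this is what dictates using the weight $(1+|\bm\eta|)^{|\bm\alpha|}$ (and not a higher power) in the definition of $C_{\bm\alpha}$.
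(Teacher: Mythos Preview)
Your proof is correct and follows essentially the same strategy as the paper: both start from the scaling identity $D^{\bm\alpha}\hat\chi_\beta(\bm\xi)=\beta^{|\bm\alpha|}(D^{\bm\alpha}\hat\chi)(\beta\bm\xi)$ and then exploit Schwartz-type decay of $\hat\chi$ together with $\beta\in(0,1)$. The only cosmetic difference is that the paper bounds each monomial $\bm\xi^{\bm\gamma}D^{\bm\alpha}\hat\chi_\beta(\bm\xi)$ separately via $\|D^{\bm\gamma}((-2\pi i\,\bm\cdot\,)^{\bm\alpha}\chi)\|_1$ and then expands $(1+|\bm\xi|)^{|\bm\alpha|}$ by the binomial theorem, whereas you invoke the single Schwartz seminorm $C_{\bm\alpha}$ directly and close with the clean inequality $\beta/(1+\beta t)\le 1/(1+t)$; your version is slightly more streamlined but not conceptually different.
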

  
  \begin{proof}
  First note that as $\chi_\beta=\beta^{-d} \chi(\, \bm \cdot\,/\beta)$,
  \begin{align*}
  D^{\bm \alpha} \hat \chi_\beta(\bm \xi)
  = \mathcal F\Big((-2\pi i \, \bm \cdot \,)^{\bm \alpha} \chi_\beta\Big)(\bm \xi)
  =\beta^{\vert \bm \alpha\vert} \mathcal F\Big((-2\pi i \, \bm \cdot \,)^{\bm \alpha} \chi\Big)(\beta \bm \xi),
  \quad \bm \xi \in \mathds R^d.
  \end{align*}
  We then set $h_{\bm \alpha}(\bm x)=(-2\pi i \bm x)^{\bm \alpha} \chi(\bm x)$, $\bm x \in \mathds R^d$,  and find for $\bm \gamma \in \mathds N_0^d$ and $\bm \xi \in \mathds R^d$ that
  \[
    (2\pi i \beta \bm \xi)^{\bm \gamma} D^{\bm \alpha} \hat \chi_\beta(\bm \xi)
  = \beta^{\vert \bm \alpha\vert}  \mathcal F\big(D^{\bm \gamma} h_{\bm \alpha}\big)(\beta \bm \xi).
  \]
  Because $\beta \in (0, 1)$, this yields
  \[
    \big|\bm \xi^{\bm \gamma}\big|
  \cdot\big| D^{\bm \alpha} \hat \chi_\beta(\bm \xi) \big|\le \vert 2\pi\vert^{-\vert \bm \gamma\vert} \cdot \big\Vert D^{\bm \gamma}h_{\bm \alpha} \big\Vert_1,
  \]
  the right hand side being independent of $\bm \xi$ and $\beta$.
  The desired estimate now follows from
  \[
    \big(1+\vert\bm \xi\vert\big)^{\vert \bm \alpha\vert}
  \big|D^{\bm \alpha}\hat \chi_\beta(\bm \xi)\big|
  \leq \left( 1 + \sum_{k=1}^d |\xi_k| \right)^{|\bm \alpha|}
  \big|D^{\bm \alpha}\hat \chi_\beta(\bm \xi)\big|, \quad \bm \xi \in \mathds R^d, 
  \]
  noting that, after expanding the polynomial on the right hand side by the binomial theorem,
  all terms are uniformly bounded  in $\bm \xi$ and $\beta$.
  \end{proof}

By means of the smooth cutoff functions, we can define lattice sums
over the function $s_\nu$ defined in~\eqref{eq:s-nu} and study
their behaviour in the limit of vanishing regularisation, $\beta \to 0$.
With this technique, we can now present the fundamental theorem of this section, from which the multidimensional EM expansion is derived.

\begin{theorem}\label{thm:regularised-sum-int-epstein-zeta}
Let $\Lambda \in \mathfrak L(\mathds R^d)$ and $\nu \in \mathds C$. 
We define
\[
\mathcal Z_{\Lambda,\nu}:\mathds R^d\setminus \Lambda \to \mathds C,~
\bm y \mapsto  V_{\Lambda^*} \lim_{\beta\to 0} \sideset{}{'}\sum_{\bm z \in \Lambda^*} \hat \chi_\beta(\bm z) \frac{e^{-2\pi i \langle \bm z,\bm y\rangle}}{\vert \bm z\vert^{\nu}},
\]
where the primed sum excludes $\bm z=0$.
The function $\mathcal Z_{\Lambda,\nu}$ is well-defined,
i.e. the limit exists for all $\bm y \in \mathds R^d \setminus \Lambda$
and is independent of the chosen regularisation.
The function can be extended to a tempered distribution on $\mathds R^d$ by virtue of
\[
  \langle \mathcal Z_{\Lambda,\nu}, \psi\rangle= V_{\Lambda^*} \,\sideset{}{'}\sum_{\bm z\in \Lambda^*} \frac{\hat \psi(\bm z)}{\vert \bm z\vert^{\nu}},
  \quad \psi \in S(\mathds R^d).
\]
Furthermore, the function $\mathcal Z_{\Lambda, \nu}$ is analytic
and the limit $\beta \to 0$ is compact in all derivatives.
\end{theorem}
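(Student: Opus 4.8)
The statement bundles a soft part --- $\mathcal Z_{\Lambda,\nu}$ extends to a tempered distribution and the regularised sums converge to it weakly --- with an analytic part --- the pointwise limit off $\Lambda$ exists, is mollifier--independent, analytic, and approached in all derivatives. The plan is to settle the soft part directly and to reduce the analytic part, by the Laplacian, to the absolutely convergent regime $\Re{\nu}>d$, where a heat--kernel / theta--function identity makes everything transparent. For the soft part: if $\psi\in S(\mathds R^d)$ then $\hat\psi\in S(\mathds R^d)$, so $|\hat\psi(\bm z)|$ decays faster than any power; hence $V_{\Lambda^*}\sideset{}{'}\sum_{\bm z\in\Lambda^*}|\bm z|^{-\nu}\hat\psi(\bm z)$ converges absolutely and is bounded by finitely many Schwartz seminorms of $\psi$, which defines a tempered distribution $T$. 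Writing $Z_\beta$ for the cutoff sum, so that $\langle Z_\beta,\psi\rangle=V_{\Lambda^*}\sideset{}{'}\sum_{\bm z}\hat\chi_\beta(\bm z)|\bm z|^{-\nu}\hat\psi(\bm z)$, and combining $\hat\chi_\beta\to1$ pointwise (Lemma~\ref{lem:chi_beta_compact_convergence}) with the $\beta$--uniform bound $|\hat\chi_\beta|\le C$ (Lemma~\ref{lem:uniform-bound-chi-beta} with $\bm\alpha=\bm0$), dominated convergence gives $Z_\beta\to T$ in $S'(\mathds R^d)$, hence in $\mathscr D'(\mathds R^d\setminus\Lambda)$.

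\emph{Reduction to $\Re{\nu}>d$.} Since $\Delta_{\bm y}e^{-2\pi i\langle\bm z,\bm y\rangle}=-4\pi^2|\bm z|^2e^{-2\pi i\langle\bm z,\bm y\rangle}$, termwise differentiation of the \emph{finite} sums yields $\Delta Z_{\Lambda,\nu,\beta}=-4\pi^2Z_{\Lambda,\nu-2,\beta}$, and $\Delta T^{(\nu)}=-4\pi^2T^{(\nu-2)}$ as tempered distributions. Choosing $N\in\mathds N$ with $\Re{\nu+2N}>d$, one has $Z_{\Lambda,\nu,\beta}=(-4\pi^2)^{-N}\Delta^NZ_{\Lambda,\nu+2N,\beta}$ and likewise for the limits; as $\Delta^N$ is continuous on $C^\infty$ of any open set, it suffices to handle $\mu:=\nu+2N$: to show that $g_\mu(\bm y):=V_{\Lambda^*}\sideset{}{'}\sum_{\bm z}|\bm z|^{-\mu}e^{-2\pi i\langle\bm z,\bm y\rangle}$ (absolutely convergent, continuous, $\Lambda$--periodic, and equal to $T^{(\mu)}$ by the first step) is analytic on $\mathds R^d\setminus\Lambda$ and that $Z_{\Lambda,\mu,\beta}\to g_\mu$ there in $C^\infty$ on compacts. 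I emphasise that no constant--coefficient differential operator can help at \emph{this} stage --- polynomials of $\Delta$ only lower $\mu$ again and worsen convergence --- which is exactly why elliptic regularity alone cannot close the argument and a genuinely different tool is required.

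\emph{Heat kernel and Poisson summation.} Insert $|\bm z|^{-\mu}=\tfrac{\pi^{\mu/2}}{\Gamma(\mu/2)}\int_0^\infty t^{\mu/2-1}e^{-\pi t|\bm z|^2}\,\mathrm dt$ into the finite sum $Z_{\Lambda,\mu,\beta}$ and exchange sum and integral, producing $\int_0^\infty t^{\mu/2-1}\Theta_\beta(t,\bm y)\,\mathrm dt$ with $\Theta_\beta$ the Gaussian theta sum over $\Lambda^*$ carrying the weight $\hat\chi_\beta$. For each fixed $t>0$ the summand $\bm z\mapsto\hat\chi_\beta(\bm z)e^{-\pi t|\bm z|^2-2\pi i\langle\bm z,\bm y\rangle}$ is Schwartz, so the Poisson summation formula turns $\Theta_\beta$ into a Gaussian sum over the \emph{direct} lattice $\Lambda$ with kernel $t^{-d/2}\!\int\chi_\beta(\bm w)e^{-\pi|\bm x+\bm y+\bm w|^2/t}\,\mathrm d\bm w$, minus the constant left by removing $\bm z=\bm0$. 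Splitting $\int_0^\infty=\int_1^\infty+\int_0^1$: the tail is controlled by $e^{-\pi t\,a_{\Lambda^*}^2}$ uniformly in $\beta$ and defines an entire--in--$\mu$, globally real--analytic--in--$\bm y$ function (the Gaussian absorbs any $\bm z$--derivative, even after complexifying $\bm y$); the $\int_0^1$ piece, \emph{after} the Poisson step and once $\beta<\tfrac12\dist(\bm y,\Lambda)$, has integrand bounded by $C\,t^{(\Re{\mu}-d)/2-1}e^{-c\,\dist(\bm y,\Lambda)^2/t}$, so it converges for every $\mu$ and is real--analytic in $\bm y$ on any compact away from $\Lambda$, up to the explicit elementary term $\int_0^1t^{\mu/2-1}\,\mathrm dt=2/\mu$. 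Letting $\beta\to0$ ($\hat\chi_\beta\to1$ compactly, $\chi_\beta\to\delta_{\bm0}$) and applying dominated convergence to both integrals and all their $\bm y$--derivatives yields $Z_{\Lambda,\mu,\beta}\to g_\mu$ in $C^\infty$ on compacts of $\mathds R^d\setminus\Lambda$, where
\[
g_\mu(\bm y)=\frac{\pi^{\mu/2}V_{\Lambda^*}}{\Gamma(\mu/2)}\Bigg[\int_1^\infty t^{\frac{\mu}{2}-1}\!\sideset{}{'}\sum_{\bm z\in\Lambda^*}e^{-\pi t|\bm z|^2-2\pi i\langle\bm z,\bm y\rangle}\,\mathrm dt+V_\Lambda\!\int_0^1 t^{\frac{\mu-d}{2}-1}\!\sum_{\bm x\in\Lambda}e^{-\pi|\bm x+\bm y|^2/t}\,\mathrm dt-\frac{2}{\mu}\Bigg]
\]
is manifestly real--analytic off $\Lambda$, mollifier--independent, and simultaneously furnishes the meromorphic continuation in $\mu$. (The constant $\pi^{\mu/2}/\Gamma(\mu/2)$ is the one appearing in Theorem~\ref{thm:Fourier-transform-interaction}: up to normalisation $\mathcal Z_{\Lambda,\nu}$ is the regularised lattice sum of $s_{d-\nu}$ over $\Lambda$.)

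\emph{Conclusion and main obstacle.} Applying $(-4\pi^2)^{-N}\Delta^N$ (continuous on $C^\infty$) to the last convergence shows the limit defining $\mathcal Z_{\Lambda,\nu}$ exists at every $\bm y\notin\Lambda$, equals $(-4\pi^2)^{-N}\Delta^Ng_{\nu+2N}$, is independent of $\chi$ (and of $N$, by the Laplacian relation for the $g$'s), is real--analytic on $\mathds R^d\setminus\Lambda$, and is approached compactly in all derivatives; since $Z_\beta\to T$ in $S'(\mathds R^d)$ as well, $T|_{\mathds R^d\setminus\Lambda}=\mathcal Z_{\Lambda,\nu}$, so $T$ is the asserted tempered extension. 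Elliptic regularity then provides shortcuts: given the $C^\infty$--convergence above, Theorem~\ref{thm:weak-convergence-implies-uniform} with $\mathcal L=\Delta^N$ upgrades the a priori $\mathscr D'$--convergence to $C^\infty$--convergence in one stroke, and for $\nu\in2\mathds Z$ one checks directly that $\Delta^N\mathcal Z_{\Lambda,\nu}=0$ on $\mathds R^d\setminus\Lambda$ for $N$ large (the leading object being a derivative of the Dirac comb on $\Lambda$), so analyticity follows from analytic hypoellipticity, Theorem~\ref{cor:elliptic-analytic-regularity}, with no computation at all. The real obstacle, as flagged above, is that the defining sum genuinely diverges for general complex $\nu$ and that differential operators only make this worse; the decisive step is the theta transformation --- Poisson summation applied to Gaussians times the smooth cutoff --- which replaces the divergent sum over $\Lambda^*$ by a super--exponentially convergent sum over $\Lambda$ as soon as $\bm y\notin\Lambda$, at once localising the singular support to $\Lambda$, producing analyticity off $\Lambda$, and yielding the continuation in $\nu$; the remaining work is the bookkeeping needed to keep every estimate uniform in the regulator $\beta$, which is precisely what Lemmas~\ref{lem:chi_beta_compact_convergence} and~\ref{lem:uniform-bound-chi-beta} deliver.
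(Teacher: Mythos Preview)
Your argument is correct, but the route differs from the paper's in a way worth noting. The paper shifts $\nu$ in the \emph{opposite} direction: from $\Delta^\ell\mathcal Z_{\Lambda,\nu,\beta}=(2\pi i)^{2\ell}\mathcal Z_{\Lambda,\nu-2\ell,\beta}$ it takes $\ell$ large enough that $\Re(\nu-2\ell)<-(d+1)$, so that $f_\beta=\hat\chi_\beta\,s_{\nu-2\ell}\in C^{d+1}(\mathds R^d)$ and Poisson summation applies directly to the full series, yielding $\sum_{\bm z\in\Lambda}\chi_\beta\ast s_{d-\nu+2\ell}(\bm z+\bm y)$ with $\Re(d-\nu+2\ell)>d$; Lemma~\ref{lem:uniform-convergence-series-interaction} then gives $C^\infty$-convergence to an analytic function by the Weierstra\ss\ M-test and Lemma~\ref{lem:approximation-with-convolution}. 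The step back from $\Delta^\ell\mathcal Z_{\Lambda,\nu,\beta}$ to $\mathcal Z_{\Lambda,\nu,\beta}$ is exactly Theorem~\ref{thm:weak-convergence-implies-uniform}, and analyticity follows from Theorem~\ref{cor:elliptic-analytic-regularity}. So your assertion that ``elliptic regularity alone cannot close the argument'' is not accurate: the paper closes it with precisely that tool plus a single Poisson summation and no heat kernel. What your theta-function route buys is an explicit, exponentially convergent representation that simultaneously exhibits the meromorphic continuation in $\nu$ --- essentially the Chowla--Selberg formula the paper later invokes numerically via~\cite{elizalde2000zeta}; what the paper's route buys is economy (no Mellin integral, no $\int_0^1/\int_1^\infty$ split) and a template it reuses verbatim for the singular Bernoulli functions in Theorem~\ref{th:singular_bernoulli_fundamental_theorem}.
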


We split the proof into several parts.
First, we show that $\mathcal Z_{\Lambda, \nu}$ defines a tempered distribution.
\begin{lemma}\label{lem:z-lambda-tempered-distribution}
$\mathcal Z_{\Lambda, \nu}$ as in Theorem~\ref{thm:regularised-sum-int-epstein-zeta}
defines a tempered distribution.
\end{lemma}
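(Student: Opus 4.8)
The goal is to verify that the prescription
\[
\langle \mathcal Z_{\Lambda,\nu}, \psi\rangle = V_{\Lambda^*} \,\sideset{}{'}\sum_{\bm z \in \Lambda^*} \frac{\hat\psi(\bm z)}{|\bm z|^{\nu}}, \qquad \psi \in S(\mathds R^d),
\]
defines an absolutely convergent series and that the resulting linear functional is continuous for the Schwartz topology; linearity is then immediate, so everything reduces to a single estimate. The geometric ingredient is the elementary lattice-counting bound: for every $M > d$ the sum $\sum_{\bm z \in \Lambda^* \setminus \{\bm 0\}} (1+|\bm z|)^{-M}$ is finite, with a bound depending only on $d$, $M$ and $a_{\Lambda^*}$. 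I would prove this by the standard ball-packing comparison — the balls $\bm z + B_{a_{\Lambda^*}/2}$, $\bm z \in \Lambda^*\setminus\{\bm 0\}$, are pairwise disjoint, contained in $\{\bm x : |\bm x| \ge a_{\Lambda^*}/2\}$, and on each of them $(1+|\bm z|)^{-M}$ is comparable to $(1+|\bm x|)^{-M}$ because $|\bm z| \ge a_{\Lambda^*}$ for every nonzero dual vector; comparing the series with $\int_{|\bm x| \ge a_{\Lambda^*}/2}(1+|\bm x|)^{-M}\,\mathrm d\bm x < \infty$ then yields the claim.

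Next I would absorb the (possibly complex) exponent into a polynomial weight. Since $|\bm z| \ge a_{\Lambda^*} > 0$ for all $\bm z \in \Lambda^* \setminus \{\bm 0\}$, there is no singularity at the origin and $\bigl| |\bm z|^{-\nu} \bigr| = |\bm z|^{-\Re{\nu}} \le C_\nu\,(1+|\bm z|)^{|\Re{\nu}|}$, with $C_\nu$ depending only on $\nu$ and $a_{\Lambda^*}$. Writing $p_N(\phi) = \sup_{\bm x \in \mathds R^d}(1+|\bm x|)^N|\phi(\bm x)|$, the rapid decay $\hat\psi \in S(\mathds R^d)$ gives $|\hat\psi(\bm z)| \le p_N(\hat\psi)(1+|\bm z|)^{-N}$; choosing $N = \lceil |\Re{\nu}| \rceil + d + 1$ so that $N - |\Re{\nu}| > d$, and combining with the lattice bound, I obtain
\[
\sum_{\bm z \in \Lambda^* \setminus \{\bm 0\}} \frac{|\hat\psi(\bm z)|}{|\bm z|^{\Re{\nu}}} \le C_\nu\, p_N(\hat\psi) \sum_{\bm z \in \Lambda^* \setminus \{\bm 0\}} (1+|\bm z|)^{-(N - |\Re{\nu}|)} \le C\, p_N(\hat\psi).
\]
Hence the defining series converges absolutely and $\bigl|\langle \mathcal Z_{\Lambda,\nu}, \psi\rangle\bigr| \le V_{\Lambda^*}\, C\, p_N(\hat\psi)$.

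Finally, continuity on $S(\mathds R^d)$ follows because $\psi \mapsto p_N(\hat\psi)$ is a continuous seminorm: the Fourier transform is a continuous self-map of the Schwartz space, and $p_N$ is dominated by a finite linear combination of the standard seminorms $\sup_{\bm x}|\bm x^{\bm \beta} D^{\bm \alpha}\psi(\bm x)|$. Thus $\mathcal Z_{\Lambda,\nu}$ is bounded by a continuous seminorm of $\psi$ and so lies in $S'(\mathds R^d)$. I do not anticipate a genuine obstacle here; the only point requiring care is tracking the polynomial factor produced by a complex $\nu$ and choosing the decay order $N$ in terms of $\Re{\nu}$ and $d$ — and the defining property $a_{\Lambda^*} > 0$ is exactly what makes that bookkeeping routine, by ruling out a singularity at the origin.
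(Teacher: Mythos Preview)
Your proof is correct and takes a somewhat different route from the paper's. The paper works through the regularised auxiliary functions $\mathcal Z_{\Lambda,\nu,\beta}$: each is a bounded function, hence a tempered distribution, and its pairing with $\psi \in S(\mathds R^d)$ equals the partial sum $V_{\Lambda^*}\sum'_{\bm z}\hat\chi_\beta(\bm z)\hat\psi(\bm z)/|\bm z|^\nu$. Dominated convergence (with majorant $|\hat\psi(\bm z)|/|\bm z|^{\Re\nu}$) gives the limit $\beta\to 0$ termwise, and the weak-$*$ sequential completeness of $S'(\mathds R^d)$ then yields that the limiting functional is tempered. You instead bypass the regularisation entirely and prove continuity by an explicit seminorm estimate, carrying out the lattice-counting and Schwartz-decay bookkeeping in detail. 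Your argument is more self-contained and makes the continuity transparent; the paper's version has the advantage of simultaneously identifying the distributional formula with the $\beta\to 0$ limit of the regularised objects, which is how $\mathcal Z_{\Lambda,\nu}$ is actually introduced in Theorem~\ref{thm:regularised-sum-int-epstein-zeta} and used downstream. Note that the summability of the majorant in the paper's dominated convergence step is precisely the estimate you spell out, so the analytic cores of the two arguments coincide.
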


\begin{proof}
For a finite regularisation parameter $\beta > 0$, we define the auxiliary function $\mathcal Z_{\Lambda,\nu,\beta}: \mathds R^d \to \mathds C$,
\[
  \mathcal Z_{\Lambda,\nu,\beta}(\bm y) = 
  V_{\Lambda^*} \sideset{}{'} \sum_{\bm z \in \Lambda^*}
\hat \chi_\beta(\bm z) \frac{e^{-2 \pi i \langle \bm z, \bm y \rangle}}{|\bm z|^\nu}.
\]
The series is well-defined since $\hat \chi_\beta$ is a Schwartz function and thus the terms
inside the sum decay superpolynomially as $|\bm z| \to \infty$.
Since $\mathcal Z_{\Lambda,\nu,\beta}$ is a bounded function,
its action as a distribution is given by
\[
\langle \mathcal Z_{\Lambda,\nu,\beta}, \psi \rangle
=
\int \limits_{\mathds R^d} \mathcal Z_{\Lambda,\nu,\beta}(\bm y)\, \psi(\bm y) \, \text d \bm y
=
V_{\Lambda^*} \sideset{}{'} \sum_{\bm z \in \Lambda^*}\hat \chi_\beta(\bm z) \frac{\hat \psi(\bm z)}{|\bm z|^\nu}
\]
for a Schwartz function $\psi \in S(\mathds R^d)$.
Due to $\vert \hat \chi_\beta \vert\le 1$ and $\hat \chi_\beta \to 1$ as $\beta \to 0$, we have by the dominated convergence theorem
\[
\lim_{\beta \to 0} \langle \mathcal Z_{\Lambda,\nu,\beta}, \psi \rangle
=
V_{\Lambda^*} \sideset{}{'} \sum_{\bm z \in \Lambda^*}
\frac{\hat \psi(\bm z)}{|\bm z|^\nu}=\langle \mathcal Z_{\Lambda,\nu}, \psi\rangle.
\]
Hence $\mathcal Z_{\Lambda,\nu}$ defines a tempered distribution.
\end{proof}
The next lemma quantifies the convergence of terms that will arise in the proof of Theorem~\ref{thm:regularised-sum-int-epstein-zeta} after Poisson summation of the auxiliary functions.
\begin{lemma}\label{lem:uniform-convergence-series-interaction}
Let $\Lambda \in \mathcal L(\mathds R^d)$ and $\nu \in \mathds C$ with $\Re \nu > d$.
For a family of mollifiers $\chi_\beta$, $\beta > 0$, the functions
\[
h_\beta : \mathds R^d \setminus \Lambda_\beta \to \mathds C,~
\bm y \mapsto \sum_{\bm z \in \Lambda} \chi_\beta \ast s_\nu(\bm z + \bm y),
\]
with $\Lambda_\beta = \Lambda + \bar B_\beta$,
reside in $C^\infty(\mathds R^d \setminus \Lambda_\beta)$
and converge to
\[
h : \mathds R^d \setminus \Lambda\to \mathds C,~
\bm y \mapsto \sum_{\bm z \in \Lambda} |\bm z + \bm y|^{-\nu}.
\]
in $C^\infty(\mathds R^d \setminus \Lambda_{\beta_0})$ as $\beta \to 0$
for any $\beta_0 > 0$.
All statements remain true if $\Lambda$
is replaced by a subset $\Lambda'$ of the lattice.
\end{lemma}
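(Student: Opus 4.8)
The plan is to reduce the entire statement to a single $\beta$-uniform decay estimate for the mollified interaction $g_\beta := \chi_\beta \ast s_\nu$, and then to handle the lattice sum by splitting it into a finite "near'' part and an absolutely convergent "far'' tail. First I would record that the individual summands are smooth and, away from the origin, given by an ordinary integral. Since $-\Lambda = \Lambda$, the condition $\bm y \notin \Lambda_\beta = \Lambda + \bar B_\beta$ is equivalent to $|\bm z + \bm y| > \beta$ for every $\bm z \in \Lambda$. On $\{|\bm x| > \beta\}$ the cutoff $\chi_\beta(\bm x - \bm \cdot)$ is supported away from $\bm 0$, where the distribution $s_\nu$ (whatever extension is fixed in the exceptional case $\nu \in d + 2\mathds N$) coincides with the locally integrable function $|\bm \cdot|^{-\nu}$, hence
\[
g_\beta(\bm x) = \int\limits_{B_1} \chi(\bm w)\,|\bm x - \beta \bm w|^{-\nu}\,\mathrm d \bm w, \qquad |\bm x| > \beta,
\]
which is manifestly in $C^\infty(\{|\bm x| > \beta\})$. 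Consequently each map $\bm y \mapsto g_\beta(\bm z + \bm y)$ lies in $C^\infty(\mathds R^d \setminus \Lambda_\beta)$, and likewise $\bm y \mapsto s_\nu(\bm z + \bm y)$ in $C^\infty(\mathds R^d \setminus \Lambda)$; only these function germs away from the origin are ever used, so no choice of extension matters.

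The core step is a decay estimate uniform in the regularisation. Using $D^{\bm \alpha} g_\beta = \chi_\beta \ast (D^{\bm \alpha} s_\nu)$, the fact that $D^{\bm \alpha} s_\nu$ is smooth and homogeneous of degree $-\nu - |\bm \alpha|$ away from $\bm 0$, and that $|\bm x - \bm w| \ge |\bm x|/2$ on $\supp \chi_\beta$ for $|\bm x| \ge 2\beta$, integrating the homogeneity bound against the unit-mass mollifier yields (because $\Re\nu + |\bm \alpha| > 0$) a constant $C_{\bm \alpha} > 0$, independent of $\beta \in (0,1)$, with
\[
\big| D^{\bm \alpha} g_\beta(\bm x) \big| \le C_{\bm \alpha}\, |\bm x|^{-\Re\nu - |\bm \alpha|}, \qquad |\bm x| \ge 2\beta,
\]
and the same (elementary) bound for $D^{\bm \alpha} s_\nu$. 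Since $\#\{\bm z \in \Lambda : |\bm z| \le T\} = O(T^d)$ and $\Re\nu > d$, the series $\sum_{\bm z \in \Lambda} |\bm z|^{-\Re\nu - |\bm \alpha|}$ converges with an explicit tail bound; and for $\bm y$ in a fixed compact $K \subseteq \mathds R^d \setminus \Lambda_{\beta_0}$ one has $|\bm z + \bm y| > \beta_0 \ge 2\beta$ for all $\bm z$ (again using $-\Lambda = \Lambda$) and $|\bm z + \bm y| \ge \tfrac12 |\bm z|$ once $|\bm z|$ is large, so $\sum_{\bm z} D^{\bm \alpha} g_\beta(\bm z + \bm y)$ and $\sum_{\bm z} D^{\bm \alpha} s_\nu(\bm z + \bm y)$ converge uniformly on $K$, uniformly in $\beta \le \beta_0/2$. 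This already gives $h_\beta \in C^\infty(\mathds R^d \setminus \Lambda_\beta)$ and $h \in C^\infty(\mathds R^d \setminus \Lambda)$, with termwise differentiation legitimate.

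Finally I would pass to the limit. Fix a compact $K \subseteq \mathds R^d \setminus \Lambda_{\beta_0}$, a multi-index $\bm \alpha$ and $\varepsilon > 0$. By the $\beta$-uniform tail bound there is $R_0$ with $\sup_{\beta \le \beta_0/2} \sup_{\bm y \in K} \big| \sum_{|\bm z| \ge R_0} D^{\bm \alpha}(g_\beta - s_\nu)(\bm z + \bm y) \big| < \varepsilon/2$. For the remaining finitely many $\bm z \in \Lambda$ with $|\bm z| < R_0$, the arguments $\bm z + \bm y$, $\bm y \in K$, all lie in a fixed compact subset of $\{|\bm x| \ge \beta_0\}$; writing $s_\nu = \phi s_\nu + (1-\phi) s_\nu$ with $\phi \in C_0^\infty(B_{\beta_0/2})$ equal to $1$ near $\bm 0$, the compactly supported piece $\chi_\beta \ast (\phi s_\nu)$ is supported in $B_{3\beta_0/4}$ once $\beta < \beta_0/4$, so on $\{|\bm x| \ge \beta_0\}$ we have $g_\beta = \chi_\beta \ast w$ and $s_\nu = w$ with $w := (1-\phi)s_\nu \in C^\infty(\mathds R^d)$; Lemma~\ref{lem:approximation-with-convolution} then gives $\chi_\beta \ast w \to w$ in $C^\infty$, so the finite sum tends to $0$ uniformly on $K$ and is $< \varepsilon/2$ for $\beta$ small. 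Adding the two contributions yields $D^{\bm \alpha} h_\beta \to D^{\bm \alpha} h$ uniformly on $K$, i.e. convergence in $C^\infty(\mathds R^d \setminus \Lambda_{\beta_0})$. Replacing $\Lambda$ by a subset $\Lambda'$ requires no change, since only "finitely many lattice points of bounded norm'' and "tail dominated by the full lattice sum'' are used. I expect the only genuine obstacle to be the $\beta$-uniformity of the decay estimate in the second step: it is precisely this uniformity — rather than mere pointwise convergence — that upgrades the convergence to $C^\infty$ and at the same time shows the limit is independent of the mollifier.
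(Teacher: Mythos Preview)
Your argument is correct, but it takes a genuinely different route from the paper's. The paper observes that the whole series $h(\bm y)=\sum_{\bm z}s_\nu(\bm z+\bm y)$ converges compactly on a conic complex neighbourhood of $\mathds R^d\setminus\Lambda$ (so $h$ is analytic), and then, crucially, interchanges the mollification with the lattice sum to obtain the one-line identity
\[
h_\beta(\bm y)=\sum_{\bm z\in\Lambda}\chi_\beta\ast s_\nu(\bm z+\bm y)=\chi_\beta\ast h(\bm y),\qquad \bm y\in\mathds R^d\setminus\Lambda_\beta,
\]
after which a single application of Lemma~\ref{lem:approximation-with-convolution} to the already-known $C^\infty$ function $h$ finishes everything: smoothness of $h_\beta$, the limit $\beta\to 0$ in $C^\infty(\mathds R^d\setminus\Lambda_{\beta_0})$, and independence of the mollifier all come for free. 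Your approach instead keeps the sum and the mollification separate, proving a $\beta$-uniform decay estimate $|D^{\bm\alpha}g_\beta(\bm x)|\le C_{\bm\alpha}|\bm x|^{-\Re\nu-|\bm\alpha|}$ and handling the series by a near/far split. This is more hands-on and slightly longer, but it is completely elementary (no holomorphic extension is needed) and produces explicit tail bounds; the paper's identity $h_\beta=\chi_\beta\ast h$ is slicker and reduces the whole lemma to the standard mollifier-approximation result in one stroke. One small remark: your use of $-\Lambda=\Lambda$ is fine for the full lattice and for the symmetric subsets $\Lambda'$ actually used later (e.g.\ $\Lambda^*\setminus\{\bm 0\}$), but the formulation of the final claim for an \emph{arbitrary} subset would strictly require $-\Lambda'_\beta$ rather than $\Lambda'_\beta$; this is a wrinkle in the lemma's statement rather than in your argument.
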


\begin{proof}
First note that $s_\nu$ has an extension to a holomorphic
function $\tilde s_\nu$ defined on a conic complex neighbourhood $U$ of $\mathds R^d \setminus \{ \bm 0 \}$.
As $\Re \nu > d$, the series
\[
\sum_{\bm z \in \Lambda} \tilde s_\nu(\bm z + \bm y)
\]
converges compactly in $\bm y$ on $U \setminus \Lambda$ by the Weierstraß M-test since
\[
|\tilde s_\nu(\bm z + \bm y)|
 \leq |\bm z / 2|^{- \Re \nu}
\]
for sufficiently large $\bm z \in \Lambda$.
Therefore, $h$ is analytic on $\mathds R^d \setminus \Lambda$
as the compact limit of analytic functions.
For $\beta > 0$ we can rewrite $h_\beta$ as
\[
h_\beta(\bm y) = \chi_\beta * h(\bm y),\quad \bm y \in \mathds R^d \setminus \Lambda_\beta. 
\]
Lemma~\ref{lem:approximation-with-convolution} shows that
$h_\beta \to h$ in $C^\infty(\mathds R^d \setminus \Lambda_{\beta_0})$ as $\beta \to 0$
for all $\beta_0 > 0$.
Finally, observe that all above arguments remain valid
if $\Lambda$ is replaced by a subset $\Lambda'$ of the lattice.
\end{proof}

Using the previous two lemmas, we now prove the main result of this section.

\begin{proof}[Proof of Theorem~\ref{thm:regularised-sum-int-epstein-zeta}]
We have previously shown in Lemma~\ref{lem:z-lambda-tempered-distribution}
that $\mathcal Z_{\Lambda, \nu}$ defines a tempered distribution.
Now, we prove that this distribution can be identified as an analytic function on $\mathds R^d \setminus \Lambda$. We start with the auxiliary functions  $\mathcal Z_{\Lambda,\nu,\beta}$, $\beta > 0$,
from Lemma~\ref{lem:z-lambda-tempered-distribution},
\[
\mathcal Z_{\Lambda,\nu,\beta}(\bm y) =V_{\Lambda^*} \sideset{}{'}\sum_{\bm z \in \Lambda^*} f_\beta(\bm z) e^{-2 \pi i \langle \bm y, \bm z \rangle},
\quad \bm y \in \mathds R^d,
\]
with $f_\beta = \hat \chi_\beta s_\nu$, 
and transform the Dirichlet series over the reciprocal lattice into a sum over $\Lambda$ by means of Poisson summation. To this end, we at first add the restriction \[\Re\nu< -(d+1).\] Then, $f_\beta$ can extended to a function in $C^{d+1}(\mathds R^d)$ with $f_\beta(\bm 0)=0$.
Thus, $\bm z=\bm 0$ can now be included in the definition of $\mathcal Z_{\Lambda, \nu, \beta}$. The conditions for Poisson summation are then fulfilled as, firstly, $f_\beta$ inherits the superpolynomially decay of the smooth cutoff function $\hat \chi_\beta$, and, secondly,
\[
  \vert \mathcal F f_\beta(\bm z) \vert\le C\big(1+\vert \bm z\vert \big)^{-(d+1)},\quad \bm z\in \mathds R^d,
\]
due to $f_\beta \in C^{d+1}(\mathds R^d)$.
Hence by Poisson summation
\[
\mathcal Z_{\Lambda,\nu,\beta}(\bm y) =
\sum_{\bm z \in \Lambda}   \hat f_\beta (\bm z + \bm y),
\quad \bm y \in \mathds R^d.
\]
For $\bm y \in \mathds R^d\setminus \Lambda_\beta$ with $\Lambda_\beta=\Lambda+\bar B_\beta$, we can express above formula in terms of the convolution that appears in Lemma~\ref{lem:uniform-convergence-series-interaction}, 
\[
  \mathcal Z_{\Lambda,\nu,\beta}(\bm y) =
\sum_{\bm z \in \Lambda}   \chi_\beta \ast \hat s_\nu(\bm z+\bm y)=c_{\nu,d} \sum_{\bm z \in \Lambda}   \chi_\beta \ast s_{d-\nu}(\bm z+\bm y),
\]
where the prefactor $c_{\nu,d}\in \mathds C$ is given
in Theorem~\ref{thm:Fourier-transform-interaction}.
The restriction on $\nu$ now yields $\Re{d- \nu} >d$, such that
with Lemma~\ref{lem:uniform-convergence-series-interaction},
\[
\lim_{\beta \to 0} \mathcal Z_{\Lambda,\nu,\beta} = \mathcal Z_{\Lambda,\nu}
~\text{in} ~C^\infty(\mathds R^d\setminus \Lambda),
\]
noting that for any compact set $K\subseteq \mathds R^d\setminus \Lambda$
there exists $\beta_0 > 0$ such that $K \subseteq \mathds R^d\setminus \Lambda_\beta$ for all $\beta < \beta_0$.
The lemma furthermore shows that $\mathcal Z_{\Lambda,\nu}$ is analytic.

As a final step, we  extend the result to all $\nu\in \mathds C$ through
elliptic regularity.
For $\ell \in \mathds N$ we have
\begin{equation}
\Delta^\ell \mathcal Z_{\Lambda,\nu,\beta} = (2 \pi i)^{2 \ell} \mathcal Z_{\Lambda,\nu - 2 \ell,\beta}.
\label{eq:theorem_em_final}
\end{equation}
Hence, we can choose $\ell$ large enough such that \[\Re{\nu - 2 \ell} < -(d+1)\]
and find from our previous considerations that the right hand side of~\eqref{eq:theorem_em_final} converges in $C^\infty(\mathds R^d \setminus \Lambda)$ as $\beta \to 0$.
Theorem~\ref{thm:weak-convergence-implies-uniform} then yields that already $\mathcal Z_{\Lambda,\nu,\beta}$,
which apriori converges only weakly to a distribution for $\beta \to 0$,
converges compactly in all derivatives to a smooth function on $\mathds R^d \setminus \Lambda$.
Finally, Theorem~\ref{cor:elliptic-analytic-regularity} shows that
$\mathcal Z_{\Lambda, \nu}$ is analytic.
\end{proof}

We then introduce the Bernoulli functions for multidimensional lattices.

\begin{definition}[Bernoulli functions]
  \label{def:basic_bernoulli_functions}
  Let $\Lambda\in \mathfrak L(\mathds R^d)$ and $\ell\in \mathds N$. We define the Bernoulli functions $\mathcal B_\Lambda^{(\ell)}:\mathds R^d\setminus \Lambda\to \mathds R$ as follows
  \[
    \mathcal B_\Lambda^{(\ell)}(\bm y) =\frac{\mathcal Z_{\Lambda,2(\ell+1)}(\bm y)}{(2\pi i)^{2(\ell+1)}} .
  \]
  In analogy to $\mathcal Z_{\Lambda,\nu}$, they define tempered distributions via
  \[
    \langle \mathcal B_\Lambda^{(\ell)},\psi\rangle=V_{\Lambda^*} \,\sideset{}{'}\sum_{\bm z\in \Lambda^*} \frac{\hat \psi(\bm z)}{\big(2\pi i\vert \bm z\vert\big)^{2(\ell+1)}},
  \]
  for $\psi \in S(\mathds R^d)$.
\end{definition}
\begin{remark}
  Clearly, $\mathcal B_\Lambda^{(\ell)}$ is $\Lambda$-periodic,
  \[
    \mathcal B_\Lambda^{(\ell)}(\,\bm \cdot\, +\bm x)=\mathcal B_\Lambda^{(\ell)}, \qquad \bm x\in \Lambda.
  \]
\end{remark}

We now introduce the central distributional property, on which the EM expansion in higher dimensions is based.

\begin{proposition}[Sum-integral property of $\mathcal B_\Lambda^{(\ell)}$]
  \label{prop:Dirac_comb_B_ell}
  Let $\Lambda \in \mathfrak L(\mathds R^d)$ and $\ell \in \mathds N$. Then for $\psi\in S(\mathds R^d)$,
    \begin{equation*}
      \big\langle \Delta^{\ell+1} \mathcal B_\Lambda^{(\ell)} ,\psi \big\rangle=\big\langle\Sha_\Lambda-V_\Lambda^{-1},\psi \big\rangle=\SumInt_{\mathds R^d,\Lambda} \psi,
    \end{equation*}
     where $\Sha_\Lambda$ is the Dirac comb for the lattice $\Lambda$,
    \[
    \Sha_\Lambda=\sum_{\bm z \in \Lambda}  \delta_{\bm z},
    \]
    and where $\delta_{\bm z}$ is the Dirac delta distribution.
\end{proposition}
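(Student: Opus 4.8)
The plan is to unwind the left-hand side through the duality definition of the distributional Laplacian and then invoke the Poisson summation formula. The second equality is purely a matter of unravelling definitions: $\langle \Sha_\Lambda, \psi\rangle = \sum_{\bm z \in \Lambda} \psi(\bm z)$ by definition of the Dirac comb, and the constant distribution $V_\Lambda^{-1}$ acts by $\langle V_\Lambda^{-1}, \psi\rangle = V_\Lambda^{-1} \int_{\mathds R^d} \psi(\bm y)\, \mathrm d \bm y$, so their difference is exactly $\SumInt_{\mathds R^d, \Lambda}\psi$ by Notation~\ref{not:sum-integral}. The whole content therefore lies in the first equality.

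For that, recall from Theorem~\ref{thm:regularised-sum-int-epstein-zeta} (via Definition~\ref{def:basic_bernoulli_functions}) that $\mathcal B_\Lambda^{(\ell)} \in S'(\mathds R^d)$, so $\Delta^{\ell+1}\mathcal B_\Lambda^{(\ell)}$ is well defined by duality; since $\Delta^{\ell+1}$ has even order there is no sign under transposition, $\langle \Delta^{\ell+1}\mathcal B_\Lambda^{(\ell)}, \psi\rangle = \langle \mathcal B_\Lambda^{(\ell)}, \Delta^{\ell+1}\psi\rangle$, and $\Delta^{\ell+1}\psi \in S(\mathds R^d)$. Inserting the explicit action from Definition~\ref{def:basic_bernoulli_functions} gives
\[
\big\langle \Delta^{\ell+1}\mathcal B_\Lambda^{(\ell)}, \psi\big\rangle = V_{\Lambda^*} \sideset{}{'}\sum_{\bm z \in \Lambda^*} \frac{\widehat{\Delta^{\ell+1}\psi}(\bm z)}{(2\pi i |\bm z|)^{2(\ell+1)}}.
\]
With the Fourier convention of the paper one has $\mathcal F(\partial_j \psi)(\bm \xi) = 2\pi i\, \xi_j\, \hat\psi(\bm\xi)$, hence $\widehat{\Delta^{\ell+1}\psi}(\bm\xi) = (2\pi i)^{2(\ell+1)} |\bm\xi|^{2(\ell+1)} \hat\psi(\bm\xi) = (2\pi i |\bm\xi|)^{2(\ell+1)} \hat\psi(\bm\xi)$, so numerator and denominator cancel termwise, leaving $V_{\Lambda^*} \sideset{}{'}\sum_{\bm z \in \Lambda^*} \hat\psi(\bm z)$. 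All rearrangements are legitimate because $\hat\psi$ is Schwartz, so $\sum_{\bm z \in \Lambda^*} |\hat\psi(\bm z)| < \infty$.

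It remains to apply the Poisson summation lemma to $f = \psi$ at $\bm y = \bm 0$; its hypotheses are trivially met since $\psi, \hat\psi \in S(\mathds R^d)$, and it yields $V_\Lambda \sum_{\bm z \in \Lambda} \psi(\bm z) = \sum_{\bm z \in \Lambda^*} \hat\psi(\bm z)$. Peeling off the $\bm z = \bm 0$ term, $\hat\psi(\bm 0) = \int_{\mathds R^d}\psi(\bm y)\,\mathrm d \bm y$, and using $V_{\Lambda^*} = V_\Lambda^{-1}$ (from $M_{\Lambda^*} = M_\Lambda^{-\top}$), we obtain
\[
V_{\Lambda^*} \sideset{}{'}\sum_{\bm z \in \Lambda^*} \hat\psi(\bm z) = \sum_{\bm z \in \Lambda}\psi(\bm z) - V_\Lambda^{-1}\int_{\mathds R^d}\psi(\bm y)\,\mathrm d \bm y = \SumInt_{\mathds R^d, \Lambda}\psi,
\]
which chained with the previous display is the claim. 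There is no genuine analytic obstacle here: the points that need care are purely formal --- that $\Delta^{\ell+1}$ transposes without a sign because its order is even, that the $2\pi i$ produced by the Fourier multiplier matches exactly the $2\pi i$ normalising $\mathcal B_\Lambda^{(\ell)}$ in Definition~\ref{def:basic_bernoulli_functions} so the cancellation is exact rather than up to a constant, and the bookkeeping of the excluded term $\bm z = \bm 0$ together with the covolume factors $V_\Lambda, V_{\Lambda^*}$. The one conceptual subtlety worth stating is that the identity must be read in $S'(\mathds R^d)$ throughout: the series defining $\mathcal B_\Lambda^{(\ell)}$ converges pointwise only on $\mathds R^d \setminus \Lambda$, and it is precisely the distributional action on test functions that produces the Dirac comb sitting on the lattice.
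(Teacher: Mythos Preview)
Your proof is correct and follows the same approach as the paper: move $\Delta^{\ell+1}$ onto $\psi$ by duality, use the Fourier multiplier identity to cancel the factor $(2\pi i|\bm z|)^{2(\ell+1)}$, and then apply Poisson summation together with $V_{\Lambda^*}=V_\Lambda^{-1}$ to identify the primed dual-lattice sum with the sum-integral. Your write-up is slightly more explicit about the bookkeeping (the even order of $\Delta^{\ell+1}$, absolute convergence, and the excluded $\bm z=\bm 0$ term), but the argument is substantively identical.
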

\begin{proof}
  For $\ell\in \mathds N$, the action of $\mathcal B_\Lambda^{(\ell)}$ on $\psi\in S(\mathds R^d)$ reads
  \[
    \langle \mathcal B_\Lambda^{(\ell)},\psi \rangle=V_{\Lambda^*} \,\sideset{}{'} \sum_{\bm z \in \Lambda^*}\frac{\hat \psi(\bm z)}{(2\pi i \vert\bm z\vert)^{2(\ell+1)}}.
  \]
  We now compute the distributional poly-Laplacian $\Delta^{\ell+1} \mathcal B_\Lambda^{(\ell)}$,
  \[
    \langle \Delta^{\ell+1} \mathcal B_\Lambda^{(\ell)},\psi \rangle=\langle \mathcal B_\Lambda^{(\ell)},\Delta^{\ell+1} \psi \rangle=V_{\Lambda^*}\,\sideset{}{'} \sum_{\bm z \in \Lambda^*}  \hat \psi(\bm z)=V_{\Lambda^*}\sum_{\bm z \in \Lambda^*}\hat \psi(\bm z)-V_{\Lambda^*}\hat \psi(\bm 0).
  \]
  From the Poisson summation formula follows 
  \[
    V_{\Lambda^*}\sum_{\bm z \in \Lambda^*}\hat \psi(\bm z)-V_{\Lambda^*}\hat \psi(\bm 0)=\sum_{\bm z\in \Lambda} \psi(\bm z)-V_{\Lambda^*}\int \limits_{\mathds R^d} \psi(\bm z)\,\mathrm d \bm z.
  \]
  Then, as $V_{\Lambda^*}=V_\Lambda^{-1}$, 
  \[
  \big \langle \Delta^{\ell+1} \mathcal B_\Lambda^{(\ell)},\psi \big \rangle= \big \langle \Sha_\Lambda - V_\Lambda^{-1}, \psi \big \rangle=\SumInt_{\mathds R^d,\Lambda}\psi.  
  \]
\end{proof}

In the next step, we determine the maximum norm of the Bernoulli functions of sufficiently high order $\ell$, which plays an important role in the error scaling of the EM expansion in higher dimensions.
\begin{corollary}[Maximum norm of $\mathcal B_\Lambda^{(\ell)}$] \label{lem:b_ell_properties}
  Let $\Lambda\in \mathfrak L(\mathds R^d)$ and $\ell\in \mathds N$. For $2(\ell+1)>d$, the functions $B_\Lambda^{(\ell)}$ can be continuously extended to $\mathds R^d$, with maximum norm
   \begin{align*}
    \Vert \mathcal B_\Lambda^{(\ell)}\Vert_{\infty}= \frac{1}{V_\Lambda} \,\sideset{}{'}\sum_{\bm z \in \Lambda^*} \frac{1}{\vert2\pi  \bm z\vert^{2(\ell+1)}},
    \label{eq:bernoulli_estimate}
   \end{align*}
   and the scaling as $\ell\to \infty$ is determined by $a_{\Lambda^*}$,
   \[
    \lim_{\ell\to\infty}  (2\pi a_{\Lambda^*})^{2(\ell+1)}\Vert \mathcal B_\Lambda^{(\ell)}\Vert_{\infty}=\frac{n_\Lambda}{V_\Lambda},
   \]
   with $n_\Lambda$ the number of elements of $\Lambda^*$ with norm $a_{\Lambda^*}$. 
\end{corollary}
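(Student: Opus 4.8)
The plan is to compute the Fourier series of $\mathcal B_\Lambda^{(\ell)}$ explicitly and extract the maximum norm from positivity of the summand at a distinguished point. First I would invoke Definition~\ref{def:basic_bernoulli_functions} together with Theorem~\ref{thm:regularised-sum-int-epstein-zeta}: when $2(\ell+1) > d$, i.e. $\Re\nu = 2(\ell+1) > d$, the reciprocal-lattice series $V_{\Lambda^*}\sideset{}{'}\sum_{\bm z\in\Lambda^*}|2\pi\bm z|^{-2(\ell+1)} e^{-2\pi i\langle\bm z,\bm y\rangle}$ converges absolutely and uniformly in $\bm y$ by the Weierstraß M-test (the lattice point count in an annulus grows polynomially while the summand decays like $|\bm z|^{-2(\ell+1)}$). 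Hence the limit defining $\mathcal Z_{\Lambda,2(\ell+1)}$ already exists pointwise everywhere, the series represents a continuous function on all of $\mathds R^d$, and this continuous function agrees with the analytic function on $\mathds R^d\setminus\Lambda$ guaranteed by the theorem; this furnishes the continuous extension to $\mathds R^d$. Dividing by $(2\pi i)^{2(\ell+1)}$ and using $V_{\Lambda^*} = V_\Lambda^{-1}$ gives
\[
\mathcal B_\Lambda^{(\ell)}(\bm y) = \frac{1}{V_\Lambda}\,\sideset{}{'}\sum_{\bm z\in\Lambda^*}\frac{e^{-2\pi i\langle\bm z,\bm y\rangle}}{|2\pi\bm z|^{2(\ell+1)}}.
\]

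Next I would establish the maximum-norm formula. The triangle inequality immediately gives $\Vert\mathcal B_\Lambda^{(\ell)}\Vert_\infty \le \frac{1}{V_\Lambda}\sideset{}{'}\sum_{\bm z\in\Lambda^*}|2\pi\bm z|^{-2(\ell+1)}$. For the reverse inequality I would evaluate at $\bm y = \bm 0$: there every term equals $|2\pi\bm z|^{-2(\ell+1)} > 0$, so $\mathcal B_\Lambda^{(\ell)}(\bm 0)$ equals the full sum, and since $\bm 0 \in \Lambda$ this is precisely the value of the continuous extension at a lattice point. Thus the sup is attained at $\bm 0$ and the claimed identity follows. (One should note $\mathcal B_\Lambda^{(\ell)}$ is real-valued: pairing $\bm z$ with $-\bm z\in\Lambda^*$ shows the imaginary parts cancel, consistent with the stated codomain $\mathds R$.)

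Finally, for the asymptotics as $\ell\to\infty$, I would multiply through by $(2\pi a_{\Lambda^*})^{2(\ell+1)}$ and split the reciprocal-lattice sum according to the norm $|\bm z|$. The $n_\Lambda$ shortest nonzero vectors, all of norm $a_{\Lambda^*}$, contribute exactly $n_\Lambda/V_\Lambda$ for every $\ell$. Every remaining vector satisfies $|\bm z| \ge a_{\Lambda^*}' > a_{\Lambda^*}$ for the second-smallest norm $a_{\Lambda^*}'$, so its rescaled contribution is bounded by $(a_{\Lambda^*}/a_{\Lambda^*}')^{2(\ell+1)}(a_{\Lambda^*}/|\bm z|)^{\,\epsilon}\cdot |\bm z|^{-(2(\ell+1)-\epsilon)}$ for a fixed small $\epsilon$ with $2(\ell_0+1)-\epsilon > d$; summing and using the geometric factor $(a_{\Lambda^*}/a_{\Lambda^*}')^{2(\ell+1)}\to 0$ shows the tail vanishes. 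Dividing by $V_\Lambda$ yields the stated limit $n_\Lambda/V_\Lambda$. The only mild subtlety — and the step I would be most careful about — is justifying the uniform domination of the tail uniformly in $\ell$ while the exponent itself varies; factoring out one fixed copy of an absolutely convergent series (at exponent $2(\ell_0+1)$) and controlling the rest by the pure geometric ratio handles this cleanly.
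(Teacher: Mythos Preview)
Your proposal is correct and mirrors the paper's proof for the continuous extension and the maximum norm: absolute convergence of the Dirichlet series when $2(\ell+1)>d$, the triangle-inequality upper bound, and attainment of that bound at a lattice point. For the $\ell\to\infty$ asymptotics the paper is slightly slicker, invoking the monotone convergence theorem directly on $\sideset{}{'}\sum_{\bm z\in\Lambda^*}(a_{\Lambda^*}/|\bm z|)^{2(\ell+1)}$ (each term decreases monotonically in $\ell$ to the indicator of the shell $|\bm z|=a_{\Lambda^*}$), whereas your explicit tail estimate via a fixed reference exponent $2(\ell_0+1)$ and the geometric ratio $(a_{\Lambda^*}/a_{\Lambda^*}')^{2(\ell+1)}$ is a valid but more hands-on route to the same conclusion.
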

\begin{proof}
Let $k=2(\ell+1)-d>0$. Then the Dirichlet series in
Definition~\ref{def:basic_bernoulli_functions} converges absolutely without regularisation on $\mathds R^d$. Now,
\[
  \big\Vert \mathcal B_\Lambda^{(\ell)}\big\Vert_{\infty}\le \frac{1}{V_\Lambda} \,\sideset{}{'}\sum_{\bm z \in \Lambda^*} \frac{1}{\vert2\pi  \bm z\vert^{2(\ell+1)}},
\]
and inequality can be replaced by equality as the upper bound is attained on $\Lambda$.
Concerning the scaling as $\ell \to \infty$, observe that
by the monotone convergence theorem 
\[
  \lim_{\ell\to\infty}  (2\pi a_{\Lambda^*})^{2(\ell+1)}  \Vert \mathcal B_\Lambda^{(\ell)}\Vert_\infty=\frac{1}{V_\Lambda}\lim_{\ell \to \infty} \sideset{}{'}\sum_{\bm z \in \Lambda^*} \left(\frac{a_{\Lambda^*}}{\vert \bm z\vert}\right)^{2(\ell+1)}=  \frac{n_\Lambda}{V_\Lambda}.
\]
\end{proof}

\begin{figure}
  \centering 
  \includegraphics[width=0.7\textwidth]{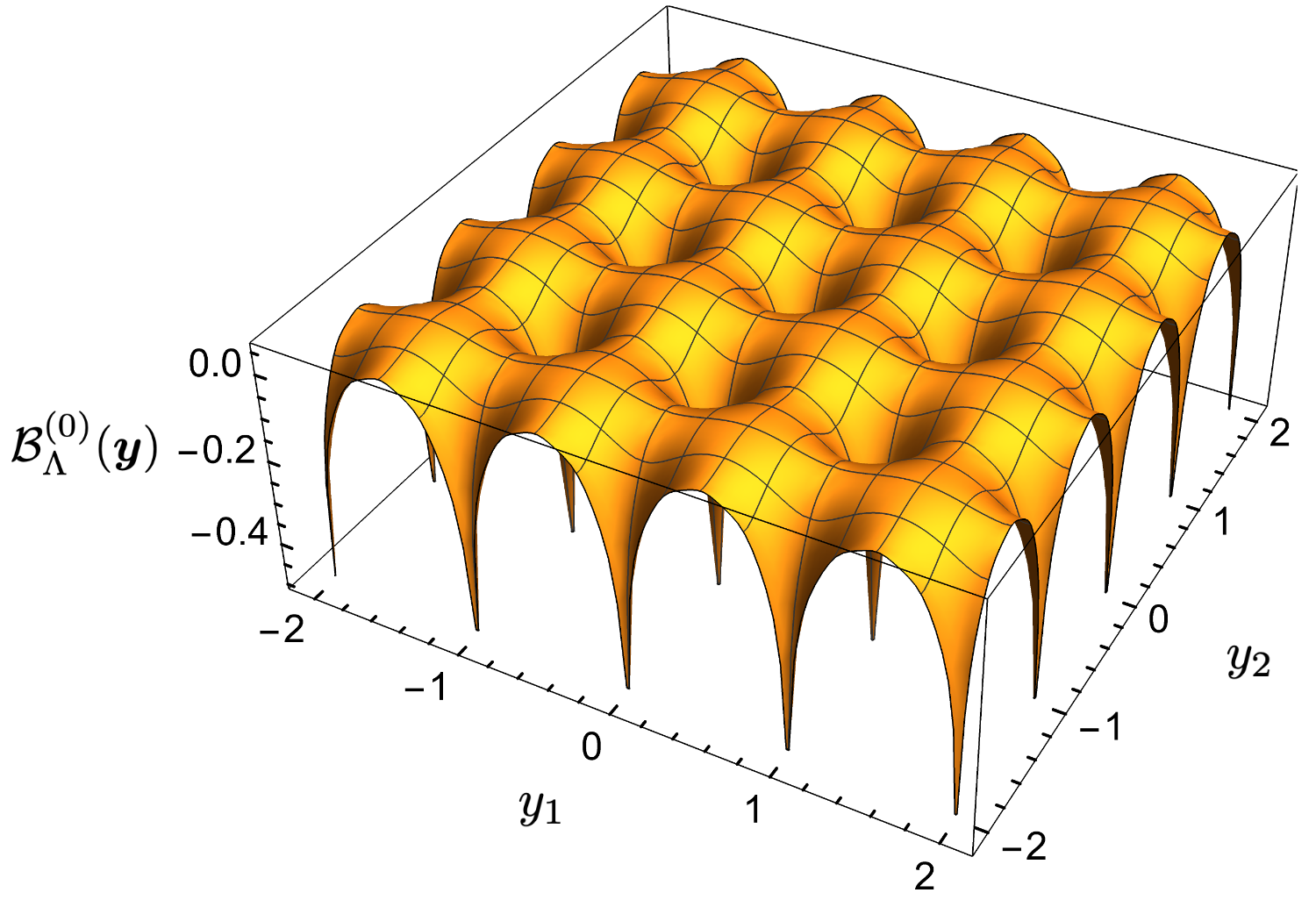}
  \caption{Multidimensional Bernoulli function $\mathcal B_\Lambda^{(0)}$ for $d=2$ and $\Lambda=\mathds Z^2$.}
  \label{fig:hd_euler_B0}
  \end{figure}
  The Bernoulli function $\mathcal B_\Lambda^{(0)}$ for a square lattice in $d=2$ dimensions is displayed in Fig.~\ref{fig:hd_euler_B0}. It exhibits logarithmic singularities at at all lattice points, which originate from the fundamental solution of the Laplace equation in two dimensions.

The Bernoulli functions form the coefficients of the EM differential operator.
\begin{definition}[EM operator]\label{def:sem_operator_basic}
  For $\Lambda \in \mathfrak L(\mathds R^d)$, $\ell\in \mathds N$, and $\bm y\in \mathds R^d\setminus \Lambda$, we define the $\ell$th order EM operator $\bm{\mathcal D}_{\Lambda,0,\bm y}^{(\ell)}$ as
  \begin{equation*}
    \bm{\mathcal D}_{\Lambda,0,\bm y}^{(\ell)}=\sum_{k=0}^\ell \Big(\nabla \Delta^{\ell-k}\mathcal B_\Lambda^{(\ell)}(\bm y)-\Delta^{\ell-k}\mathcal B_\Lambda^{(\ell)}(\bm y)\nabla\Big)\Delta^k.
  \end{equation*}
  With $\bm {\mathcal D}_{\Lambda,0,\bm y}$, we denote the infinite order EM operator obtained by setting $\ell = \infty$ in above equation.
\end{definition}
We will show later that the infinite order operator is well-defined for band-limited functions with bandwidth $\sigma<a_{\Lambda^*}$.

In the expansion, we will compute surface integrals that involve the SEM operator over the boundary of a domain $\Omega$. 
\begin{notation}[Domain]
  In the following a domain $\Omega\subseteq \mathds R^d$ shall always denote a non-empty and connected open set with Lipschitz boundary $\partial \Omega$.
\end{notation}

The Bernoulli function of order $\ell$ can be interpreted as an infinite linear combination of parametrices for the poly-Laplace operator.
Here, a parametrix for $\Delta^{\ell + 1}$ is a distribution $E \in \mathscr D'\big( \mathds R^d \big)$ with
\begin{equation*}
\Delta^{\ell + 1} E = \delta_{\bm 0} - \psi
\end{equation*}
for a smooth function $\psi$~\cite[Definition 7.1.21]{hormander2003analysisI}.
Green's third identity, or representation formula, also holds with a small modification if the fundamental solution is replaced with a parametrix~\cite[p.\,235, Eq.~(20.1.6)]{hormander2007analysisIII}.
We only state the result for $\ell = 0$ where the case of general $\ell$ follows from successive application of Green's second identity.
\begin{lemma}\label{lem:parametrix}
Suppose $E$ is a parametrix for the Laplacian, $\Delta E = \delta_{\bm 0} - \psi$, with $\psi \in C^\infty\big( \mathds R^d \big)$.
Then $E \in C^\infty\big( \mathds R^d \setminus \{ \bm 0 \} \big)$ and the following representation formula holds for $f \in C^2\big( \bar \Omega \big)$, a domain $\Omega \subseteq \mathds R^d$, and $\bm x\in \Omega$:
\begin{multline*}
f(\bm x) - \int\limits_\Omega \psi(\bm x - \bm y) f(\bm y) \, \text d \bm y \\
=
\int\limits_{\partial \Omega} \Big(
\partial_{\bm n_{\bm y}} E(\bm x - \bm y) f(\bm y)
- E(\bm x - \bm y) \partial_{\bm n_{\bm y}} f(\bm y) \Big)\, \text d S_{\bm y}
+ \int\limits_\Omega E(\bm x - \bm y) \Delta f(\bm y) \, \text d \bm y,
\end{multline*}
where $\partial_{\bm n_{\bm y}} = \langle\bm n_{\bm y} , \nabla_{\bm y}\rangle$ denotes the normal derivative
and $\bm n_{\bm y}$ is the outward normal vector to $\Omega$ at $\bm y \in \partial \Omega$.
Furthermore, $f$ is assumed to have compact support on $\bar \Omega$
if $\Omega$ is unbounded.
\end{lemma}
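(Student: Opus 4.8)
The plan is to derive the representation formula for a parametrix by reducing it to the classical Green's third identity for the fundamental solution, exploiting that a parametrix differs from the fundamental solution only by a smooth correction term. First I would recall that the Laplacian has a fundamental solution $F \in \mathscr D'(\mathds R^d)$ with $\Delta F = \delta_{\bm 0}$, and that $F \in C^\infty(\mathds R^d \setminus \{\bm 0\})$ (explicitly, $F(\bm x) = c_d |\bm x|^{2-d}$ for $d \geq 3$ and $F(\bm x) = \frac{1}{2\pi}\log|\bm x|$ for $d = 2$). Given a parametrix $E$ with $\Delta E = \delta_{\bm 0} - \psi$, the difference $g := E - F$ satisfies $\Delta g = -\psi \in C^\infty(\mathds R^d)$, so by elliptic regularity (Theorem~\ref{cor:elliptic-c-infty-regularity}) $g \in C^\infty(\mathds R^d)$. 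In particular $E = F + g$ is smooth away from the origin, which gives the first claim $E \in C^\infty(\mathds R^d \setminus \{\bm 0\})$.

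Next I would write out the two pieces. For the fundamental solution part, the classical Green's third identity for $F$ gives, for $f \in C^2(\bar\Omega)$ and $\bm x \in \Omega$,
\[
f(\bm x) = \int\limits_{\partial\Omega}\Big(\partial_{\bm n_{\bm y}} F(\bm x - \bm y) f(\bm y) - F(\bm x - \bm y)\partial_{\bm n_{\bm y}} f(\bm y)\Big)\,\text d S_{\bm y} + \int\limits_\Omega F(\bm x - \bm y)\Delta f(\bm y)\,\text d \bm y.
\]
For the smooth correction $g$, since $\Delta g = -\psi$ with $g \in C^\infty$, Green's second identity applied to $g(\bm x - \bm\cdot)$ and $f$ on $\Omega$ yields
\[
\int\limits_\Omega\big(g(\bm x - \bm y)\Delta f(\bm y) - f(\bm y)\Delta_{\bm y} g(\bm x - \bm y)\big)\,\text d \bm y = \int\limits_{\partial\Omega}\big(g(\bm x - \bm y)\partial_{\bm n_{\bm y}} f(\bm y) - f(\bm y)\partial_{\bm n_{\bm y}} g(\bm x - \bm y)\big)\,\text d S_{\bm y},
\]
and using $\Delta_{\bm y} g(\bm x - \bm y) = (\Delta g)(\bm x - \bm y) = -\psi(\bm x - \bm y)$ this rearranges to an identity expressing $\int_\Omega \psi(\bm x - \bm y)f(\bm y)\,\text d\bm y$ in terms of $\int_\Omega g(\bm x - \bm y)\Delta f(\bm y)\,\text d\bm y$ and boundary terms in $g$. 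Adding this to the Green's third identity for $F$ and collecting terms — the boundary and volume integrals combine because $E = F + g$ — produces exactly the stated formula with $F$ replaced by $E$ and the extra volume term $-\int_\Omega \psi(\bm x - \bm y)f(\bm y)\,\text d\bm y$ appearing on the left.

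The main obstacle is the justification of the classical Green's third identity itself under the stated regularity hypotheses: $\Omega$ is only assumed to be a domain with Lipschitz boundary (not smooth), $f$ is only $C^2(\bar\Omega)$, and in the unbounded case one must handle the behaviour at infinity via the compact-support assumption on $f$. The standard route is to excise a small ball $B_\varepsilon(\bm x)$ around the singularity, apply Green's second identity on $\Omega \setminus \overline{B_\varepsilon(\bm x)}$ (valid on Lipschitz domains, e.g.\ via the Gauss–Green theorem for Lipschitz domains), and take $\varepsilon \to 0$, using the local integrability of $F$ and the explicit asymptotics of $F$ and $\nabla F$ near $\bm 0$ to show that the spherical integral over $\partial B_\varepsilon(\bm x)$ converges to $f(\bm x)$ while the other $\varepsilon$-dependent terms vanish; in the unbounded case the compact support of $f$ makes the integral over the outer boundary/infinity disappear. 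I would cite the classical potential-theory references for this step rather than reproduce it. The extension to general $\ell$ is then obtained, as the statement indicates, by successively applying Green's second identity, which is purely formal once the $\ell = 0$ case is in hand.
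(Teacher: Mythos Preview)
Your argument is correct. The paper does not actually supply its own proof of this lemma: it states the result and attributes it to H\"ormander~\cite[p.\,235, Eq.~(20.1.6)]{hormander2007analysisIII}, remarking only that the general-$\ell$ case follows from successive application of Green's second identity. Your decomposition $E = F + g$ with $F$ the fundamental solution and $g \in C^\infty(\mathds R^d)$ obtained via elliptic regularity, followed by summing Green's third identity for $F$ with Green's second identity for the smooth correction $g$, is a clean and standard way to obtain the formula, and the technical caveats you flag (Lipschitz boundary, $C^2$ regularity, compact support in the unbounded case) are the right ones to note.
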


We now present the EM expansion on multidimensional lattices.
\begin{theorem}[Multidimensional EM expansion]\label{th:em_expansion_basic}
  Let $\Lambda\in \mathfrak L(\mathds R^d)$ and $\Omega \subseteq \mathds R^d$ a domain such that $\partial \Omega \cap \Lambda=\varnothing$.
  If $f\in C^{2(\ell+1)}(\bar \Omega)$, $\ell \in \mathds N$,
  with compact support in $\bar \Omega$ in case of an unbounded domain,
  then the sum-integral of $f$ over $(\Omega, \Lambda)$ has the representation
  \begin{equation*}
    \SumInt \limits_{\Omega,\Lambda}f  = \int \limits_{\partial \Omega} \left\langle \bm {\mathcal D}^{(\ell)}_{\Lambda,0,\bm y} \,f(\bm y), \bm n_{\bm y}\right \rangle\,\mathrm d S_{\bm y}+\int \limits_{ \Omega} \mathcal B_\Lambda^{(\ell)}(\bm y) \Delta^{\ell+1}f(\bm y)\, \mathrm d \bm y.
  \end{equation*}
  If $\Omega$ is bounded and $f \in E_\sigma$ with $\sigma<a_{\Lambda^*}$, then
  \begin{equation*}
    \SumInt \limits_{\Omega,\Lambda}f= \int \limits_{\partial \Omega} \left \langle \bm {\mathcal D}_{\Lambda,0,\bm y} \,f(\bm y), \bm n_{\bm y} \right \rangle\,\mathrm d S_{\bm y}.
  \end{equation*}  
\end{theorem}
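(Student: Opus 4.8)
The plan is to combine the key distributional identity from Proposition~\ref{prop:Dirac_comb_B_ell} with the parametrix representation formula of Lemma~\ref{lem:parametrix}, applied iteratively through Green's second identity. The Bernoulli function $\mathcal B_\Lambda^{(\ell)}$ satisfies $\Delta^{\ell+1}\mathcal B_\Lambda^{(\ell)} = \Sha_\Lambda - V_\Lambda^{-1}$ in the sense of distributions. Away from the lattice, $\mathcal B_\Lambda^{(\ell)}$ is analytic (a consequence of Theorem~\ref{thm:regularised-sum-int-epstein-zeta}), and near a lattice point it behaves like a parametrix for $\Delta^{\ell+1}$: indeed, localising around a single lattice point $\bm z\in\Omega\cap\Lambda$, we may write $\mathcal B_\Lambda^{(\ell)}(\bm\cdot) = E(\bm\cdot-\bm z) + (\text{smooth})$ where $E$ is a parametrix for $\Delta^{\ell+1}$ whose non-smooth part is the fundamental solution of the poly-Laplacian. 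Since $\partial\Omega\cap\Lambda=\varnothing$, all lattice points in $\bar\Omega$ lie in the open set $\Omega$, so the parametrix formula applies cleanly at each of them.

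First I would establish the representation for $\ell=0$ and then bootstrap. Apply Lemma~\ref{lem:parametrix} with the parametrix $E=\mathcal B_\Lambda^{(0)}$ and $\psi=V_\Lambda^{-1}$ (a constant, hence smooth), for each $\bm x=\bm z\in\Omega\cap\Lambda$. This gives
\[
f(\bm z) - \frac{1}{V_\Lambda}\int_\Omega f(\bm y)\,\mathrm dS_{\bm y}
= \int_{\partial\Omega}\!\Big(\partial_{\bm n_{\bm y}}\mathcal B_\Lambda^{(0)}(\bm z-\bm y)\,f(\bm y) - \mathcal B_\Lambda^{(0)}(\bm z-\bm y)\,\partial_{\bm n_{\bm y}}f(\bm y)\Big)\mathrm dS_{\bm y}
+ \int_\Omega \mathcal B_\Lambda^{(0)}(\bm z-\bm y)\,\Delta f(\bm y)\,\mathrm d\bm y.
\]
The subtlety here is that the ``$\int_\Omega\psi(\bm x-\bm y)f(\bm y)$'' term in Lemma~\ref{lem:parametrix} should really produce $\tfrac1{V_\Lambda}\int_\Omega f$ only if there is exactly one relevant lattice translate; in fact $\mathcal B_\Lambda^{(0)}$ is $\Lambda$-periodic, so $\mathcal B_\Lambda^{(0)}(\bm z-\bm y)=\mathcal B_\Lambda^{(0)}(-\bm y)$ is independent of which lattice point $\bm z$ we chose, and summing the identity over all $\bm z\in\Omega\cap\Lambda$ is where the full Dirac comb enters. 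Summing over $\bm z$, using $\mathcal B_\Lambda^{(0)}(\bm z-\bm y)=\mathcal B_\Lambda^{(0)}(-\bm y)$ and absorbing the normal-derivative and volume terms into the definition of $\bm{\mathcal D}^{(0)}_{\Lambda,0,\bm y}$, yields exactly the claimed formula for $\ell=0$. For general $\ell$, one iterates: repeatedly apply Green's second identity to move pairs of Laplacians from $\mathcal B_\Lambda^{(\ell)}$ onto $f$ and back, generating the telescoping sum $\sum_{k=0}^\ell(\cdots)\Delta^k$ that defines $\bm{\mathcal D}^{(\ell)}_{\Lambda,0,\bm y}$, with the final volume term $\int_\Omega\mathcal B_\Lambda^{(\ell)}\Delta^{\ell+1}f$. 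The regularity hypothesis $f\in C^{2(\ell+1)}(\bar\Omega)$ is exactly what is needed to justify the $\ell+1$ rounds of integration by parts, and compact support handles the unbounded case by killing boundary-at-infinity contributions.

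For the second statement, assume $\Omega$ bounded and $f\in E_\sigma$ with $\sigma<a_{\Lambda^*}$. The strategy is to pass to the limit $\ell\to\infty$ in the first formula and show the volume remainder $\int_\Omega\mathcal B_\Lambda^{(\ell)}\Delta^{\ell+1}f$ vanishes. By Corollary~\ref{lem:b_ell_properties}, $\Vert\mathcal B_\Lambda^{(\ell)}\Vert_\infty \sim \tfrac{n_\Lambda}{V_\Lambda}(2\pi a_{\Lambda^*})^{-2(\ell+1)}$ as $\ell\to\infty$. By Lemma~\ref{lem:exponential_type_estimate}, $\Vert\Delta^{\ell+1}f\Vert_{1,\Omega}\le (2\pi\sigma)^{2(\ell+1)}\mathrm{vol}(\Omega)\Vert f\Vert_1$. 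Hence
\[
\left|\int_\Omega\mathcal B_\Lambda^{(\ell)}(\bm y)\Delta^{\ell+1}f(\bm y)\,\mathrm d\bm y\right|
\le \Vert\mathcal B_\Lambda^{(\ell)}\Vert_\infty\,\Vert\Delta^{\ell+1}f\Vert_{1,\Omega}
\le C\,\mathrm{vol}(\Omega)\,\Vert f\Vert_1\left(\frac{\sigma}{a_{\Lambda^*}}\right)^{2(\ell+1)} \xrightarrow{\ell\to\infty} 0,
\]
since $\sigma/a_{\Lambda^*}<1$. It remains to check that the boundary operator converges: one must show $\bm{\mathcal D}^{(\ell)}_{\Lambda,0,\bm y}f \to \bm{\mathcal D}_{\Lambda,0,\bm y}f$ in a sense that lets us pass to the limit inside the surface integral over the (compact, since $\Omega$ is bounded) boundary $\partial\Omega$. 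This follows from the same geometric-series estimate: the coefficients $\Delta^{\ell-k}\mathcal B_\Lambda^{(\ell)}$ paired against $\Delta^k f$ are controlled by $\Vert\mathcal B_\Lambda^{(\ell)}\Vert_\infty$-type bounds (note $\partial\Omega$ stays at positive distance from $\Lambda$, so $\mathcal B_\Lambda^{(\ell)}$ and its derivatives are uniformly bounded there by the analyticity and the compact convergence in Theorem~\ref{thm:regularised-sum-int-epstein-zeta}) times $(2\pi\sigma)^{2k}$-type bounds on $f$, summing to a convergent series.

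The main obstacle I anticipate is bookkeeping rather than conceptual: correctly tracking, through the $\ell+1$ iterations of Green's second identity, which Laplacian powers land on $\mathcal B_\Lambda^{(\ell)}$ versus on $f$, so that the boundary contributions assemble precisely into $\bigl\langle\bm{\mathcal D}^{(\ell)}_{\Lambda,0,\bm y}f(\bm y),\bm n_{\bm y}\bigr\rangle = \sum_{k=0}^\ell\bigl(\langle\bm n_{\bm y},\nabla\Delta^{\ell-k}\mathcal B_\Lambda^{(\ell)}(\bm y)\rangle\Delta^k f(\bm y) - \Delta^{\ell-k}\mathcal B_\Lambda^{(\ell)}(\bm y)\,\partial_{\bm n_{\bm y}}\Delta^k f(\bm y)\bigr)$ with the residual volume term $\int_\Omega\mathcal B_\Lambda^{(\ell)}\Delta^{\ell+1}f$. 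A secondary technical point is justifying that Lemma~\ref{lem:parametrix} applies with $\mathcal B_\Lambda^{(\ell)}$ in place of $E$ even though $\mathcal B_\Lambda^{(\ell)}$ has singularities at \emph{all} lattice points, not just one; the resolution is that for fixed $\bm x=\bm z\in\Omega\cap\Lambda$ the function $\bm y\mapsto\mathcal B_\Lambda^{(\ell)}(\bm z-\bm y)$ is singular only at $\bm y=\bm z$ within $\bar\Omega$ (the other singularities are at the remaining lattice points, which are genuine singularities of the same function but get accounted for once we sum over $\bm z$ — each lattice point contributes its own copy of the representation formula), combined with the fact that $\partial\Omega\cap\Lambda=\varnothing$ keeps all singularities off the boundary.
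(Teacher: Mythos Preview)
Your overall strategy matches the paper's: use the distributional identity $\Delta^{\ell+1}\mathcal B_\Lambda^{(\ell)}=\Sha_\Lambda-V_\Lambda^{-1}$ together with the parametrix representation formula, then iterate Green's second identity, and for the band-limited case bound the remainder via Corollary~\ref{lem:b_ell_properties} and Lemma~\ref{lem:exponential_type_estimate}. The second half of your argument (the remainder estimate for $f\in E_\sigma$) is correct and essentially identical to the paper's.

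However, your application of Lemma~\ref{lem:parametrix} in the first part contains a genuine error. You write that taking $E=\mathcal B_\Lambda^{(0)}$ and $\psi=V_\Lambda^{-1}$ and applying the lemma at $\bm x=\bm z\in\Omega\cap\Lambda$ yields $f(\bm z)-V_\Lambda^{-1}\int_\Omega f$ on the left-hand side, and that one should then \emph{sum over} $\bm z$. This cannot work: the kernel $\mathcal B_\Lambda^{(0)}$ does not satisfy $\Delta\mathcal B_\Lambda^{(0)}=\delta_{\bm 0}-V_\Lambda^{-1}$ but rather $\Delta\mathcal B_\Lambda^{(0)}=\Sha_\Lambda-V_\Lambda^{-1}$, so all Dirac masses are already present. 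Your own displayed formula would, if summed over the $N$ lattice points in $\Omega$, give $\sum_{\bm z}f(\bm z)-N\,V_\Lambda^{-1}\int_\Omega f$ on the left, while the right-hand side---being independent of $\bm z$ by the very periodicity you invoke---would be multiplied by $N$. The result is off by a factor of $N$ and does not recover the sum-integral. Relatedly, your claim that $\bm y\mapsto\mathcal B_\Lambda^{(0)}(\bm z-\bm y)$ is singular only at $\bm y=\bm z$ within $\bar\Omega$ is false (and you contradict it two lines later): it is singular at every lattice point in $\bar\Omega$.

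The fix is simple and is what the paper does: apply the parametrix formula \emph{once}, with $E=\Delta^\ell\mathcal B_\Lambda^{(\ell)}$, at any single $\bm x\in\Lambda$. Because $\Delta E=\Sha_\Lambda-V_\Lambda^{-1}$, the left-hand side of the representation formula is already
\[
\sum_{\bm w\in\Lambda}\int_\Omega\delta_{\bm w}(\bm x-\bm y)f(\bm y)\,\mathrm d\bm y-\frac1{V_\Lambda}\int_\Omega f
=\sum_{\bm w\in\Omega\cap\Lambda}f(\bm w)-\frac1{V_\Lambda}\int_\Omega f
=\SumInt_{\Omega,\Lambda}f,
\]
using that $\bm x\in\Lambda$ so $\{\bm x-\bm z:\bm z\in\Lambda\}=\Lambda$. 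Periodicity and evenness of $\mathcal B_\Lambda^{(\ell)}$ then reduce $E(\bm x-\bm y)$ to $\Delta^\ell\mathcal B_\Lambda^{(\ell)}(\bm y)$ on the right-hand side, independently of which $\bm x\in\Lambda$ was chosen. No summation over lattice points is needed; the Dirac comb does all the work in a single step. After this correction, your iteration of Green's identity and the rest of the argument go through exactly as you describe.
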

\begin{proof}
  From Corollary~\ref{lem:b_ell_properties}, we know that the poly-Laplacian of $\mathcal B_\Lambda^{(\ell)}$ describes the following tempered distribution
  \[
    \Delta^{\ell+1} \mathcal B_\Lambda^{(\ell)} =\Sha_\Lambda - V_\Lambda^{-1}=\Delta\big(\Delta^{\ell} \mathcal B_\Lambda^{(\ell)}\big).
  \]
  By assumption on $\Omega$ and $f$, the sum
  \[
  \sum_{\bm z \in \Omega \cap \Lambda} f(\bm z)  
  \]
  has only a finite number of nonzero summands.
  Thus we can apply Lemma~\ref{lem:parametrix} to the sum-integral and obtain
  \begin{multline*}
  \SumInt_{\Omega,\Lambda} f= \int \limits_{\partial \Omega}   \Big(\partial_{\bm n_{\bm y}}\Delta^\ell\mathcal B_\Lambda^{(\ell)}(\bm y)-\Delta^\ell\mathcal B_\Lambda^{(\ell)}(\bm y)\partial_{\bm n_{\bm y}}\Big) f(\bm y)\, \mathrm d S_{\bm y} \notag \\ +\int \limits_{\Omega} \Delta^\ell\mathcal B_\Lambda^{(\ell)}(\bm y) \Delta f(\bm y)\,\mathrm d \bm y,
  \end{multline*}
  where we have used that $\Delta^{\ell - k} \mathcal B_{\Lambda}^{(\ell)}$
  is $\Lambda$-periodic and symmetric, i.e.
  \[
  \Delta^{\ell - k} \mathcal B_{\Lambda}^{(\ell)}(\bm z - \bm y)
  = \Delta^{\ell - k} \mathcal B_{\Lambda}^{(\ell)}(\bm y),
  \quad \bm z \in \Lambda, \bm y \in \mathds R^d \setminus \Lambda.
  \]
  We apply Green's second identity $\ell$ times to the right hand side and find
  \begin{multline*}
  \SumInt_{\Omega,\Lambda} f = \int \limits_{\partial \Omega}   \sum_{k=0}^\ell\Big(\partial_{\bm n_{\bm y}}\, \Delta^{\ell-k}\mathcal B_\Lambda^{(\ell)}(\bm y)-\Delta^{\ell-k}\mathcal B_\Lambda^{(\ell)}(\bm y)\partial_{\bm n_{\bm y}}\Big) \Delta^k f(\bm y)\, \mathrm d S_{\bm y} \notag \\ +\int \limits_{\Omega} \mathcal B_\Lambda^{(\ell)}(\bm y) \Delta^{\ell+1} f(\bm y)\,\mathrm d \bm y.
  \end{multline*}
  Finally, given a bounded domain and $f \in E_\sigma$,
  we derive an estimate for the remainder integral
  \[
    \mathcal R_\Lambda^{(\ell)}=\int \limits_{\Omega} \mathcal B_\Lambda^{(\ell)}(\bm y) \Delta^{\ell+1} f(\bm y)\,\mathrm d \bm y,
  \]
  for $2(\ell+1)>d$. By Corollary~\ref{lem:b_ell_properties}, $\mathcal B_\Lambda^{(\ell)}$ is continuous and bounded, thus
\[
  \big\vert \mathcal R_\Lambda^{(\ell)} \big\vert\le \Vert \mathcal B_\Lambda^{(\ell)}  \Vert_{\infty} \Vert \Delta^{\ell+1} f\Vert_{1,\Omega}.
\]
As $f\in E_\sigma$, with $f=\hat h$, we have by Lemma~\ref{lem:exponential_type_estimate}
\[
  \Vert \Delta^{\ell+1} f\Vert_{1,\Omega}\le (2\pi\sigma)^{2(\ell+1)} \mathrm{vol}(\Omega)\Vert h \Vert_1.
\] 
After inserting the asymptotic scaling of the Bernoulli functions from Corollary~\ref{lem:b_ell_properties}, we obtain
\[
  \lim_{\ell\to \infty}\big(\sigma/a_{\Lambda^*}\big)^{-2(\ell+1) }\big\vert \mathcal R_\Lambda^{(\ell)}\big \vert \le  \frac{n_\Lambda \mathrm{vol}(\Omega)}{V_\Lambda}  \Vert h \Vert_1.
\]
Hence, 
\[
\big\vert \mathcal R_\Lambda^{(\ell)}\big \vert \sim \big(\sigma/a_{\Lambda^*}\big)^{2(\ell+1) },
\] 
and the remainder vanishes as $\ell\to \infty$ for $\sigma<a_{\Lambda^*}$.
\end{proof}

The requirement of compact support on unbounded domains can be relaxed to a
sufficiently fast decay at infinity.

\begin{corollary}[EM expansion on unbounded domains]\label{cor:em_expansion_unbounded_domain}
 The EM expansion of order $\ell\in \mathds N$ extends to unbounded domains $\Omega\subseteq \mathds R^d$ with $\partial \Omega \cap \Lambda=\varnothing$ and
 functions $f \in C^{2(\ell + 1)}(\bar \Omega)$
   for which there exist $C,\varepsilon>0$ such that
 \[
 \vert \langle \bm t,\nabla\rangle^k f(\bm y)\vert\le C\big(1+\vert\bm y\vert \big)^{-(d+\varepsilon)},\quad \bm y\in \bar \Omega,  
 \] for all $\bm t \in \partial B_1$ and $k\le 2(\ell+1)$.
\end{corollary}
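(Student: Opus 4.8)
The plan is to deduce the statement from the compactly supported case of Theorem~\ref{th:em_expansion_basic} by a cut-off and limiting argument. Fix a bump function $\eta\in C_0^\infty(B_2)$ with $\eta\equiv 1$ on $B_1$, and set $\eta_R=\eta(\,\bm\cdot\,/R)$ for $R>0$, so that $\eta_R\equiv 1$ on $B_R$, $\supp\eta_R\subseteq B_{2R}$, and $\big|D^{\bm\alpha}\eta_R\big|\le C_{\bm\alpha}R^{-|\bm\alpha|}$ with $C_{\bm\alpha}$ independent of $R$. Then $f_R:=\eta_R f\in C^{2(\ell+1)}(\bar\Omega)$ has compact support in $\bar\Omega$, so Theorem~\ref{th:em_expansion_basic} applies and gives
\[
\SumInt_{\Omega,\Lambda}f_R=\int\limits_{\partial\Omega}\big\langle\bm{\mathcal D}^{(\ell)}_{\Lambda,0,\bm y}f_R(\bm y),\bm n_{\bm y}\big\rangle\,\mathrm dS_{\bm y}+\int\limits_{\Omega}\mathcal B_\Lambda^{(\ell)}(\bm y)\,\Delta^{\ell+1}f_R(\bm y)\,\mathrm d\bm y .
\]
The remaining task is to let $R\to\infty$ and identify the limit of each of the three terms with the corresponding object for $f$; in particular this shows that the identity in the corollary makes sense with all terms absolutely convergent.

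For the sum-integral on the left, the hypothesis with $k=0$ yields $|f(\bm y)|\le C(1+|\bm y|)^{-(d+\varepsilon)}$, a bound that is summable over the lattice $\Lambda$ and integrable over $\mathds R^d$. Since $|f_R|\le|f|$ and $f_R\to f$ pointwise, the dominated convergence theorem, applied separately to the lattice sum and to the integral, gives $\SumInt_{\Omega,\Lambda}f_R\to\SumInt_{\Omega,\Lambda}f$. For the two integrals on the right I expand $\Delta^{\ell+1}f_R$ and the derivatives $\Delta^kf_R$, $\nabla\Delta^kf_R$ entering $\bm{\mathcal D}^{(\ell)}_{\Lambda,0,\bm y}$ by the Leibniz rule: each splits into a main term ($\eta_R$ times the corresponding derivative of $f$) plus finitely many error terms of the form $D^{\bm\beta}\eta_R\,D^{\bm\gamma}f$ with $|\bm\beta|\ge 1$. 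Every error term carries a factor $O(R^{-1})$ from $D^{\bm\beta}\eta_R$ and is supported in the annulus $B_{2R}\setminus B_R$, where $|D^{\bm\gamma}f|\le CR^{-(d+\varepsilon)}$ by hypothesis. Since that annulus contains $O(R^d)$ lattice cells and meets $\partial\Omega$ in a set of surface measure $O(R^{d-1})$, and since the coefficient functions $\Delta^{\ell-k}\mathcal B_\Lambda^{(\ell)}$, $\nabla\Delta^{\ell-k}\mathcal B_\Lambda^{(\ell)}$ have uniformly bounded $L^1$-norm over each lattice cell (see the next paragraph) and are bounded on $\partial\Omega$, each error contribution is $O(R^{-1-\varepsilon})$ in the volume integral and $O(R^{-2-\varepsilon})$ in the surface integral, hence vanishes as $R\to\infty$.

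It remains to treat the main terms. The functions $\mathcal B_\Lambda^{(\ell)}$, $\Delta^{\ell-k}\mathcal B_\Lambda^{(\ell)}$ and their gradients are $\Lambda$-periodic and locally integrable: they are analytic on $\mathds R^d\setminus\Lambda$, and by Proposition~\ref{prop:Dirac_comb_B_ell} together with the analytic hypoellipticity of the poly-Laplacian (Theorem~\ref{cor:elliptic-analytic-regularity}) each of them differs near a lattice point from a fundamental solution of a power of the Laplacian by an analytic function, so its singularity there is at worst algebraic of order strictly less than $d$ (logarithmic in the borderline case). Splitting $\mathds R^d$ into translates of $E_\Lambda$ and bounding the $L^1$-norm over each translate by the finite $L^1$-norm over $E_\Lambda$, the decay $(1+|\bm y|)^{-(d+\varepsilon)}$ makes $|\mathcal B_\Lambda^{(\ell)}(\bm y)|\,(1+|\bm y|)^{-(d+\varepsilon)}$ integrable over $\mathds R^d$; likewise the relevant products are integrable over $\partial\Omega$, using $\partial\Omega\cap\Lambda=\varnothing$ (so the coefficient functions, being smooth off $\Lambda$, are bounded on $\partial\Omega$) and the polynomial growth of the surface measure of a Lipschitz boundary. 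Dominated convergence then gives $\int_\Omega\mathcal B_\Lambda^{(\ell)}\,\eta_R\Delta^{\ell+1}f\to\int_\Omega\mathcal B_\Lambda^{(\ell)}\,\Delta^{\ell+1}f$ and, analogously, $\int_{\partial\Omega}\eta_R\langle\bm{\mathcal D}^{(\ell)}_{\Lambda,0,\bm y}f,\bm n_{\bm y}\rangle\,\mathrm dS_{\bm y}\to\int_{\partial\Omega}\langle\bm{\mathcal D}^{(\ell)}_{\Lambda,0,\bm y}f,\bm n_{\bm y}\rangle\,\mathrm dS_{\bm y}$, which finishes the proof.

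The main obstacle is the bookkeeping for the remainder integral $\int_\Omega\mathcal B_\Lambda^{(\ell)}\Delta^{\ell+1}f\,\mathrm d\bm y$: one has to reconcile the integrable-but-unbounded singularities of $\mathcal B_\Lambda^{(\ell)}$ at the lattice points inside $\Omega$ with the merely polynomial decay $(1+|\bm y|)^{-(d+\varepsilon)}$ of the derivatives of $f$, and to check that the cut-off error terms really disappear — this is precisely where the one power of $R$ gained each time a derivative hits $\eta_R$, together with the extra $\varepsilon$ in the decay exponent, is needed. Everything else is a routine dominated-convergence passage to the limit.
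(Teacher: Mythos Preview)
Your proof is correct and follows essentially the same route as the paper: multiply by a smooth cut-off $\eta_R$, apply Theorem~\ref{th:em_expansion_basic} to the compactly supported truncation, and pass to the limit by dominated convergence. The paper's argument is considerably terser---it simply observes that $\|D^{\bm\alpha}\eta_n\|_\infty\le\|D^{\bm\alpha}\eta\|_\infty$ and that the decay hypothesis on the derivatives of $f$ furnishes an integrable and summable majorant---whereas you spell out the Leibniz bookkeeping, the $O(R^{-1})$ gain from each derivative landing on $\eta_R$, and the local integrability of $\mathcal B_\Lambda^{(\ell)}$ at lattice points when $2(\ell+1)\le d$. This last point is a genuine detail the paper elides, and your treatment via periodicity and the fundamental-solution asymptotics is the right way to handle it.

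One mild caveat: your bounds on the surface integral tacitly assume that $\partial\Omega$ has surface measure growing at most polynomially and that the coefficient functions $\Delta^{\ell-k}\mathcal B_\Lambda^{(\ell)}$, $\nabla\Delta^{\ell-k}\mathcal B_\Lambda^{(\ell)}$ are bounded on $\partial\Omega$. The latter requires $\mathrm{dist}(\partial\Omega,\Lambda)>0$, not merely $\partial\Omega\cap\Lambda=\varnothing$, since $\partial\Omega$ could in principle approach lattice points at infinity. The paper's proof has the same implicit assumption, and in the only place the corollary is used (Proposition~\ref{prop:beta_limit_A_ell}, where $\Omega=\mathds R^d\setminus B_R$ has compact boundary) the issue does not arise.
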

\begin{proof}
 Let $\eta \in C_0^\infty(\mathds R^d)$ with $\eta(\bm 0) = 1$.
 For $n \in \mathds N$, we set $\eta_n = \eta\big(\bm \cdot / (n + 1)\big)$.
 Since $f_n = \eta_n f$ has compact support, we can expand the sum-integral as
 \begin{equation*}
  \SumInt \limits_{\Omega,\Lambda}f_n  = \int \limits_{\partial \Omega} \left\langle \bm {\mathcal D}^{(\ell)}_{\Lambda,0,\bm y} \, f_n(\bm y), \bm n_{\bm y}\right \rangle\,\mathrm d S_{\bm y}+\int \limits_{ \Omega} \mathcal B_\Lambda^{(\ell)}(\bm y) \Delta^{\ell+1}f_n(\bm y)\, \mathrm d \bm y,
\end{equation*}
for all $n \in \mathds N$.
Now, the derivatives of $\eta_n$ are bounded independently of $n$,
\[
\|D^{\bm \alpha} \eta_n\|_\infty
\leq \frac{1}{(n + 1)^{|\bm \alpha|}} 
\| D^{\bm \alpha} \eta \|_{\infty}
\leq \| D^{\bm \alpha} \eta \|_{\infty},
\]
and the bounds on the derivative of $f$ on $\bar \Omega$
provide us with an integrable (and summable) majorant.
The EM expansion for $f$ then follows from the dominated convergence theorem.
\end{proof}

The error of the EM expansion is measured by the remainder
\[
  \mathcal R^{(\ell)}_\Lambda=\int \limits_{ \Omega} \mathcal B_\Lambda^{(\ell)}(\bm y) \Delta^{\ell+1}f(\bm y)\, \mathrm d \bm y.
\]
If we interpret the EM expansion from Theorem~\ref{th:em_expansion_basic} as a quadrature
rule, we then find for the error under refinement, $\Lambda_h = h \Lambda$ for $h > 0$, of the integral approximation 
\[
V_{{\Lambda_h}}|\mathcal R_{\Lambda_h}^{(\ell)}| \leq h^{2(\ell + 1)} V_\Lambda \| \mathcal B_\Lambda^{(\ell)} \|_\infty \| \Delta^{\ell + 1} f \|_{1, \Omega},
\]
for $2(\ell + 1) > d$, where the bound for $\mathcal B_\Lambda^{(\ell)}$ on the scaled lattice follows from Corollary~\ref{lem:b_ell_properties}.
Similarly, if we dilate the argument of $f$, $f_\lambda(\bm x) = f(\bm x / \lambda)$,
for $\lambda > 0$, then
\[
|\mathcal R^{(\ell)}_\Lambda| \leq \lambda^{-2(\ell + 1)} \| \mathcal B_\Lambda^{(\ell)} \|_\infty \| \Delta^{\ell + 1} f \|_{1, \Omega}.
\]
This estimate is of importance in the opposite case when the lattice sum is approximated
by an integral, see Section~\ref{sec:num_application} for an example.
Special care has to be taken if we are interested in the convergence of $\mathcal R^{(\ell)}$
for $\ell \to \infty$.
The proof of above theorem shows that for band-limited functions $f = \mathcal F h$
with bandwidth  $E_\sigma$,
the error decays exponentially if $\sigma < a_{\Lambda^*}$,
\[
|\mathcal R^{(\ell)}_\Lambda| \leq C_{\Lambda,\Omega} \| h \|_{1} \left( \frac{\sigma}{a_{\Lambda^*}} \right)^{2(\ell + 1)},
\]
for a constant $C_{\Lambda,\Omega} > 0$ that only depends on $\Lambda$ and $\Omega$.
These results are not applicable if $f$ includes an algebraic singularity. In this case, a more advanced version of the multidimensional EM expansion is required, which we develop in the next section.

\section{Multidimensional singular Euler--Maclaurin expansion}
\label{sec:sem}

Let us consider a lattice $\Lambda\in \mathfrak L(\mathds R^d)$, a bounded domain $\Omega\subseteq \mathds R^d$, and a lattice point $\bm x \in \Lambda$ outside of $\bar\Omega$. For a function $f_{\bm x}:\bar \Omega\to \mathds C$ that factors into 
\[
f_{\bm x}(\bm y) = s(\bm y-\bm x)g(\bm y), 
\]
with $s\in C^\infty(\mathds R^d\setminus \{\bm 0\})$ exhibiting an algebraic singularity at $\bm 0$, and $g\in C^{2(\ell+1)}(\bar\Omega)$, the EM expansion in Theorem~\ref{th:em_expansion_basic} of the sum-integral
\[
\SumInt_{\Omega,\Lambda} f_{\bm x}
\]
 does not converge as we extend the order of the expansion to infinity, even if $g \in E_{\sigma}$ with $\sigma<a_{\Lambda^*}$. In the following, we call $s$ the interaction, and $g$ the interpolating function.
 This lack of convergence has its origin in the algebraic singularity of $s$, due to which the derivatives of $s$ increase quickly with the derivative order. Moreover, the error of the expansion is uncontrolled and typically significant, even for low orders, as the Fourier transform of the summand function is not constrained to a ball of radius $a_{\Lambda^*}$. In the following, we focus on the physically most relevant interaction
 \[
 s_\nu=\vert \bm \cdot\vert^{-\nu},\quad  
 \]
 with $\nu \in \mathds C$. 
 
 The lack of convergence of the one-dimensional EM expansion for functions with singularities is a well known problem \cite{apostol1998introduction}, which can be overcome by means of the 1D SEM expansion \cite{buchheit2020singular}.
  In the same way as the derivation of the 1D SEM expansion relies on the standard EM expansion in $d=1$, we will make use of the EM expansion in higher dimensions in order to show the existence of mathematical objects, be it functions or distributions, that appear in the multidimensional SEM expansion.

 As we have demonstrated in 1D, the key to making the EM expansion applicable to functions that include a singular interaction lies in the inclusion of the interaction in a generalisation of the Bernoulli functions. With these singular Bernoulli functions, the differential operator of the expansion will act only on the well-behaved function $g$ and not on $s$, which avoids the divergence of the remainder integral and thus ultimately leads to an expansion that is useful in practice. 
 
 We structure the derivation of the SEM expansion in the following way. In the first step, in Section~\ref{subsec:fundamental_solution_laplace}, we introduce rotationally symmetric fundamental solutions to the poly-Laplace operator. In Section~\ref{subsec:bernoulli_symbols}, we proceed by suitably combining the interaction with the fundamental solution. 
 We call the resulting function Bernoulli symbol, as it shares properties with symbols that appear in the study of pseudo-differential operators. We then construct the singular Bernoulli functions as regularised sum-integrals of the Bernoulli symbols in Section~\ref{subsec:singular_bernoulli_functions} and derive their properties. We define the coefficients of the SEM operator by means of the singular Bernoulli functions and formulate the SEM expansion in Section~\ref{subsec:sem_expansion} for the case that the singularity lies outside of the region $\Omega$. Finally, in Section~\ref{subsec:sem_interior}, we show that singularities inside $\Omega$, a highly relevant case in practice, can be described by an additional local SEM operator.

\subsection{Fundamental solutions to the poly-Laplace operator}\label{subsec:fundamental_solution_laplace}
The construction of the SEM expansion relies on fundamental solutions to the poly-Laplace operator. 
\begin{notation}[Rotationally symmetric fundamental solutions of $\Delta^{\ell+1}$]
  Let $\ell \in \mathds N$. We denote by $\phi_\ell\in S'(\mathds R^d)$ a rotationally symmetric fundamental solution to $\Delta^{\ell+1}$,
  \[
    \Delta^{\ell+1} \phi_\ell = \delta_{\bm 0}.
  \]
  \end{notation}
  \begin{remark} \label{rem:uniqueness_phi_ell}
    The choice for $\phi_\ell$ is unique up to a rotationally symmetric polynomial of order $2\ell$, which follows from the representation of the poly-Laplace operator in spherical coordinates.
  \end{remark}

\begin{lemma}\label{lem:poly-laplace-fundamental-solution}
Every fundamental solution $\phi_\ell$ can be identified as a $C^\infty$-function on $\mathds R^d \setminus \{ \bm 0 \}$. One possible choice is given by
\begin{gather*}
\phi_\ell(\bm x) = C_{\ell, d} \,|\bm x|^{2(\ell + 1) - d}, \quad \ell \in \mathds N \text{ and } d \text{ odd}, \\
\left\{
\begin{aligned}
\phi_\ell(\bm x) &= C_{\ell, d} \,|\bm x|^{2(\ell + 1) - d}, \quad \ell = 1,\dots,m-1, \\
\phi_\ell(\bm x) &= C_{\ell, d}^{(1)} \,|\bm x|^{2(\ell + 1) - d}
- C_{\ell, d}^{(2)}\, |\bm x|^{2(\ell + 1) - d}\log |\bm x|, \quad \ell \geq m \text{ for } d = 2 m,
\end{aligned}
\right.
\end{gather*}
for constants $C_{\ell, d}, C_{\ell, d}^{(1)}, C_{\ell, d}^{(2)} \in \mathds R$.
\end{lemma}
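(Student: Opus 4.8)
The plan is to establish the two claims in turn: first smoothness of $\phi_\ell$ away from the origin, then the explicit formulae. For the regularity statement, I would invoke elliptic (analytic) hypoellipticity directly: the poly-Laplacian $\Delta^{\ell+1}$ is an elliptic differential operator with constant coefficients of order $2(\ell+1)$, and on $\mathds R^d \setminus \{ \bm 0 \}$ we have $\Delta^{\ell+1}\phi_\ell = \delta_{\bm 0} = 0$, which is smooth (in fact analytic). Theorem~\ref{cor:elliptic-c-infty-regularity} then gives $\phi_\ell \in C^\infty(\mathds R^d \setminus \{ \bm 0 \})$ at once (and Theorem~\ref{cor:elliptic-analytic-regularity} even gives analyticity there, though only smoothness is claimed).

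For the explicit formulae, I would reduce everything to the single-Laplacian case via the factorisation $\Delta^{\ell+1} = \Delta \circ \Delta^\ell$. Recall the standard fundamental solution $\Phi_d$ of $\Delta$: $\Phi_d(\bm x) = c_d |\bm x|^{2-d}$ for $d \neq 2$ and $\Phi_2(\bm x) = c_2 \log|\bm x|$. The idea is that $\phi_\ell$ should be obtained by "integrating" $\Phi_d$ against itself $\ell$ times, i.e. $\phi_\ell = \Phi_d \ast \cdots \ast \Phi_d$ formally, which suggests the power $|\bm x|^{2(\ell+1)-d}$. Concretely, I would proceed by induction on $\ell$, using the elementary radial identity
\[
\Delta\big( |\bm x|^{\alpha} \big) = \alpha(\alpha + d - 2)\,|\bm x|^{\alpha - 2}, \qquad \bm x \neq \bm 0,
\]
and its logarithmic counterpart
\[
\Delta\big( |\bm x|^{\alpha}\log|\bm x| \big) = \alpha(\alpha+d-2)\,|\bm x|^{\alpha-2}\log|\bm x| + (2\alpha + d - 2)\,|\bm x|^{\alpha-2}.
\]
Setting $\alpha = 2(\ell+1)-d$, one checks that $\Delta |\bm x|^{2(\ell+1)-d}$ is a constant multiple of $|\bm x|^{2\ell-d}$ precisely when the coefficient $\alpha(\alpha+d-2) = (2(\ell+1)-d)(2\ell)$ does not vanish, i.e. when $2(\ell+1) \neq d$. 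For $d$ odd this never happens, so iterating gives $\Delta^{\ell+1}|\bm x|^{2(\ell+1)-d} = (\text{nonzero const})\cdot |\bm x|^{-d} \cdot (\text{distributional correction at } \bm 0)$; more carefully, I would track that the distributional Laplacian of $c_d|\bm x|^{2-d}$ is exactly $\delta_{\bm 0}$ and that at each stage the function is locally integrable so no spurious distributional terms appear until the last step. This fixes $C_{\ell,d}$. For $d = 2m$ even, the recursion $\alpha \mapsto \alpha - 2$ starting from $\alpha = 2(\ell+1)-d = 2(\ell+1-m)$ runs through the exponent $0$ exactly when $\ell \geq m$; at that point $\Delta|\bm x|^0 = 0$ classically but $= c_d^{-1}\delta_{\bm 0}$ distributionally, which is why for $\ell \geq m$ one must introduce the logarithmic term — the $(2\alpha+d-2)|\bm x|^{\alpha-2}$ inhomogeneity in the log-identity supplies the missing power. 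Choosing $C^{(1)}_{\ell,d}, C^{(2)}_{\ell,d}$ so that the logarithmic contributions telescope correctly and the final distributional Laplacian produces exactly $\delta_{\bm 0}$ pins down the constants. By Remark~\ref{rem:uniqueness_phi_ell} any other rotationally symmetric fundamental solution differs from this one by a rotationally symmetric polynomial of degree $\leq 2\ell$, so it suffices to exhibit one.

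The main obstacle is bookkeeping the passage between classical and distributional Laplacians through the iteration: each function $|\bm x|^{\alpha}$ with $\alpha > -d$ is in $L^1_{\mathrm{loc}}$ and defines a distribution, but applying $\Delta$ distributionally can produce a $\delta_{\bm 0}$-type contribution precisely at the critical exponents, and for even $d$ with $\ell \geq m$ the correct normalisation of $C^{(1)}_{\ell,d}$ and $C^{(2)}_{\ell,d}$ must be computed by carefully unwinding this. I would handle it by verifying the base case $\ell = 0$ (the standard fundamental solution of $\Delta$) and the first critical case $\ell = m$ for $d = 2m$ by hand, then running the induction with the two radial identities above, checking at the final step that the accumulated constant multiplying $\delta_{\bm 0}$ equals $1$. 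All of this only needs to be carried out for one explicit choice, since uniqueness modulo polynomials is already granted.
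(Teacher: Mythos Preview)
The paper does not actually prove this lemma: immediately after the statement it just says ``Details on the constants are given in \cite[Chapter~I.2]{aronszajn1983polyharmonic}.'' So there is no argument in the paper to compare against beyond that citation. Your proposal is correct and supplies what the paper omits. The smoothness argument via hypoellipticity (Theorem~\ref{cor:elliptic-c-infty-regularity}) is exactly in line with the tools the paper has set up in Section~\ref{sec:preliminaries}, and the inductive derivation of the explicit radial formulae using $\Delta|\bm x|^\alpha = \alpha(\alpha+d-2)|\bm x|^{\alpha-2}$ together with its logarithmic companion is the standard route taken in the polyharmonic literature, including the cited reference. Your identification of the critical exponent where the logarithm enters (when the recursion passes through $\alpha = 0$, i.e.\ $2(\ell+1)=d$) and your remark that the distributional-versus-classical Laplacian bookkeeping is the only genuine subtlety are both on point; this is precisely the care that the Aronszajn--Creese--Lipkin reference takes.
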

Details on the constants are given in \cite[Chapter~I.2]{aronszajn1983polyharmonic}.

\begin{lemma}\label{lem:phi_ell_estimate}
For $\phi_\ell$ as in Lemma~\ref{lem:poly-laplace-fundamental-solution}
and $\bm \alpha \in \mathds N^d$ there exists a constant $C > 0$ such that for all $\bm x \in \mathds R^d \setminus \{ \bm 0 \}$
\[
|D^{\bm \alpha} \phi_\ell(\bm x)|
\leq C |\bm x|^{2(\ell + 1) -d  - |\bm \alpha|}\Big(\big|\log |\bm x| \big| + 1 + \log (\ell + 1)\Big) .
\]
\end{lemma}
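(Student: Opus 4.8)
The plan is to reduce everything to differentiating the two model functions appearing in Lemma~\ref{lem:poly-laplace-fundamental-solution} and then to track the dependence on the order $\ell$ carefully. Write $s=2(\ell+1)-d$. The basic tool is an elementary bound for derivatives of a radial function $u(\bm x)=g(|\bm x|)$ with $g\in C^{k}(0,\infty)$: repeatedly differentiating $\partial_i\,g(|\bm x|)=g'(|\bm x|)\,x_i/|\bm x|$ and using $|x_i|\le|\bm x|$, an induction on $|\bm\alpha|$ yields
\[
\big|D^{\bm\alpha}u(\bm x)\big|\le c_{\bm\alpha,d}\sum_{j=1}^{|\bm\alpha|}\big|g^{(j)}(|\bm x|)\big|\,|\bm x|^{j-|\bm\alpha|},\qquad\bm x\neq\bm 0,
\]
with $c_{\bm\alpha,d}$ depending only on $\bm\alpha$ and $d$. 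It therefore suffices to estimate $g^{(j)}$ for $g(r)=r^{s}$ and, in the exceptional even-dimensional high-order cases, additionally for $g(r)=r^{s}\log r$.

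Next I would insert these choices. For $g(r)=r^{s}$ one has $g^{(j)}(r)=s(s-1)\cdots(s-j+1)\,r^{s-j}$, hence $|g^{(j)}(r)|\,r^{j-|\bm\alpha|}\le(|s|+1)^{|\bm\alpha|}\,r^{s-|\bm\alpha|}$. For $g(r)=r^{s}\log r$ (where $s$ is a nonnegative even integer in the relevant range), the Leibniz rule shows that the $j$-th derivative of $r^{s}\log r$ equals $(r^{s})^{(j)}\log r$ plus a finite sum of terms of size at most $c_j(|s|+1)^{j}r^{s-j}$, so $|g^{(j)}(r)|\,r^{j-|\bm\alpha|}\le c_j(|s|+1)^{|\bm\alpha|}\big(\big|\log r\big|+1\big)r^{s-|\bm\alpha|}$. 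Feeding these into the radial bound, using $|s|+1=|2(\ell+1)-d|+1\le C_{d}(\ell+1)$, and setting $C_{\ell,d}^{(2)}=0$, $C_{\ell,d}^{(1)}=C_{\ell,d}$ in the non-exceptional cases, I arrive at
\[
\big|D^{\bm\alpha}\phi_\ell(\bm x)\big|\le c_{\bm\alpha,d}\big(|C_{\ell,d}^{(1)}|+|C_{\ell,d}^{(2)}|\big)(\ell+1)^{|\bm\alpha|}\,|\bm x|^{2(\ell+1)-d-|\bm\alpha|}\big(\big|\log|\bm x|\big|+1\big).
\]

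It remains to see that the $\ell$-dependent factor $\big(|C_{\ell,d}^{(1)}|+|C_{\ell,d}^{(2)}|\big)(\ell+1)^{|\bm\alpha|}$ is bounded uniformly in $\ell$. For this I would invoke the explicit constants of \cite[Chapter~I.2]{aronszajn1983polyharmonic}; equivalently, applying $\Delta$ to the model functions and using that $\Delta\phi_\ell$ equals $\phi_{\ell-1}$ up to a rotationally symmetric polynomial of degree $\le 2\ell-2$ gives the recursion $C_{\ell,d}^{(2)}=C_{\ell-1,d}^{(2)}/\big(2\ell(2(\ell+1)-d)\big)$ for the logarithmic coefficient and an analogous relation for $C_{\ell,d}^{(1)}$ with an $O(1/\ell)$ correction in $C_{\ell,d}^{(2)}$. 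Hence $|C_{\ell,d}^{(1)}|+|C_{\ell,d}^{(2)}|$ decays superpolynomially in $\ell$ (at a rate governed by $(4^{\ell}(\ell!)^{2})^{-1}$, up to a $d$-dependent constant), which beats the growth $(\ell+1)^{|\bm\alpha|}$; the factor therefore even tends to $0$. Absorbing all constants into a single $C=C(\bm\alpha,d)$ and using $\big|\log|\bm x|\big|+1\le\big|\log|\bm x|\big|+1+\log(\ell+1)$ yields the claim.

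The principal difficulty is precisely the uniformity in $\ell$: differentiating the leading power $|\bm x|^{2(\ell+1)-d}$ of the fundamental solution unavoidably produces a prefactor that grows polynomially, like $\ell^{|\bm\alpha|}$, in the order, so the argument cannot rest on abstract properties of $\phi_\ell$ alone and must use the explicit normalisation constants and their superpolynomial decay. A secondary, more bookkeeping-heavy point is the even-dimensional high-order case, where $2(\ell+1)-d$ is a nonnegative integer and $\phi_\ell$ carries a logarithm: one has to verify that this logarithm contributes exactly the term $\big|\log|\bm x|\big|+1$ and nothing larger. The deliberately generous $\log(\ell+1)$ on the right-hand side leaves ample slack for all of these estimates.
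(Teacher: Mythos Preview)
The paper does not prove this lemma; it is stated without argument, with the explicit constants deferred to \cite[Chapter~I.2]{aronszajn1983polyharmonic}. Your approach---reducing to the one-variable derivatives of $r^s$ and $r^s\log r$ via the chain-rule bound
\[
\big|D^{\bm\alpha}(g\circ|\cdot|)(\bm x)\big|\le c_{\bm\alpha,d}\sum_{j=1}^{|\bm\alpha|}\big|g^{(j)}(|\bm x|)\big|\,|\bm x|^{j-|\bm\alpha|}
\]
and handling the logarithmic case by Leibniz---is correct and is the natural way to obtain the estimate.

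One remark on scope: as the lemma is phrased, $\ell$ is fixed before the constant $C$ is chosen (``For $\phi_\ell$ \dots there exists $C$''), so $C$ may depend on $\ell$, and indeed the only application in the paper (Lemma~\ref{lem:symbol_derivatives_estimate}) allows this. Your final paragraph, establishing uniformity in $\ell$ via the factorial-type decay of $|C_{\ell,d}^{(i)}|$ against the polynomial growth $(\ell+1)^{|\bm\alpha|}$, is therefore not strictly required---the $\log(\ell+1)$ on the right-hand side is then simply slack that can be absorbed into $C$. That said, your recursion argument for the constants is correct and yields a genuinely stronger bound than the lemma asks for. A minor inaccuracy: the estimate $|s(s-1)\cdots(s-j+1)|\le(|s|+1)^{|\bm\alpha|}$ should read $(|s|+|\bm\alpha|)^{|\bm\alpha|}$, but since $|\bm\alpha|$ is fixed this is immaterial to the conclusion.
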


\begin{lemma}\label{lem:representation-formula}
Let $\Omega \subseteq \mathds R^d$ be a bounded domain.
For $\ell \in \mathds N$ and $g \in C^{2(\ell + 1)}(\bar \Omega)$,
we can express $g(\bm x)$, $\bm x \in \Omega$, by the representation formula,
\begin{multline*}
g(\bm x) = \sum_{k = 0}^{\ell}\, \int\limits_{\partial \Omega}
\Big(\partial_{\bm n_{\bm y}} \Delta^{\ell - k}  \phi_\ell(\bm x - \bm y)
-
\Delta^{\ell - k} \phi_\ell(\bm x - \bm y) \partial_{\bm n_{\bm y}} 
\Big)\Delta^k g(\bm y)\, \text d S_{\bm y} \\
+ \int\limits_\Omega \phi_\ell(\bm x - \bm y) \Delta^{\ell + 1} g(\bm y) \, \text d \bm y.
\end{multline*} 
\end{lemma}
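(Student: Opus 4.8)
The plan is to reduce this to the parametrix-based representation formula already proved in Lemma~\ref{lem:parametrix}, applied with the genuine fundamental solution $\phi_\ell$ in place of a general parametrix. First I would treat the case $\ell = 0$ directly: since $\Delta \phi_0 = \delta_{\bm 0}$ is a parametrix with $\psi = 0$, and $\phi_0 \in C^\infty(\mathds R^d \setminus \{\bm 0\})$ by Lemma~\ref{lem:poly-laplace-fundamental-solution}, Lemma~\ref{lem:parametrix} gives exactly the asserted formula for $\ell = 0$, namely
\[
g(\bm x) = \int\limits_{\partial\Omega}\Big(\partial_{\bm n_{\bm y}}\phi_0(\bm x - \bm y) g(\bm y) - \phi_0(\bm x - \bm y)\partial_{\bm n_{\bm y}} g(\bm y)\Big)\,\mathrm d S_{\bm y} + \int\limits_\Omega \phi_0(\bm x - \bm y)\Delta g(\bm y)\,\mathrm d\bm y.
\]
Here I must check that the boundedness hypothesis of Lemma~\ref{lem:parametrix} is irrelevant since $\Omega$ is assumed bounded, and that $g \in C^2(\bar\Omega)$ follows from $g \in C^{2(\ell+1)}(\bar\Omega)$ with $\ell \ge 0$.

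Next I would induct on $\ell$ using Green's second identity, exactly as in the proof of Theorem~\ref{th:em_expansion_basic}. The key observation is that $\Delta^{\ell-k}\phi_\ell$ is, up to a rotationally symmetric polynomial, a fundamental solution of $\Delta^{k+1}$; more precisely $\Delta(\Delta^{\ell-k}\phi_\ell) = \Delta^{\ell-k+1}\phi_\ell$, and $\Delta^{\ell+1-k}\phi_\ell$ serves as a fundamental solution for $\Delta^{k+1}$ when $k < \ell$, while $\Delta^{\ell+1}\phi_\ell = \delta_{\bm 0}$. Starting from the $\ell$-level volume term $\int_\Omega \phi_\ell(\bm x - \bm y)\Delta^{\ell+1} g(\bm y)\,\mathrm d\bm y$ is not the right starting point; instead I would start from the $\ell=0$ formula applied with the fundamental solution being $\Delta^\ell \phi_\ell$ (a rotationally symmetric fundamental solution of $\Delta$, modulo the non-uniqueness in Remark~\ref{rem:uniqueness_phi_ell}, which does not affect the representation formula since harmonic polynomial ambiguities integrate to zero in Green's identity against $\Delta$-data — or more cleanly, simply note $\Delta(\Delta^\ell\phi_\ell) = \delta_{\bm 0}$). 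This yields
\[
g(\bm x) = \int\limits_{\partial\Omega}\Big(\partial_{\bm n_{\bm y}}\Delta^\ell\phi_\ell(\bm x - \bm y)g(\bm y) - \Delta^\ell\phi_\ell(\bm x - \bm y)\partial_{\bm n_{\bm y}}g(\bm y)\Big)\,\mathrm d S_{\bm y} + \int\limits_\Omega \Delta^\ell\phi_\ell(\bm x - \bm y)\Delta g(\bm y)\,\mathrm d\bm y.
\]
Then I repeatedly apply Green's second identity to the volume integral: at step $j$ (for $j = 1,\dots,\ell$) I move one Laplacian from $\Delta^{\ell-j+1}\phi_\ell$ onto the $g$-factor, picking up a boundary term $\int_{\partial\Omega}(\partial_{\bm n_{\bm y}}\Delta^{\ell-j}\phi_\ell \cdot \Delta^j g - \Delta^{\ell-j}\phi_\ell \cdot \partial_{\bm n_{\bm y}}\Delta^j g)\,\mathrm d S_{\bm y}$ and leaving $\int_\Omega \Delta^{\ell-j}\phi_\ell(\bm x-\bm y)\Delta^{j+1}g(\bm y)\,\mathrm d\bm y$; after $\ell$ steps the volume term is $\int_\Omega \phi_\ell(\bm x - \bm y)\Delta^{\ell+1}g(\bm y)\,\mathrm d\bm y$ and the accumulated boundary terms sum to exactly the stated sum over $k = 0,\dots,\ell$.

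The main obstacle is the justification of Green's second identity in the presence of the singularity of $\Delta^{\ell-j}\phi_\ell$ at $\bm y = \bm x \in \Omega$. For $j < \ell$ (equivalently, for the kernels $\Delta^{\ell-j}\phi_\ell$ with a nonzero number of surviving Laplacians), the kernel $\Delta^{\ell-j}\phi_\ell$ behaves like $|\bm x - \bm y|^{2(j+1)-d}$ up to a logarithm by Lemma~\ref{lem:phi_ell_estimate}, hence for $j \ge 1$ it is at least continuous (indeed $C^{2j}$-regular away from contributing a delta only at the last step), so the cross terms are classically integrable and Green's identity applies on $\Omega$ directly; only the final application, where $\Delta(\Delta^0\phi_\ell) = \delta_{\bm 0}$ produces the point evaluation $g(\bm x)$, requires the excision-of-a-small-ball argument — but that is precisely what is already carried out inside the proof of Lemma~\ref{lem:parametrix}, so I would simply invoke that lemma rather than redo the excision. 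I would also remark that the non-uniqueness of $\phi_\ell$ (Remark~\ref{rem:uniqueness_phi_ell}) is harmless: two choices differ by a rotationally symmetric polynomial $p$ of degree $2\ell$, and substituting $\phi_\ell + p$ changes each side of the identity by the same amount because $p$ and its Laplace-iterates are smooth on all of $\mathds R^d$, so the $\ell=0$ representation formula (applied to the smooth kernel $\Delta^\ell p$, which is itself polynomial) combined with Green's identity shows the extra contributions cancel.
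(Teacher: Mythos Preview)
Your proposal is correct and follows exactly the route the paper indicates: the remark preceding Lemma~\ref{lem:parametrix} states that the general $\ell$ case ``follows from successive application of Green's second identity,'' and Lemma~\ref{lem:representation-formula} itself is stated without proof, so your plan of invoking Lemma~\ref{lem:parametrix} with $E = \Delta^\ell\phi_\ell$ and then iterating Green's identity is precisely the intended argument. One small correction: your claim that $\Delta^{\ell-j}\phi_\ell$ is ``at least continuous'' for $j \ge 1$ fails when $d > 2(j+1)$, but this does not damage the argument, since the relevant excision boundary terms on $\partial B_\varepsilon(\bm x)$ scale like $\varepsilon^{2j}$ (from $|\nabla\Delta^{\ell-j}\phi_\ell| \sim \varepsilon^{2(j+1)-d-1}$ against surface measure $\sim \varepsilon^{d-1}$) and hence vanish as $\varepsilon \to 0$ for every $j \ge 1$, which is all you need.
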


The following lemma is a direct consequence of the estimates for the derivatives
of the fundamental solutions, see~\cite[Lemma 4.1]{gilbargtrudinger1998elliptic} for a proof in case of the Laplace operator, $\ell = 0$.
\begin{lemma}\label{lem:newton-potential-polylaplace}
Let $\Omega \subseteq \mathds R^d$ open and bounded.
For $g \in C(\bar \Omega)$ and $\ell \in \mathds N$, the Newton potential
\[
f(\bm x) = \int\limits_\Omega \phi_\ell(\bm x - \bm y) g(\bm y) \, \text d \bm y, \quad \bm x \in \Omega,
\]
defines a $C^{2\ell + 1}$-function on $\Omega$.
The derivatives up to order $2 \ell + 1$ are given by
\[
D^{\bm \alpha} f(\bm x)
= \int\limits_\Omega D^{\bm \alpha} \phi_\ell(\bm x - \bm y) g(\bm y) \, \text d \bm y,
\quad \bm x \in \Omega,
\]
for $\bm \alpha \in \mathds N^d$ with $|\bm \alpha| \leq 2 \ell + 1$.
\end{lemma}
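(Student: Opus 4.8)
The plan is to prove the statement by induction on $|\bm\alpha|$, showing that $f\in C^{|\bm\alpha|}(\Omega)$ with $D^{\bm\alpha}f(\bm x)=\int_\Omega D^{\bm\alpha}\phi_\ell(\bm x-\bm y)\,g(\bm y)\,\text d\bm y=:f_{\bm\alpha}(\bm x)$ for every $\bm\alpha\in\mathds N^d$ with $|\bm\alpha|\le 2\ell+1$. The key quantitative input is Lemma~\ref{lem:phi_ell_estimate}: for $|\bm\alpha|\le 2\ell+1$ the exponent in the kernel bound satisfies $2(\ell+1)-d-|\bm\alpha|\ge 1-d>-d$, so each $D^{\bm\alpha}\phi_\ell$ is locally integrable on $\mathds R^d$ — the logarithmic factor being harmless, as $|\bm z|^{-d+\delta}|\log|\bm z||$ is integrable near the origin for any $\delta>0$ — and, since $\Omega$ is bounded, $f_{\bm\alpha}$ is a well-defined bounded function on $\Omega$. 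This local integrability is precisely what fails at order $2\ell+2$, where the exponent hits $-d$, which is why the regularity stops at $2\ell+1$.

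The base case $|\bm\alpha|=0$ is the continuity of $f=f_{\bm 0}$, which follows from local integrability of $\phi_\ell$, boundedness of $g$ on the bounded set $\Omega$, and continuity of translation in $L^1$. For the inductive step, assume $f\in C^k(\Omega)$ and $D^{\bm\beta}f=f_{\bm\beta}$ for all $|\bm\beta|\le k$ with $k<2\ell+1$; fix $|\bm\beta|=k$ and a coordinate direction $e_i$. Following the mollification argument of~\cite[Lemma 4.1]{gilbargtrudinger1998elliptic}, I would pick $\eta\in C^\infty(\mathds R)$ with $\eta=0$ on $(-\infty,1]$, $\eta=1$ on $[2,\infty)$, $0\le\eta\le1$, and set
\[
f_{\bm\beta}^\varepsilon(\bm x)=\int\limits_\Omega \eta\big(|\bm x-\bm y|/\varepsilon\big)\,D^{\bm\beta}\phi_\ell(\bm x-\bm y)\,g(\bm y)\,\text d\bm y,\qquad \varepsilon>0.
\]
Since the cutoff removes the singularity, the integrand is $C^\infty$ in $\bm x$ on $\Omega$, so $f_{\bm\beta}^\varepsilon\in C^\infty(\Omega)$ and one may differentiate under the integral sign.

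It then remains to verify two uniform limits on every compact $K\subseteq\Omega$. First, $f_{\bm\beta}^\varepsilon\to f_{\bm\beta}$ uniformly on $K$, since the difference is supported on $|\bm x-\bm y|\le 2\varepsilon$ and $\int_{|\bm z|\le 2\varepsilon}|D^{\bm\beta}\phi_\ell(\bm z)|\,\text d\bm z\to0$ by local integrability, uniformly in $\bm x$. Second, $\partial_i f_{\bm\beta}^\varepsilon\to f_{\bm\beta+e_i}$ uniformly on $K$: differentiating under the integral, $\partial_i f_{\bm\beta}^\varepsilon$ splits into the term where $\partial_i$ hits $D^{\bm\beta}\phi_\ell$, which converges to $f_{\bm\beta+e_i}$ by the same argument since $|\bm\beta+e_i|\le 2\ell+1$, plus an error term where $\partial_i$ hits the cutoff. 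The latter is supported on the annulus $\varepsilon\le|\bm x-\bm y|\le 2\varepsilon$ and bounded there, via Lemma~\ref{lem:phi_ell_estimate}, by $C\varepsilon^{-1}\varepsilon^{2(\ell+1)-d-|\bm\beta|}\big(|\log\varepsilon|+1+\log(\ell+1)\big)\Vert g\Vert_\infty$; integrating over the annulus, whose volume is of order $\varepsilon^d$, yields a bound of order $\varepsilon^{2(\ell+1)-1-|\bm\beta|}|\log\varepsilon|$, which tends to $0$ because $|\bm\beta|=k\le 2\ell$ forces $2(\ell+1)-1-|\bm\beta|\ge 1$. The standard theorem on interchanging limits and differentiation then gives $f_{\bm\beta}\in C^1$ locally on $\Omega$ with $\partial_i f_{\bm\beta}=f_{\bm\beta+e_i}$, closing the induction.

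The argument is essentially routine once Lemma~\ref{lem:phi_ell_estimate} is available; the only points requiring care are carrying the logarithmic factor through the annulus estimate (it never causes trouble, being dominated by any negative power of $\varepsilon$) and respecting the restriction $|\bm\beta|\le 2\ell$ in the differentiation step, so that the resulting derivative still has order $\le 2\ell+1$ and its kernel bound remains locally integrable. I would note in passing that one could instead work directly with the explicit $\phi_\ell$ of Lemma~\ref{lem:poly-laplace-fundamental-solution}, but the estimate-based proof is cleaner and matches the one-dimensional template cited.
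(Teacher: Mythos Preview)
Your proposal is correct and matches the paper's approach exactly: the paper does not spell out a proof but states that the lemma is a direct consequence of the derivative estimates in Lemma~\ref{lem:phi_ell_estimate}, referring to \cite[Lemma~4.1]{gilbargtrudinger1998elliptic} for the template in the case $\ell=0$. You have carried out precisely that adaptation, with the correct observation that the exponent $2(\ell+1)-d-|\bm\alpha|\ge 1-d$ for $|\bm\alpha|\le 2\ell+1$ keeps the kernel locally integrable, and the annulus estimate for the cutoff error is handled correctly.
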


  The following representation of the poly-Laplace operator in terms of higher order directional derivatives is known
  as Pizetti's formula~\cite[p.\,74]{gelfand1964generalizedI}.

\begin{lemma}[Integral representation of the poly-Laplace operator]
  \label{lem:poly_laplacian}
  Let $\bm y \in \mathds R^d$.
  Assume $g \in C^{2 \ell}(U)$ on some open neighbourhood $U$ of $\bm y$
  for $\ell \in \mathds N$. Then
  \begin{equation*}
    \Delta^\ell g(\bm y) =  \frac{p_{\ell,d}}{\omega_d}\int \limits_{\partial B_1} \langle \bm z , \nabla\rangle^{2\ell} g(\bm y) \,\mathrm d S_{\bm z},
  \end{equation*}
  with $\omega_d$ the surface area of the unit sphere and where the prefactor is given by
  \[
    p_{\ell,d}=\frac{(d/2)_\ell}{(1/2)_\ell}.
  \]
  Here, $(x)_\ell=x(x+1)\cdots(x+\ell-1)$ denotes the Pochhammer symbol.
\end{lemma}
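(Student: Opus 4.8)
The plan is to prove the identity by expanding $\langle \bm z, \nabla\rangle^{2\ell} g(\bm y)$ as a homogeneous polynomial in $\bm z \in \partial B_1$ and integrating term by term over the sphere, using the known formulas for spherical moments of monomials. First I would write
\[
\langle \bm z, \nabla\rangle^{2\ell} g(\bm y)
= \sum_{|\bm \alpha| = 2\ell} \binom{2\ell}{\bm \alpha}\, \bm z^{\bm \alpha}\, D^{\bm \alpha} g(\bm y),
\]
so that
\[
\int\limits_{\partial B_1} \langle \bm z, \nabla\rangle^{2\ell} g(\bm y)\, \mathrm d S_{\bm z}
= \sum_{|\bm \alpha| = 2\ell} \binom{2\ell}{\bm \alpha}\, D^{\bm \alpha} g(\bm y)\, \int\limits_{\partial B_1} \bm z^{\bm \alpha}\, \mathrm d S_{\bm z}.
\]
The integral $\int_{\partial B_1} \bm z^{\bm \alpha}\, \mathrm d S_{\bm z}$ vanishes unless every component $\alpha_j$ is even, say $\bm \alpha = 2\bm \beta$ with $|\bm \beta| = \ell$, in which case it equals a standard Beta-function expression: $\omega_d \cdot \frac{\prod_j (2\beta_j - 1)!!}{d(d+2)\cdots(d+2\ell-2)} = \omega_d \cdot \frac{\prod_j (2\beta_j-1)!!}{2^\ell (d/2)_\ell}$. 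Substituting and simplifying the multinomial coefficient $\binom{2\ell}{2\bm\beta}$ together with the double-factorials should collapse the sum to $\frac{(1/2)_\ell}{(d/2)_\ell}\sum_{|\bm\beta|=\ell}\frac{\ell!}{\bm\beta!}D^{2\bm\beta}g(\bm y) = \frac{(1/2)_\ell}{(d/2)_\ell}\Delta^\ell g(\bm y)$, using the multinomial expansion $\Delta^\ell = \sum_{|\bm\beta|=\ell}\frac{\ell!}{\bm\beta!}D^{2\bm\beta}$.

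An alternative, cleaner route that I might prefer is to reduce to a single known one-variable fact: both sides of the claimed identity are linear in $g$ and, by a density/continuity argument together with Lemma~\ref{lem:newton-potential-polylaplace}-type reasoning, it suffices to verify it on a spanning set of smooth functions, e.g. on plane waves $g(\bm x) = e^{\langle \bm \xi, \bm x\rangle}$ or on harmonic-times-power functions. For $g(\bm x) = e^{\langle \bm \xi, \bm x\rangle}$ one has $\Delta^\ell g(\bm y) = |\bm\xi|^{2\ell} g(\bm y)$ and $\langle \bm z,\nabla\rangle^{2\ell} g(\bm y) = \langle \bm z,\bm\xi\rangle^{2\ell} g(\bm y)$, so the identity reduces to
\[
\frac{1}{\omega_d}\int\limits_{\partial B_1} \langle \bm z, \bm\xi\rangle^{2\ell}\, \mathrm d S_{\bm z} = \frac{(1/2)_\ell}{(d/2)_\ell}\,|\bm\xi|^{2\ell},
\]
which by rotational invariance is the classical computation $\frac{1}{\omega_d}\int_{\partial B_1} z_1^{2\ell}\,\mathrm dS_{\bm z} = \frac{(2\ell-1)!!}{d(d+2)\cdots(d+2\ell-2)} = \frac{(1/2)_\ell}{(d/2)_\ell}$. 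Since finite linear combinations of plane waves are dense in $C^{2\ell}$ on a neighbourhood of $\bm y$ in the relevant topology (one only needs the values of $2\ell$-th order derivatives at the single point $\bm y$, which depend continuously on $g$ in $C^{2\ell}(U)$), the general case follows.

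The main obstacle is purely bookkeeping: matching the multinomial coefficients and iterated double-factorials in the direct approach, or justifying the density/continuity reduction cleanly in the second approach (the latter is really just the observation that $\Delta^\ell g(\bm y)$ and $\int_{\partial B_1}\langle\bm z,\nabla\rangle^{2\ell}g(\bm y)\,\mathrm dS_{\bm z}$ are continuous linear functionals of the $2\ell$-jet of $g$ at $\bm y$, hence determined by their values on any jet-dense family). Given that both sides are manifestly continuous in $g \in C^{2\ell}(U)$, I expect the plane-wave reduction to be the shortest and least error-prone; the only genuine content is the spherical moment computation, which is standard. I would therefore present the proof via the plane-wave test functions and relegate the moment identity $\frac{1}{\omega_d}\int_{\partial B_1} z_1^{2\ell}\,\mathrm dS_{\bm z} = \frac{(1/2)_\ell}{(d/2)_\ell}$ to a one-line reference or a short Beta-function computation.
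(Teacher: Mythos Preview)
Your preferred route (testing on plane waves, which amounts to passing to the symbol) is exactly the paper's approach: the paper also reduces the assertion to the identity
\[
\frac{1}{\omega_d}\int_{\partial B_1}\langle\bm z,\bm\xi\rangle^{2\ell}\,\mathrm dS_{\bm z}=\frac{(1/2)_\ell}{(d/2)_\ell},\qquad \bm\xi\in\partial B_1,
\]
and then evaluates this spherical moment. The only cosmetic differences are (i) the paper states the reduction as ``clearly equivalent'' via the operator--symbol correspondence rather than via a density argument (your own parenthetical about $2\ell$-jets is the right justification and makes the density detour unnecessary), and (ii) the paper computes the moment using the Funk--Hecke theorem to collapse to the one-dimensional integral $\int_{-1}^1 t^{2\ell}(1-t^2)^{(d-3)/2}\,\mathrm dt$, whereas you propose rotational invariance plus a direct Beta computation of $\int_{\partial B_1} z_1^{2\ell}\,\mathrm dS_{\bm z}$; these are the same calculation in slightly different packaging.

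Your first approach (multinomial expansion plus monomial moments) is also correct and is a genuinely different, more combinatorial route; it trades the single spherical integral for bookkeeping with double factorials, and yields the same answer with comparable effort.
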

\begin{proof}
  Clearly, the assertion is equivalent to the identity
  \[
  \frac{1}{\omega_d} \int\limits_{\partial B_1} \langle \bm z ,  \bm \xi\rangle^{2 \ell} \, \text dS_{\bm z} = \frac{1}{p_{\ell, d}} = \frac{(1/2)_\ell}{(d/2)_\ell}, \quad \bm \xi \in \partial B_1.
  \]
  By the Funk--Hecke theorem \cite[Theorem 3.4.1]{groemer1996geometric} the integral over the sphere reduces to a one-dimensional integral,
  \[
  \frac{1}{\omega_d} \int\limits_{\partial B_1}\langle \bm z , \bm \xi\rangle^{2 \ell}\,\mathrm d S_{\bm z} = \frac{\omega_{d - 1}}{\omega_d} \int\limits_{-1}^1 t^{2 \ell} \big(1 - t^2 \big)^{(d - 3) / 2} \, \text dt,
  \]
  which can be evaluated in terms of Gamma functions,
  \[
  \frac{\omega_{d - 1}}{\omega_d} \int\limits_{-1}^1 t^{2 \ell} \big(1 - t^2 \big)^{(d - 3) / 2} \, \text dt
    = \frac{1}{\sqrt{\pi}} \frac{\Gamma(d/2)}{\Gamma\big( (d-1) / 2 \big)} \frac{\Gamma \big( (d - 1) / 2 \big) \Gamma(\ell + 1/2)}{\Gamma(d/2 + \ell)}.
  \]
  Since $\Gamma(1/2) = \sqrt{\pi}$, we have
  \[
    \frac{1}{\sqrt{\pi}} \frac{\Gamma(d/2)}{\Gamma(d/2 + \ell)} \Gamma(\ell + 1/2) = \frac{(1/2)_\ell}{(d/2)_\ell}.
  \]
\end{proof}
 
\subsection{Bernoulli symbols}
\label{subsec:bernoulli_symbols}
We define the Bernoulli symbol as the product of the interaction with a function that includes both the fundamental solution as well as a precisely chosen number of its higher order directional derivatives.
 \begin{definition}[Bernoulli symbol]
  For $\nu\in \mathds C$ and $\ell\in \mathds N$, we set \[a_{\nu}^{(\ell)}:\big(\mathds R^d\setminus \{\bm 0\}\big)\times \mathds R^d\setminus \Big\{(\bm x,\bm x):~\bm x\in \mathds R^d\Big\},\] with
  \[
  a_\nu^{(\ell)}(\bm y,\bm z)= \frac{1}{\vert \bm z\vert^\nu} \Big(\phi_\ell(\bm y-\bm z)-\sum_{k=0}^{2\ell+1} \frac{1}{k!} \langle -\bm z,\nabla\rangle^k\, \phi_{\ell}(\bm y)\Big),
  \]
  and call $a_\nu^{(\ell)}$ Bernoulli symbol of order $\ell$ for the interaction exponent $\nu$.
\end{definition}  
As will become evident at a later point, it is important to make the correct choice for the number of higher order derivatives of the fundamental solution. If we take too many derivatives, integrability in the first argument is lost. If, on the other hand, we take too few derivatives, the symbol will depend on the particular choice for the fundamental solution $\phi_\ell$.
\begin{lemma}\label{lem:symbol_uniqueness}
  The Bernoulli symbol does not depend on the choice of the fundamental solution $\phi_\ell$.
\end{lemma}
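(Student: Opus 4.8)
The plan is to reduce the statement to a single elementary observation: the Taylor expansion of a polynomial terminates. By Remark~\ref{rem:uniqueness_phi_ell}, any two rotationally symmetric fundamental solutions $\phi_\ell$ and $\tilde\phi_\ell$ of $\Delta^{\ell+1}$ differ by a rotationally symmetric polynomial $p = \tilde\phi_\ell - \phi_\ell$ of degree at most $2\ell$. Substituting $\tilde\phi_\ell$ for $\phi_\ell$ in the definition of $a_\nu^{(\ell)}$ changes the symbol by
\[
\frac{1}{|\bm z|^\nu}\Big(p(\bm y - \bm z) - \sum_{k=0}^{2\ell+1}\frac{1}{k!}\langle -\bm z,\nabla\rangle^k p(\bm y)\Big),
\]
so it suffices to prove that the bracketed expression vanishes identically for every polynomial $p$ with $\deg p \le 2\ell$.

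For this I would invoke the multivariate Taylor formula: for $p \in C^\infty(\mathds R^d)$, $\bm y,\bm h \in \mathds R^d$ and $N \in \mathds N$ one has $p(\bm y + \bm h) = \sum_{k=0}^{N}\frac{1}{k!}\langle \bm h,\nabla\rangle^k p(\bm y) + R_N$, and when $p$ is a polynomial of degree at most $N$ the remainder $R_N$ vanishes — indeed each term with $k > \deg p$ is already identically zero, since $\langle \bm h,\nabla\rangle^k$ applied to a polynomial of degree $< k$ gives $0$. Applying this with $N = 2\ell+1 \ge 2\ell \ge \deg p$ and $\bm h = -\bm z$ yields exactly
\[
p(\bm y - \bm z) = \sum_{k=0}^{2\ell+1}\frac{1}{k!}\langle -\bm z,\nabla\rangle^k p(\bm y),
\]
so the bracketed difference above is zero, and hence $a_\nu^{(\ell)}$ is independent of the chosen fundamental solution.

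There is no genuine obstacle here; the real content of the lemma is that the number $2\ell+1$ of subtracted Taylor terms is at least the degree $2\ell$ of the polynomial ambiguity in $\phi_\ell$, which is precisely the issue flagged in the paragraph preceding the statement (subtracting "too few derivatives" would reintroduce dependence on $\phi_\ell$, while the bound $2\ell+1$ is what will later be needed for integrability in the first argument). The only point to mention for cleanliness is that $a_\nu^{(\ell)}$ is defined only off the diagonal and away from $\bm z = \bm 0$, but the displayed identity is a polynomial identity valid for all $\bm y,\bm z \in \mathds R^d$, so its restriction to the domain of definition of $a_\nu^{(\ell)}$ is immediate.
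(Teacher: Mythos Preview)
Your proof is correct and follows essentially the same route as the paper's own argument: both reduce to the observation that two rotationally symmetric fundamental solutions differ by a polynomial of degree at most $2\ell$, and such a polynomial coincides with its Taylor expansion of order $2\ell+1$, so the bracketed difference vanishes identically.
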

\begin{proof}
  We consider two choices for the rotationally symmetric fundamental solution, which we denote by $\phi_{\ell,1}$ and $\phi_{\ell,2}$. Then by Remark~\ref{rem:uniqueness_phi_ell}, the difference between the two is given by a polynomial $P$ of order $2\ell$,
  \[
  \phi_{\ell,1} - \phi_{\ell,2}=P.
  \]
  We now denote by $a_{\nu,1}^{(\ell)}$ and $a_{\nu,2}^{(\ell)}$ the Bernoulli symbols for the respective fundamental solutions and show that they are equal. We have
  \[
    a_{\nu,1}^{(\ell)}(\bm y,\bm z)-a_{\nu,2}^{(\ell)}(\bm y,\bm z)=\frac{1}{\vert \bm z\vert^\nu} \Big(P(\bm y-\bm z)-\sum_{k=0}^{2\ell+1} \frac{1}{k!} \langle -\bm z,\nabla\rangle^k\, P(\bm y)\Big)=0,
  \]
  as $P$ is a polynomial of order $2\ell$ and thus equals its Taylor series of order $2\ell$.
\end{proof}
We now show that the spherical surface integral of the Bernoulli symbol with respect to its second argument vanishes, if the radius of the sphere is chosen sufficiently small.
\begin{lemma}\label{lem:symbol_surface_integral}
  Let $\nu\in \mathds C$, $\ell\in \mathds N$. Then for $\bm y\in \mathds R^d\setminus \{\bm 0\}$
  \[
    \int \limits_{\partial B_r} a_\nu^{(\ell)}(\bm y,\bm z)\,\mathrm d S_{\bm z} =0,\quad r<\vert \bm y \vert.
  \]
\end{lemma}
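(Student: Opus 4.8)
The plan is to reduce the claim to a mean-value property for polyharmonic functions. On $\partial B_r$ the norm $|\bm z| = r$ is constant, so $|\bm z|^{-\nu} = r^{-\nu}$ factors out of the integral and it suffices to show that
\[
\int\limits_{\partial B_r} G(\bm z)\, \mathrm d S_{\bm z} = 0, \qquad 0 < r < |\bm y|,
\]
where
\[
G(\bm z) = \phi_\ell(\bm y - \bm z) - \sum_{k=0}^{2\ell+1} \frac{1}{k!}\, \langle -\bm z, \nabla \rangle^k \phi_\ell(\bm y).
\]
Since $\phi_\ell \in C^\infty(\mathds R^d \setminus \{ \bm 0 \})$ by Lemma~\ref{lem:poly-laplace-fundamental-solution} and $\bm y - \bm z \neq \bm 0$ whenever $|\bm z| < |\bm y|$, the map $\bm z \mapsto G(\bm z)$ is smooth on the ball $B_{|\bm y|}$. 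Moreover, the subtracted sum is exactly the Taylor polynomial of degree $2\ell + 1$ of $\bm z \mapsto \phi_\ell(\bm y - \bm z)$ at $\bm z = \bm 0$, so all partial derivatives of $G$ of order at most $2\ell + 1$ vanish at the origin.

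Two structural facts drive the argument. First, $G$ is polyharmonic of order $\ell + 1$ on $B_{|\bm y|}$: we have $\Delta_{\bm z}^{\ell + 1} \phi_\ell(\bm y - \bm z) = (\Delta^{\ell + 1}\phi_\ell)(\bm y - \bm z) = 0$ for $|\bm z| < |\bm y|$, because $\Delta^{\ell + 1} \phi_\ell = \delta_{\bm 0}$ vanishes away from the origin and $\bm y - \bm z$ stays off the origin, while the subtracted polynomial has degree $2\ell + 1 < 2(\ell + 1)$ and is therefore annihilated by $\Delta^{\ell + 1}$. Second, $\Delta^n G(\bm 0) = 0$ for every $n = 0, 1, \dots, \ell$, since $\Delta^n G(\bm 0)$ is a fixed linear combination of partial derivatives of $G$ of order $2n \le 2\ell \le 2\ell + 1$, all of which vanish at $\bm 0$ by the remark above; and trivially $\Delta^n G \equiv 0$ on $B_{|\bm y|}$ for $n \ge \ell + 1$.

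With these in hand I would invoke the Pizzetti (poly-harmonic) mean-value formula: for $0 < r < |\bm y|$,
\[
\frac{1}{\omega_d\, r^{d - 1}} \int\limits_{\partial B_r} G(\bm z) \, \mathrm d S_{\bm z} = \sum_{n = 0}^{\ell} \frac{r^{2n}}{2^{2n}\, n!\, (d/2)_n}\, \Delta^n G(\bm 0),
\]
the sum terminating at $n = \ell$ precisely because $G$ is polyharmonic of order $\ell + 1$. Every term on the right vanishes by the second fact, so the surface integral is zero, and restoring the factor $r^{-\nu}$ yields the lemma. The mean-value identity itself can be obtained either by iterating the classical spherical average $\ell + 1$ times, or --- closer to the tools already assembled --- by observing that $r \mapsto \frac{1}{\omega_d}\int_{\partial B_1} G(r\bm \omega)\, \mathrm d S_{\bm \omega}$ is even and real-analytic (analytic hypoellipticity of the elliptic operator $\Delta^{\ell + 1}$, Theorem~\ref{cor:elliptic-analytic-regularity}) with its $2n$-th Taylor coefficient a fixed multiple of $\Delta^n G(\bm 0)$ by Lemma~\ref{lem:poly_laplacian}, hence identically zero. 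I expect the only genuinely delicate point to be the bookkeeping behind the count $2\ell + 1$ of subtracted Taylor terms: one needs $2\ell + 1 \ge 2\ell$ so that $\Delta^n G(\bm 0)$ vanishes for all $n \le \ell$, and simultaneously $2\ell + 1 < 2(\ell + 1)$ so that $\Delta^{\ell + 1}$ does not manufacture a nonzero constant from the polynomial part and $G$ stays polyharmonic of order $\ell + 1$; that this single count meets both requirements is exactly what makes the argument close.
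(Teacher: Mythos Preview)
Your proof is correct and follows essentially the same route as the paper's: both arguments reduce to Pizetti's formula and the fact that $\Delta^k\phi_\ell(\bm y)=0$ for $k\ge \ell+1$ and $\bm y\ne\bm 0$. The paper Taylor-expands $\phi_\ell(\bm y-\bm z)$ in $\bm z$, integrates the remainder series term by term over $\partial B_r$, and then applies Lemma~\ref{lem:poly_laplacian} to each term to obtain $\Delta^k\phi_\ell(\bm y)=0$; your packaging via the polyharmonic mean-value identity for $G$ is the same computation viewed from $\bm z=\bm 0$ rather than from $\bm y$, and your second derivation (analyticity plus Lemma~\ref{lem:poly_laplacian}) is exactly the paper's argument.
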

\begin{proof}
  Let $\vert \bm z\vert<\vert \bm y\vert$. We can then expand the first term in the Bernoulli symbol in a Taylor series in $\bm z$, which leads to  
  \[
    a_\nu^{(\ell)}(\bm y,\bm z)= \frac{1}{\vert \bm z\vert^\nu} \sum_{k=2(\ell+1)}^{\infty} \frac{1}{k!} \langle -\bm z,\nabla\rangle^k\, \phi_{\ell}(\bm y),  
  \]
  with uniform convergence in $\bm y$ on $B_r$ for $0<r<\vert \bm y\vert$. We then integrate the Bernoulli symbol over a sphere with radius $r$,
  \[
    \int \limits_{\partial B_r} a_\nu^{(\ell)}(\bm y,\bm z)\,\mathrm d S_{\bm z}=\frac{1}{r^\nu}\sum_{k=\ell+1}^{\infty} \frac{1}{(2k)!} \int \limits_{\partial B_r} \langle \bm z,\nabla\rangle^{2k}\, \phi_{\ell}(\bm y)\,\mathrm d S_{\bm z},
  \]
where we have used that terms with odd powers of $\bm z$ vanish in the surface integral.
Using the integral representation of the poly-Laplace operator from Lemma~\ref{lem:poly_laplacian}, we find that
\[
  \int \limits_{\partial B_r} \langle \bm z,\nabla\rangle^{2k}\, \phi_{\ell}(\bm y)\,\mathrm d S_{\bm z}=\frac{\omega_d}{p_{\ell,d}} r^{(d-1+2k)} \Delta^{k}\phi_\ell(\bm y),
\]
but now as $k\ge \ell+1$, we have that 
\[
\Delta^k \phi_\ell(\bm y)=0,\quad \bm y\in \mathds R^d\setminus \{\bm 0\}.  
\]
Hence all terms in above sum vanish.
\end{proof}
We now discuss the scaling of the derivatives of the Bernoulli symbol with respect to its second argument.

\begin{lemma}\label{lem:symbol_derivatives_estimate}
  Let $\ell\in \mathds N$, $\nu \in \mathds C$, $\bm \alpha\in \mathds N^d$, and $K\subseteq \mathds R^d\setminus \{\bm 0\}$ compact. Then there exist $R>0$ and $C>0$ such that
  \[
   \big \vert D^{\bm \alpha}_{\bm z} a_\nu^{(\ell)}(\bm y,\bm z) \big \vert \le C \vert\bm z \vert^{2(\ell+1)-\Re \nu-\vert \bm \alpha\vert},\quad \bm \vert \bm z\vert>R,~\bm y\in  K,
  \]
  where $C$ only depends on $\ell$, $\nu$, $\bm \alpha$, and $K$.
\end{lemma}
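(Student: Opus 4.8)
The plan is to strip off the factor $s_\nu(\bm z) = |\bm z|^{-\nu}$ and recognise the bracket in the definition of the symbol as a Taylor remainder. Set
\[
T^{(\ell)}(\bm y,\bm z) = \phi_\ell(\bm y - \bm z) - \sum_{k=0}^{2\ell+1}\frac{1}{k!}\langle -\bm z,\nabla\rangle^k\phi_\ell(\bm y),
\]
so that $a_\nu^{(\ell)} = s_\nu\, T^{(\ell)}$, the subtracted sum being exactly the degree-$(2\ell+1)$ Taylor polynomial of $\bm z\mapsto\phi_\ell(\bm y-\bm z)$ at $\bm z=\bm 0$. Expanding $D_{\bm z}^{\bm\alpha}$ by the Leibniz rule, $D_{\bm z}^{\bm\alpha}a_\nu^{(\ell)} = \sum_{\bm\beta\le\bm\alpha}\binom{\bm\alpha}{\bm\beta}(D_{\bm z}^{\bm\beta}s_\nu)(D_{\bm z}^{\bm\alpha-\bm\beta}T^{(\ell)})$, reduces the claim to separate bounds on $D_{\bm z}^{\bm\beta}s_\nu$ and $D_{\bm z}^{\bm\gamma}T^{(\ell)}$ for $\bm\beta+\bm\gamma=\bm\alpha$. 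Since $s_\nu$ is positively homogeneous of degree $-\nu$ and smooth on $\mathds R^d\setminus\{\bm 0\}$, the derivative $D_{\bm z}^{\bm\beta}s_\nu$ is homogeneous of degree $-\nu-|\bm\beta|$, whence $|D_{\bm z}^{\bm\beta}s_\nu(\bm z)| \le C_{\bm\beta}|\bm z|^{-\Re\nu-|\bm\beta|}$ with $C_{\bm\beta} = \max_{|\bm w|=1}|D^{\bm\beta}s_\nu(\bm w)| < \infty$.

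For $T^{(\ell)}$ I would fix the compact set $K\subseteq\mathds R^d\setminus\{\bm 0\}$ and take $R = 2\max_{\bm y\in K}|\bm y|$, so that $|\bm z|/2 \le |\bm y-\bm z| \le \tfrac{3}{2}|\bm z|$ for $\bm y\in K$ and $|\bm z|>R$, and bound the two summands of $T^{(\ell)}$ independently. The first, $D_{\bm z}^{\bm\gamma}\big[\phi_\ell(\bm y-\bm z)\big] = (-1)^{|\bm\gamma|}(D^{\bm\gamma}\phi_\ell)(\bm y-\bm z)$, is controlled by Lemma~\ref{lem:phi_ell_estimate}: using $|\bm y-\bm z|\sim|\bm z|$ this gives $|D_{\bm z}^{\bm\gamma}[\phi_\ell(\bm y-\bm z)]| \le C|\bm z|^{2(\ell+1)-d-|\bm\gamma|}\big(\log|\bm z|+1\big)$. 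The second summand is a polynomial in $\bm z$ of degree $\le 2\ell+1$ whose coefficients are the derivatives $D^{\bm\delta}\phi_\ell(\bm y)$ with $|\bm\delta|\le 2\ell+1$; these are uniformly bounded on $K$ since $\phi_\ell\in C^\infty(\mathds R^d\setminus\{\bm 0\})$ by Lemma~\ref{lem:poly-laplace-fundamental-solution}, so its $\bm z$-derivative of order $\bm\gamma$ has modulus $\le C(1+|\bm z|)^{2\ell+1-|\bm\gamma|}$ (and vanishes if $|\bm\gamma|>2\ell+1$). Enlarging $R$ so that $\log|\bm z|+1\le|\bm z|^d$ and $1+|\bm z|\le 2|\bm z|$ for $|\bm z|>R$, and using $d\ge 1$, both summands are $\le C|\bm z|^{2(\ell+1)-|\bm\gamma|}$, hence $|D_{\bm z}^{\bm\gamma}T^{(\ell)}(\bm y,\bm z)| \le C|\bm z|^{2(\ell+1)-|\bm\gamma|}$. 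Feeding this together with the bound on $D_{\bm z}^{\bm\beta}s_\nu$ into the Leibniz sum and summing the $\binom{\bm\alpha}{\bm\beta}$ yields $|D_{\bm z}^{\bm\alpha}a_\nu^{(\ell)}(\bm y,\bm z)| \le C'|\bm z|^{2(\ell+1)-\Re\nu-|\bm\alpha|}$ with $C'$ depending only on $\ell,\nu,\bm\alpha,K$.

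There is no deep obstacle here; the estimate is essentially book-keeping. The one point worth noticing is that no cancellation between the two summands of $T^{(\ell)}$ is needed: the fundamental-solution term grows at most like $|\bm z|^{2(\ell+1)-d}$ (up to a logarithm) and the subtracted Taylor polynomial at most like $|\bm z|^{2\ell+1}$, and both are dominated by $|\bm z|^{2(\ell+1)}$ whenever $d\ge1$, so the crude term-by-term bound already delivers the claimed exponent with a full power of $|\bm z|^{-d}$ (and a logarithm) to spare. In particular the specific truncation order $2\ell+1$ in the definition of the symbol is not exploited in this lemma; it matters for the complementary small-$|\bm z|$ behaviour used in Lemma~\ref{lem:symbol_surface_integral}. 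The only care required is the elementary verification that $D_{\bm z}^{\bm\gamma}$ acts on $\langle-\bm z,\nabla\rangle^k\phi_\ell(\bm y)$ simply by lowering its polynomial degree in $\bm z$, together with the choice of $R\ge 1$ large enough for the auxiliary inequalities above.
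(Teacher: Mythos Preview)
Your proof is correct and follows essentially the same approach as the paper's: split off $s_\nu$ via the Leibniz rule, bound its derivatives by homogeneity, then bound the two summands of the bracket separately using Lemma~\ref{lem:phi_ell_estimate} for $\phi_\ell(\bm y-\bm z)$ and the fact that the subtracted Taylor polynomial has degree $2\ell+1$ with coefficients uniformly bounded on $K$, absorbing the logarithm into a spare power of $|\bm z|$. Your closing remark that no cancellation between the two summands is needed (and that the truncation order $2\ell+1$ plays no role here) is a worthwhile observation not made explicit in the paper.
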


\begin{proof}
  We first recall the definition of the Bernoulli symbol,
  \[a_\nu^{(\ell)}(\bm y,\bm z)= \frac{1}{\vert \bm z\vert^\nu} \Big(\phi_\ell(\bm y-\bm z)-\sum_{k=0}^{2\ell+1} \frac{1}{k!} \langle -\bm z,\nabla\rangle^k\, \phi_{\ell}(\bm y)\Big).\]
   In the following, we use $C>0$ as a generic constant that depends on $\ell$, $\nu$, $\bm \alpha$, $K$ and whose value may change during the proof. First, we consider derivatives of the terms in brackets. Then by Lemma~\ref{lem:phi_ell_estimate}
  \[
    \big\vert D^{\bm \alpha}_{\bm z} \phi_\ell(\bm y-\bm z) \big\vert\le  C   \vert \bm y-\bm z\vert^{2(\ell+1)+1-d-\vert \bm  \alpha\vert},\quad \vert \bm y-\bm z\vert>1,
  \]
  where the exponent is increased by $1$ in order to bound the logarithmic terms.
  For \[R>2 \max\Big\{1,\sup_{\bm y\in K} \vert \bm y\vert\Big\},\]
  we have that for all $\gamma\in \mathds R$
  \[\vert \bm y-\bm z\vert^{\gamma} \le 2^{\vert\gamma\vert} \vert \bm z\vert^{\gamma},\quad \vert \bm z\vert>R.\]
  Thus
  \[
    \big\vert D^{\bm \alpha}_{\bm z} \phi_\ell(\bm y-\bm z) \big\vert\le  C   \vert \bm z\vert^{2(\ell+1)+\varepsilon-d-\vert \bm  \alpha\vert}\le C \vert \bm z\vert^{2(\ell+1)-\vert \bm  \alpha\vert},\quad \vert \bm z\vert>R.
  \]
  Moreover, 
    \[\bigg \vert D^{\bm \alpha}_{\bm z} \sum_{k=0}^{2\ell+1} \frac{1}{k!} \langle -\bm z,\nabla\rangle^k\, \phi_{\ell}(\bm y)\bigg \vert\le C \vert \bm z \vert^{2(\ell+1)-\vert \bm \alpha\vert}.\]
    Concerning derivatives of the singular prefactor, we find 
  \[
    \Big\vert D^{\bm \alpha}_{\bm z} \vert \bm z\vert^{-\nu} \Big\vert\le  C   \vert \bm z\vert^{-\Re \nu-\vert \bm \alpha\vert}, \quad \bm z \in \mathds R^d \setminus \{ \bm 0 \}.
  \]
  The estimate now follows from the Leibniz rule.
\end{proof}

\subsection{Singular Bernoulli functions}
\label{subsec:singular_bernoulli_functions}

We now construct the singular Bernoulli functions by applying the regularised sum-integral to the Bernoulli symbol. The fundamental solutions  then provide the singularities that lead to Dirac distributions at lattice points if the poly-Laplacian is applied. 
Here, the regularisation of the sum-integral with a smooth cutoff function allows us to add and integrate over an infinite number of fundamental solutions while keeping the difference between sum and integral well-defined.

\begin{figure}
  \centering
  \includegraphics[width=0.8\textwidth]{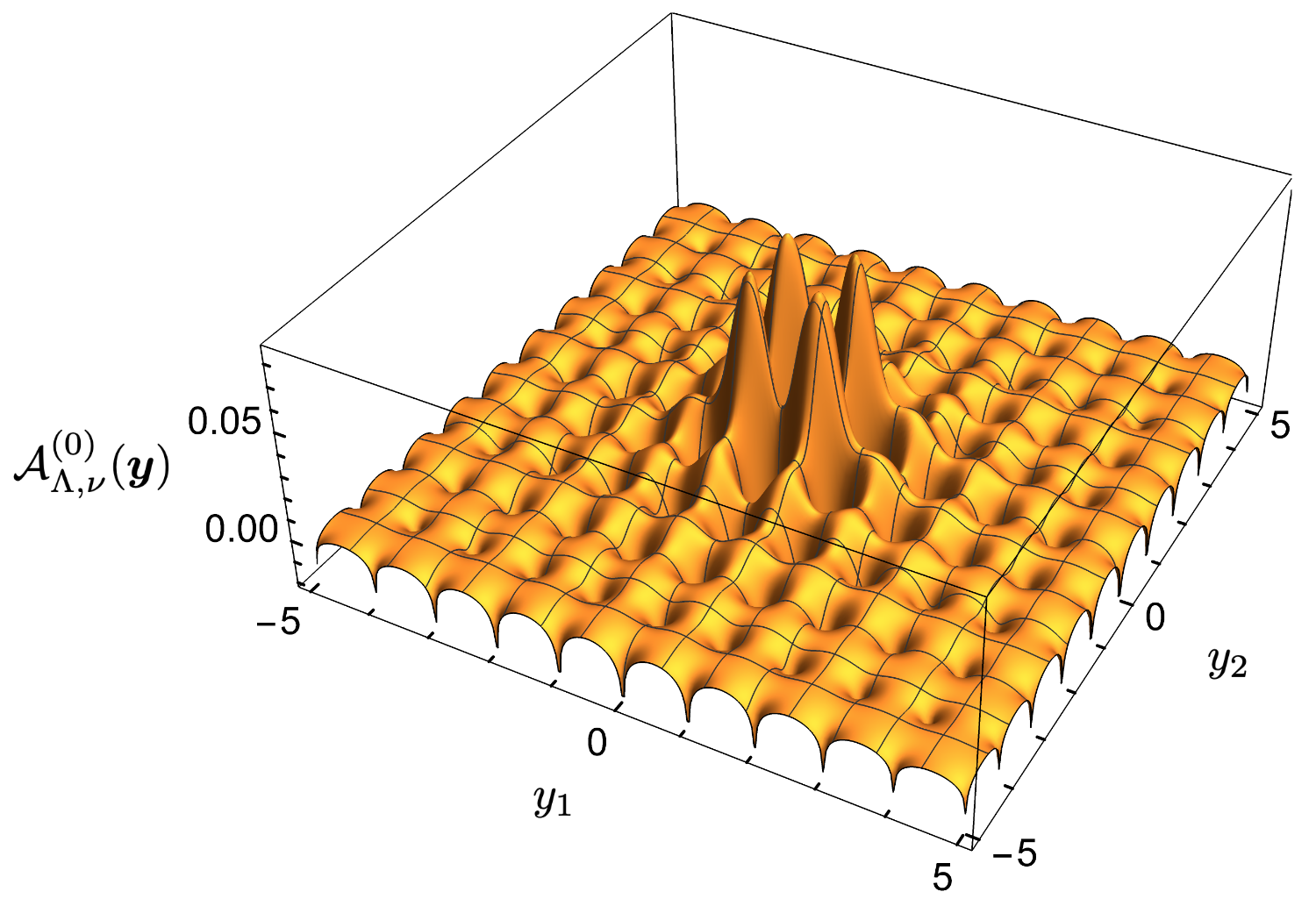}
  \caption{Singular Bernoulli function $\mathcal A_{\Lambda,\nu}^{(0)}$ for $d=2$, $\Lambda=\mathds Z^2$, and $\nu=2+10^{-3}$.}
\end{figure}
\label{fig:singular_bernoulli_function}
\begin{definition}[Singular Bernoulli functions]
  \label{def:bernoulli_functions}
  Let $\Lambda\in \mathfrak L(\mathds R^d)$, $\nu\in \mathds C$, and  $\ell \in \mathds N$. 
  We define $\mathcal A_{\Lambda,\nu}^{(\ell)}:\mathds R^d\setminus \Lambda\to \mathds C$ as
  \begin{align*}
    \mathcal A_{\Lambda,\nu}^{(\ell)}(\bm y)=&\lim_{\beta \to 0}\SumInt_{\bm z\in \mathds R^d\setminus B_\delta,\Lambda} \hat \chi_\beta(\bm z)\, a_\nu^{(\ell)}(\bm y,\bm z),
  \end{align*}
  for a family of smooth cutoff functions $\hat \chi_\beta$, $\beta > 0$, and an arbitrary $\delta\in(0,a_\Lambda)$ such that $\delta<\vert \bm y\vert$.
\end{definition}

The zero order Bernoulli function $A_{\Lambda,\nu}^{(0)}$ is displayed in Fig.~\ref{fig:singular_bernoulli_function} for the two-dimensional grid $\Lambda=\mathds Z^2$ and an interaction coefficient $\nu=2+10^{-3}$. The Bernoulli function exhibits the characteristic logarithmic singularities of the fundamental solution at all lattice points. In addition, it has a singularity at the origin $\bm y=0$ that depends on the order $\ell$ and on the interaction coefficient $\nu$. Finally, the asymptotic behaviour of the function is determined by the interaction. We now show the existence  of the singular Bernoulli functions and outline their central properties in the following fundamental theorem. Its proof relies on the Euler--Maclaurin expansion in higher dimensions.

\begin{theorem}[Fundamental theorem of the SEM expansion]\label{th:singular_bernoulli_fundamental_theorem}
  For $\Lambda\in \mathfrak L(\mathds R^d)$, $\ell\in \mathds N$, and $\nu\in \mathds C$, the function $\mathcal A_{\Lambda,\nu}^{(\ell)}$ is well-defined, and independent of the choices for $\phi_\ell$, $\delta$, and $\chi$.
  Furthermore, 
  $\mathcal  A_{\Lambda,\nu}^{(\ell)}$ is analytic and
  the limit $\beta\to 0$ in the definition of $\mathcal  A_{\Lambda,\nu}^{(\ell)}$ is compact in all derivatives. 
\end{theorem}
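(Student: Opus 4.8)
Fix $\bm y \in \mathds R^d \setminus \Lambda$ and, as in Definition~\ref{def:bernoulli_functions}, write $\mathcal A_{\Lambda,\nu,\beta}^{(\ell)}(\bm y) = \SumInt_{\bm z \in \mathds R^d \setminus B_\delta, \Lambda} \hat\chi_\beta(\bm z)\, a_\nu^{(\ell)}(\bm y, \bm z)$ for the regularised sum-integral whose $\beta \to 0$ limit defines $\mathcal A_{\Lambda,\nu}^{(\ell)}(\bm y)$. The plan is to rewrite this quantity by means of the higher-dimensional EM expansion of Section~\ref{sec:hd_em_expansion}, converting the (for general $\nu$ only conditionally convergent) sum-integral into a boundary term plus a remainder in which the limit $\beta \to 0$ can be taken termwise. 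Since $a_\nu^{(\ell)}(\bm y, \,\cdot\,)$ carries the fundamental solution $\phi_\ell$ at $\bm z = \bm y$ and is therefore only finitely differentiable there, while the remainder forces the expansion order $m$ to be chosen large, I would first excise a closed ball $\bar B_\rho(\bm y)$ — small enough to contain no lattice point and to be disjoint from $\bar B_\delta$ — and apply the unbounded-domain EM expansion of Corollary~\ref{cor:em_expansion_unbounded_domain} at order $m$ on $\Omega' = \mathds R^d \setminus (\bar B_\delta \cup \bar B_\rho(\bm y))$ to $f_\beta := \hat\chi_\beta\, a_\nu^{(\ell)}(\bm y, \,\cdot\,)$, which is smooth on $\bar\Omega'$ and decays at infinity since $\hat\chi_\beta$ is a Schwartz function. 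As $\bar B_\rho(\bm y)$ contains no lattice point, additivity of the sum-integral yields, for every $\beta > 0$ and every $m$ with $2(m+1) > \max\{d,\, 2(\ell+1) - \Re{\nu} + d\}$,
\begin{multline*}
\mathcal A_{\Lambda,\nu,\beta}^{(\ell)}(\bm y) = \int\limits_{\partial\Omega'} \big\langle \bm{\mathcal D}^{(m)}_{\Lambda,0,\bm z} f_\beta(\bm z), \bm n_{\bm z} \big\rangle \, \mathrm d S_{\bm z}
+ \int\limits_{\Omega'} \mathcal B_\Lambda^{(m)}(\bm z)\, \Delta_{\bm z}^{m+1} f_\beta(\bm z) \, \mathrm d \bm z \\
- \frac{1}{V_\Lambda} \int\limits_{\bar B_\rho(\bm y)} f_\beta(\bm z) \, \mathrm d \bm z .
\end{multline*}

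Next I would let $\beta \to 0$ in the three terms separately. The two spheres constituting $\partial\Omega'$ and the ball $\bar B_\rho(\bm y)$ are compact and avoid $\bm 0$, $\bm y$ and $\Lambda$, so $\hat\chi_\beta \to 1$ there in $C^\infty$ by Lemma~\ref{lem:chi_beta_compact_convergence} while $a_\nu^{(\ell)}(\bm y, \,\cdot\,)$ and $\mathcal B_\Lambda^{(m)}$ are smooth on neighbourhoods of these sets; hence the boundary integral converges, and the excised-ball integral converges by dominated convergence since $\phi_\ell \in L^1_{\mathrm{loc}}$. For the remainder, Leibniz' rule together with Lemma~\ref{lem:uniform-bound-chi-beta} and Lemma~\ref{lem:symbol_derivatives_estimate} gives $|\Delta_{\bm z}^{m+1} f_\beta(\bm z)| \le C |\bm z|^{2(\ell+1) - \Re{\nu} - 2(m+1)}$ for large $|\bm z|$, uniformly in $\beta \in (0,1)$, which is integrable over the unbounded $\Omega'$ by the choice of $m$, and $\mathcal B_\Lambda^{(m)}$ is bounded by Corollary~\ref{lem:b_ell_properties}, so dominated convergence applies once more. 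Thus $\mathcal A_{\Lambda,\nu,\beta}^{(\ell)}(\bm y)$ converges for all $\nu \in \mathds C$, and the limiting expression — the same three integrals with every $\hat\chi_\beta$ deleted — is manifestly independent of the mollifier family $\chi$. Independence of $\phi_\ell$ is already contained in Lemma~\ref{lem:symbol_uniqueness}, and independence of $\delta$ follows because enlarging $\delta$ changes $\mathcal A_{\Lambda,\nu,\beta}^{(\ell)}(\bm y)$ only by $-V_\Lambda^{-1}$ times an integral over a spherical shell $\{\delta_1 \le |\bm z| < \delta_2\}$ meeting $\Lambda$ nowhere, and by Lemma~\ref{lem:symbol_surface_integral} this tends to $-V_\Lambda^{-1} \int_{\delta_1}^{\delta_2} \big( \int_{\partial B_r} a_\nu^{(\ell)}(\bm y, \bm z) \, \mathrm d S_{\bm z} \big)\, \mathrm d r = 0$ as $\beta \to 0$.

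For analyticity and the promotion of the limit to $C^\infty(\mathds R^d \setminus \Lambda)$, the key identity is
\[
\Delta_{\bm y}^{\ell+1} \mathcal A_{\Lambda,\nu,\beta}^{(\ell)}(\bm y) = - \frac{1}{V_\Lambda}\, \hat\chi_\beta(\bm y)\, |\bm y|^{-\nu}, \qquad \bm y \in \mathds R^d \setminus \Lambda, \ |\bm y| > \delta,
\]
which I would obtain by moving $\Delta_{\bm y}^{\ell+1}$ inside the lattice sum and the integral (legitimate for $\beta > 0$ by the superpolynomial decay of $\hat\chi_\beta$): on each summand $\Delta_{\bm y}^{\ell+1} a_\nu^{(\ell)}(\bm y, \bm z) = 0$ since $\phi_\ell$ and all its $\bm y$-derivatives satisfy $\Delta^{\ell+1}(\,\cdot\,) = 0$ away from the origin and $\bm z \ne \bm y$, while in the integral only the Newton-potential piece $\int \hat\chi_\beta(\bm z) |\bm z|^{-\nu} \phi_\ell(\bm y - \bm z)\, \mathrm d \bm z$ survives and $\Delta^{\ell+1} \phi_\ell = \delta_{\bm 0}$ reproduces $\hat\chi_\beta(\bm y) |\bm y|^{-\nu}$, the subtracted Taylor polynomial of $\phi_\ell$ being again annihilated. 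By Lemma~\ref{lem:chi_beta_compact_convergence} the right-hand side converges to $-V_\Lambda^{-1} s_\nu$ in $C^\infty(\mathds R^d \setminus \Lambda)$; moreover the decomposition of the first paragraph shows $\mathcal A_{\Lambda,\nu,\beta}^{(\ell)}$ is locally uniformly bounded in $\beta \in (0,1)$, so it converges to $\mathcal A_{\Lambda,\nu}^{(\ell)}$ in $L^1_{\mathrm{loc}}$, hence in $\mathscr D'(\mathds R^d \setminus \Lambda)$. Theorem~\ref{thm:weak-convergence-implies-uniform} with the elliptic operator $\mathcal L = \Delta^{\ell+1}$ then upgrades this to convergence in $C^\infty(\mathds R^d \setminus \Lambda)$, i.e. compactly in all derivatives, and gives $\Delta^{\ell+1} \mathcal A_{\Lambda,\nu}^{(\ell)} = -V_\Lambda^{-1} s_\nu$ there. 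Since $s_\nu$ is analytic on $\mathds R^d \setminus \{\bm 0\} \supseteq \mathds R^d \setminus \Lambda$, analytic-hypoellipticity of $\Delta^{\ell+1}$ (Theorem~\ref{cor:elliptic-analytic-regularity}) shows $\mathcal A_{\Lambda,\nu}^{(\ell)}$ is analytic on $\mathds R^d \setminus \Lambda$.

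The main obstacle I expect is the interplay between the $\bm z = \bm y$ singularity of the Bernoulli symbol and the need to take the EM order $m$ arbitrarily large for general $\nu$: one cannot feed $f_\beta$ to the EM expansion on $\mathds R^d \setminus \bar B_\delta$ directly, so the ball-excision is forced, and one must check that the extra sphere in $\partial\Omega'$, the extra volume integral over $\bar B_\rho(\bm y)$, and the dependence on $m$ and $\rho$ all disappear in the limit — which they do once the limit is known to exist and be $\chi$-, $\delta$- and $\phi_\ell$-independent. The secondary, more technical, point is extracting enough uniformity in $\beta$ from Lemmas~\ref{lem:uniform-bound-chi-beta}, \ref{lem:symbol_derivatives_estimate} and the asymptotics of $\mathcal B_\Lambda^{(m)}$ in Corollary~\ref{lem:b_ell_properties} to run the dominated-convergence arguments and, crucially, to pass from pointwise to $\mathscr D'$-convergence, which is what makes Theorem~\ref{thm:weak-convergence-implies-uniform} applicable; once the identity $\Delta_{\bm y}^{\ell+1} \mathcal A_{\Lambda,\nu,\beta}^{(\ell)} = -V_\Lambda^{-1} \hat\chi_\beta s_\nu$ is in hand, analyticity and the $C^\infty$-limit follow almost formally from the elliptic-regularity results of Section~\ref{sec:preliminaries}.
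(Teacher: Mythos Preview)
Your proposal is correct and follows the paper's overall architecture: use the higher-dimensional EM expansion to control the $\beta\to 0$ limit of the regularised sum-integral, compute $\Delta^{\ell+1}\mathcal A_{\Lambda,\nu,\beta}^{(\ell)}$ on $\mathds R^d\setminus\Lambda$, and then invoke Theorems~\ref{thm:weak-convergence-implies-uniform} and~\ref{cor:elliptic-analytic-regularity} to upgrade weak convergence to $C^\infty$-convergence and to obtain analyticity. The essential ingredients and the order in which they are assembled coincide with the paper's Propositions~\ref{prop:delta_independence_A_ell}--\ref{prop:sum_integral_property_A_ell}.

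The one noteworthy technical difference is how you cope with the singularity of $a_\nu^{(\ell)}(\bm y,\,\cdot\,)$ at $\bm z=\bm y$ when feeding it to the EM expansion at arbitrarily high order $m$. You excise a small ball $\bar B_\rho(\bm y)$ and carry the resulting extra boundary sphere and volume correction through the limit. The paper instead splits the sum-integral at a \emph{large} radius $R$ chosen so that $K\subseteq B_R$: the EM expansion is applied only on $\mathds R^d\setminus B_R$, where the symbol is smooth and the estimates of Lemma~\ref{lem:symbol_derivatives_estimate} are in force, while the troublesome point $\bm z=\bm y$ sits harmlessly in the bounded piece $B_R\setminus B_\delta$, which converges by dominated convergence without any expansion. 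This avoids the moving boundary component $\partial B_\rho(\bm y)$ and lets the paper establish $L^1_{\mathrm{loc}}$-convergence directly by first integrating over a compact $K$ (Proposition~\ref{prop:beta_limit_A_ell}), rather than arguing pointwise convergence plus local uniform boundedness as you do. Your route works, but the uniformity-in-$\bm y$ over compact sets that you need for the $L^1_{\mathrm{loc}}$ step deserves a sentence more of justification (choosing $\rho$ uniformly for $\bm y\in K$ and checking the three pieces are bounded independently of $\bm y$ and $\beta$); the paper's large-$R$ splitting sidesteps this entirely.
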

We split the proof into several propositions and lemmas, for all of which the conditions on $\Lambda$, $\ell$, $\nu$, and $\hat \chi_\beta$ from Definition~\ref{def:bernoulli_functions} shall hold.
The first proposition is concerned with the well-definedness of the sum-integral in the definition of the singular Bernoulli functions for finite $\beta>0$. 
\begin{proposition}\label{prop:delta_independence_A_ell}
  For $\beta>0$, the auxiliary function $\mathcal A_{\Lambda,\nu,\beta}^{(\ell)}:\mathds R^d\setminus \Lambda \to \mathds C$ with
  \[
  \mathcal A_{\Lambda,\nu,\beta}^{(\ell)}(\bm y)=\SumInt_{\bm z\in \mathds R^d\setminus B_\delta,\Lambda} \hat \chi_\beta(\bm z)\, a_\nu^{(\ell)}(\bm y,\bm z),\quad \vert \bm y\vert>\delta,
  \]
  for $0<\delta<a_\Lambda$ is analytic and independent of the choices for $\delta$ and $\phi_\ell$.
\end{proposition}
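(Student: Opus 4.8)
The plan is to establish, in order: (i) absolute convergence of the sum-integral (well-definedness); (ii) independence of the choice of $\phi_\ell$; (iii) independence of $\delta$; and (iv) analyticity, the last via elliptic regularity, which is where the real work lies. For (i) I would first split the sum-integral into its sum and its integral. In the sum $\sum_{\bm z\in\Lambda,\,|\bm z|>\delta}\hat\chi_\beta(\bm z)\,a_\nu^{(\ell)}(\bm y,\bm z)$ no term hits the diagonal since $\bm y\notin\Lambda$, and Lemma~\ref{lem:symbol_derivatives_estimate} with $\bm\alpha=\bm 0$ bounds $|a_\nu^{(\ell)}(\bm y,\bm z)|$ by $C|\bm z|^{2(\ell+1)-\Re \nu}$ for large $|\bm z|$, uniformly for $\bm y$ in compacta of $\mathds R^d\setminus\Lambda$; as $\hat\chi_\beta$ is a Schwartz function this polynomial growth is dominated, so the sum converges absolutely and locally uniformly on $\mathds R^d\setminus\Lambda$ and defines a continuous function there. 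In the integral over $\mathds R^d\setminus B_\delta$ the only source of non-integrability is $\bm z=\bm y$, where the subtracted Taylor polynomial of $a_\nu^{(\ell)}$ is smooth in $\bm z$ and the remaining term behaves like the locally integrable $|\bm z|^{-\nu}\phi_\ell(\bm y-\bm z)$ by Lemma~\ref{lem:phi_ell_estimate}; superpolynomial decay at infinity again comes from $\hat\chi_\beta$, so by Fubini--Tonelli the integral is finite and defines an $L^1_{\mathrm{loc}}$-function of $\bm y$. Thus $\mathcal A^{(\ell)}_{\Lambda,\nu,\beta}$ is a well-defined element of $\mathscr D'\big((\mathds R^d\setminus\Lambda)\cap\{|\bm y|>\delta\}\big)$, and (ii) is immediate from Lemma~\ref{lem:symbol_uniqueness} since $\mathcal A^{(\ell)}_{\Lambda,\nu,\beta}$ is built only from the Bernoulli symbol.

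For (iii), given $0<\delta_1<\delta_2<\min(a_\Lambda,|\bm y|)$ the two versions of $\mathcal A^{(\ell)}_{\Lambda,\nu,\beta}(\bm y)$ differ only by the sum-integral over the annulus $B_{\delta_2}\setminus B_{\delta_1}$. This annulus contains no lattice point because $\delta_2<a_\Lambda$, so the sum part of the difference is empty; by the coarea formula the integral part equals $\int_{\delta_1}^{\delta_2}\big(\int_{\partial B_r}\hat\chi_\beta(\bm z)\,a_\nu^{(\ell)}(\bm y,\bm z)\,\mathrm dS_{\bm z}\big)\,\mathrm dr$, and since $\chi$, hence $\hat\chi_\beta$, is rotationally invariant, $\hat\chi_\beta$ is constant on each sphere $\partial B_r$, so Lemma~\ref{lem:symbol_surface_integral} makes each inner surface integral vanish for $r\le\delta_2<|\bm y|$. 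Hence the difference is zero and $\mathcal A^{(\ell)}_{\Lambda,\nu,\beta}$ extends unambiguously to all of $\mathds R^d\setminus\Lambda$.

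For (iv), the central step is to compute $\Delta^{\ell+1}\mathcal A^{(\ell)}_{\Lambda,\nu,\beta}$ as a distribution. For fixed $\bm z\ne\bm 0$, in the variable $\bm y$ and on $\mathds R^d\setminus\{\bm 0\}$, one has $\Delta^{\ell+1}_{\bm y}a_\nu^{(\ell)}(\bm y,\bm z)=|\bm z|^{-\nu}\delta_{\bm z}$: applying $\Delta^{\ell+1}$ to $\phi_\ell(\bm y-\bm z)$ produces $\delta_{\bm z}$, while each term of the subtracted Taylor polynomial turns into a derivative of $\delta_{\bm 0}$ and so is annihilated away from the origin. Differentiating the locally uniformly convergent sum termwise therefore gives a distribution supported on $\Lambda$, hence $0$ on $\mathds R^d\setminus\Lambda$; testing the integral against $\psi\in C_0^\infty$, moving $\Delta^{\ell+1}$ onto $\psi$ and swapping the order of integration by Fubini gives $\Delta^{\ell+1}_{\bm y}\int_{|\bm z|>\delta}\hat\chi_\beta(\bm z)\,a_\nu^{(\ell)}(\bm y,\bm z)\,\mathrm d\bm z=\hat\chi_\beta(\bm y)\,|\bm y|^{-\nu}$ on $\{|\bm y|>\delta\}$. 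Consequently $\Delta^{\ell+1}\mathcal A^{(\ell)}_{\Lambda,\nu,\beta}=-V_\Lambda^{-1}\hat\chi_\beta(\bm y)\,|\bm y|^{-\nu}$ on $(\mathds R^d\setminus\Lambda)\cap\{|\bm y|>\delta\}$, which is real-analytic there because $\hat\chi_\beta$ is entire (Paley--Wiener) and $|\bm y|^{-\nu}$ is analytic off the origin. Since $\Delta^{\ell+1}$ is an elliptic constant-coefficient operator, Theorem~\ref{cor:elliptic-analytic-regularity} shows $\mathcal A^{(\ell)}_{\Lambda,\nu,\beta}$ is analytic on $(\mathds R^d\setminus\Lambda)\cap\{|\bm y|>\delta\}$, and letting $\delta\downarrow 0$ and invoking the $\delta$-independence just proved yields analyticity on all of $\mathds R^d\setminus\Lambda$.

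I expect the main obstacle to be the rigorous justification of the distributional computation of $\Delta^{\ell+1}$ acting through the integral: one must commute $\Delta^{\ell+1}$ with $\int_{|\bm z|>\delta}\cdots\,\mathrm d\bm z$ past the diagonal singularity of $\phi_\ell(\bm y-\bm z)$ and verify that the subtracted Taylor polynomial of order $2\ell+1$ in $a_\nu^{(\ell)}$ contributes nothing away from $\bm z=\bm 0$. This is precisely where the exact count of subtracted derivatives and the estimate of Lemma~\ref{lem:phi_ell_estimate} enter; everything else is bookkeeping with the Schwartz decay of $\hat\chi_\beta$ and the surface-integral identity of Lemma~\ref{lem:symbol_surface_integral}.
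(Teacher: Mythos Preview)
Your argument is correct. Parts (i)--(iii) match the paper's proof essentially verbatim: the paper also invokes the superpolynomial decay of $\hat\chi_\beta$ for well-definedness, Lemma~\ref{lem:symbol_uniqueness} for $\phi_\ell$-independence, and the vanishing surface integral of Lemma~\ref{lem:symbol_surface_integral} together with rotational invariance of $\hat\chi_\beta$ for $\delta$-independence.

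Part (iv) is where you diverge. The paper dispatches analyticity in one sentence, asserting that the decay of $\hat\chi_\beta$ ``permits to interchange differentiation with the sum-integral, so that $\mathcal A^{(\ell)}_{\Lambda,\nu,\beta}$ inherits the analyticity of the Bernoulli symbol.'' This is transparent for the lattice sum (each summand is analytic in $\bm y$ on $\mathds R^d\setminus\Lambda$ and the series, together with all derivative series, converges locally uniformly), but for the integral part the integrand has a diagonal singularity at $\bm z=\bm y$, so differentiation under the integral sign is a priori only justified up to order $2\ell+1$ by Lemma~\ref{lem:phi_ell_estimate}. Your route---computing $\Delta^{\ell+1}\mathcal A^{(\ell)}_{\Lambda,\nu,\beta}=-V_\Lambda^{-1}\hat\chi_\beta\,|\cdot|^{-\nu}$ distributionally and then invoking analytic hypoellipticity (Theorem~\ref{cor:elliptic-analytic-regularity})---handles this cleanly and in fact establishes, for finite $\beta$, the identity that the paper proves separately as Proposition~\ref{prop:sum_integral_property_A_ell}. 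So your argument is longer but more self-contained; the paper's brevity leans on the reader supplying the elliptic-regularity step for the Newton-potential piece of the integral.
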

\begin{proof}
  The sum-integral is well defined due to the superpolynomial decay of $\hat \chi_\beta(\bm z)$ as $|\bm z|\to \infty$.
This decay also permits to interchange differentiation with the sum-integral,
so that $\mathcal A^{(\ell)}_{\Lambda, \nu, \beta}$ inherits the analyticity of the Bernoulli symbol.

  We now show that the auxiliary function does not depend on the particular choice for $\delta$. To that end, let $\bm y \in \mathds R^d \setminus \Lambda$ and pick $\delta_1, \delta_2$ from $(0, a_\Lambda)$ smaller than $|\bm y|$.
  Without loss of generality, we assume $\delta_2 > \delta_1$.
  We first note that the sum over $\Lambda$ is independent of the choice for $\delta$ since we require that it is smaller than $a_\Lambda$, the minimal distance of two points in $\Lambda$.
  The difference of the sum-integral for the two choices $\delta_1, \delta_2$ is then proportional to
  \begin{equation}\label{eq:difference-different-delta}
\int \limits_{B_{\delta_2} \setminus B_{\delta_1}}\hat \chi_\beta(\bm z)\, a_\nu^{(\ell)}(\bm y,\bm z)\,\mathrm d \bm z.
  \end{equation}
  Now we know from Lemma~\ref{lem:symbol_surface_integral} that
  \[
  \int \limits_{\partial B_r}  a_\nu^{(\ell)}(\bm y,\bm z)\,\mathrm d S_{\bm z}=0,\quad r<\vert \bm y\vert,
  \]
  so the integral in~\eqref{eq:difference-different-delta} vanishes since $\hat \chi_\beta$ is rotationally symmetric.
  This proves that the auxiliary function does not depend of $\delta$.
  Furthermore, by Lemma~\ref{lem:symbol_uniqueness}, the Bernoulli symbol and thus the auxiliary function do not depend on the choice of the fundamental solution.
\end{proof}  
\begin{proposition}\label{prop:beta_limit_A_ell}
  The auxiliary functions $\mathcal A_{\Lambda, \nu, \beta}^{(\ell)}$, $\beta > 0$, are locally integrable on $\mathds R^d \setminus \{ \bm 0 \}$
  with
  \[
    \int\limits_K \mathcal A_{\Lambda,\nu,\beta}^{(\ell)}(\bm y) \, \text d \bm y = \SumInt_{\bm z\in \mathds R^d\setminus B_\delta,\Lambda} \hat\chi_\beta(\bm z)
    \int\limits_K  a_\nu^{(\ell)}(\bm y,\bm z) \, \text d \bm y,
    \quad \beta > 0,
  \]
  for $K \subseteq \mathds R^d \setminus \{ \bm 0 \}$ compact and $\delta > 0$ such that
  \[
    \delta < \min\big(a_\Lambda, \mathrm{dist}(\bm 0,K) \big).
  \]
  Furthermore, the auxiliary functions converge in $L^1_\text{loc}(\mathds R^d \setminus \{ \bm 0 \})$ for $\beta \to 0$
  to the locally integrable function $\mathcal A_{\Lambda,\nu}^{(\ell)}$, which is independent of the choice of $\chi$.
  In particular,
  \[
    \int\limits_K \mathcal A_{\Lambda, \nu}^{(\ell)}(\bm y) \, \text d \bm y =
    \lim_{\beta \to 0} \SumInt_{\bm z\in \mathds R^d\setminus B_\delta,\Lambda} \hat\chi_\beta(\bm z)
    \int\limits_K  a_\nu^{(\ell)}(\bm y,\bm z) \, \text d \bm y.
  \]
\end{proposition}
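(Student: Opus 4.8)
The plan is to first establish the Fubini-type identity for fixed $\beta>0$, then upgrade to the limit $\beta\to 0$ by an $L^1_\mathrm{loc}$ Cauchy argument built on the decay estimate of Lemma~\ref{lem:symbol_derivatives_estimate}. For the first part I would fix a compact $K\subseteq\mathds R^d\setminus\{\bm 0\}$ and $\delta<\min(a_\Lambda,\operatorname{dist}(\bm 0,K))$. By Proposition~\ref{prop:delta_independence_A_ell} the integrand $\bm y\mapsto \mathcal A_{\Lambda,\nu,\beta}^{(\ell)}(\bm y)$ is continuous on $\mathds R^d\setminus\Lambda$, hence measurable; local integrability on $\mathds R^d\setminus\{\bm 0\}$ follows once the Fubini identity is in hand, since the right-hand side is finite. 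The sum-integral $\SumInt_{\bm z\in\mathds R^d\setminus B_\delta,\Lambda}$ splits into a sum over $\Lambda\cap(\mathds R^d\setminus B_\delta)=\Lambda\setminus\{\bm 0\}$ and $-V_\Lambda^{-1}$ times an integral over $\mathds R^d\setminus B_\delta$. For the lattice sum, $\hat\chi_\beta$ is Schwartz and $a_\nu^{(\ell)}(\bm y,\bm z)$ grows at most polynomially in $\bm z$ uniformly for $\bm y\in K$ by Lemma~\ref{lem:symbol_derivatives_estimate}, so the sum converges absolutely and uniformly in $\bm y\in K$; interchanging $\sum_{\bm z}$ with $\int_K\,\mathrm d\bm y$ is then immediate. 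For the integral term, on the region $|\bm z|$ large the same lemma gives an integrable (in $\bm z$, over $\mathds R^d\setminus B_R$) majorant uniform in $\bm y\in K$, while on the compact annulus $B_R\setminus B_\delta$ the function $(\bm y,\bm z)\mapsto\hat\chi_\beta(\bm z)a_\nu^{(\ell)}(\bm y,\bm z)$ is continuous away from the diagonal, and since $|\bm z|\le R<\operatorname{dist}(\bm 0,K)\le$ may fail — here I would instead note $\delta<\operatorname{dist}(\bm 0,K)$ keeps $\bm 0$ separated but the diagonal $\bm y=\bm z$ can still occur; however $a_\nu^{(\ell)}(\bm y,\bm z)$ for $|\bm z|<|\bm y|$ is given by the Taylor-remainder series of Lemma~\ref{lem:symbol_surface_integral}, which is \emph{continuous} up to $\bm z=\bm y$, so no singularity arises there and Tonelli/Fubini applies on $B_R\setminus B_\delta$ as well. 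Combining the two pieces yields the stated identity for each $\beta>0$, and local integrability of $\mathcal A_{\Lambda,\nu,\beta}^{(\ell)}$ follows.

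For the convergence as $\beta\to0$ I would show the net $(\mathcal A_{\Lambda,\nu,\beta}^{(\ell)})_{\beta>0}$ is Cauchy in $L^1(K)$ for every such $K$. Write $\mathcal A_{\Lambda,\nu,\beta}^{(\ell)}-\mathcal A_{\Lambda,\nu,\beta'}^{(\ell)}$ using the already-established Fubini identity: its $L^1(K)$-norm is bounded by
\[
\sum_{\bm z\in\Lambda\setminus\{\bm 0\}}\bigl|\hat\chi_\beta(\bm z)-\hat\chi_{\beta'}(\bm z)\bigr|\int_K|a_\nu^{(\ell)}(\bm y,\bm z)|\,\mathrm d\bm y
+V_\Lambda^{-1}\int_{\mathds R^d\setminus B_\delta}\bigl|\hat\chi_\beta(\bm z)-\hat\chi_{\beta'}(\bm z)\bigr|\int_K|a_\nu^{(\ell)}(\bm y,\bm z)|\,\mathrm d\bm y\,\mathrm d\bm z.
\]
By Lemma~\ref{lem:symbol_derivatives_estimate} (with $\bm\alpha=\bm 0$), $\int_K|a_\nu^{(\ell)}(\bm y,\bm z)|\,\mathrm d\bm y\le C|\bm z|^{2(\ell+1)-\Re\nu}$ for $|\bm z|>R$, together with $\sup_{\beta\in(0,1)}|\hat\chi_\beta(\bm z)|\le C(1+|\bm z|)^{-N}$ for every $N$ by Lemma~\ref{lem:uniform-bound-chi-beta}; choosing $N$ large makes the tails (both the $\Lambda$-sum and the integral) dominated by a convergent series/integral uniformly in $\beta,\beta'\in(0,1)$. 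On the remaining finite part ($\bm z\in\Lambda$ with $|\bm z|\le R$, and $\bm z$ in the compact annulus), $\hat\chi_\beta\to 1$ pointwise — in fact in $C^\infty$ by Lemma~\ref{lem:chi_beta_compact_convergence} — so $|\hat\chi_\beta-\hat\chi_{\beta'}|\to0$ uniformly there. A standard $\varepsilon/2$ split then shows the net is $L^1(K)$-Cauchy, hence converges to a limit in $L^1(K)$; these limits are consistent across nested $K$ and patch to a function $\mathcal A_{\Lambda,\nu}^{(\ell)}\in L^1_\mathrm{loc}(\mathds R^d\setminus\{\bm 0\})$. Since $L^1$-convergence permits passing to the limit under $\int_K$, and each $\mathcal A_{\Lambda,\nu,\beta}^{(\ell)}$ satisfies the Fubini identity while $\lim_{\beta\to0}\SumInt\hat\chi_\beta(\bm z)\int_K a_\nu^{(\ell)}(\bm y,\bm z)\,\mathrm d\bm y$ exists by the same dominated-convergence estimate applied to the $\bm z$-variable, the displayed limit identity for $\int_K\mathcal A_{\Lambda,\nu}^{(\ell)}$ follows.

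Finally, independence of the limit from the choice of mollifier $\chi$: given two families $\hat\chi_\beta,\hat\chi_\beta'$, both produce $L^1_\mathrm{loc}$-limits, and on each $K$ the difference of the $\beta$-regularised versions is controlled exactly as above by $\sum_{\bm z}|\hat\chi_\beta(\bm z)-\hat\chi_\beta'(\bm z)|(\cdots)+\dots$, which tends to $0$ because both cutoffs converge to the constant $1$ in $C^\infty$ and share the uniform bound of Lemma~\ref{lem:uniform-bound-chi-beta}; hence the two limits agree a.e. on every $K$, so a.e. on $\mathds R^d\setminus\{\bm 0\}$. The main obstacle I anticipate is bookkeeping the diagonal $\bm y=\bm z$ inside the annulus $B_R\setminus B_\delta$ when applying Fubini — this is resolved by invoking the Taylor-remainder representation of $a_\nu^{(\ell)}$ from the proof of Lemma~\ref{lem:symbol_surface_integral}, which shows the symbol is actually continuous there and not singular, so no extra care beyond dominated convergence is needed.
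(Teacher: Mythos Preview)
Your approach has a genuine gap in the tail estimate for the Cauchy argument. You claim that $\sup_{\beta\in(0,1)}|\hat\chi_\beta(\bm z)|\le C(1+|\bm z|)^{-N}$ for every $N$ by Lemma~\ref{lem:uniform-bound-chi-beta}, but that lemma only gives $|D^{\bm\alpha}\hat\chi_\beta(\bm\xi)|\le C(1+|\bm\xi|)^{-|\bm\alpha|}$, i.e.\ decay of order $|\bm\alpha|$ for the $\bm\alpha$-th \emph{derivative}. For $\bm\alpha=\bm 0$ this is merely uniform boundedness, and indeed no uniform decay can hold: since $\hat\chi_\beta(\bm z)=\hat\chi(\beta\bm z)\to 1$ pointwise as $\beta\to0$, any bound $|\hat\chi_\beta(\bm z)|\le C(1+|\bm z|)^{-N}$ uniformly in $\beta$ would force $1\le C(1+|\bm z|)^{-N}$ for all $\bm z$. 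Consequently your dominated-convergence majorant for the tail $|\bm z|>R$ does not exist: the integrand $|\hat\chi_\beta(\bm z)-\hat\chi_{\beta'}(\bm z)|\cdot C|\bm z|^{2(\ell+1)-\Re\nu}$ has no $\beta$-uniform integrable (or summable) bound when $2(\ell+1)-\Re\nu\ge -d$. Splitting the sum-integral into a lattice sum and an integral and taking absolute values, as you do, destroys the cancellation between them that makes the $\beta\to0$ limit finite in the first place.

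The paper's proof resolves this precisely by \emph{not} separating sum and integral on the unbounded region. Instead it applies the multidimensional EM expansion (Corollary~\ref{cor:em_expansion_unbounded_domain}) to $\SumInt_{\mathds R^d\setminus B_R,\Lambda}\hat\chi_\beta\, a_\nu^{(\ell)}(\bm y,\,\bm\cdot\,)$, converting it into a surface integral over $\partial B_R$ (compact, hence harmless under $\hat\chi_\beta\to1$ in $C^\infty$) plus a remainder $\int_{\mathds R^d\setminus B_R}\mathcal B_\Lambda^{(m)}(\bm z)\,\Delta^{m+1}\big(\hat\chi_\beta\, a_\nu^{(\ell)}(\bm y,\,\bm\cdot\,)\big)(\bm z)\,\mathrm d\bm z$. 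The poly-Laplacian puts derivatives on $\hat\chi_\beta$, and \emph{those} do decay uniformly by Lemma~\ref{lem:uniform-bound-chi-beta}; combined with Lemma~\ref{lem:symbol_derivatives_estimate} this yields a genuine $\beta$-independent integrable majorant once $m$ is chosen large enough. Your Fubini argument for fixed $\beta$ is fine in spirit (the diagonal $\bm y=\bm z$ is not an obstacle because $\phi_\ell$ is locally integrable, so $a_\nu^{(\ell)}(\,\bm\cdot\,,\bm z)\in L^1_{\mathrm{loc}}(\mathds R^d)$), but the passage to $\beta\to0$ requires the EM expansion or some equivalent device to exploit the sum--integral cancellation.
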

\begin{proof}
  As $a_\nu^{(\ell)}(\, \bm \cdot\,, \bm z)\in L_{loc}^1(\mathds R^d)$ for $\bm z\in \mathds R^d\setminus \{\bm 0\}$ and due to superpolynomially decay of $\hat \chi_\beta$, the integral of the auxiliary function over the compact set $K\subseteq \mathds R^d\setminus\{\bm 0\}$ exists and can be written as
  \[
    \int\limits_K \mathcal A_{\Lambda,\nu,\beta}^{(\ell)}(\bm y) \, \text d \bm y = \SumInt_{\bm z\in \mathds R^d\setminus B_\delta,\Lambda} \hat\chi_\beta(\bm z)
    \int\limits_K  a_\nu^{(\ell)}(\bm y,\bm z) \, \text d \bm y.
  \]
  In the next step, we choose $R>0$ sufficiently large such that $K \subseteq B_R$, $\dist(K, \partial B_R) > \delta$ and such that the estimates in Lemma~\ref{lem:symbol_derivatives_estimate} hold. Additionally, we request that  $\partial B_R \cap \Lambda = \varnothing$.  We then split the auxiliary function into two parts,
  \begin{equation*}
  \mathcal A_{\Lambda, \nu, \beta}^{(\ell)}(\bm y)
  =
  \SumInt_{B_R\setminus B_\delta,\Lambda} \hat \chi_\beta a_\nu^{(\ell)}(\bm y, \,\bm \cdot\,)
  + \SumInt_{\mathds  R^d\setminus B_R,\Lambda} \hat \chi_\beta a_\nu^{(\ell)}(\bm y,\,\bm \cdot\,), \quad \bm y \in K,
  \end{equation*}
  and expand the sum-integral over the unbounded domain by the EM expansion in Corollary~\ref{cor:em_expansion_unbounded_domain},
  \begin{multline*}
    \SumInt_{\mathds R^d\setminus B_R,\Lambda} \hat \chi_\beta a_{\nu}^{(\ell)}(\bm y, \,\bm \cdot\,)
    =-\int \limits_{\partial B_R} \left \langle \bm {\mathcal D}^{(m)}_{\Lambda,0,\bm z} \Big( \hat \chi_\beta a_{\nu}^{(\ell)}(\bm y,\,\bm \cdot\,) \Big) (\bm z), \bm n_{\bm z} \right \rangle\,\mathrm d S_{\bm z} \\
    +\int \limits_{\mathds R^d\setminus B_R} \mathcal B_\Lambda^{(m)}(\bm z) \Delta^{m+1} \big(\hat \chi_\beta a_{\nu}^{(\ell)}(\bm y,\,\bm \cdot\,) \big)(\bm z) \, \mathrm d \bm z,
  \end{multline*}
  with a yet to be specificied order $m \in \mathds N$.
  As  $\mathrm{dist}(\partial B_R,\Lambda)>0$, the integrand in the surface integral is smooth in a neighbourhood of $\partial B_R$.
  Since $\hat \chi_\beta\to 1$ as $\beta\to 0$ in $C^\infty(\mathds R^d)$, the sum-integral over $B_R \setminus B_\delta$ and the surface integral over $\partial B_R$
  converge in $L^1(K)$ to
  \[
  \bm y \mapsto 
  \SumInt_{B_R\setminus B_\delta,\Lambda} a_\nu^{(\ell)}(\bm y, \,\bm \cdot\,)
  -\int \limits_{\partial B_R} \left \langle \bm {\mathcal D}^{(m)}_{\Lambda,0,\bm z} a_{\nu}^{(\ell)}(\bm y,\,\bm \cdot\,)(\bm z), \bm n_{\bm z} \right \rangle\,\mathrm d S_{\bm z}
  \]
  by virtue of the dominated convergence theorem.
  We now consider the convergence of the remainder,
  \[
    \mathcal R_\beta^{(m)}(\bm y) = \int \limits_{ \mathds R^d\setminus B_R} \mathcal B_\Lambda^{(m)}(\bm z)
    \Delta^{m+1}\big( \hat \chi_\beta a_\nu^{(\ell)}(\bm y, \, \bm \cdot \,) \big)(\bm z)\, \mathrm d \bm z,
    \quad \bm y \in K.
  \]
  Pizetti's formula for the poly-Laplacian in Lemma~\ref{lem:poly_laplacian}  yields 
  \begin{multline*}
  \Delta^{m + 1} \big(\hat \chi_\beta a_{\nu}^{(\ell)}(\bm y, \,\bm \cdot\,) \big) \\
  =
  \frac{p_{m + 1,d}}{\omega_d}
  \sum_{k=0}^{2 (m + 1)}
  \binom{2 (m + 1)}{k}
  \int\limits_{\partial B_1}
  \langle \bm t ,\nabla \rangle^{2(m + 1) - k}
  \hat \chi_\beta \,
  \langle \bm t, \nabla \rangle^{k}
  a_\nu^{(\ell)}(\bm y,\, \bm \cdot\,)
  \, \text d S_{\bm t}.
  \end{multline*}
  With the uniform estimates from Lemma~\ref{lem:uniform-bound-chi-beta} for $\hat \chi_\beta$ and Lemma~\ref{lem:symbol_derivatives_estimate} for $a_\nu^{(\ell)}$, we then find
  \begin{align*}
  |\Delta^{m + 1} \big(\hat \chi_\beta a_{\nu}^{(\ell)}(\bm y, \,\bm \cdot\,) \big)(\bm z)| 
  \leq& C
  \sum_{k=0}^{2 (m + 1)}
  \binom{2 (m + 1)}{k}
  |\bm z|^{-2(m + 1) + k} |\bm z|^{2(\ell + 1) -\Re \nu - k} \\
  =& 2^{2(m + 1)} C |\bm z|^{2(\ell + 1) - \Re \nu - 2(m + 1)}
  \end{align*}
  for all $\bm z \in \mathds R^d \setminus B_R$ and where  $C > 0$ depends only on $K$, $m$, $\ell$ and $\nu$.
  Choosing $m$ sufficiently large such that
  \[
    2(m+1)> \max\big\{2(\ell+1)- \Re\nu+d, 2(\ell+1)\}
  \]
  ensures both that our upper bound for $\big|\Delta^{m + 1}\big( \hat \chi_\beta a_{\nu}^{(\ell)} \big)\big|$ is integrable on $K \times \mathds R^d \setminus B_R$
  and that the Bernoulli function $B_\Lambda^{(m)}$ is bounded by virtue of Corollary~\ref{lem:b_ell_properties}.
  The dominated convergence theorem now shows that $\mathcal R^{(m)}_\beta$ converges in $L^1(K)$ to
  \[
  \bm y \mapsto \int \limits_{\mathds R^d\setminus B_R} \mathcal B_\Lambda^{(m)}(\bm z) \Delta^{m+1}_{\bm z} a_{\nu}^{(\ell)}(\bm y, \bm z) \, \mathrm d \bm z.
  \]
  Therefore, $A_{\Lambda, \nu}^{(\ell)}$ is independent of $\chi$ and locally integrable on $\mathds R^d \setminus \{ \bm 0 \}$
  as the $L^1_\text{loc}$-limit of locally integrable functions.
\end{proof}

We now show that the poly-Laplacian of the auxiliary function yields a sum-integral that includes the regularised interaction. Outside of lattice points, we identify the distribution as a smooth function whose limit $\beta \to 0$ converges in $C^\infty(\mathds R^d\setminus \Lambda)$. 
\begin{proposition}[Sum-integral property]\label{prop:sum_integral_property_A_ell}
  The distributional poly-Laplacian of $\mathcal A_{\Lambda, \nu, \beta}^{(\ell)}$, $\beta > 0$, on $\mathds R^d \setminus \{ \bm 0 \}$ reads
  \[
  \Delta^{\ell+1} \mathcal A_{\Lambda,\nu,\beta}^{(\ell)} =\hat \chi_\beta\frac{\Sha_\Lambda-V_\Lambda^{-1}}{\vert \bm \cdot\vert^\nu}.
  \]
  Furthermore,
  \[
  \Delta^{\ell + 1} \mathcal A_{\Lambda, \nu, \beta} \to -\frac{V_\Lambda}{|\, \bm \cdot \,|^\nu}, \quad \beta \to 0,
  \]
  in $C^\infty(\mathds R^d \setminus \Lambda)$ and
  \[
    \lim_{\beta \to 0}\big\langle\Delta^{\ell+1} \mathcal A_{\Lambda,\nu,\beta}^{(\ell)},\psi \rangle =\bigg\langle \frac{\Sha_\Lambda-V_\Lambda^{-1}}{\vert \bm \cdot\vert^\nu},\psi \bigg\rangle=\SumInt_{\mathds R^d,\Lambda}\frac{\psi}{\vert \bm \cdot\vert^\nu}
    \]
  for all $\psi \in C_0^\infty(\mathds R^d \setminus \{ \bm 0 \})$.
\end{proposition}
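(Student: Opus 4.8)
The plan is to reduce all three assertions to the single pointwise fact that, for fixed $\bm z \neq \bm 0$, the Bernoulli symbol satisfies
\[
\Delta^{\ell+1}_{\bm y}\, a_\nu^{(\ell)}(\bm y,\bm z) = \frac{1}{|\bm z|^\nu}\,\delta_{\bm z}
\quad\text{as distributions on }\mathds R^d\setminus\{\bm 0\},
\]
and then to transport this through the regularised sum-integral. To see the identity, recall that $a_\nu^{(\ell)}(\,\bm\cdot\,,\bm z)$ is locally integrable on $\mathds R^d$ (its only singularities being the locally integrable one of $\phi_\ell$ at $\bm y=\bm z$ and those of $\langle -\bm z,\nabla\rangle^k\phi_\ell$ at $\bm y=\bm 0$). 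Since $\Delta^{\ell+1}\phi_\ell=\delta_{\bm 0}$ and constant-coefficient operators commute, $\Delta^{\ell+1}_{\bm y}\phi_\ell(\bm y-\bm z)=\delta_{\bm z}$, while $\Delta^{\ell+1}\big(\langle -\bm z,\nabla\rangle^k\phi_\ell\big)=\langle -\bm z,\nabla\rangle^k\delta_{\bm 0}$ is supported at the origin and hence vanishes on $\mathds R^d\setminus\{\bm 0\}$; multiplying by the $\bm y$-independent factor $|\bm z|^{-\nu}$ gives the claim. (That the symbol is integrable in its first argument --- the fact underlying the entire sum-integral construction --- is precisely what fixes the truncation order at $2\ell+1$, cf.\ Lemma~\ref{lem:symbol_uniqueness}.)

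For the first identity I would use that $\mathcal A^{(\ell)}_{\Lambda,\nu,\beta}\in L^1_{\mathrm{loc}}(\mathds R^d\setminus\{\bm 0\})$ by Proposition~\ref{prop:beta_limit_A_ell}, so that its distributional poly-Laplacian on $\mathds R^d\setminus\{\bm 0\}$ is meaningful. Fix $\psi\in C_0^\infty(\mathds R^d\setminus\{\bm 0\})$ and write $\langle\Delta^{\ell+1}\mathcal A^{(\ell)}_{\Lambda,\nu,\beta},\psi\rangle=\langle\mathcal A^{(\ell)}_{\Lambda,\nu,\beta},\Delta^{\ell+1}\psi\rangle$. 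Interchanging the $\bm y$-integral with the sum-integral over $\bm z$ --- legitimate by the superpolynomial decay of $\hat\chi_\beta$ together with the polynomial growth bound of Lemma~\ref{lem:symbol_derivatives_estimate}, exactly as in the proof of Proposition~\ref{prop:beta_limit_A_ell} --- and then applying the pointwise identity to each $\bm z$-slice yields
\[
\langle\Delta^{\ell+1}\mathcal A^{(\ell)}_{\Lambda,\nu,\beta},\psi\rangle
= \SumInt_{\bm z\in\mathds R^d\setminus B_\delta,\Lambda}\hat\chi_\beta(\bm z)\,\frac{\psi(\bm z)}{|\bm z|^\nu}.
\]
By the $\delta$-independence from Proposition~\ref{prop:delta_independence_A_ell} I may shrink $\delta$ so that $B_\delta\cap\supp\psi=\varnothing$; then the $\bm z=\bm 0$ term drops from both sum and integral, $\hat\chi_\beta\,\psi/|\,\bm\cdot\,|^\nu\in C_0^\infty(\mathds R^d)$, and the right-hand side equals $\langle\Sha_\Lambda-V_\Lambda^{-1},\hat\chi_\beta\,\psi/|\,\bm\cdot\,|^\nu\rangle=\langle\hat\chi_\beta(\Sha_\Lambda-V_\Lambda^{-1})/|\,\bm\cdot\,|^\nu,\psi\rangle$, which is the first claim.

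The second claim follows at once: on the open set $\mathds R^d\setminus\Lambda$ the Dirac comb is zero, so the first identity becomes the equality of smooth functions $\Delta^{\ell+1}\mathcal A^{(\ell)}_{\Lambda,\nu,\beta}=-\hat\chi_\beta/(V_\Lambda|\,\bm\cdot\,|^\nu)$ there (the left side being analytic by Proposition~\ref{prop:delta_independence_A_ell}); since $\hat\chi_\beta\to 1$ in $C^\infty(\mathds R^d)$ as $\beta\to 0$ by Lemma~\ref{lem:chi_beta_compact_convergence} and multiplication by the fixed smooth function $1/|\,\bm\cdot\,|^\nu$ is continuous on $C^\infty(\mathds R^d\setminus\Lambda)$, the convergence in $C^\infty(\mathds R^d\setminus\Lambda)$ to $-V_\Lambda^{-1}/|\,\bm\cdot\,|^\nu$ follows. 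For the third claim I would pass to the limit in the formula just obtained: the sum $\sum_{\bm z\in\Lambda}\hat\chi_\beta(\bm z)\psi(\bm z)/|\bm z|^\nu$ is finite and converges termwise because $\hat\chi_\beta\to 1$ locally uniformly, while $\int\hat\chi_\beta\,\psi/|\,\bm\cdot\,|^\nu\to\int\psi/|\,\bm\cdot\,|^\nu$ by dominated convergence ($|\hat\chi_\beta|\le 1$ and $\psi/|\,\bm\cdot\,|^\nu\in L^1$); the limit is $\langle(\Sha_\Lambda-V_\Lambda^{-1})/|\,\bm\cdot\,|^\nu,\psi\rangle$, which by Notation~\ref{not:sum-integral} equals $\SumInt_{\mathds R^d,\Lambda}\psi/|\,\bm\cdot\,|^\nu$.

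I expect the only genuinely technical point to be the Fubini interchange in the second step: one must verify absolute convergence of the double sum/integral of $\big|\hat\chi_\beta(\bm z)\,a_\nu^{(\ell)}(\bm y,\bm z)\,\Delta^{\ell+1}\psi(\bm y)\big|$, separating the finitely many lattice points near $\supp\psi$ (where the mild $\phi_\ell$-singularity at $\bm y=\bm z$ occurs but is integrable) from the rapidly decaying tail, and keep track of the excluded ball $B_\delta$ via Proposition~\ref{prop:delta_independence_A_ell}. Conceptually, however, the whole proposition is driven by the single observation that $\Delta^{\ell+1}$ turns the translated fundamental solution inside the Bernoulli symbol into $\delta_{\bm z}$ and annihilates the Taylor-correction term away from the origin; because every test function here is supported away from $\bm 0$, the singularities of $\phi_\ell$ and of $|\,\bm\cdot\,|^{-\nu}$ at the origin never intervene.
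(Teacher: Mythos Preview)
Your proof is correct and follows essentially the same route as the paper: both reduce to the distributional identity $\Delta^{\ell+1}_{\bm y}\, a_\nu^{(\ell)}(\,\bm\cdot\,,\bm z)=|\bm z|^{-\nu}\delta_{\bm z}$ on $\mathds R^d\setminus\{\bm 0\}$, interchange the pairing with $\psi$ and the regularised sum-integral (the paper does this without comment, you flag the Fubini justification explicitly), and then pass to the limit $\beta\to 0$ via $\hat\chi_\beta\to 1$ in $C^\infty$ and dominated convergence. You also tacitly correct the paper's typo $V_\Lambda$ to the right constant $V_\Lambda^{-1}$ in the second displayed limit.
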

\begin{proof}
  For $\psi \in C_0^\infty(\mathds R^d \setminus \{ \bm 0 \}$, we have
  \[
    \big\langle\Delta^{\ell+1} \mathcal A_{\Lambda,\nu,\beta}^{(\ell)}, \,\psi \big\rangle=\SumInt_{\bm z\in \mathds R^d\setminus B_\delta,\Lambda}\,\hat\chi_\beta(\bm z)\Big\langle \Delta^{\ell+1}_{\bm \xi} a_\nu^{(\ell)}(\bm \xi,\bm z),\psi(\bm \xi) \Big\rangle,
  \]
  where in this context $\bm \xi$ denotes a placeholder with respect to which the action of the distribution is applied. 
  We now insert the Definition of $a_{\nu}^{(\ell)}$,
  \[
      a_\nu^{(\ell)}(\bm y,\bm z)= \frac{1}{\vert \bm z\vert^\nu} \Big(\phi_\ell(\bm y-\bm z)-\sum_{k=0}^{2\ell+1} \frac{1}{k!} \langle -\bm z,\nabla\rangle^k\, \phi_{\ell}(\bm y)\Big),
  \]
  and find due to the properties of the fundamental solution of the poly-Laplacian that
  \[
    \Big\langle \Delta^{\ell+1}_{\bm \xi} a_\nu^{(\ell)}(\bm \xi,\bm z),\psi(\bm \xi) \Big\rangle=\frac{\psi(\bm z)}{\vert \bm z\vert^\nu},
  \]
  as $\psi$ is not supported at $\bm 0$.  Then
  \[
    \big\langle\Delta^{\ell+1} \mathcal A_{\Lambda,\nu,\beta}^{(\ell)}, \,\psi \big\rangle=\SumInt_{\bm z \in \mathds R^d\setminus B_\delta,\Lambda}\hat \chi_\beta(\bm z)\frac{\psi(\bm z)}{\vert \bm z\vert^\nu}=\SumInt_{\bm z \in \mathds R^d,\Lambda}\hat \chi_\beta(\bm z)\frac{\psi(\bm z)}{\vert \bm z\vert^\nu},
  \]
  as $\psi$ has no support in $B_\delta$.
  The dominated convergence theorem then yields
  \begin{equation}\label{eq:weak-convergence-a_ell_beta}
    \lim_{\beta \to 0}\big\langle\Delta^{\ell+1} \mathcal A_{\Lambda,\nu,\beta}^{(\ell)},\psi \rangle = \bigg\langle \frac{\Sha_\Lambda-V_\Lambda^{-1}}{\vert \bm \cdot\vert^\nu},\psi \bigg\rangle.
  \end{equation}
  On $\mathds R^d \setminus \Lambda$, the distribution $\Delta^{\ell + 1} \mathcal A_{\Lambda, \nu, \beta}^{(\ell)}$ can be identified as a smooth function,
  \[
\Delta^{\ell + 1} \mathcal A_{\Lambda, \nu, \beta}^{(\ell)} = - \hat \chi_\beta \frac{V_\Lambda}{|\, \bm \cdot \,|^\nu}.
  \]
  Since $\hat \chi_\beta \to 1$, $\beta \to 0$, in $C^\infty(\mathds R^d)$ by Lemma~\ref{lem:chi_beta_compact_convergence},
  the limit in~\eqref{eq:weak-convergence-a_ell_beta} does not only hold weakly but also in $C^\infty(\mathds R^d \setminus \Lambda)$.
\end{proof}
With Propositions~\ref{prop:delta_independence_A_ell}, \ref{prop:beta_limit_A_ell}, and \ref{prop:sum_integral_property_A_ell}, we now prove the fundamental theorem of the SEM expansion using elliptic regularity.
\begin{proof}[Proof of Theorem~\ref{th:singular_bernoulli_fundamental_theorem}] 
By Proposition~\ref{prop:beta_limit_A_ell} the auxiliary functions $\mathcal A_{\Lambda, \nu, \beta}^{(\ell)}$, $\beta > 0$,
converge in $L^1_\text{loc}(\mathds R^d \setminus \Lambda)$, and thus weakly as distributions, to $\mathcal A_{\Lambda, \nu}^{(\ell)}$.
Proposition~\ref{prop:sum_integral_property_A_ell} shows that
$\Delta^{\ell + 1} A_{\Lambda, \nu, \beta}^{(\ell)}$ resides in $C^\infty(\mathds R^d \setminus \Lambda)$ for all $\beta > 0$
and converges in $C^\infty(\mathds R^d \setminus \Lambda)$ to the analytic function $\Delta^{\ell + 1} A_{\Lambda, \nu}^{(\ell)}$,
so, by virtue of Theorem~\ref{thm:weak-convergence-implies-uniform}, $\mathcal A_{\Lambda, \nu, \beta}^{(\ell)}$
already converges in $C^\infty(\mathds R^d \setminus \Lambda)$ to $A_{\Lambda, \nu}^{(\ell)}$.
The analyticity of the latter now follows from Theorem~\ref{cor:elliptic-analytic-regularity}.
Finally, the limit function $\mathcal A_{\Lambda, \nu}^{(\ell)}$ is independent of $\delta$ and $\phi_\ell$ as
the auxiliary functions are independent of this choice by Proposition~\ref{prop:delta_independence_A_ell} and it is independent of $\chi$ due to Proposition~\ref{prop:beta_limit_A_ell}.
\end{proof}

From the fundamental theorem of the SEM expansion follows the central distributional property of the singular Bernoulli function, on which the SEM expansion is based.
\begin{corollary}\label{cor:sem_sum_integral_property}
  Let $\Lambda\in \mathfrak L(\mathds R^d)$, $\ell\in \mathds N$, and $\nu\in \mathds C$. Let $\psi \in C_0^\infty(\mathds R^d\setminus \{\bm 0\})$, then
  \[
  \big\langle \Delta^{\ell+1} \mathcal A_{\Lambda,\nu}^{(\ell)},\psi \big\rangle=\SumInt_{\mathds R^d,\Lambda}\frac{\psi}{\vert\bm \cdot\vert^\nu}.  
  \]
\end{corollary}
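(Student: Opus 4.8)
The plan is to obtain the identity as a direct consequence of the convergence results already established for the auxiliary functions $\mathcal A_{\Lambda,\nu,\beta}^{(\ell)}$, combined with the explicit computation of their distributional poly-Laplacian carried out in Proposition~\ref{prop:sum_integral_property_A_ell}. The key observation is that the weak limit $\beta \to 0$ commutes with the differential operator $\Delta^{\ell+1}$, so that the action of $\Delta^{\ell+1}\mathcal A_{\Lambda,\nu}^{(\ell)}$ on a test function supported away from the origin can be read off from the corresponding action at finite $\beta$, which is explicit.

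Concretely, I would first recall from Proposition~\ref{prop:beta_limit_A_ell} that the auxiliary functions $\mathcal A_{\Lambda,\nu,\beta}^{(\ell)}$ are locally integrable on $\mathds R^d \setminus \{\bm 0\}$ and converge there, in $L^1_\text{loc}(\mathds R^d \setminus \{\bm 0\})$, to $\mathcal A_{\Lambda,\nu}^{(\ell)}$ as $\beta \to 0$. In particular, both sides define distributions on $\mathds R^d \setminus \{\bm 0\}$ and the convergence holds in $\mathscr D'(\mathds R^d \setminus \{\bm 0\})$. This is the version of the convergence that is needed here, rather than the stronger $C^\infty(\mathds R^d \setminus \Lambda)$ statement from Theorem~\ref{th:singular_bernoulli_fundamental_theorem}, since the test function $\psi$ is only required to vanish near $\bm 0$ and may well be supported near other lattice points $\bm z \in \Lambda \setminus \{\bm 0\}$.

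Second, since distributional differentiation is continuous on $\mathscr D'(\mathds R^d \setminus \{\bm 0\})$, the convergence $\mathcal A_{\Lambda,\nu,\beta}^{(\ell)} \to \mathcal A_{\Lambda,\nu}^{(\ell)}$ passes to the poly-Laplacian, so that for every $\psi \in C_0^\infty(\mathds R^d \setminus \{\bm 0\})$
\[
\big\langle \Delta^{\ell+1} \mathcal A_{\Lambda,\nu}^{(\ell)}, \psi \big\rangle
= \lim_{\beta \to 0} \big\langle \Delta^{\ell+1} \mathcal A_{\Lambda,\nu,\beta}^{(\ell)}, \psi \big\rangle.
\]
Finally, I invoke Proposition~\ref{prop:sum_integral_property_A_ell}, which identifies the right-hand side as $\big\langle (\Sha_\Lambda - V_\Lambda^{-1})/\vert\bm\cdot\vert^\nu, \psi \big\rangle = \SumInt_{\mathds R^d,\Lambda} \psi/\vert\bm\cdot\vert^\nu$. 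Combining the two identities yields the claim.

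Since every ingredient is already in place, there is no genuine obstacle; the only point requiring a little care is to use the right topology for the convergence of the auxiliary functions — the $L^1_\text{loc}(\mathds R^d\setminus\{\bm 0\})$ statement of Proposition~\ref{prop:beta_limit_A_ell}, so that test functions supported near lattice points other than $\bm 0$ are admissible — and to note that $\psi/\vert\bm\cdot\vert^\nu$, extended by zero, is again an element of $C_0^\infty(\mathds R^d \setminus \{\bm 0\}) \subseteq C_0^\infty(\mathds R^d)$, hence summable on $\Lambda$ and integrable on $\mathds R^d$, so that the sum-integral on the right-hand side is well-defined in the sense of Notation~\ref{not:sum-integral}.
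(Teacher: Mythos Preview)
Your argument is correct and follows the route the paper intends: the corollary is stated without proof as an immediate consequence of the convergence results for the auxiliary functions together with Proposition~\ref{prop:sum_integral_property_A_ell}, and this is precisely what you spell out. Your observation that one should invoke the $L^1_\text{loc}(\mathds R^d\setminus\{\bm 0\})$ convergence from Proposition~\ref{prop:beta_limit_A_ell} rather than the $C^\infty(\mathds R^d\setminus\Lambda)$ convergence of Theorem~\ref{th:singular_bernoulli_fundamental_theorem}, so that test functions supported near nonzero lattice points are admissible, is a genuinely useful clarification.
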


\subsection{Singular Euler--Maclaurin expansion for exterior lattice points} 
\label{subsec:sem_expansion}
The singular Bernoulli functions form the coefficients of the SEM differential operator.
\begin{definition}[SEM operator]
  \label{def:infinite_order_sem_operator}
  We define the $\ell$th order SEM operator $\bm{\mathcal D}_{\Lambda,\nu,\bm y}^{(\ell)}$ as
  \begin{equation*}
    \bm{\mathcal D}_{\Lambda,\nu,\bm y}^{(\ell)}=\sum_{k=0}^\ell \Big(\nabla \Delta^{\ell-k} \mathcal A_{\Lambda,\nu}^{(\ell)}(\bm y)-\Delta^{\ell-k}\mathcal A_{\Lambda,\nu}^{(\ell)}(\bm y)\nabla\Big)\Delta^k.
  \end{equation*}
  We define the infinite order SEM operator  $\bm{\mathcal D}_{\Lambda,\nu,\bm y}$ by setting $\ell=\infty$ in the above equation.  
\end{definition}

Finally, after introducing all necessary functions and operators, we present the SEM expansion on bounded sets.
\begin{theorem}[SEM expansion]
  \label{th:higher_dimensional_sem_expansion}
  Let $\Lambda\in \mathfrak L(\mathds R^d)$, $\Omega \subseteq \mathds R^d$ a bounded domain such that $\partial \Omega \cap \Lambda=\varnothing$, and $\bm x\in \Lambda\setminus \Omega$.
  If $f_{\bm x}:\bar \Omega\to \mathds C$ factors into
  \begin{equation*}
    f_{\bm x}(\bm y)=\frac{g(\bm y)}{\vert \bm y-\bm x\vert^\nu},
  \end{equation*}
  with $\nu \in \mathds C$ and $g\in C^{2(\ell+1)}(\bar \Omega)$, $\ell\in \mathds N$, then the sum-integral of $f_{\bm x}$ over $(\Omega, \Lambda)$ has the representation
  \begin{equation*}
    \SumInt \limits_{\Omega,\Lambda}f_{\bm x}  = \int \limits_{\partial \Omega} \left\langle \bm {\mathcal D}^{(\ell)}_{\Lambda,\nu,\bm y-\bm x} \,g(\bm y),\bm n_{\bm y} \right\rangle\,\mathrm d S_{\bm y}+\int \limits_{ \Omega} \mathcal A_{\Lambda,\nu}^{(\ell)}(\bm y-\bm x) \Delta^{\ell+1}g(\bm y)\, \mathrm d \bm y.
  \end{equation*} 
\end{theorem}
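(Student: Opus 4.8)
The plan is to follow the proof of the multidimensional EM expansion, Theorem~\ref{th:em_expansion_basic}, with the Bernoulli function $\mathcal B_\Lambda^{(\ell)}$ replaced by the translated singular Bernoulli function $\mathcal A_{\Lambda,\nu}^{(\ell)}(\,\bm\cdot-\bm x)$ and the sum-integral property of Proposition~\ref{prop:Dirac_comb_B_ell} replaced by its singular analogue, Corollary~\ref{cor:sem_sum_integral_property}. The guiding picture is that $\SumInt_{\Omega,\Lambda}f_{\bm x}$ is the pairing of $\Delta^{\ell+1}\mathcal A_{\Lambda,\nu}^{(\ell)}(\,\bm\cdot-\bm x)$ with $g$ over $\Omega$, and that integrating by parts $\ell+1$ times moves the poly-Laplacian onto $g$ at the price of boundary terms on $\partial\Omega$ which assemble into the SEM operator. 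Since $\partial\Omega\cap\Lambda=\varnothing$ and $\bm x\notin\Omega$, we have $\bm x\notin\bar\Omega$, hence $\mathrm{dist}(\bm x,\bar\Omega)>0$; and because $\Omega$ is bounded the set $\Omega\cap\Lambda$ is finite and does not contain $\bm x$.

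The first step is to translate Corollary~\ref{cor:sem_sum_integral_property}. As $\bm x\in\Lambda$ we have $\Lambda-\bm x=\Lambda$, so for every $\psi\in C_0^\infty(\mathds R^d\setminus\{\bm x\})$,
\[
\big\langle\Delta^{\ell+1}\mathcal A_{\Lambda,\nu}^{(\ell)}(\,\bm\cdot-\bm x),\psi\big\rangle=\sum_{\bm w\in\Lambda,\,\bm w\neq\bm x}\frac{\psi(\bm w)}{|\bm w-\bm x|^\nu}-\frac1{V_\Lambda}\int\limits_{\mathds R^d}\frac{\psi(\bm y)}{|\bm y-\bm x|^\nu}\,\mathrm d\bm y .
\]
Equivalently, on a small enough open neighbourhood of $\bar\Omega$ not containing $\bm x$, the distribution $\Delta^{\ell+1}\mathcal A_{\Lambda,\nu}^{(\ell)}(\,\bm\cdot-\bm x)$ is the measure $\sum_{\bm w\in\Omega\cap\Lambda}|\bm w-\bm x|^{-\nu}\delta_{\bm w}-V_\Lambda^{-1}|\,\bm\cdot-\bm x|^{-\nu}$, so pairing it against $g$ does reproduce $\SumInt_{\Omega,\Lambda}f_{\bm x}$. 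The key local fact that makes the integration by parts work is that $\mathcal A_{\Lambda,\nu}^{(\ell)}(\,\bm\cdot-\bm x)$ is a parametrix for $\Delta^{\ell+1}$ on $\bar\Omega$: it is analytic on $\mathds R^d\setminus\Lambda$ by Theorem~\ref{th:singular_bernoulli_fundamental_theorem} (for $\bm y\in\bar\Omega$ the argument $\bm y-\bm x$ stays bounded away from $\bm 0$, so the origin singularity of $\mathcal A_{\Lambda,\nu}^{(\ell)}$ never enters), and near each $\bm w\in\Omega\cap\Lambda$ the identity above gives $\Delta^{\ell+1}\big(\mathcal A_{\Lambda,\nu}^{(\ell)}(\,\bm\cdot-\bm x)-|\bm w-\bm x|^{-\nu}\phi_\ell(\,\bm\cdot-\bm w)\big)=-V_\Lambda^{-1}|\,\bm\cdot-\bm x|^{-\nu}$, which is real-analytic near $\bm w$; by analytic-hypoellipticity of $\Delta^{\ell+1}$, Theorem~\ref{cor:elliptic-analytic-regularity}, the difference is analytic near $\bm w$, so
\[
\mathcal A_{\Lambda,\nu}^{(\ell)}(\bm y-\bm x)=\frac{\phi_\ell(\bm y-\bm w)}{|\bm w-\bm x|^\nu}+\rho_{\bm w}(\bm y),\qquad\rho_{\bm w}\ \text{real-analytic near }\bm w .
\]
Proposition~\ref{prop:beta_limit_A_ell}, translated, gives $\mathcal A_{\Lambda,\nu}^{(\ell)}(\,\bm\cdot-\bm x)\in L^1_{\mathrm{loc}}(\bar\Omega)$, so the remainder integral in the asserted formula is finite, and the derivative bounds of Lemmas~\ref{lem:poly-laplace-fundamental-solution} and~\ref{lem:phi_ell_estimate} will control the surface integrals over the shrinking spheres below.

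With this in hand the execution mirrors the proof of Theorem~\ref{th:em_expansion_basic} combined with the argument behind Lemma~\ref{lem:representation-formula}/Lemma~\ref{lem:parametrix}. Write $\Omega\cap\Lambda=\{\bm w_1,\dots,\bm w_N\}$, fix small $\epsilon>0$, and set $\Omega_\epsilon=\Omega\setminus\bigcup_{j=1}^N\bar B_\epsilon(\bm w_j)$. On $\Omega_\epsilon$ both $\mathcal A_{\Lambda,\nu}^{(\ell)}(\,\bm\cdot-\bm x)$ and $g$ are smooth and $\Delta^{\ell+1}\big(\mathcal A_{\Lambda,\nu}^{(\ell)}(\,\bm\cdot-\bm x)\big)=-V_\Lambda^{-1}|\,\bm\cdot-\bm x|^{-\nu}$, so applying Green's second identity $\ell+1$ times exactly as in Theorem~\ref{th:em_expansion_basic} produces the boundary integrand $\big\langle\bm{\mathcal D}^{(\ell)}_{\Lambda,\nu,\bm y-\bm x}g(\bm y),\bm n_{\bm y}\big\rangle$ of Definition~\ref{def:infinite_order_sem_operator} over $\partial\Omega_\epsilon=\partial\Omega\cup\bigcup_j\partial B_\epsilon(\bm w_j)$, with the same sign bookkeeping. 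Letting $\epsilon\to0$: on $\partial B_\epsilon(\bm w_j)$ the analytic remainder $\rho_{\bm w_j}$ contributes nothing in the limit, while the $\phi_\ell$-part reproduces the point value $|\bm w_j-\bm x|^{-\nu}g(\bm w_j)$ exactly as in the derivation of Lemma~\ref{lem:representation-formula}; the volume integrals over $\Omega_\epsilon$ converge to those over $\Omega$ by local integrability and dominated convergence. Collecting the lattice-point values together with the integral term recovers $\SumInt_{\Omega,\Lambda}f_{\bm x}$ on the left, and rearranging gives the asserted expansion.

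The main obstacle I anticipate is the rigorous verification that $\mathcal A_{\Lambda,\nu}^{(\ell)}(\,\bm\cdot-\bm x)$ is a genuine parametrix on $\bar\Omega$ together with the clean passage $\epsilon\to0$: one must pin down the singular behaviour of $\mathcal A_{\Lambda,\nu}^{(\ell)}$ at \emph{all} lattice points of $\Omega$ simultaneously and make sure the surface integrals over the shrinking spheres split off precisely the values $g(\bm w_j)/|\bm w_j-\bm x|^\nu$ with no spurious leftover. This is exactly where the combination of the translated Corollary~\ref{cor:sem_sum_integral_property}, analytic-hypoellipticity (Theorem~\ref{cor:elliptic-analytic-regularity}), and the $\phi_\ell$-estimates (Lemmas~\ref{lem:poly-laplace-fundamental-solution} and~\ref{lem:phi_ell_estimate}) is needed; everything else is a direct transcription of the proof of Theorem~\ref{th:em_expansion_basic}.
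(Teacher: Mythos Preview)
Your proposal is correct and follows essentially the same route as the paper. The paper's proof is just two sentences: invoke Corollary~\ref{cor:sem_sum_integral_property} together with the parametrix representation formula Lemma~\ref{lem:parametrix} to get the first boundary--volume decomposition, and then apply Green's second identity $\ell$ times to shift the remaining Laplacians onto $g$. What you describe as your ``main obstacle''---the local parametrix structure of $\mathcal A_{\Lambda,\nu}^{(\ell)}(\,\bm\cdot-\bm x)$ near each $\bm w_j\in\Omega\cap\Lambda$ and the clean passage $\epsilon\to0$ on the shrinking spheres---is precisely what Lemma~\ref{lem:parametrix} already encapsulates, so you are re-deriving that lemma inside your proof rather than simply citing it; your use of analytic-hypoellipticity to isolate the $\phi_\ell$-singularity is correct but heavier than needed, since the parametrix lemma only requires the $C^\infty$ regularity away from the lattice points that already comes from Theorem~\ref{th:singular_bernoulli_fundamental_theorem}.
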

\begin{proof}
  We find from the sum-integral property in Corollary~\ref{cor:sem_sum_integral_property} and Lemma~\ref{lem:parametrix} that 
  \begin{align*}
  \SumInt_{\Omega,\Lambda} f_{\bm x}&=\int \limits_{\partial \Omega} \Big(\partial_{\bm n_{\bm y}}\Delta^\ell \mathcal A_\ell(\bm y-\bm x)-\Delta^\ell \mathcal A_\ell(\bm y-\bm x) \partial_{\bm n_{\bm y}}\Big) g(\bm y)\,\mathrm d S_{\bm y}\\ &+ \int \limits_\Omega \Delta^\ell \mathcal A_\ell(\bm y-\bm x) \Delta g(\bm y)  \, \mathrm d \bm y.
  \end{align*}
  The theorem follows after applying Green's second theorem $\ell$ times to the right hand side.
\end{proof}

In contrast to case of the EM expansion, in the SEM expansion, the differential operator acts only on $g$ and not on the interaction. Thus the convergence problems of the EM expansion are avoided.

\subsection{Singular Euler--Maclaurin expansion for interior lattice points}
\label{subsec:sem_interior}
In the previous section, we have derived the SEM expansion for a singularity that lies outside of the set $\Omega$. We now move on the case that the singularity is positioned at a lattice point $\bm x$ inside $\Omega$. From an application point of view, this is the most relevant scenario, as it describes singular interactions inside a lattice. We will show that these singular interactions can be described by local derivatives of the interpolating function at the position of the singularity, where the corresponding differential operator does not depend on $\Omega$. The arising contribution remains relevant even in the case of an infinite lattice without boundaries. Furthermore, as the resulting term does not rely on oscillating surface integrals, its computation can be readily implemented numerically. 
\begin{theorem}[SEM expansion for interior lattice points]
  \label{lem:hsem_preparatory_restructuring}
  Assume the conditions of Theorem~\ref{th:higher_dimensional_sem_expansion}, however with $\bm x\in \Lambda \cap \Omega$. Let in addition $\varepsilon>0$ with $\varepsilon<a_\Lambda$ small enough such that $\bar B_\varepsilon(\bm x)\subseteq \Omega$.  Then
  \begin{align*}
    \SumInt \limits_{\Omega\setminus B_\varepsilon(\bm x),\Lambda}f_{\bm x}&=\dhsem_{\Lambda,\nu,\varepsilon}^{(\ell)}g(\bm x)+\mathcal S^{(\ell)}_{\Lambda,\nu}g(\bm x)+\mathcal R^{(\ell)}_{\Lambda,\nu,\varepsilon}g(\bm x),
  \end{align*}
  with the local SEM operator $\dhsem_{\Lambda,\nu,\varepsilon}^{(\ell)}$,
  \[
    \dhsem_{\Lambda,\nu,\varepsilon}^{(\ell)}g(\bm x)=\sum_{k=0}^\ell \frac{1}{(2k)!}\lim_{\beta \to 0}\SumInt_{\bm z\in \mathds R^d\setminus \bar B_{\varepsilon},\Lambda} \hat \chi_\beta(\bm z)\frac{\langle \bm z,\nabla \rangle^{2k} }{\vert \bm z\vert^\nu}g(\bm x),  
  \]  
  a surface integral over derivatives of $g$ of up to order $2\ell+1$,
  \[
    \mathcal S^{(\ell)}_{\Lambda,\nu}g(\bm x)=\int \limits_{\partial \Omega} \left\langle \bm {\mathcal D}^{(\ell)}_{\Lambda,\nu,\bm y-\bm x} \,g(\bm y),\bm n_{\bm y} \right\rangle\,\mathrm d S_{\bm y},
  \]
  and the remainder
  \[
    \mathcal R_{\Lambda,\nu,\varepsilon}^{(\ell)}g(\bm x)=\SumInt_{\bm z\in \mathds R^d\setminus \bar B_{\varepsilon},\Lambda} \hat \chi_\beta(\bm z) \int \limits_{\Omega} a_\nu^{(\ell)}(\bm y-\bm x,\bm z)\Delta^{\ell+1}g(\bm y)\,\mathrm d \bm y.
  \]
\end{theorem}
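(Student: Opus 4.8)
The plan is to deduce the interior case from the exterior \emph{SEM} expansion (Theorem~\ref{th:higher_dimensional_sem_expansion}) by excising the ball $B_\varepsilon(\bm x)$, and then to re-express the contributions carried by that ball through the polyharmonic representation formula, identifying them with $\dhsem_{\Lambda,\nu,\varepsilon}^{(\ell)}g(\bm x)$ plus the part of the remainder supported on $B_\varepsilon(\bm x)$. Concretely I would set $\Omega'=\Omega\setminus\bar B_\varepsilon(\bm x)$. Since $\bar B_\varepsilon(\bm x)$ is compactly contained in $\Omega$ and $0<\varepsilon<a_\Lambda$, $\Omega'$ is a bounded domain (for $d=1$ one first splits it into its two components), $\partial\Omega'=\partial\Omega\cup\partial B_\varepsilon(\bm x)$ is disjoint from $\Lambda$ (no lattice point lies at distance $\varepsilon$ from $\bm x$), $\bm x\in\Lambda\setminus\bar{\Omega'}$, and $f_{\bm x}\in C^{2(\ell+1)}(\bar{\Omega'})$ because $|\,\bm\cdot-\bm x|^{-\nu}$ is smooth away from $\bm x$. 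Applying Theorem~\ref{th:higher_dimensional_sem_expansion} to $(\Omega',\Lambda,\bm x)$, using $\SumInt_{\Omega\setminus B_\varepsilon(\bm x),\Lambda}f_{\bm x}=\SumInt_{\Omega',\Lambda}f_{\bm x}$ (the sphere is a null set free of lattice points), and noting that the outward normal of $\Omega'$ on $\partial B_\varepsilon(\bm x)$ is $-\bm\nu_{\bm y}$ with $\bm\nu_{\bm y}$ the outward normal of $B_\varepsilon(\bm x)$, I obtain
\[
  \SumInt_{\Omega\setminus B_\varepsilon(\bm x),\Lambda}f_{\bm x}
  =\mathcal S^{(\ell)}_{\Lambda,\nu}g(\bm x)-\widetilde S_0+\Psi,\qquad
  \widetilde S_0=\int_{\partial B_\varepsilon(\bm x)}\big\langle\bm{\mathcal D}^{(\ell)}_{\Lambda,\nu,\bm y-\bm x}g(\bm y),\bm\nu_{\bm y}\big\rangle\,\mathrm dS_{\bm y},
\]
with $\Psi=\int_{\Omega\setminus\bar B_\varepsilon(\bm x)}\mathcal A^{(\ell)}_{\Lambda,\nu}(\bm y-\bm x)\,\Delta^{\ell+1}g(\bm y)\,\mathrm d\bm y$. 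It then remains to prove $\dhsem_{\Lambda,\nu,\varepsilon}^{(\ell)}g(\bm x)+\mathcal R^{(\ell)}_{\Lambda,\nu,\varepsilon}g(\bm x)=\Psi-\widetilde S_0$, together with the fact that the $\beta\to0$ limits defining $\dhsem$ and $\mathcal R^{(\ell)}$ exist.

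The heart of the matter is a local identity on $B_\varepsilon(\bm x)$. Fixing $\bm z$ with $|\bm z|>\varepsilon$, the kernel $\bm w\mapsto a_\nu^{(\ell)}(\bm w,\bm z)$ is, up to the factor $|\bm z|^{-\nu}$, the translate $\phi_\ell(\,\bm\cdot-\bm z)$ minus its truncated Taylor polynomial about $\bm 0$, hence a finite linear combination of $\phi_\ell(\,\bm\cdot-\bm z)$ and of derivatives $\langle-\bm z,\nabla\rangle^k\phi_\ell$ with $k\le2\ell+1$; on $B_\varepsilon$ one therefore has $\Delta^{\ell+1}_{\bm w}a_\nu^{(\ell)}(\bm w,\bm z)=-|\bm z|^{-\nu}\sum_{k=0}^{2\ell+1}\tfrac1{k!}\langle-\bm z,\nabla\rangle^k\delta_{\bm 0}$, since $\Delta^{\ell+1}\phi_\ell(\,\bm\cdot-\bm z)=\delta_{\bm z}$ is supported outside $\bar B_\varepsilon$. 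Summing the representation formula of Lemma~\ref{lem:representation-formula} together with its translate and derivative variants (the latter obtained by differentiating Lemma~\ref{lem:parametrix} and applying Green's identity $\ell$ times) on the ball $B_\varepsilon(\bm x)$ with the matching coefficients, and using $\langle\langle-\bm z,\nabla\rangle^k\delta_{\bm 0},\psi\rangle=(\langle\bm z,\nabla\rangle^k\psi)(\bm 0)$, I arrive at
\[
  -\frac1{|\bm z|^\nu}\sum_{k=0}^{2\ell+1}\frac1{k!}\big(\langle\bm z,\nabla\rangle^kg\big)(\bm x)
  =\int_{\partial B_\varepsilon(\bm x)}\!\big\langle\bm{\mathcal D}^{(\ell)}_{a_\nu^{(\ell)}(\,\bm\cdot,\bm z),\,\bm y-\bm x}g(\bm y),\bm\nu_{\bm y}\big\rangle\,\mathrm dS_{\bm y}
  +\int_{B_\varepsilon(\bm x)}\! a_\nu^{(\ell)}(\bm y-\bm x,\bm z)\,\Delta^{\ell+1}g(\bm y)\,\mathrm d\bm y,
\]
where $\bm{\mathcal D}^{(\ell)}_{A,\bm w}$ is the operator of Definition~\ref{def:infinite_order_sem_operator} with coefficient function $A$. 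I would then apply $\SumInt_{\bm z\in\mathds R^d\setminus\bar B_\varepsilon,\Lambda}\hat\chi_\beta(\bm z)[\,\cdot\,]$ to this identity for fixed $\beta>0$; the superpolynomial decay of $\hat\chi_\beta$ together with the polynomial growth in $\bm z$ from Lemma~\ref{lem:symbol_derivatives_estimate} justifies all interchanges with the $\bm y$-integrations and the $\bm w$-derivatives. On the left, every odd-order term drops out, because $\bm z\mapsto(\langle\bm z,\nabla\rangle^kg)(\bm x)\,|\bm z|^{-\nu}$ is odd under $\bm z\mapsto-\bm z$ for odd $k$, while $\hat\chi_\beta$, $|\bm z|^{-\nu}$, the punctured lattice $\Lambda\setminus\{\bm 0\}$ and the set $\mathds R^d\setminus\bar B_\varepsilon$ are all reflection-symmetric; thus only $k=0,2,\dots,2\ell$ survive and the left side equals $-\widetilde Q_\beta$ with $\widetilde Q_\beta=\sum_{k=0}^{\ell}\tfrac1{(2k)!}\SumInt_{\bm z\in\mathds R^d\setminus\bar B_\varepsilon,\Lambda}\hat\chi_\beta(\bm z)\tfrac{\langle\bm z,\nabla\rangle^{2k}}{|\bm z|^\nu}g(\bm x)$. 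On the right, using $\SumInt_{\bm z\in\mathds R^d\setminus\bar B_\varepsilon,\Lambda}\hat\chi_\beta(\bm z)\,a_\nu^{(\ell)}(\bm w,\bm z)=\mathcal A^{(\ell)}_{\Lambda,\nu,\beta}(\bm w)$ for $|\bm w|\ge\varepsilon$ (Lemma~\ref{lem:symbol_surface_integral} and rotational symmetry of $\hat\chi_\beta$, as in Proposition~\ref{prop:delta_independence_A_ell}), the first term becomes $\widetilde S_\beta:=\int_{\partial B_\varepsilon(\bm x)}\langle\bm{\mathcal D}^{(\ell)}_{\mathcal A^{(\ell)}_{\Lambda,\nu,\beta},\,\bm y-\bm x}g(\bm y),\bm\nu_{\bm y}\rangle\,\mathrm dS_{\bm y}$ and the second becomes $P_\beta:=\SumInt_{\bm z\in\mathds R^d\setminus\bar B_\varepsilon,\Lambda}\hat\chi_\beta(\bm z)\int_{\bar B_\varepsilon(\bm x)}a_\nu^{(\ell)}(\bm y-\bm x,\bm z)\Delta^{\ell+1}g(\bm y)\,\mathrm d\bm y$; so $-\widetilde Q_\beta=\widetilde S_\beta+P_\beta$ for every $\beta>0$.

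Finally I would let $\beta\to0$. By Theorem~\ref{th:singular_bernoulli_fundamental_theorem} the convergence $\mathcal A^{(\ell)}_{\Lambda,\nu,\beta}\to\mathcal A^{(\ell)}_{\Lambda,\nu}$ is compact in all derivatives on $\mathds R^d\setminus\Lambda\supseteq\partial B_\varepsilon$, whence $\widetilde S_\beta\to\widetilde S_0$; and writing $\int_\Omega=\int_{\bar B_\varepsilon(\bm x)}+\int_{\Omega\setminus\bar B_\varepsilon(\bm x)}$ in the definition of $\mathcal R^{(\ell)}_{\Lambda,\nu,\varepsilon}g(\bm x)$, the $\Omega\setminus\bar B_\varepsilon(\bm x)$-piece converges to $\Psi$ by Proposition~\ref{prop:beta_limit_A_ell} (with the bounded weight $\Delta^{\ell+1}g$). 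To see that $\widetilde Q_\beta$ converges, note that each of its summands reduces to finitely many quantities $\SumInt_{\bm z\in\mathds R^d\setminus\bar B_\varepsilon,\Lambda}\hat\chi_\beta(\bm z)\,\bm z^{\bm\alpha}|\bm z|^{-\nu}$ with $|\bm\alpha|\le2\ell$; since $\bm z\mapsto\hat\chi_\beta(\bm z)\bm z^{\bm\alpha}|\bm z|^{-\nu}$ is smooth on $\overline{\mathds R^d\setminus\bar B_\varepsilon}$ with superpolynomial decay, the unbounded-domain EM expansion (Corollary~\ref{cor:em_expansion_unbounded_domain}) applies with an order $m$ large enough that $\Delta^{m+1}(\bm z^{\bm\alpha}|\bm z|^{-\nu})$ is integrable against the bounded Bernoulli function $\mathcal B_\Lambda^{(m)}$ over $\mathds R^d\setminus\bar B_\varepsilon$, and passing to $\beta\to0$ inside it (dominated convergence using the uniform bounds of Lemma~\ref{lem:uniform-bound-chi-beta} and $\hat\chi_\beta\to1$ from Lemma~\ref{lem:chi_beta_compact_convergence}) shows $\widetilde Q_\beta\to\dhsem_{\Lambda,\nu,\varepsilon}^{(\ell)}g(\bm x)$. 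Then $-\widetilde Q_\beta=\widetilde S_\beta+P_\beta$ forces $P_\beta$ to converge, with $\lim_{\beta\to0}P_\beta=-\dhsem_{\Lambda,\nu,\varepsilon}^{(\ell)}g(\bm x)-\widetilde S_0$, hence $\mathcal R^{(\ell)}_{\Lambda,\nu,\varepsilon}g(\bm x)=\lim_{\beta\to0}P_\beta+\Psi$ exists and $\dhsem_{\Lambda,\nu,\varepsilon}^{(\ell)}g(\bm x)+\mathcal R^{(\ell)}_{\Lambda,\nu,\varepsilon}g(\bm x)=\Psi-\widetilde S_0$. Substituting into the first display yields the assertion.

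The step I expect to be the main obstacle is the local computation on $B_\varepsilon(\bm x)$: one must run the polyharmonic representation formula with the genuinely singular kernel $a_\nu^{(\ell)}(\,\bm\cdot,\bm z)$, whose worst singularity $|\bm w|^{1-d}$ is stronger than that of $\phi_\ell$, so Lemma~\ref{lem:representation-formula}/Lemma~\ref{lem:parametrix} must be invoked in the form valid for a parametrix whose source is a distribution supported at a point, and the boundary terms on a shrinking inner sphere must be shown to converge to the finite-order derivative-of-$\delta$ pairings. Intimately tied to this is the fact that the lattice sum and the Euclidean integral entering $\dhsem_{\Lambda,\nu,\varepsilon}^{(\ell)}$ and $\mathcal R^{(\ell)}_{\Lambda,\nu,\varepsilon}$ diverge individually as $\beta\to0$ and cancel only in the sum-integral; making this cancellation rigorous is precisely where the unbounded EM expansion is needed.
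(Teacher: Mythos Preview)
Your proposal is correct and follows essentially the same route as the paper: apply the exterior SEM expansion to $\Omega\setminus\bar B_\varepsilon(\bm x)$, split off the $\partial\Omega$ surface term, and reduce the $\partial B_\varepsilon$ contribution via the polyharmonic representation formula after expanding $\mathcal A_{\Lambda,\nu}^{(\ell)}$ through the Bernoulli symbol. The only organisational difference is that the paper first exchanges the surface integral and $\bm y$-derivatives with the $\bm z$-sum-integral (invoking the compact-in-all-derivatives convergence of Theorem~\ref{th:singular_bernoulli_fundamental_theorem}) and then splits the symbol into the two pieces $T^{(1)}_{\varepsilon,\bm z}$, $T^{(2)}_{\varepsilon,\bm z}$ handled respectively by Lemma~\ref{lem:epsilon_surface_integral} and Green's identity, whereas you first establish the pointwise local identity for $a_\nu^{(\ell)}(\,\cdot\,,\bm z)$ on $B_\varepsilon$ and then sum-integrate; the computations coincide term by term, and the obstacle you flag (representation formula with the $|\bm w|^{1-d}$ singularity) is precisely what Lemma~\ref{lem:epsilon_surface_integral} resolves.
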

To simplify the terms that appear in the proof of the SEM expansion
we need the following lemma that readily follows from the representation
formula for the poly-Laplacian and the regularity of the associated Newton potential.

\begin{lemma}
  \label{lem:epsilon_surface_integral}
Let $\ell\in \mathds N$, $\bm x\in \mathds R^d$, and $g\in C^{2(\ell+1)}\big(B_\delta(\bm x)\big)$ for $\delta>0$. Then for any linear differential operator $\mathcal P$ of order smaller or equal $2 \ell + 1$, we have for $\varepsilon < \delta$
\begin{multline*}
  \mathcal P g(\bm x) = \int \limits_{\partial B_{\varepsilon}}
  \sum_{m=0}^\ell \Big(\mathcal P \partial_{\bm n_{\bm y}} \Delta^{\ell-m} \phi_\ell(\bm y) 
  - \mathcal P \Delta^{\ell-m}\phi_\ell(\bm y)  \partial_{\bm n_{\bm y}} \Big)
  \Delta^m g(\bm y+\bm x)\,\mathrm d S_{\bm y} \\
  + \int\limits_{B_\varepsilon} \mathcal P \phi_\ell(\bm y)
  \Delta^{\ell + 1} g(\bm x + \bm y) \, \text d \bm y.
\end{multline*}
The volume term vanishes in the limit $\varepsilon \to 0$.
\end{lemma}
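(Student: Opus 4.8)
The plan is to obtain the identity by differentiating the representation formula of Lemma~\ref{lem:representation-formula} in an auxiliary evaluation point, and then to read off the limiting statement from the size estimates for $\phi_\ell$. Since $\varepsilon<\delta$ gives $\bar B_\varepsilon(\bm x)\subseteq B_\delta(\bm x)$, we have $g\in C^{2(\ell+1)}\big(\bar B_\varepsilon(\bm x)\big)$, so Lemma~\ref{lem:representation-formula} applied on the domain $\Omega=B_\varepsilon(\bm x)$ expresses $g(\bm x')$, for every $\bm x'\in B_\varepsilon(\bm x)$, as a sum of boundary integrals over $\partial B_\varepsilon(\bm x)$ involving $\phi_\ell$, its poly-Laplacians and their normal derivatives paired with the traces $\Delta^m g$, plus the Newton potential $\int_{B_\varepsilon(\bm x)}\phi_\ell(\bm x'-\bm w)\,\Delta^{\ell+1}g(\bm w)\,\mathrm d\bm w$. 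Both sides agree as functions of $\bm x'$ on a neighbourhood of $\bm x$, so applying $\mathcal P=\mathcal P_{\bm x'}$ and evaluating at $\bm x'=\bm x$ yields $\mathcal P g(\bm x)$ on the left. A change of variables $\bm w=\bm x+\bm y$ then recentres all integrals on the sphere $\partial B_\varepsilon$ and the ball $B_\varepsilon$ about the origin, and the rotational symmetry of $\phi_\ell$ (so that $D^{\bm\alpha}\phi_\ell$ has parity $(-1)^{|\bm\alpha|}$; in the applications $\mathcal P$ is moreover of even order) brings the boundary and volume terms into the form asserted in the statement.

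The only genuine work lies in justifying that $\mathcal P_{\bm x'}$ may be brought inside the two integrals before setting $\bm x'=\bm x$. For the boundary integrals this is routine: when $\bm x'$ ranges over a small ball around $\bm x$ and $\bm w\in\partial B_\varepsilon(\bm x)$, the distance $|\bm x'-\bm w|$ stays bounded away from $0$, so the integrands are $C^\infty$ jointly in $(\bm x',\bm w)$ and differentiation under the integral is legitimate up to any order. For the Newton potential this is the crucial point, and it is precisely where I invoke Lemma~\ref{lem:newton-potential-polylaplace}: with continuous density $\Delta^{\ell+1}g\in C\big(\bar B_\varepsilon(\bm x)\big)$, that potential is of class $C^{2\ell+1}$ and each of its derivatives of order at most $2\ell+1$ equals the corresponding derivative of $\phi_\ell$ taken under the integral sign. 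Since $\mathcal P$ has order at most $2\ell+1$ and is a linear combination of such derivatives, applying $\mathcal P_{\bm x'}$ produces $\int(\mathcal P\phi_\ell)(\bm x'-\bm w)\,\Delta^{\ell+1}g(\bm w)\,\mathrm d\bm w$, as needed. I would stress that this step cannot be carried out by a naive term-by-term differentiation of the kernel: after $d$ derivatives $\phi_\ell$ already fails to be locally integrable, so the structural regularity afforded by Lemma~\ref{lem:newton-potential-polylaplace} is essential. This is the main obstacle.

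It remains to check that the volume term vanishes as $\varepsilon\to0$. I would bound it by $\Vert\Delta^{\ell+1}g\Vert_{L^\infty(\bar B_\varepsilon(\bm x))}\int_{B_\varepsilon}\big|(\mathcal P\phi_\ell)(\bm y)\big|\,\mathrm d\bm y$, the first factor remaining bounded since $\Delta^{\ell+1}g$ is continuous near $\bm x$. By Lemma~\ref{lem:phi_ell_estimate}, writing $p=\operatorname{ord}(\mathcal P)\le 2\ell+1$, one has $\big|(\mathcal P\phi_\ell)(\bm y)\big|\le C|\bm y|^{2(\ell+1)-d-p}\big(\big|\log|\bm y|\big|+1+\log(\ell+1)\big)$ for $0<|\bm y|\le1$, and the exponent satisfies $2(\ell+1)-d-p\ge 1-d>-d$, so the right-hand side is integrable near the origin; passing to polar coordinates gives $\int_{B_\varepsilon}\big|(\mathcal P\phi_\ell)(\bm y)\big|\,\mathrm d\bm y=O\big(\varepsilon\,|\log\varepsilon|\big)\to0$ as $\varepsilon\to0$. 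Hence the volume term tends to $0$, which completes the proof.
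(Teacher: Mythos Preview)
Your proof is correct and follows exactly the route the paper indicates (``readily follows from the representation formula for the poly-Laplacian and the regularity of the associated Newton potential''): apply Lemma~\ref{lem:representation-formula} on $B_\varepsilon(\bm x)$, differentiate in the interior point using Lemma~\ref{lem:newton-potential-polylaplace} to justify passing $\mathcal P$ under the Newton potential, and recentre. Your observation about the parity of $D^{\bm\alpha}\phi_\ell$ and the even order of $\mathcal P$ in the actual application is a careful point that the paper glosses over.
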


We proceed with the proof of the SEM expansion for interior lattice points.
\begin{proof}[Proof of Theorem~\ref{lem:hsem_preparatory_restructuring}]
  As $\bm x\in \Omega$ and $\Omega$ is open, we can choose $\varepsilon>0$ such that $\bar B_\varepsilon(\bm x)\in \Omega$. We then apply the SEM expansion in Theorem~\ref{th:higher_dimensional_sem_expansion} to the sum-integral of $f_{\bm x}$ over $(\Omega\setminus B_\varepsilon(\bm x),\Lambda)$. After dividing the SEM surface integral into the integral over $\partial \Omega$ and $\partial B_{\varepsilon}(\bm x)$, we separate the terms that do not depend on $\varepsilon$ from those which do, and obtain
  \begin{align*}
  \SumInt \limits_{\Omega\setminus B_\varepsilon(\bm x),\Lambda}f_{\bm x}  =\int \limits_{\partial \Omega} \left\langle \bm {\mathcal D}^{(\ell)}_{\Lambda,\nu,\bm y-\bm x} \,g(\bm y),\bm n_{\bm y} \right\rangle\,\mathrm d S_{\bm y}-S_\varepsilon+\mathcal R_\varepsilon^{(1)},
  \end{align*}
  with 
  \[
    S_\varepsilon=\int \limits_{\partial B_\varepsilon(\bm x)} \left\langle \bm {\mathcal D}^{(\ell)}_{\Lambda,\nu,\bm y-\bm x} \,g(\bm y),\bm n_{\bm y} \right\rangle\,\mathrm d S_{\bm y},
  \]
  and with the remainder
  \[
    \mathcal R_\varepsilon^{(1)}=\int \limits_{ \Omega\setminus B_\varepsilon(\bm x)} \mathcal A_{\Lambda,\nu}^{(\ell)}(\bm y-\bm x) \Delta^{\ell+1}g(\bm y)\, \mathrm d \bm y.  
  \]
  After inserting the definition of the SEM operator in $S_\varepsilon$,  
\[
  \mathcal S_\varepsilon=\int \limits_{\partial B_\varepsilon}\sum_{m=0}^\ell \Big(\partial_{\bm n_{\bm y}} \Delta_{\bm y}^{\ell-m}\mathcal A_{\Lambda,\nu}^{(\ell)}(\bm y)  - \Delta_{\bm y}^{\ell-m}\mathcal A_{\Lambda,\nu}^{(\ell)}(\bm y) \partial_{\bm n_{\bm y}}  \Big)\Delta^m g(\bm x+\bm y)\,\mathrm d S_{\bm y},
\]
it becomes evident that the value of the surface integral is determined by the behaviour of $\mathcal A_{\Lambda,\nu}^{(\ell)}$ around $\bm 0$. For $0<\delta<\min\{\varepsilon,a_\Lambda\}$, we have
\[
  \mathcal A_{\Lambda,\nu}^{(\ell)}(\bm y)=\lim_{\beta \to 0}\SumInt_{\bm z\in \mathds R^d\setminus B_{\delta},\Lambda}\hat \chi_\beta(\bm z)a_\nu^{(\ell)}(\bm y,\bm z).
\]
 As $a_\nu^{(\ell)}(\bm y,\bm \cdot)$ is locally integrable on $\mathds R^d\setminus \{\bm 0\}$, and as the definition of $\mathcal A_{\Lambda,\nu}^{(\ell)}$ is independent of the choice of $\delta$ for $\delta<\min\{\vert \bm y \vert,a_\Lambda\}$, we can replace $\mathds R^d\setminus B_\delta$ by $\mathds R^d\setminus \bar B_{\varepsilon}$ in the sum-integral if $\vert \bm y\vert<a_\Lambda$.
Now remember from the fundamental theorem of the SEM expansion~\ref{th:singular_bernoulli_fundamental_theorem} that the limit in $\beta$ is compact in all derivatives  with respect to $\bm y$ on $\mathds R^d\setminus \Lambda$. Therefore, we can exchange the surface integral over the $\varepsilon$-sphere, including all derivatives, with the limit in $\beta$ and the sum-integral. We insert the definition of the Bernoulli symbol,
\[
  a_\nu^{(\ell)}(\bm y,\bm z)=\frac{1}{\vert \bm z\vert^\nu }\Big(\phi_\ell(\bm y-\bm x-\bm z)-\sum_{k=0}^{2\ell+1} \frac{1}{k!}\langle -\bm z,\nabla \rangle^k \phi_{\ell}(\bm y-\bm x)\Big),
\]
and note that, due to symmetry of lattice and interaction, the odd derivatives on the right hand side do not contribute in the sum-integral. The surface integral then reads
\begin{align*}
  \mathcal S_\varepsilon=-\lim_{\beta \to 0}\SumInt_{\bm z\in \mathds R^d\setminus \bar B_{\varepsilon},\Lambda} \frac{\hat \chi_\beta(\bm z)}{\vert \bm z\vert^\nu} \Big(T_{\varepsilon,\bm z}^{(1)}+T_{\varepsilon,\bm z}^{(2)}\Big),
\end{align*}
where
\begin{align*}
  T_{\varepsilon,\bm z}^{(1)}=\sum_{k=0}^{\ell}\frac{1}{(2k)!}\int \limits_{\partial B_{\varepsilon}} \sum_{m=0}^\ell &\Big(\langle \nabla, \bm z \rangle^{2k}\partial_{\bm n_{\bm y}} \Delta^{\ell-m}\mathcal \phi_\ell(\bm y)- \langle \nabla, \bm z \rangle^{2k}\Delta_{\bm y}^{\ell-m}\mathcal \phi_\ell(\bm y)  \partial_{\bm n_{\bm y}} \Big) \\ &\times\Delta^m g(\bm y+\bm x)\,\mathrm d S_{\bm y},
\end{align*}
 and
\[
  T_{\varepsilon,\bm z}^{(2)}=-\int \limits_{\partial B_{\varepsilon}} \sum_{m=0}^\ell \Big(\partial_{\bm n_{\bm y}} \Delta^{\ell-m}\mathcal \phi_\ell(\bm y-\bm z)- \Delta_{\bm y}^{\ell-m}\mathcal \phi_\ell(\bm y-\bm z)  \partial_{\bm n_{\bm y}} \Big) \Delta^m g(\bm y+\bm x)\,\mathrm d S_{\bm y}.
  \] 
We first concern ourselves with $T_{\varepsilon,\bm z}^{(1)}$. By Lemma~\ref{lem:epsilon_surface_integral} we can rewrite it as
\[
  T_{\varepsilon,\bm z}^{(1)}=\sum_{k=0}^{\ell}\frac{1}{(2k)!} \Bigg(\langle \nabla, \bm z \rangle^{2k}g(\bm x)- \int\limits_{B_\varepsilon} \langle \nabla, \bm z \rangle^{2k} \phi_\ell(\bm y) \Delta^{\ell + 1} g(\bm y + \bm x) \, \text d \bm y\Bigg).
\] 
Moving on to $T_{\varepsilon,\bm z}^{(2)}$, we find from Green's second identity
\[
  T_{\varepsilon,\bm z}^{(2)}=\int \limits_{B_\varepsilon }\phi_\ell(\bm y-\bm z)\Delta^{\ell+1}g(\bm y+\bm x)\,\mathrm d \bm y,\quad \vert \bm z\vert> \varepsilon.
\]
Thus, we obtain for the surface integral over the $\varepsilon$-sphere
\[
  S_\varepsilon=-\big(\dhsem_{\Lambda,\nu,\varepsilon}^{(\ell)}g(\bm x)   +\mathcal R_{\varepsilon}^{(2)}\big),  
\]
with 
\begin{align*}
\mathcal R_\varepsilon^{(2)}=  \lim_{\beta \to 0}\SumInt_{\bm z\in \mathds R^d\setminus \bar B_{\varepsilon},\Lambda} \hat \chi_\beta(\bm z) \int \limits_{B_\varepsilon(\bm x)} a_\nu^{(\ell)}(\bm y-\bm x,\bm z)\Delta^{\ell+1}g(\bm y)\,\mathrm d \bm y.
\end{align*}
As, by Proposition~\ref{prop:beta_limit_A_ell}, the limit $\beta\to 0$ in the definition of the singular Bernoulli functions converges in $L^1_\text{loc}(\mathds R^d\setminus \{\bm 0\})$ , we can exchange the integral in $\mathcal R_\varepsilon^{(1)}$ with the limit in $\beta$. Then $\mathcal R_\varepsilon^{(1)}$ and $\mathcal R_\varepsilon^{(2)}$ can be merged into a single remainder term
\[
  \mathcal R_{\Lambda,\nu,\varepsilon}^{(\ell)}g(\bm x)=\mathcal R_\varepsilon^{(1)}+\mathcal R_\varepsilon^{(2)}=\SumInt_{\bm z\in \mathds R^d\setminus \bar B_{\varepsilon},\Lambda} \hat \chi_\beta(\bm z) \int \limits_{\Omega} a_\nu^{(\ell)}(\bm y-\bm x,\bm z)\Delta^{\ell+1}g(\bm y)\,\mathrm d \bm y.
\]
In total, the $\varepsilon$ dependent terms can be rewritten as 
\[
  -S_\varepsilon+\mathcal R_\varepsilon^{(1)}=\dhsem_{\Lambda,\nu,\varepsilon}^{(\ell)}g(\bm x)+\mathcal R_{\Lambda,\nu,\varepsilon}^{(\ell)}g(\bm x).
\]
\end{proof}

\section{Hypersingular expansion and connection to analytic number theory}
\label{sec:hsem_and_analytical_number_theory}

The local differential operator in the SEM expansion for interior lattice points exhibits a single free parameter $\varepsilon$. In many cases, it proves advantageous to remove free parameters, as it simplifies the resulting theory and leads to a deeper understanding of the underlying mechanisms that make it work. In this particular case, the removal of $\varepsilon$  reveals a connection between the SEM expansion and analytic number theory. It is this very connection that will allow us to compute the coefficients of the local SEM operator in an efficient way.

\subsection{Derivation of the hypersingular Euler--Maclaurin expansion}
We aim at eliminating the free parameter $\varepsilon$ by taking the limit $\varepsilon\to 0$.
The algebraically decaying interaction, however, is in general not locally integrable.
Therefore, we use the Hadamard finite-part integral in order to give meaning to otherwise divergent integrals~\cite[Chapter 5]{mclean2000stronglyelliptic}.
\begin{definition}[Hadamard finite-part integral]
  Let $\Omega \subseteq \mathds R^d$ be a bounded domain and $\bm x \in \Omega$. Consider a function $ f_{\bm x}:\bar \Omega\setminus \{\bm x\} \to \mathds C$ that factors into 
  \[
   f_{\bm x}(\bm y)=\frac{g(\bm y)}{\vert \bm x-\bm y\vert^\nu} 
  \] with $\nu \in \mathds C$ and $g\in C_0^\infty(\Omega)$. The Hadamard finite-part integral is then defined as the action of a homogeneous distribution, 
  \[
    \ddashint \limits_\Omega  f_{\bm x}(\bm y)\,\mathrm d \bm y=\Big\langle \vert \bm \cdot\vert^{-\nu},g(\bm x+\bm \cdot) \Big\rangle.
  \]
  If $\nu\in \mathds C\setminus (\mathds N+d)$, the Hadamard integral can be uniquely extended to functions $g\in C^{\ell}(\bar \Omega)$, $\ell\in \mathds N$, with $\ell\ge \ell_{\nu,d}$, 
  \[
  \ell_{\nu,d}=\lfloor \Re \nu-d \rfloor,
  \] 
  and $\lfloor t \rfloor$ the nearest integer smaller than or equal to $t$.
  The extension reads
  \[
  \ddashint \limits_\Omega  f_{\bm x}(\bm y)  \,\mathrm d \bm y =\lim_{\varepsilon\to 0}\Bigg(\,\int \limits_{\Omega\setminus B_{\varepsilon}(\bm x)}  f_{\bm x}(\bm y)  \,\mathrm d \bm y-\big(\mathcal H_{\nu,\varepsilon}g\big)(\bm x)\Bigg)
  \]
with 
\[
  \mathcal H_{\nu,\varepsilon}=\sum_{k=0}^{\ell_{\nu,d}} \frac{1}{k!} \int\limits_{\mathds R^d\setminus B_\varepsilon} \frac{\langle \bm y,\nabla \rangle^k}{\vert \bm y \vert^\nu}\,\mathrm d \bm y,\quad \nu \in \mathds C\setminus (\mathds N+d).
\]
For $\nu \in (\mathds N+d)$, the Hadamard integral is uniquely defined up to derivatives of $g$ of order $\nu-d$. One possible choice is
\[
  \mathcal H_{\nu,\varepsilon}=\sum_{k=0}^{\ell_{\nu,d}-1} \frac{1}{k!} \int\limits_{\mathds R^d\setminus B_\varepsilon} \frac{\langle \bm y,\nabla \rangle^k}{\vert \bm y \vert^\nu}\,\mathrm d \bm y +  \frac{1}{\ell_{\nu,d}!} \int\limits_{B_1\setminus B_\varepsilon} \frac{\langle \bm y,\nabla \rangle^{\ell_{\nu,d}}}{\vert \bm y \vert^\nu}\,\mathrm d \bm y.
\]
Note that due to the spherical symmetry of $\vert \bm \cdot\vert^{-\nu}$,
the non-unique term vanishes if $\nu$ is odd. 
Other choices for the Hadamard integral are obtained by replacing $B_1$ in above equation by an arbitrary, bounded and open neighbourhood of the origin. 
\end{definition}
Hadamard integrals are also referred to as hypersingular integrals. 
As the resulting expansion relies on hypersingular integrals, we refer to it as the hypersingular Euler--Maclaurin expansion (HSEM).

The SEM expansion in the previous section relies on a regularisation of sum integral by means of smooth cutoff functions. This procedure gives meaning to the difference between lattice sum and integral, even if the function is not integrable, e.g. because it increases at a polynomial rate. The HSEM expansion now requires a generalisation of the sum-integral that makes it applicable to functions that exhibit non-integrable algebraic singularities inside the region $\Omega$. This is achieved in a natural way by making use of the Hadamard integral. 
\begin{definition}[Hadamard sum-integral]
  Let $\Lambda\in\mathfrak L(\mathds R^d)$ and $\Omega\subseteq \mathds R^d$ a bounded domain such that $\Omega \cap \Lambda=\varnothing$. Let $f_{\bm x}:\Omega\to\mathds C$ factor into
  \[
    f_{\bm x}(\bm y)=\frac{g(\bm y)}{\vert \bm x-\bm y\vert^\nu},
  \]
  with  $\nu \in \mathds C$ and $g\in  C^\ell(\Omega)$ with $\ell\ge \ell_{\nu,d}$.
  We then define the Hadamard sum-integral as
  \[
     \SumDDInt_{\Omega,\Lambda} f_{\bm x}=\,\sideset{}{'}\sum_{\bm y \in \Omega \cap \Lambda}f_{\bm x}(\bm y)-\frac{1}{V_\Lambda}\ddashint \limits_{\Omega} f_{\bm x}(\bm y)\,\mathrm d \bm y,
  \]
  where the primed sum excludes $\bm y=\bm x$.
\end{definition}

 After the introduction of the Hadamard sum-integral, we present the HSEM differential operator.
\begin{definition}[Hypersingular Euler--Maclaurin operator]
  \label{def:hsem_operator}
  Let  $\Lambda\in \mathfrak L(\mathds R^d)$, $\nu\in \mathds C$, and $\ell\in \mathds N$. We define the $\ell$th order hypersingular Euler--Maclaurin (HSEM) operator $\dhsem_{\Lambda,\nu}^{(\ell)}$ as 
  \begin{align*}
    \dhsem_{\Lambda,\nu}^{(\ell)} =\sum_{k=0}^{\ell} \frac{1}{(2k)!}\lim_{\beta \to 0}\SumDDInt_{\bm z \in \mathds R^d,\Lambda}\hat \chi_\beta(\bm z)\frac{\langle \bm z,\nabla\rangle^{2k}}{\vert \bm z\vert^\nu}.
  \end{align*}
  The infinite order operator $\dhsem_{\Lambda,\nu}$ is defined by setting $\ell=\infty$ in the above equation. 
\end{definition}
 
  \begin{theorem}[Hypersingular Euler--Maclaurin expansion]
    \label{th:free_space_sem}
    Let $\Lambda\in \mathfrak L(\mathds R^d)$ and $\Omega \subseteq \mathds R^d$ a bounded domain such that $\partial \Omega \cap \Lambda=\varnothing$. For $\bm x\in \Lambda\cap \Omega$,
    let $f_{\bm x}:\bar \Omega\to \mathds C$ factor into
    \begin{equation*}
      f_{\bm x}(\bm y)=\frac{g(\bm y)}{\vert \bm x-\bm y\vert^\nu},
    \end{equation*}
    with $\nu \in \mathds C$ and $g\in C^{2m+3}(\bar \Omega)$,  $m\in \mathds N$, such that
    \[
      2(m+1)\ge\ell_{\nu,d}=\lfloor \Re \nu - d\rfloor.
    \]
    Then for $\ell\in \mathds N$ with $\ell\le m$, 
    \begin{align*}
      \SumDDInt_{\Omega,\Lambda}f_{\bm x}&=\dhsem_{\Lambda,\nu}^{(\ell)}  g(\bm x)+\mathcal S^{(\ell)}_{\Lambda,\nu}g(\bm x)+\mathcal R^{(\ell)}_{\Lambda,\nu}g(\bm x).
    \end{align*}
    The expansion of the sum-integral consists of the local HSEM operator $\dhsem_{\Lambda,\nu}^{(\ell)}$,
    a surface integral over derivatives of $g$ of up to order $2\ell+1$,
    \[
      \mathcal S^{(\ell)}_{\Lambda,\nu}g(\bm x)=\int \limits_{\partial \Omega} \left\langle \bm {\mathcal D}^{(\ell)}_{\Lambda,\nu,\bm y-\bm x} \,g(\bm y),\bm n_{\bm y} \right\rangle\,\mathrm d S_{\bm y},
    \]
    and the remainder     
    \[
      \mathcal R^{(\ell)}_{\Lambda,\nu}g(\bm x)=\lim_{\beta \to 0}\SumDDInt_{\bm z\in \mathds R^d,\Lambda} \hat \chi_\beta(\bm z) \int \limits_{\Omega} a_\nu^{(\ell)}(\bm y-\bm x,\bm z)\Delta^{\ell+1}g(\bm y)\,\mathrm d \bm y.
    \]
    
  \end{theorem}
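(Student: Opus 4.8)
The plan is to derive the statement from the SEM expansion for interior lattice points, Theorem~\ref{lem:hsem_preparatory_restructuring}, by sending the free parameter $\varepsilon\to 0$. Fix $\varepsilon\in(0,a_\Lambda)$ small enough that $\bar B_\varepsilon(\bm x)\subseteq\Omega$. Since $\ell\le m$ we have $g\in C^{2m+3}(\bar\Omega)\subseteq C^{2(\ell+1)}(\bar\Omega)$, so Theorem~\ref{lem:hsem_preparatory_restructuring} applies and
\[
\SumInt_{\Omega\setminus B_\varepsilon(\bm x),\Lambda}f_{\bm x}=\dhsem_{\Lambda,\nu,\varepsilon}^{(\ell)}g(\bm x)+\mathcal S^{(\ell)}_{\Lambda,\nu}g(\bm x)+\mathcal R^{(\ell)}_{\Lambda,\nu,\varepsilon}g(\bm x),
\]
where $\mathcal S^{(\ell)}_{\Lambda,\nu}g(\bm x)$ is already the $\varepsilon$-independent surface term of the final theorem. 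On the left, for $\varepsilon<a_\Lambda$ the ball $B_\varepsilon(\bm x)$ removes only the lattice point $\bm x$, so the sum is the primed sum over $\Omega\cap\Lambda$; combining with the defining relation $\int_{\Omega\setminus B_\varepsilon(\bm x)}f_{\bm x}=\ddashint_\Omega f_{\bm x}+(\mathcal H_{\nu,\varepsilon}g)(\bm x)+o(1)$ of the Hadamard integral, the left-hand side equals $\SumDDInt_{\Omega,\Lambda}f_{\bm x}-V_\Lambda^{-1}(\mathcal H_{\nu,\varepsilon}g)(\bm x)+o(1)$ as $\varepsilon\to0$. Splitting $\mathcal H_{\nu,\varepsilon}=\mathcal H^{\le 2\ell}_{\nu,\varepsilon}+\mathcal H^{>2\ell}_{\nu,\varepsilon}$ at directional-derivative order $2\ell$, it then suffices to prove
\[
\dhsem_{\Lambda,\nu,\varepsilon}^{(\ell)}g(\bm x)+\tfrac{1}{V_\Lambda}\mathcal H^{\le 2\ell}_{\nu,\varepsilon}g(\bm x)=\dhsem_{\Lambda,\nu}^{(\ell)}g(\bm x)+o(1),\qquad
\mathcal R^{(\ell)}_{\Lambda,\nu,\varepsilon}g(\bm x)+\tfrac{1}{V_\Lambda}\mathcal H^{>2\ell}_{\nu,\varepsilon}g(\bm x)=\mathcal R^{(\ell)}_{\Lambda,\nu}g(\bm x)+o(1),
\]
as $\varepsilon\to 0$; adding these, together with the surface term, and using that the left-hand side is independent of $\varepsilon$ then gives the claim.

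For the first relation, observe that the lattice sums inside $\dhsem_{\Lambda,\nu,\varepsilon}^{(\ell)}$ (Definition~\ref{def:hsem_operator}, via the ordinary sum-integral over $\mathds R^d\setminus\bar B_\varepsilon$) and inside $\dhsem_{\Lambda,\nu}^{(\ell)}$ (via the Hadamard sum-integral over $\mathds R^d$) coincide, so their difference is a difference of integrals, $\int_{\mathds R^d\setminus B_\varepsilon}$ versus $\ddashint_{\mathds R^d}$, applied for each $k=0,\dots,\ell$ to the Schwartz function $\hat\chi_\beta(\bm z)\,Q_k(\bm z)\vert\bm z\vert^{-\nu}$ with $Q_k(\bm z)=\langle\bm z,\nabla\rangle^{2k}g(\bm x)$, a homogeneous polynomial of degree $2k$ in $\bm z$. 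By the definition of the Hadamard extension this difference equals $(\mathcal H_{\nu,\varepsilon}(\hat\chi_\beta Q_k))(\bm 0)$ up to a term that vanishes as $\varepsilon\to 0$. Since $Q_k$ is homogeneous of degree $2k$, only its degree-$2k$ Taylor term at $\bm 0$ is nonzero, and because $\hat\chi_\beta\to1$ in $C^\infty$ (Lemma~\ref{lem:chi_beta_compact_convergence}) the limit $\beta\to 0$ of $(\mathcal H_{\nu,\varepsilon}(\hat\chi_\beta Q_k))(\bm 0)$ equals $\int_{\mathds R^d\setminus B_\varepsilon}\langle\bm y,\nabla\rangle^{2k}g(\bm x)\,\vert\bm y\vert^{-\nu}\,\mathrm d\bm y$ when $2k\le\ell_{\nu,d}$ and $0$ otherwise. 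Summing over $k$ reproduces exactly $V_\Lambda^{-1}\mathcal H^{\le 2\ell}_{\nu,\varepsilon}g(\bm x)$.

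For the second relation the same device reduces matters to the difference $\int_{\mathds R^d\setminus B_\varepsilon}-\ddashint_{\mathds R^d}$ applied to $\hat\chi_\beta(\bm z)F(\bm z)$, where $F(\bm z)=\int_\Omega a_\nu^{(\ell)}(\bm y-\bm x,\bm z)\Delta^{\ell+1}g(\bm y)\,\mathrm d\bm y=\tilde h(\bm z)\vert\bm z\vert^{-\nu}$, and $\tilde h$ is the Newton potential of $\Delta^{\ell+1}g$ with kernel $\phi_\ell$ corrected by its Taylor polynomial of degree $2\ell+1$ at the origin. By Lemma~\ref{lem:newton-potential-polylaplace}, the hypotheses $g\in C^{2m+3}$, $\ell\le m$, $2(m+1)\ge\ell_{\nu,d}$ are precisely what yields $\tilde h\in C^{\ell_{\nu,d}}$; moreover $\tilde h$ vanishes to order $2\ell+2$ at $\bm 0$, so the Hadamard correction only involves its Taylor terms of orders $j\in\{2\ell+2,\dots,\ell_{\nu,d}\}$. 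By Pizetti's formula (Lemma~\ref{lem:poly_laplacian}) the contribution of the order-$j$ term against the radial weight depends only on $\Delta^{j/2}\tilde h(\bm 0)$, and since $\Delta^{\ell+1}\phi_\ell=\delta_{\bm 0}$ one has $\Delta^{j/2}\phi_\ell=\Delta^{j/2-\ell-1}\delta_{\bm 0}$ for $j\ge 2\ell+2$, whence $\Delta^{j/2}\tilde h(\bm 0)=\Delta^{j/2}g(\bm x)$. Therefore this difference equals exactly $V_\Lambda^{-1}\mathcal H^{>2\ell}_{\nu,\varepsilon}g(\bm x)$, which completes the bookkeeping and, with the first relation and the surface term, proves the theorem.

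The main obstacle is handling the two regularisations simultaneously: one must justify interchanging the limits $\beta\to 0$ and $\varepsilon\to 0$ and show that the error terms in the two relations above are genuinely $o(1)$ in $\varepsilon$ and uniform in $\beta$. This rests on the uniform derivative bounds of Lemma~\ref{lem:uniform-bound-chi-beta} for $\hat\chi_\beta$ and of Lemma~\ref{lem:symbol_derivatives_estimate} for the Bernoulli symbol (the latter controls the growth of $\tilde h$ and of the tails), together with the compact convergence $\hat\chi_\beta\to1$. A secondary technical point is the exceptional case $\nu\in d+\mathds N$: there $\mathcal H_{\nu,\varepsilon}$ has the modified top-order term and the Hadamard integral is unique only up to derivatives of $g$ at $\bm x$ of order $\nu-d$, so one must check that the same ambiguity appears — and cancels — on the right-hand side, which it does since it is carried jointly by $\mathcal H^{>2\ell}_{\nu,\varepsilon}$, $\mathcal R^{(\ell)}_{\Lambda,\nu}$, and the convention chosen in the Hadamard sum-integral.
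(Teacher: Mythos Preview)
Your approach is essentially the same as the paper's: start from Theorem~\ref{lem:hsem_preparatory_restructuring}, rewrite the left-hand side using the Hadamard regularisation $\mathcal H_{\nu,\varepsilon}g(\bm x)/V_\Lambda$, split this correction at derivative order $2\ell$, and absorb the low-order piece into the local operator and the high-order piece into the remainder via Pizetti's formula and the identity $\Delta^{k}\tilde h(\bm 0)=\Delta^{k}g(\bm x)$ for $k\ge \ell+1$. The paper carries this out in the same order with the same ingredients; your splitting $\mathcal H^{\le 2\ell}+\mathcal H^{>2\ell}$ is its $\mathcal H^{(1)}+\mathcal H^{(2)}$, and your $\tilde h$ is its $h_{\bm x}$. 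Your final paragraph flags exactly the points the paper glosses over (the $\beta\leftrightarrow\varepsilon$ interchange and the exceptional set $\nu\in d+2\mathds N$), so if anything you are being slightly more scrupulous about the analysis, but the strategy and the key computations coincide.
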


\begin{proof}
  We begin with the restructured sum-integral in Lemma~\ref{lem:hsem_preparatory_restructuring},
  \[
  \SumInt \limits_{\Omega\setminus B_\varepsilon(\bm x),\Lambda}f_{\bm x}=\dhsem_{\Lambda,\nu,\varepsilon}^{(\ell)}g(\bm x)+\mathcal R^{(\ell)}_{\Lambda,\nu,\varepsilon}g(\bm x)+\mathcal S_{\Lambda,\nu}^{(\ell)}g(\bm x),
  \]
  and add the rescaled Hadamard regularisation
  \[
    \frac{\mathcal H_{\varepsilon,\nu}g(\bm x)}{V_\Lambda}=\frac{1}{V_\Lambda}\sum_{k=0}^{\ell_{\nu,d}} \frac{1}{k!} \int\limits_{\mathds R^d\setminus B_\varepsilon} \frac{\langle \bm y,\nabla \rangle^k}{\vert \bm y \vert^\nu}g(\bm x)\,\mathrm d \bm y,
  \]
  to both sides. If the case $2k=\nu$ arises, we replace $\mathds R^d\setminus B_\varepsilon$ by 
  $B_1\setminus B_\varepsilon$ in the associated integral. We now show that the HSEM expansion follows as $\varepsilon \to 0$.   The left hand side is the definition of the Hadamard integral,
  \begin{align*}
    \lim_{\varepsilon\to 0}\Bigg( \SumInt \limits_{\Omega\setminus B_\varepsilon(\bm x),\Lambda}f_{\bm x}+\frac{\mathcal H_{\varepsilon,\nu}g(\bm x)}{V_\Lambda} \Bigg)=\SumDDInt_{ \Omega,\Lambda}f_{\bm x}.
  \end{align*}
On the right hand side, we divide the Hadamard regularisation into two parts,
\[
  \mathcal H_{\varepsilon,\nu} g(\bm x)=\mathcal H_{\varepsilon,\nu}^{(1)} g(\bm x)+\mathcal H_{\varepsilon,\nu}^{(2)} g(\bm x),
\]
where $\mathcal H_{\varepsilon,\nu}^{(1)} g(\bm x)$ includes the directional derivatives of $g$ of order smaller or equal $2\ell+1$ and $\mathcal H_{\varepsilon,\nu}^{(2)} g(\bm x)$ includes any remaining derivatives of higher order (note that odd derivatives do not contribute due to symmetry of lattice and interaction). We show that the first contribution is absorbed in the HSEM operator while the second regularises the remainder. The first limit yields
\begin{align*}
  &\lim_{\varepsilon\to 0}\bigg(\dhsem_{\Lambda,\nu,\varepsilon}^{(\ell)}+\frac{\mathcal H_{\varepsilon,\nu}^{(1)} g(\bm x)}{V_\Lambda}\bigg)\\ 
  =&\lim_{\varepsilon\to 0}\Bigg(\sum_{k=0}^\ell \frac{1}{(2k)!}\lim_{\beta \to 0}\SumInt_{\bm z\in \mathds R^d\setminus \bar B_{\varepsilon},\Lambda} \hat \chi_\beta(\bm z)\frac{\langle \nabla, \bm z \rangle^{2k} }{\vert \bm z\vert^\nu}g(\bm x)  +\frac{\mathcal H_{\varepsilon,\nu}^{(1)} g(\bm x)}{V_\Lambda}\Bigg)\\ =&\sum_{k=0}^\ell \frac{1}{(2k)!}\lim_{\beta \to 0} \SumDDInt_{\bm z\in \mathds R^d,\Lambda} \hat \chi_\beta(\bm z)\frac{\langle \nabla, \bm z \rangle^{2k} }{\vert \bm z\vert^\nu}g(\bm x)\\ =&\dhsem^{(\ell)}_{\Lambda,\nu}g(\bm x), 
\end{align*}
where we have used that derivatives of $\hat \chi_\beta$ do not contribute in the Hadamard integral as $\hat \chi_\beta \to 1$ for $\beta \to 0$ in $C^\infty(\mathds R^d)$. 
We now rewrite the remainder as 
\[
  \mathcal R_{\Lambda,\nu,\varepsilon}^{(\ell)}(\bm x)= \lim_{\beta \to 0} \SumInt_{\bm z\in \mathds R^d\setminus \bar B_{\varepsilon},\Lambda} \frac{\hat \chi_\beta(\bm z)}{\vert \bm z\vert^\nu} h_{\bm x}(\bm z),
\]
with the auxiliary function 
\[
  h_{\bm x}(\bm z)=\int \limits_{\Omega} \Big(\phi_\ell(\bm y-\bm x-\bm z)-\sum_{k=0}^{2\ell+1} \frac{1}{k!}\langle -\bm z,\nabla \rangle^k \phi_{\ell}(\bm y-\bm x) \Big)\Delta^{\ell+1}g(\bm y)\,\mathrm d \bm y,
\]
and show that the appropriate Hadamard regularisation for the sum-integral in the remainder is already provided by the second Hadamard regularisation, namely
\[
\mathcal H_{\varepsilon,\nu} h_{\bm x}(\bm 0)=\mathcal H_{\varepsilon,\nu}^{(2)} g(\bm x).
\] 
Notice that 
\[
  \langle \bm z,\nabla \rangle^k h_{\bm z}(\bm 0)=0,\quad k=0,\dots,2\ell+1,
\]
as we have substracted a truncated Taylor expansion of order $2\ell+1$ from the fundamental solution. Hence these orders are already regularised. Then by Pizetti's formula in Lemma~\ref{lem:poly_laplacian}, we can rewrite the second Hadamard regularisation in terms of poly-Laplace operators 
\[
  \mathcal H_{\varepsilon,\nu}^{(2)} g(\bm x)=\sum_{k=\ell+1}^{\lfloor \ell_{\nu,d}/2\rfloor}\frac{1}{(2k)!}\int\limits_{\mathds R^d\setminus B_\varepsilon} \frac{\vert \bm y\vert^{2k-\nu}}{p_{k,d}} \,\mathrm d \bm y \,\Delta^{k}g(\bm x), 
\]
for $\nu \in \mathds C\setminus (\mathds N+d)$, where we again replace  $\mathds R^d\setminus B_\varepsilon$ by $B_1\setminus B_\varepsilon$ in case that $2k=\nu$.
Now as $k\ge \ell+1$ we find by the representation formula for the poly-Laplace operator that
\begin{align*}
  \Delta^{k} h_{\bm x}(\bm 0)&=\Delta_{\bm z}^{k}\int \limits_{\Omega} \Big(\phi_\ell(\bm y-\bm x-\bm z)-\sum_{k=0}^{2\ell+1} \frac{1}{k!}\langle -\bm z,\nabla \rangle^k \phi_{\ell}(\bm y-\bm x) \Big)\Delta^{\ell+1}g(\bm y)\,\mathrm d \bm y \Big\vert_{\bm z=\bm 0}\\ &= \Delta_{\bm z}^{k-(\ell+1)} \Delta_{\bm z}^{\ell+1}\int \limits_{\Omega} \phi_\ell(\bm y-\bm x-\bm z)\Delta^{\ell+1}g(\bm y)\,\mathrm d \bm y\Big\vert_{\bm z=\bm 0}\\ &=\Delta^{k-(\ell+1)} \Delta^{\ell+1} g(\bm x+\bm z)\Big\vert_{\bm z=\bm 0}\notag \\ &=\Delta^k g(\bm x),
\end{align*}
as $g\in C^{2m+3}$, with $2m+3>2(k+1)$.
After evaluating the remaining $\varepsilon$-limit,
\begin{align*}
  \lim_{\varepsilon\to 0} \bigg(\mathcal R_{\Lambda,\nu,\varepsilon}^{(\ell)}g(\bm x)+\frac{\mathcal H_{\nu,\varepsilon}^{(2)}g(\bm x)}{V_\Lambda}\bigg)&=\lim_{\varepsilon\to 0} \Bigg(\lim_{\beta \to 0} \SumInt_{\bm z\in \mathds R^d\setminus \bar B_{\varepsilon},\Lambda} \frac{\hat \chi_\beta(\bm z)}{\vert \bm z\vert^\nu} h_{\bm x}(\bm z)+\frac{\mathcal H_{\nu,\varepsilon}h_{\bm x}(\bm 0)}{V_\Lambda}\Bigg) \notag \\ 
  &=\lim_{\beta \to 0} \SumDDInt_{\bm z\in \mathds R^d,\Lambda} \frac{\hat \chi_\beta(\bm z)}{\vert \bm z\vert^\nu} h_{\bm x}(\bm z),
\end{align*}
we have recovered the HSEM expansion.
\end{proof}

\subsection{Connection to analytic number theory}
We now establish a connection between the coefficients of the HSEM differential operator and analytic number theory. To this end, we first provide an alternative representation of $\dhsem_{\Lambda,\nu}$.
\begin{theorem}
  \label{th:hsem_alternative_representation}
Let $\Lambda\in \mathfrak L(\mathds R^d)$ and $\nu\in \mathds C$. We set $\mathcal Z_{\Lambda^*,\nu}^{(0)}:\mathds R^d\setminus \Lambda^*\to \mathds C$,
\begin{equation*}
  \mathcal Z_{\Lambda^*,\nu}^{(0)}(\bm y)= \lim_{\beta \to 0}\SumDDInt_{\bm z \in \mathds R^d,\Lambda} \hat \chi_\beta(\bm z)\frac{e^{- 2\pi i \, \langle \bm z,\bm y\rangle}}{\vert \bm z\vert^\nu},
\end{equation*}
where the function depends on the choice of the Hadamard regularisation in case that $\nu\in (2\mathds N+d)$.
For all choices, $\mathcal Z_{\Lambda^*,\nu}^{(0)}$ can be extended to an analytic function on $\mathds R^d\setminus \Lambda^*\cup \{\bm 0\}$ and the infinite order HSEM operator admits the representation
\[
  \dhsem_{\Lambda,\nu} =\mathcal Z_{\Lambda^*,\nu}^{(0)}\bigg(-\frac{\nabla}{2\pi i}\bigg),
\]
in the sense of a Taylor expansion of the function $\mathcal Z_{\Lambda^*,\nu}^{(0)}$ around zero. The finite order operators are found by truncating the Taylor expansion at the corresponding order.
\end{theorem}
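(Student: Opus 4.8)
The plan is to identify $\mathcal Z_{\Lambda^*,\nu}^{(0)}$ in closed form and then read off its Taylor coefficients at the origin. Writing the regularised Hadamard sum-integral as a primed lattice sum minus a Hadamard integral and using $(\Lambda^*)^*=\Lambda$, $V_{(\Lambda^*)^*}=V_\Lambda$, the lattice part is $\frac{1}{V_\Lambda}\mathcal Z_{\Lambda^*,\nu,\beta}(\bm y)$ in the notation of Lemma~\ref{lem:z-lambda-tempered-distribution}, while the Hadamard integral is the action of the homogeneous extension $s_\nu$ on the Schwartz function $\hat\chi_\beta\,e^{-2\pi i\langle\,\cdot\,,\bm y\rangle}$, which by Fourier duality equals $(\chi_\beta*\hat s_\nu)(\bm y)$. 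For $\nu\in 2\mathds N+d$ the extension is not unique (Theorem~\ref{thm:extension-homogeneous-distribution}): two choices differ by a linear combination of derivatives of $\delta_{\bm 0}$, hence $\hat s_\nu$ differs by a polynomial, which accounts for the stated dependence on the Hadamard regularisation. Passing to the limit with Theorem~\ref{thm:regularised-sum-int-epstein-zeta} ($\mathcal Z_{\Lambda^*,\nu,\beta}\to\mathcal Z_{\Lambda^*,\nu}$ in $C^\infty(\mathds R^d\setminus\Lambda^*)$) and with Lemma~\ref{lem:approximation-with-convolution} together with Theorem~\ref{thm:Fourier-transform-interaction} ($\chi_\beta*\hat s_\nu\to\hat s_\nu$ in $C^\infty(\mathds R^d\setminus\{\bm 0\})$) yields
\[
  \mathcal Z_{\Lambda^*,\nu}^{(0)}=\frac{1}{V_\Lambda}\big(\mathcal Z_{\Lambda^*,\nu}-\hat s_\nu\big)\quad\text{on }\mathds R^d\setminus(\Lambda^*\cup\{\bm 0\}),
\]
the limit $\beta\to 0$ being compact in all derivatives there.

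I would then establish analyticity. On $\mathds R^d\setminus\Lambda^*$ it is immediate, since $\mathcal Z_{\Lambda^*,\nu}$ is analytic there by Theorem~\ref{thm:regularised-sum-int-epstein-zeta} and $\hat s_\nu$ is analytic on $\mathds R^d\setminus\{\bm 0\}$. The delicate point is the extension across $\bm 0$, where the singularity of $\mathcal Z_{\Lambda^*,\nu}$ at the lattice point $\bm 0$ must be cancelled by that of $\hat s_\nu$. I would repeat the $\Delta^\ell$-reduction from the proof of Theorem~\ref{thm:regularised-sum-int-epstein-zeta}: both $\mathcal Z_{\Lambda^*,\nu}$ and $\hat s_\nu$ satisfy $\Delta^\ell T_\nu=(2\pi i)^{2\ell}T_{\nu-2\ell}$ (for $\mathcal Z$ this is the distributional limit of~\eqref{eq:theorem_em_final}; for $\hat s_\nu$ it follows from Fourier transformation, with the matching choice of extension), so $\Delta^\ell(\mathcal Z_{\Lambda^*,\nu}-\hat s_\nu)=(2\pi i)^{2\ell}(\mathcal Z_{\Lambda^*,\nu-2\ell}-\hat s_{\nu-2\ell})$. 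Choosing $\ell$ with $\Re{\nu-2\ell}<-(d+1)$, Poisson summation gives $\mathcal Z_{\Lambda^*,\nu-2\ell}=\sum_{\bm z\in\Lambda^*}\hat s_{\nu-2\ell}(\,\cdot\,+\bm z)$ as an absolutely and compactly convergent series, so $\mathcal Z_{\Lambda^*,\nu-2\ell}-\hat s_{\nu-2\ell}=\sum_{\bm z\in\Lambda^*\setminus\{\bm 0\}}\hat s_{\nu-2\ell}(\,\cdot\,+\bm z)$ is analytic near $\bm 0$. Analytic hypoellipticity of the poly-Laplacian (Theorem~\ref{cor:elliptic-analytic-regularity}) then promotes this to analyticity of $\mathcal Z_{\Lambda^*,\nu}-\hat s_\nu$, hence of $\mathcal Z_{\Lambda^*,\nu}^{(0)}$, near $\bm 0$; the same reduction fed into Theorem~\ref{thm:weak-convergence-implies-uniform} also upgrades the $\beta\to 0$ convergence of the auxiliary functions to $C^\infty$ on a neighbourhood of $\bm 0$. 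This cancellation — carried through the elliptic-regularity results and, in the exceptional cases $\nu\in 2\mathds N+d$, with the bookkeeping of the homogeneous extensions — is the main obstacle.

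With analyticity secured, $\mathcal Z_{\Lambda^*,\nu}^{(0)}$ equals its Taylor series about $\bm 0$ near the origin. I would compute the coefficients by differentiating the auxiliary functions under the sum-integral: the superpolynomial decay of $\hat\chi_\beta$ allows term-by-term differentiation of the lattice sum, and $\bm y\mapsto\hat\chi_\beta\,e^{-2\pi i\langle\,\cdot\,,\bm y\rangle}$ is smooth into $S(\mathds R^d)$, so one may also differentiate under the Hadamard pairing; evaluating at $\bm 0$ and letting $\beta\to 0$ (legitimate by the $C^\infty$-convergence just obtained) gives
\[
  D^{\bm\alpha}\mathcal Z_{\Lambda^*,\nu}^{(0)}(\bm 0)=\lim_{\beta\to 0}\SumDDInt_{\bm z\in\mathds R^d,\Lambda}\hat\chi_\beta(\bm z)\frac{(-2\pi i\bm z)^{\bm\alpha}}{\vert\bm z\vert^\nu},\qquad\bm\alpha\in\mathds N^d.
\]
Substituting $\bm y\mapsto-\nabla/(2\pi i)$ into the series, the coordinatewise cancellation $(-\partial_j/(2\pi i))(-2\pi i z_j)=z_j\partial_j$ turns $\frac{1}{\bm\alpha!}\bm y^{\bm\alpha}(-2\pi i\bm z)^{\bm\alpha}$ into $\frac{1}{\bm\alpha!}\bm z^{\bm\alpha}\nabla^{\bm\alpha}$; collecting the terms with $\vert\bm\alpha\vert=m$ via $\sum_{\vert\bm\alpha\vert=m}\frac{1}{\bm\alpha!}\bm z^{\bm\alpha}\nabla^{\bm\alpha}=\frac{1}{m!}\langle\bm z,\nabla\rangle^m$ and discarding the odd-$m$ terms, which vanish in the sum-integral because $\Lambda$, $\vert\bm\cdot\vert^{-\nu}$, $\hat\chi_\beta$ and the Hadamard regularisation are all symmetric under $\bm z\mapsto-\bm z$, produces precisely
\[
  \sum_{k\ge 0}\frac{1}{(2k)!}\lim_{\beta\to 0}\SumDDInt_{\bm z\in\mathds R^d,\Lambda}\hat\chi_\beta(\bm z)\frac{\langle\bm z,\nabla\rangle^{2k}}{\vert\bm z\vert^\nu}=\dhsem_{\Lambda,\nu}.
\]
Truncating the Taylor expansion at total degree $2\ell$ gives $\dhsem_{\Lambda,\nu}^{(\ell)}$, and in the cases $\nu\in 2\mathds N+d$ the computation holds for each choice, the polynomial ambiguity in $\mathcal Z_{\Lambda^*,\nu}^{(0)}$ matching the corresponding ambiguity in $\dhsem_{\Lambda,\nu}$. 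Beyond the origin-cancellation, the remaining work — the Fourier-duality identification, differentiation under the sum-integral, and the multinomial rearrangement — is routine.
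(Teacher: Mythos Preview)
Your proposal is correct and follows essentially the same strategy as the paper: reduce via $\Delta^\ell$ to $\Re{\nu-2\ell}<-(d+1)$, apply Poisson summation so that the Hadamard sum-integral becomes the primed series $V_{\Lambda^*}\sum'_{\bm z\in\Lambda^*}\chi_\beta\ast\hat s_{\nu-2\ell}(\bm z+\bm y)$ (analytic near $\bm 0$ by Lemma~\ref{lem:uniform-convergence-series-interaction} with $\Lambda'=\Lambda^*\setminus\{\bm 0\}$), and then invoke elliptic regularity to promote this to all $\nu$ and upgrade the $\beta\to 0$ convergence. The only organisational difference is that you first isolate the closed form $\mathcal Z_{\Lambda^*,\nu}^{(0)}=\tfrac{1}{V_\Lambda}(\mathcal Z_{\Lambda^*,\nu}-\hat s_\nu)$ on $\mathds R^d\setminus\Lambda^*$, recycling Theorem~\ref{thm:regularised-sum-int-epstein-zeta} for the sum part and Fourier duality for the integral part, whereas the paper applies Poisson summation directly to the auxiliary sum-integral as a whole; after that point the arguments coincide. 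Your explicit computation of the Taylor coefficients (differentiation under the sum-integral, multinomial regrouping, elimination of odd orders by symmetry) spells out what the paper dismisses with ``a Taylor expansion then readily yields''.
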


\begin{proof}
The proof follows in close analogy to the one of Theorem~\ref{thm:regularised-sum-int-epstein-zeta}. We first show that $\mathcal Z_{\Lambda^*,\nu}^{(0)}$ defines a distribution. The sum is discussed in the aforementioned proof, so we only need to investigate the Hadamard integral. For $\beta > 0$, we set $u_\beta \in \mathscr D'(\mathds R^d)$ with
\[
\langle u_\beta, \psi \rangle
= \int\limits_{\mathds R^d} \psi(\bm y) \left(~\,
\ddashint \limits_{\mathds R^d} \hat \chi_\beta(\bm z)
\frac{e^{-2 \pi i \langle \bm z, \bm y \rangle}}{|\bm z|^\nu} \, \text d \bm z \right)
\text d \bm y, \quad \psi \in C_0^\infty(\mathds R^d).
\]
The superpolynomial decay of $\hat \chi_\beta$ allows the exchange of the Hadamard
integral with the integration over $\mathds R^d$, from which we recover the Fourier transform
\begin{align*}
\langle u_\beta, \psi \rangle
 = \ddashint \limits_{\mathds R^d} \hat \chi_\beta(\bm z)
\frac{\hat \psi(\bm z)}{|\bm z|^\nu} \, \text d \bm z.
\end{align*}
Since the Hadamard integral defines an extension of the function $s_\nu$
to a tempered distribution $\bar s_\nu$, we can rewrite the definition of $u_\beta$ as
\[
\langle u_\beta, \psi \rangle = \big\langle \hat \chi_{\beta} \bar s_\nu, \hat \psi \big\rangle.
\]
Now, as $\hat \chi_\beta\to 1$ as $\beta \to 0$ in $C^\infty(\mathds R^d)$,
it holds, by continuity of the multiplication of a distribution with a smooth function,
\[
\lim_{\beta \to 0} \langle u_\beta, \psi \rangle = \big\langle \bar s_\nu, \hat \psi \big\rangle,
\quad \psi \in C_0^\infty(\mathds R^d).
\]
Hence, $\mathcal Z_{\Lambda^*, \nu}^{(0)}$ defines a distribution by virtue of
\[
\langle \mathcal Z_{\Lambda^*, \nu}^{(0)}, \psi \rangle
=  \sideset{}{'} \sum_{z \in \Lambda}
\frac{\hat \psi(\bm z)}{|\bm z|^\nu}
- \frac{1}{V_{\Lambda}}\big\langle \bar s_\nu, \hat \psi \big\rangle,
\quad \psi \in C_0^\infty(\mathds R^d).
\]

We subsequently show that $\mathcal Z_{\Lambda^*, \nu}^{(0)}$ can be identified as an analytic function on $\mathds R^d\setminus \Lambda^*\cup \{\bm 0\}$. Under the familiar restriction $\Re \nu < - (d + 1)$,  Poisson summation yields
\[
  \SumDDInt_{\bm z \in \mathds R^d,\Lambda} \hat \chi_\beta(\bm z)\frac{e^{- 2\pi i \, \langle \bm z,\bm y\rangle}}{\vert \bm z\vert^\nu}=V_{\Lambda^*}\sideset{}{'} \sum_{\bm z \in \Lambda^*}
  \chi_\beta \ast \hat s_\nu(\bm z + \bm y),
\]
as the Hadamard integral coincides with the usual integration over $\mathds R^d$ for this choice of $\nu$. We already studied this type of series in the proof of
Theorem~\ref{thm:regularised-sum-int-epstein-zeta}, the only relevant difference being that the origin is excluded in above sum. As Lemma~\ref{lem:uniform-convergence-series-interaction} also holds for $\Lambda^*\setminus \{\bm 0\}$, the rest of the proof coincides with that of Theorem~\ref{thm:regularised-sum-int-epstein-zeta}. A Taylor expansion of $\mathcal Z_{\Lambda^*,\nu}^{(0)}$ at $\bm 0$ then readily yields the operator coefficients of $\dhsem_{\Lambda,\nu}$.
\end{proof}
We proceed by showing that the coefficients of the HSEM operator can be identified as meromorphic continuations of lattice sums.
\begin{theorem}[Meromorphic continuation of Dirichlet series]\label{th:meromorphic_continuation_by_sum_integral}
  Consider a polynomial $P:\mathds R^d\to \mathds C$ of order $m$. Then for $\nu \in \mathds C$ with $\Re\nu > d+m$, the sum-integral equals its associated Dirichlet series,
  \[
    \lim_{\beta\to 0}\SumDDInt_{\bm z \in \mathds R^d,\Lambda} \hat \chi_\beta(\bm z) \frac{P(\bm z)}{\vert \bm z\vert^\nu}=\,\sideset{}{'}\sum_{\bm z\in \Lambda} \frac{P(\bm z)}{\vert \bm z\vert^\nu}.
  \]
  The left hand side forms, as a function of $\nu$, the meromorphic continuation of the Dirichlet series for $\nu\in \mathds C$, whose simple poles lie at $\nu=d+2k$, $k\in \mathds N$, with residues
  \[
    \frac{\omega_d}{V_\Lambda}\frac{ (1/2)_{k}}{(2k)!(d/2)_{k}}\Delta^k P(\bm 0).  
  \]
\end{theorem}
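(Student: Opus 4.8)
I would establish the claimed identity on the half-plane $\Re\nu>d+m$ first, and then obtain the meromorphic continuation and the pole structure from a theta/Mellin representation. For $\Re\nu>d+m$ the Dirichlet series $\sideset{}{'}\sum_{\bm z\in\Lambda}P(\bm z)|\bm z|^{-\nu}$ converges absolutely, since $|P(\bm z)|\lesssim(1+|\bm z|)^m$ and $\Re\nu-m>d$. The key observation is that $\ell_{\nu,d}=\lfloor\Re\nu-d\rfloor\ge m$, so that in the Hadamard regularisation of $\bm z\mapsto P(\bm z)|\bm z|^{-\nu}$ over $\mathds R^d$ centred at $\bm 0$ the subtracted counterterm $\mathcal H_{\nu,\varepsilon}P(\bm 0)$ is the \emph{full} Taylor polynomial of $P$ (its order-$k$ term being $P_k$, the $k$-th homogeneous component, because $\tfrac{1}{k!}\langle\bm y,\nabla\rangle^kP(\bm 0)=P_k(\bm y)$) and hence coincides identically with $\int_{\mathds R^d\setminus B_\varepsilon}P(\bm z)|\bm z|^{-\nu}\,\mathrm d\bm z$; this uses only $\ell_{\nu,d}\ge m$ and holds in either branch of the definition of $\mathcal H_{\nu,\varepsilon}$. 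Consequently $\ddashint_{\mathds R^d}P(\bm z)|\bm z|^{-\nu}\,\mathrm d\bm z=0$. I would then pass to the limit $\beta\to0$ in the identity
\[
\SumDDInt_{\bm z\in\mathds R^d,\Lambda}\hat\chi_\beta(\bm z)\frac{P(\bm z)}{|\bm z|^\nu}=\sideset{}{'}\sum_{\bm z\in\Lambda}\hat\chi_\beta(\bm z)\frac{P(\bm z)}{|\bm z|^\nu}-\frac{1}{V_\Lambda}\ddashint_{\mathds R^d}\hat\chi_\beta(\bm z)\frac{P(\bm z)}{|\bm z|^\nu}\,\mathrm d\bm z.
\]
The sum converges to the Dirichlet series by dominated convergence ($|\hat\chi_\beta|\le1$, $\hat\chi_\beta\to1$), while the Hadamard integral equals $\langle\bar s_\nu,\hat\chi_\beta P\rangle=\sum_{j=0}^m\langle P_j\bar s_\nu,\hat\chi(\beta\,\bm\cdot\,)\rangle$, and since $P_j\bar s_\nu$ is homogeneous of degree $j-\nu$, the scaling law for homogeneous distributions gives $\langle P_j\bar s_\nu,\hat\chi(\beta\,\bm\cdot\,)\rangle=\beta^{\,\nu-j-d}\langle P_j\bar s_\nu,\hat\chi\rangle\to 0$, because $\Re{\nu-j-d}\ge\Re\nu-m-d>0$ (a harmless logarithmic factor appears for the resonant $\nu\in 2\mathds N+d$ but does not change the limit). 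This proves the displayed identity.

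\textbf{Meromorphic continuation.} Using $|\bm z|^{-\nu}=\frac{\pi^{\nu/2}}{\Gamma(\nu/2)}\int_0^\infty t^{\nu/2-1}e^{-\pi t|\bm z|^2}\,\mathrm dt$ for $\bm z\neq\bm 0$, I would rewrite the left-hand side as an identity of meromorphic functions
\[
\lim_{\beta\to0}\SumDDInt_{\bm z\in\mathds R^d,\Lambda}\hat\chi_\beta(\bm z)\frac{P(\bm z)}{|\bm z|^\nu}=\frac{\pi^{\nu/2}}{\Gamma(\nu/2)}\int_0^\infty t^{\nu/2-1}\Theta(t)\,\mathrm dt,\qquad \Theta(t)=\SumInt_{\bm z\in\mathds R^d,\Lambda}P(\bm z)e^{-\pi t|\bm z|^2}-P(\bm 0),
\]
where $\Theta$ is an ordinary, regularisation-free sum-integral since the Gaussian factor makes both the lattice sum and the integral absolutely convergent; the $\beta\to0$ limit may be moved inside for the same reason. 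Split $\int_0^\infty=\int_0^1+\int_1^\infty$. As $t\to\infty$ the lattice-sum part of $\Theta$ decays exponentially (the nearest nonzero lattice vector has length $a_\Lambda$), whereas $\int_{\mathds R^d}P(\bm z)e^{-\pi t|\bm z|^2}\,\mathrm d\bm z=\sum_{k=0}^{\lfloor m/2\rfloor}c_{2k}\,t^{-(d+2k)/2}$ with $c_{2k}=\int_{\mathds R^d}P_{2k}(\bm w)e^{-\pi|\bm w|^2}\,\mathrm d\bm w$, so $\int_1^\infty t^{\nu/2-1}\Theta(t)\,\mathrm dt$ is meromorphic in $\nu$ with at most simple poles, confined to $\nu\in d+2\mathds N$. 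As $t\to0$, Poisson summation collapses the $\bm w=\bm 0$ term of $\sum_{\bm z\in\Lambda}P(\bm z)e^{-\pi t|\bm z|^2}$ against the subtracted integral $V_\Lambda^{-1}\int_{\mathds R^d}P\,e^{-\pi t|\,\bm\cdot\,|^2}$, leaving $\Theta(t)=V_\Lambda^{-1}\sum_{\bm w\in\Lambda^*\setminus\{\bm 0\}}\widehat{P\,e^{-\pi t|\,\bm\cdot\,|^2}}(\bm w)-P(\bm 0)=-P(\bm 0)+O(e^{-c/t})$; hence $\int_0^1 t^{\nu/2-1}\Theta(t)\,\mathrm dt$ has only a pole at $\nu=0$, which is removed by the zero of $1/\Gamma(\nu/2)$ there. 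Therefore the left-hand side is meromorphic on $\mathds C$ with poles confined to $\nu\in d+2\mathds N$, and, agreeing with the Dirichlet series on $\Re\nu>d+m$, it is its meromorphic continuation.

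\textbf{Residues.} Only the $\int_1^\infty$-contribution is singular at $\nu=d+2k$, and $\int_1^\infty t^{\nu/2-1}t^{-(d+2k)/2}\,\mathrm dt=-2/(\nu-d-2k)$, so the residue of the left-hand side at $\nu=d+2k$ equals $\frac{\pi^{(d+2k)/2}}{\Gamma((d+2k)/2)}\cdot\frac{2c_{2k}}{V_\Lambda}$. Separating radial and angular integration and then invoking Pizetti's formula (Lemma~\ref{lem:poly_laplacian}), with $\langle\bm z,\nabla\rangle^{2k}P_{2k}(\bm 0)=(2k)!\,P_{2k}(\bm z)$, yields $c_{2k}=\frac{\Gamma(k+d/2)}{2\pi^{k+d/2}}\int_{\partial B_1}P_{2k}(\bm\omega)\,\mathrm dS_{\bm\omega}=\frac{\Gamma(k+d/2)}{2\pi^{k+d/2}}\cdot\frac{\omega_d}{(2k)!\,p_{k,d}}\Delta^kP(\bm 0)$. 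The $\pi$- and $\Gamma$-factors cancel completely and the residue collapses to $\frac{\omega_d}{V_\Lambda\,(2k)!\,p_{k,d}}\Delta^kP(\bm 0)=\frac{\omega_d}{V_\Lambda}\frac{(1/2)_k}{(2k)!\,(d/2)_k}\Delta^kP(\bm 0)$, as claimed; in particular the pole at $\nu=d+2k$ is genuine exactly when $\Delta^kP(\bm 0)\neq0$.

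\textbf{Main obstacle.} The conceptual skeleton is routine; the delicate part is justifying the Mellin representation of $\Theta$ as an equality of \emph{meromorphic} functions and carrying the limit $\beta\to0$ under the Mellin integral uniformly enough to place the poles correctly — in particular, since the subtracted-integral contribution has a $t\to\infty$ tail that is convergent only after analytic continuation when $\Re\nu$ is large, one must split at $t=1$ and keep track of which half-integral is holomorphic in which region of the $\nu$-plane. An alternative closer to the paper's own machinery is to deduce meromorphy from Theorem~\ref{th:hsem_alternative_representation}, writing $\lim_{\beta\to0}\SumDDInt_{\bm z\in\mathds R^d,\Lambda}\hat\chi_\beta(\bm z)P(\bm z)|\bm z|^{-\nu}=\big[P\big(-\nabla/(2\pi i)\big)\,\mathcal Z^{(0)}_{\Lambda^*,\nu}\big](\bm 0)$ and propagating the $\nu$-holomorphy of $\mathcal Z^{(0)}_{\Lambda^*,\nu}$ — manifest for $\Re\nu<-(d+1)$ from its Poisson-summed representation — through the elliptic-regularity arguments used in the proof of Theorem~\ref{thm:regularised-sum-int-epstein-zeta}; this gives meromorphy cleanly but leaves the residues less explicit, so I would still read them off from the theta representation.
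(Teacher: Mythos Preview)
Your proof of the identity on $\Re\nu>d+m$ is correct and coincides with the paper's argument (your homogeneity/scaling computation for $\langle P_j\bar s_\nu,\hat\chi(\beta\,\bm\cdot\,)\rangle\to 0$ is in fact a little more careful than the paper's one-line claim that the sum-integral converges without regularisation). Your residue computation via Pizetti's formula is also correct.

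For the meromorphic continuation you take a genuinely different route. The paper decomposes the Hadamard sum-integral \emph{spatially}: the regularised sum-integral over $\mathds R^d\setminus B_\delta$ is entire in $\nu$ by the multidimensional EM expansion (Lemma~\ref{lem:meromorphic_aux_lemma1}), and the Hadamard integral over $B_\delta$ is evaluated in closed form as a finite combination of $\delta^{-\nu+2k+d}/(\nu-d-2k)$ (Lemma~\ref{lem:meromorphic_aux_lemma2}), so meromorphy of the left-hand side in $\nu$ and the pole structure are read off directly. You instead invoke the Riemann--Hecke theta/Mellin method, which is classical and gives the residues very cleanly.

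There is, however, a real gap in your argument. The theta construction produces a meromorphic continuation of the \emph{Dirichlet series}, but the theorem asserts more: that the left-hand side --- which is defined pointwise in $\nu$ via the cutoff limit and the Hadamard regularisation --- \emph{is} that continuation. To conclude this by uniqueness of analytic continuation you must first know the left-hand side is meromorphic (or at least holomorphic) in $\nu$, and you never prove this. Your displayed ``identity of meromorphic functions'' between the LHS and the Mellin integral of $\Theta$ is asserted rather than established on any common domain: the Mellin integral of $\Theta$ converges only on the strip $0<\Re\nu<d$, which is disjoint from $\Re\nu>d+m$ where you have matched the LHS to the Dirichlet series, and on that strip the regularised lattice sum does not converge absolutely, so equating it with the Mellin transform still requires work. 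You acknowledge the difficulty in your ``main obstacle'' paragraph, but neither remedy you propose --- carrying $\beta\to 0$ under the Mellin integral, or appealing to Theorem~\ref{th:hsem_alternative_representation} --- actually yields $\nu$-meromorphy of the Hadamard sum-integral. This is exactly what the paper's spatial decomposition supplies: it expresses the LHS, for \emph{every} $\nu$, as a sum of two terms each visibly meromorphic in $\nu$.
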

For the proof we need two lemmas that investigate holomorphy of the sum-integral and of the Hadamard integral with respect to $\nu$. 
\begin{lemma}\label{lem:meromorphic_aux_lemma1}
  Let $P:\mathds R^d\to \mathds C$ be a polynomial 
  and fix $\delta>0$ with $\delta<a_\Lambda$. Then
  \[
    \nu \mapsto \lim_{\beta \to 0}  \SumInt_{\bm z \in \mathds R^d\setminus B_\delta,\Lambda} \hat \chi_\beta(\bm z) \frac{P(\bm z)}{\vert \bm z\vert^\nu}
  \]
  is an entire function.
\end{lemma}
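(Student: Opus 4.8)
The plan is to reorganise the regularised sum-integral by the multidimensional Euler--Maclaurin expansion of Corollary~\ref{cor:em_expansion_unbounded_domain} applied on the unbounded domain $\Omega=\mathds R^d\setminus\bar B_\delta$, taking the order $\ell$ of the expansion as large as the given $\nu$ requires. Since $0<\delta<a_\Lambda$, the sphere $\partial\Omega=\partial B_\delta$ contains no lattice point, so the hypotheses on $\Omega$ hold (for $d=1$ one applies the expansion on each of the two components of $\Omega$ separately and adds the results). Moreover $\bm 0\notin\bar\Omega$, so for every $\beta>0$ the function $g_\beta=\hat\chi_\beta\,P\,|\,\bm\cdot\,|^{-\nu}$ is smooth on $\bar\Omega$ and, $\hat\chi_\beta$ being a Schwartz function, decays superpolynomially together with all its derivatives. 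Hence for any $\ell\in\mathds N$ with $2(\ell+1)>d$, Corollary~\ref{cor:em_expansion_unbounded_domain} gives
\[
\SumInt_{\bm z\in\mathds R^d\setminus B_\delta,\Lambda}\hat\chi_\beta(\bm z)\frac{P(\bm z)}{|\bm z|^\nu}
= S_\beta(\nu)+R_\beta(\nu),
\]
with $S_\beta(\nu)=\int_{\partial B_\delta}\langle\bm{\mathcal D}^{(\ell)}_{\Lambda,0,\bm y}\,g_\beta(\bm y),\bm n_{\bm y}\rangle\,\mathrm d S_{\bm y}$ and $R_\beta(\nu)=\int_{\Omega}\mathcal B_\Lambda^{(\ell)}(\bm y)\,\Delta^{\ell+1}g_\beta(\bm y)\,\mathrm d\bm y$. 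The left-hand side is exactly the quantity whose $\beta\to0$ limit appears in the lemma, since $\mathds R^d\setminus B_\delta$ and $\mathds R^d\setminus\bar B_\delta$ differ by the null set $\partial B_\delta$.

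Fix $\nu_0\in\mathds C$ and a bounded open neighbourhood $U\ni\nu_0$. Choose $\ell$ so large that $2(\ell+1)>d$ and $2(\ell+1)>m+d-\inf_{\nu\in U}\Re\nu$, so that $\gamma=m-\inf_{\nu\in U}\Re\nu-2(\ell+1)$ satisfies $\gamma<-d$. From the Leibniz rule applied to $\Delta^{\ell+1}g_\beta=\Delta^{\ell+1}\big(\hat\chi_\beta\cdot P\,|\,\bm\cdot\,|^{-\nu}\big)$, the uniform bound $|D^{\bm\alpha}\hat\chi_\beta(\bm y)|\le C(1+|\bm y|)^{-|\bm\alpha|}$ for $\beta\in(0,1)$ from Lemma~\ref{lem:uniform-bound-chi-beta}, and the elementary estimate $|D^{\bm\beta}(P(\bm y)|\bm y|^{-\nu})|\le C(1+|\bm y|)^{m-\Re\nu-|\bm\beta|}$ valid for $|\bm y|\ge\delta$ with $C$ locally uniform in $\nu$, one obtains the majorant
\[
\big|\mathcal B_\Lambda^{(\ell)}(\bm y)\,\Delta^{\ell+1}g_\beta(\bm y)\big|
\le C\,\|\mathcal B_\Lambda^{(\ell)}\|_\infty\,(1+|\bm y|)^{\gamma},\qquad \bm y\in\Omega,\ \beta\in(0,1),\ \nu\in U,
\]
which is integrable on $\Omega$; here $\mathcal B_\Lambda^{(\ell)}$ is continuous and bounded by Corollary~\ref{lem:b_ell_properties} because $2(\ell+1)>d$. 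Since $\hat\chi_\beta\to1$ in $C^\infty(\mathds R^d)$ as $\beta\to0$ (Lemma~\ref{lem:chi_beta_compact_convergence}), we have $\Delta^{\ell+1}g_\beta\to\Delta^{\ell+1}g$ pointwise on $\Omega$ with $g=P\,|\,\bm\cdot\,|^{-\nu}$, so the dominated convergence theorem yields $R_\beta(\nu)\to R(\nu)=\int_{\Omega}\mathcal B_\Lambda^{(\ell)}(\bm y)\,\Delta^{\ell+1}g(\bm y)\,\mathrm d\bm y$; likewise $g_\beta\to g$ in $C^\infty$ on a neighbourhood of the compact sphere $\partial B_\delta$ (which avoids $\Lambda$) gives $S_\beta(\nu)\to S(\nu)=\int_{\partial B_\delta}\langle\bm{\mathcal D}^{(\ell)}_{\Lambda,0,\bm y}\,g(\bm y),\bm n_{\bm y}\rangle\,\mathrm d S_{\bm y}$. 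Hence the limit in the lemma exists and equals $S(\nu)+R(\nu)$, which in particular is independent of the choice of $\chi$.

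It remains to see that $\nu\mapsto S(\nu)+R(\nu)$ is holomorphic on $U$. For every $\bm y$ with $|\bm y|\ge\delta>0$ the maps $\nu\mapsto|\bm y|^{-\nu}$ and all their derivatives are entire, so the integrands defining $S(\nu)$ and $R(\nu)$ are holomorphic in $\nu$ and jointly continuous in $(\bm y,\nu)$; since $\partial B_\delta$ is compact and the $R$-integrand is dominated on $\Omega$ by the above majorant, which is integrable and locally uniform in $\nu$, the standard theorem on holomorphy of parameter integrals (Morera together with Fubini) shows that $S$ and $R$ are holomorphic on $U$. As $\nu_0$ was arbitrary, the function is entire. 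I expect the only substantive point to be the choice of $\ell$ together with the verification of the uniform-in-$\beta$, locally-uniform-in-$\nu$ majorant for $\Delta^{\ell+1}g_\beta$: the naive Dirichlet series $\sideset{}{'}\sum_{\bm z\in\Lambda}P(\bm z)|\bm z|^{-\nu}$ converges only for $\Re\nu$ large, and it is precisely the freedom to take $\ell$ as large as the given compact set of $\nu$-values demands — which makes the remainder integrand decay like $|\bm y|^{m-\Re\nu-2(\ell+1)}$ at infinity — that upgrades the regularised sum-integral to an entire function.
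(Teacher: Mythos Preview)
Your proof is correct and follows essentially the same approach as the paper's own proof, which is a two-sentence sketch referring to the technique of Proposition~\ref{prop:beta_limit_A_ell}: rewrite the regularised sum-integral via the EM expansion on the unbounded domain $\mathds R^d\setminus\bar B_\delta$ at an order $\ell$ chosen large enough for the given range of $\nu$, pass to the limit $\beta\to 0$ by dominated convergence using the uniform bounds on $\hat\chi_\beta$, and conclude that the resulting surface and volume integrals have integrands entire in $\nu$. You have simply fleshed out the details (the Leibniz estimate, the explicit choice of $\ell$, the $d=1$ connectedness caveat, and the Morera/Fubini step) that the paper leaves implicit.
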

\begin{proof}
  Following the proof of Proposition~\ref{prop:beta_limit_A_ell} we can express
  the limit $\beta \to 0$ of the sum-integral as the sum of a surface integral
  and a volume integral over derivatives of $|\, \bm \cdot \,|^{-\nu} P$
  by means of the EM expansion for unbounded domains of sufficiently high order.
  The integrands are then entire functions of $\nu$ and so are the resulting integrals.
\end{proof}
\begin{lemma}\label{lem:meromorphic_aux_lemma2}
  Let $P:\mathds R^d\to \mathds C$ be a polynomial of degree $m\in \mathds N$ 
  and let $\delta>0$. Then for $\nu\in \mathds C\setminus (d+2\mathds N)$,
  \[
    \ddashint \limits_{ B_\delta} \frac{P(\bm z)}{\vert \bm z\vert^\nu}\,\mathrm d \bm z=-\sum_{k=0}^{\lfloor m/2\rfloor }\omega_d  \frac{ (1/2)_{k}}{(2k)!(d/2)_{k}} \frac{\delta^{-\nu+(2k+d)}}{\nu-(2k+d)} \Delta^k P(\bm 0),
  \]
  so the left hand side defines a meromorphic function with simple poles 
  at $\nu = d+2k$, $k\in \mathds N$ with residue
  \[
    -\omega_d\frac{ (1/2)_{k}}{(2k)!(d/2)_{k}}\Delta^k P(\bm 0).  
  \]
\end{lemma}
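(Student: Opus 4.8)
The plan is to reduce the statement to a one‑dimensional radial computation combined with Pizetti's formula. Since both $\ddashint_{B_\delta}P|\bm\cdot|^{-\nu}$ and $\Delta^kP(\bm 0)$ depend linearly on $P$, I would first decompose $P=\sum_{j=0}^m P_j$ into its homogeneous parts $P_j$ of degree $j$ and treat each summand separately. For odd $j$ the function $P_j$ is odd on the unit sphere, so every spherical integral occurring vanishes; unwinding the definition of the Hadamard integral — the counterterms $\mathcal H_{\nu,\varepsilon}$ also reduce, after applying $\langle\bm y,\nabla\rangle^k$ to $P_j$, to spherical integrals of homogeneous polynomials — gives $\ddashint_{B_\delta}P_j|\bm\cdot|^{-\nu}=0$, matching the absence of odd‑degree terms on the right‑hand side.

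For $j=2k$ I would pass to spherical coordinates $\bm z=r\bm\omega$, so that $P_{2k}(\bm z)|\bm z|^{-\nu}=r^{2k-\nu}P_{2k}(\bm\omega)$ and the $d$‑dimensional integral factors into the angular integral $c_{2k}:=\int_{\partial B_1}P_{2k}(\bm\omega)\,\mathrm dS_{\bm\omega}$ times a radial integral. For $\Re\nu<2k+d$ the latter converges over $(0,\delta)$ and the Hadamard integral coincides with the ordinary one, yielding $\ddashint_{B_\delta}P_{2k}|\bm\cdot|^{-\nu}=c_{2k}\,\delta^{2k-\nu+d}/(2k-\nu+d)$. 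The right‑hand side is meromorphic in $\nu$ with a single simple pole at $\nu=2k+d$, and the left‑hand side is holomorphic on $\mathds C\setminus(d+2\mathds N)$ — this is exactly what the Taylor subtraction in the definition of the Hadamard integral is built to achieve, and can be checked by running the $\varepsilon\to0$ limit explicitly in spherical coordinates (for $\Re\nu$ large the term $\mathcal H_{\nu,\varepsilon}$ cancels precisely the $\varepsilon$‑dependent tail of $\int_{B_\delta\setminus B_\varepsilon}$), or by invoking the general theory of Hadamard finite parts, see \cite[Chapter~5]{mclean2000stronglyelliptic}. Analytic continuation in $\nu$ then extends the identity to all $\nu\neq 2k+d$.

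It remains to evaluate $c_{2k}$. Applying Pizetti's formula (Lemma~\ref{lem:poly_laplacian}) at the origin and using that $P_{2k}$ equals its own degree‑$2k$ Taylor term, so that $\langle\bm z,\nabla\rangle^{2k}P_{2k}(\bm 0)=(2k)!\,P_{2k}(\bm z)$, gives $(2k)!\,c_{2k}=\int_{\partial B_1}\langle\bm z,\nabla\rangle^{2k}P_{2k}(\bm 0)\,\mathrm dS_{\bm z}=\omega_d\frac{(1/2)_k}{(d/2)_k}\Delta^kP_{2k}(\bm 0)$, and $\Delta^kP_{2k}(\bm 0)=\Delta^kP(\bm 0)$ since $\Delta^k$ kills the homogeneous parts of degree below $2k$ and sends those of higher degree to polynomials vanishing at $\bm 0$. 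Substituting $c_{2k}=\omega_d(1/2)_k/\big((2k)!(d/2)_k\big)\,\Delta^kP(\bm 0)$, rewriting $\delta^{2k-\nu+d}/(2k-\nu+d)=-\delta^{-\nu+2k+d}/(\nu-(2k+d))$, and summing over $k=0,\dots,\lfloor m/2\rfloor$ (larger $k$ contribute nothing, as $\Delta^kP\equiv0$ then) produces the displayed formula; the residues are read off at once, since $\delta^{-\nu+2k+d}/(\nu-(2k+d))$ has residue $1$ at $\nu=2k+d$. The one genuinely non‑mechanical point, which I would spell out carefully, is the holomorphy of $\nu\mapsto\ddashint_{B_\delta}P|\bm\cdot|^{-\nu}$ off $d+2\mathds N$ that licenses the analytic continuation; the rest is bookkeeping with polar coordinates and Pizetti's formula.
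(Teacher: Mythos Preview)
Your argument is correct and uses the same skeleton as the paper: decompose $P$ into homogeneous parts (which for a polynomial is exactly its Taylor expansion at $\bm 0$, the form the paper uses), pass to spherical coordinates so the integral factorises into a radial piece times an angular one, and identify the angular factor via Pizetti's formula. The one tactical difference is in how the Hadamard regularisation is dispatched: you compute first in the range $\Re\nu<2k+d$, where the finite part coincides with the ordinary integral, and then analytically continue in $\nu$; the paper instead works at a fixed generic $\nu$ and tracks the counterterms $\mathcal H_{\nu,\varepsilon}$ through the $\varepsilon\to 0$ limit, which splits the Taylor sum at $k=\ell_{\nu,d}$ into integrals over $\mathds R^d\setminus B_\delta$ (from the subtracted terms) and over $B_\delta$ (from the remaining integrable terms) that recombine into the same closed form. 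Your route is a little cleaner but, as you correctly flag, needs the holomorphy of $\nu\mapsto\ddashint_{B_\delta}P\,|\bm\cdot|^{-\nu}$ off $d+2\mathds N$ as an input; the paper's route is self-contained from the definition of the finite part and has the side benefit of exhibiting the cancellation mechanism in the regularisation explicitly.
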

\begin{proof}
  The polynomial $P$ is equal to its Taylor expansion of order $m$ around $\bm 0$,
  \[
  P(\bm z)=\sum_{k=0}^m \frac{1}{k!} \langle \bm z,\nabla\rangle^k P(\bm 0).  
  \]
  Inserting this into the definition of the Hadamard integral yields 
  \begin{align*}
    &\ddashint \limits_{ B_\delta} \frac{P(\bm z)}{\vert \bm z\vert^\nu}\,\mathrm d \bm z =\lim_{\varepsilon \to 0} \Bigg(\,\int \limits_{B_\delta\setminus B_\varepsilon} \sum_{k=0}^m \frac{1}{k!}\frac{\langle \bm z,\nabla\rangle^k}{\vert \bm z\vert^\nu} P(\bm 0)\,\mathrm d \bm z - \int \limits_{\mathds R^d\setminus B_\varepsilon} \sum_{k=0}^{\ell_{\nu,d}} \frac{1}{k!}\frac{\langle \bm z,\nabla\rangle^k}{\vert \bm z\vert^\nu} P(\bm 0)\,\mathrm d \bm z\Bigg) \\ &= -\sum_{k=0}^{\ell_{\nu,d}}\frac{1}{k!} \int \limits_{\mathds R^d\setminus B_\delta}\frac{\langle \bm z,\nabla\rangle^k}{\vert \bm z\vert^\nu} P(\bm 0)\,\mathrm d \bm z + \sum_{k=\max\{0,\ell_{\nu,d}+1\}}^m \frac{1}{k!} \int \limits_{B_\delta}\frac{\langle \bm z,\nabla\rangle^k}{\vert \bm z\vert^\nu} P(\bm 0)\,\mathrm d \bm z,
  \end{align*}
  with $\ell_{\nu,d}=\lfloor\Re\nu-d \rfloor$.
  We now apply Pizetti's formula Lemma~\ref{lem:poly_laplacian} and obtain 
  \begin{align*}
    \ddashint \limits_{ B_\delta} \frac{P(\bm z)}{\vert \bm z\vert^\nu}\,\mathrm d \bm z &=  -\sum_{k=0}^{\lfloor \ell_{\nu,d}/2 \rfloor}\frac{1}{(2k)!}  \int \limits_{\mathds R^d\setminus B_\delta}\frac{\vert \bm z\vert^{2k-\nu}}{p_{k,d}} \,\mathrm d \bm z \,\Delta^k P(\bm 0) \\ &+ \sum_{k=\max\{0,\lfloor (\ell_{\nu,d}+1)/2\rfloor\}}^{\lfloor m/2\rfloor} \frac{1}{(2k)!} \int \limits_{B_\delta}\frac{\vert \bm z\vert^{2k-\nu}}{p_{k,d}} \,\mathrm d \bm z \,\Delta^k P(\bm 0),
  \end{align*}
  where the derivatives of odd order vanish due to the rotational symmetry of $\mathds R^d \setminus B_\delta$ and $B_\delta$.
  The integrals on the right hand side are readily computed, yielding
  \[
  \ddashint\limits_{B_\delta} \frac{P(\bm z)}{|\bm z|^{\nu}} \, \text d \bm z
  = -\sum_{k=0}^{\lfloor m/2\rfloor }  \frac{1}{(2k)!} \frac{\omega_d}{p_{k,d}} \frac{\delta^{-\nu+(2k+d)}}{\nu-(2k+d)} \Delta^k P(\bm 0).
  \]
  The final expression for the residues follows after inserting the definition of
  $p_{k, d}$ from Lemma~\ref{lem:poly_laplacian}.
\end{proof}
With the previous two lemmas, we are now in the position to prove Theorem~\ref{th:meromorphic_continuation_by_sum_integral}.
\begin{proof}[Proof of Theorem~\ref{th:meromorphic_continuation_by_sum_integral}]
  We first show that the Hadamard sum-integral is meromorphic in $\nu$.  Define the auxiliary function $P_\beta = \hat \chi_\beta P$. Then for $\delta>0$ such that $\delta<a_\Lambda$, the sum-integral can be separated as follows
  \begin{align*}
    \lim_{\beta\to 0}\SumDDInt_{\bm z \in \mathds R^d,\Lambda} \frac{P_\beta(\bm z)}{\vert \bm z\vert^\nu}&=\lim_{\beta\to 0}\SumInt_{\bm z \in \mathds R^d\setminus B_\delta,\Lambda}  \frac{P_\beta(\bm z)}{\vert \bm z\vert^\nu}-\frac{1}{V_\Lambda}\lim_{\beta\to 0} \ddashint \limits_{B_\delta} \frac{P_\beta(\bm z)}{\vert \bm z\vert^\nu}\,\mathrm d \bm z \notag \\ &=\lim_{\beta\to 0}\SumInt_{\bm z \in \mathds R^d\setminus B_\delta,\Lambda}  \frac{P_\beta(\bm z)}{\vert \bm z\vert^\nu}-\frac{1}{V_\Lambda} \ddashint \limits_{B_\delta} \frac{P(\bm z)}{\vert \bm z\vert^\nu}\,\mathrm d \bm z,
  \end{align*}
  due to locality of the Hadamard integral and as $\hat \chi_\beta \to 1$ in $C^\infty(\mathds R^d)$.
  By Lemma~\ref{lem:meromorphic_aux_lemma1}, the first term on the right hand side defines an entire function in $\nu$. Moreover, by Lemma~\ref{lem:meromorphic_aux_lemma2} the second term defines a meromorphic function with simple poles at $\nu \in (d + 2 \mathds N)$.
  Therefore,
  \[
  \lim_{\beta\to 0}\SumDDInt_{\bm z \in \mathds R^d,\Lambda} \frac{P_\beta(\bm z)}{\vert \bm z\vert^\nu}
  \]
  is meromorphic in $\nu$ and the poles with associated residues are determined by Lemma~\ref{lem:meromorphic_aux_lemma2}.

  We now show that the Hadamard sum-integral coincides with the Dirichlet series, in case that $\Re \nu>d+m$. Under this restriction, the sum-integral converges absolutely without regularisation, 
  \[
    \lim_{\beta\to 0}\SumDDInt_{\bm z \in \mathds R^d,\Lambda} \frac{P_\beta(\bm z)}{\vert \bm z\vert^\nu}=\,\sideset{}{'}\sum_{\bm z \in  \Lambda} \frac{P(\bm z)}{\vert \bm z\vert^\nu}-\frac{1}{V_\Lambda}\, \ddashint \limits_{\mathds R^d} \frac{P(\bm z)}{\vert \bm z\vert^\nu}\,\mathrm d\bm z.
  \]
  As $\ell_{\nu, d} > m$, the Hadamard integral on the right hand side vanishes,
  \[
    \ddashint \limits_{\mathds R^d} \frac{P(\bm z)}{\vert \bm z\vert^\nu}\,\mathrm d\bm z= \lim_{\varepsilon\to 0} \int \limits_{\mathds R^d\setminus B_\varepsilon} \Bigg( \frac{P(\bm z)}{\vert \bm z\vert^\nu}-\sum_{k=0}^{\ell_{\nu,d}}\frac{1}{k!}\frac{\langle \bm z,\nabla \rangle^k}{\vert \bm z\vert^\nu} P(\bm 0)\,\Bigg)\mathrm d\bm z =0,
  \]
  thus proving the equality of the sum-integral and the associated Dirichlet series.
\end{proof}

The coefficients of the HSEM operator are connected to the Epstein zeta function, which has been introduced by Epstein in~\cite{epstein1903theorieI,epstein1903theorieII}, and which can be efficiently computed in any number of space dimensions \cite{elizalde2000zeta}.

\begin{definition}[Epstein zeta function]
  \label{def:epstein_zeta}
  For $\bm x,\bm y\in \mathds R^d$, $A\in \mathds R^{d\times d}$ symmetric positive definite (s.p.d.) and $\nu\in \mathds C$ with $\mathrm{Re}(\nu)>d$, the Epstein zeta function $Z$ is defined by the Dirichlet series
  \begin{equation*}
    Z\left\vert \begin{matrix}
      \bm x\\\bm y
    \end{matrix}\right\vert (A; \nu)=\,\sideset{}{'}\sum_{\bm z \in \mathds Z^d} \frac{e^{-2\pi i \langle \bm z,\bm y\rangle}}{ {\vert\bm z + \bm x\vert_A}^{\nu}},
  \end{equation*}
  with 
  \[
    \vert\bm z \vert_A=\sqrt{\bm z ^\top A \,\bm z},
  \]
  and where the primed sum excludes $\bm z=-\bm x$.
  The Epstein zeta function can be analytically continued to a holomorphic function in $\nu$ if not both $\bm x\in \mathds Z^d$ and $\bm y\in \mathds Z^d$. If on the other hand $\bm x \in \mathds Z^d$ and $\bm y\in \mathds Z^d$, then it can be continued to an meromorphic function in $\nu \in \mathds C\setminus \{d\}$ with a simple pole at $\nu=d$. 
  We furthermore define the simple Epstein zeta function $Z_0$ such that 
  \begin{equation*}
    Z_0(A;\nu)=Z\left\vert \begin{matrix}
      \bm 0\\\bm 0
    \end{matrix}\right\vert (A; \nu).
  \end{equation*}
\end{definition}
The Epstein zeta function has been utilised by Emersleben~\cite{emersleben1923zetafunktionenI,emersleben1923zetafunktionenII} to precisely evaluate the electrostatic potential of ionic lattices. Series representations of the Epstein zeta function with exponentially fast convergence have been developed, among others the Chowla-Selberg formula, thus it can be computed efficiently, see \cite{elizalde2000zeta} and references therein.

\begin{example}
  The Hadamard sum-integral offers a representation for the simple Epstein zeta function via
  \[
    \lim_{\beta\to 0}\SumDDInt_{\mathds R^d,\Lambda} \frac{\hat \chi_\beta }{\vert \bm \cdot\vert^\nu}= Z_0\big(M_\Lambda^\top M_\Lambda; \nu\big),
  \]
  which is a meromorphic function in $\nu$ for $\nu\in \mathds C$ with a simple pole at $\nu=d$ with residue 
  \[
    \frac{\omega_d}{V_\Lambda}=\frac{\omega_d}{\sqrt{\det (M_\Lambda^\top M_\Lambda)}}.
    \]
  For the special case $d=1$ and $\Lambda=\mathds Z$, the usual Riemann zeta function $\zeta$ is recovered,
  \[
    \lim_{\beta\to 0}\SumDDInt_{\mathds R,\mathds Z} \frac{\hat \chi_\beta }{\vert \bm \cdot\vert^\nu}=2\zeta(\nu).
  \]
  Vice versa, any Epstein zeta function $Z_0$ can be represented by
  a Hadamard sum-integral. To see this, assume $A \in \mathds R^{d \times d}$
  symmetric and positive definite. The matrix $A$ now admits
  the unique Cholesky factorisation
  $A = L^\top L$ with a regular lower triangular matrix $L \in \mathds R^{d \times d}$.
  We set the lattice $\Lambda_L = L^\top \mathds Z^d$ and obtain
  \[
  Z_0(A;\nu) = Z_0(L^\top L; \nu)
  =
  \lim_{\beta\to 0}\SumDDInt_{\mathds R^d,\Lambda_L} \frac{\hat \chi_\beta }{\vert \bm \cdot\vert^\nu}
  \]
  for all $\nu \in \mathds C \setminus \{ d \}$.
\end{example}

In case that we want to avoid the use of Hadamard finite-part integrals in numerical applications, we can apply the SEM for interior lattice points instead of the HSEM. In this case, the following corollary provides us with the necessary local SEM operator $\dhsem_{\Lambda,\nu,\varepsilon}$.
\begin{corollary}
  Let $\ell \in \mathds N$, $\nu \in \mathds C\setminus (d+2\mathds N)$, and $\varepsilon>0$ with $\varepsilon<a_\Lambda$. Then
  \[
    \dhsem_{\Lambda,\nu,\varepsilon}=\dhsem_{\Lambda,\nu}-\frac{\omega_d}{V_\Lambda}\sum_{k=0}^{\ell}  \frac{ (1/2)_{2k}}{(2k)!(d/2)_{2k}} \frac{\varepsilon^{-\nu+(2k+d)}}{\nu-(2k+d)} \Delta^k.
  \]
\end{corollary}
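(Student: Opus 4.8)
The plan is to read the claimed identity degree by degree, i.e.\ to establish the corresponding statement for the order-$\ell$ operators $\dhsem_{\Lambda,\nu,\varepsilon}^{(\ell)}$ (from the SEM expansion for interior lattice points, Theorem~\ref{lem:hsem_preparatory_restructuring}) and $\dhsem_{\Lambda,\nu}^{(\ell)}$ (Definition~\ref{def:hsem_operator}), the infinite-order version then following by $\ell\to\infty$. Fix $k\in\{0,\dots,\ell\}$ and, for the $\bm z$-integration, regard $\nabla$ as a fixed vector $\bm\xi$, so that $F_{\beta,k}(\bm z)=\hat\chi_\beta(\bm z)\langle\bm z,\bm\xi\rangle^{2k}\vert\bm z\vert^{-\nu}$ is smooth on $\mathds R^d\setminus\{\bm 0\}$ and decays superpolynomially. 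Since $\varepsilon<a_\Lambda$, the origin is the only lattice point in $\bar B_\varepsilon$, and it is excluded both from the primed sum in $\SumDDInt_{\bm z\in\mathds R^d,\Lambda}$ and from the restricted sum-integral $\SumInt_{\bm z\in\mathds R^d\setminus\bar B_\varepsilon,\Lambda}$; hence the two lattice sums coincide and only the integral parts contribute to the difference,
\[
\left(\SumInt_{\bm z\in\mathds R^d\setminus\bar B_\varepsilon,\Lambda}-\SumDDInt_{\bm z\in\mathds R^d,\Lambda}\right)F_{\beta,k}=\frac{1}{V_\Lambda}\left(\,\ddashint\limits_{\mathds R^d}F_{\beta,k}(\bm z)\,\mathrm d\bm z-\int\limits_{\mathds R^d\setminus\bar B_\varepsilon}F_{\beta,k}(\bm z)\,\mathrm d\bm z\right).
\]

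Next I would use the locality of the Hadamard finite-part integral: its regularisation only alters a neighbourhood of the singularity at the origin, so for $F_{\beta,k}$ (smooth off the origin and rapidly decaying) one has $\ddashint_{\mathds R^d}F_{\beta,k}=\ddashint_{B_\varepsilon}F_{\beta,k}+\int_{\mathds R^d\setminus\bar B_\varepsilon}F_{\beta,k}$, and the right-hand side above collapses to $\frac{1}{V_\Lambda}\ddashint_{B_\varepsilon}F_{\beta,k}(\bm z)\,\mathrm d\bm z$. Passing to $\beta\to0$, Lemma~\ref{lem:chi_beta_compact_convergence} gives $\hat\chi_\beta\to1$ in $C^\infty(\mathds R^d)$, hence $\hat\chi_\beta\langle\,\bm\cdot\,,\bm\xi\rangle^{2k}\to\langle\,\bm\cdot\,,\bm\xi\rangle^{2k}$ in $C^\ell(\bar B_\varepsilon)$ for every $\ell$, and since the Hadamard integral over the bounded set $B_\varepsilon$ is a continuous functional on $C^\ell(\bar B_\varepsilon)$ once $\ell\ge\ell_{\nu,d}$ --- precisely where the hypothesis $\nu\notin d+2\mathds N$ is used, to make this extension unique --- the limit equals $\ddashint_{B_\varepsilon}\langle\bm z,\bm\xi\rangle^{2k}\vert\bm z\vert^{-\nu}\,\mathrm d\bm z$. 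This is the same continuity of $\hat\chi_\beta$ inside a Hadamard integral already exploited in the proof of Theorem~\ref{th:free_space_sem}.

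It remains to evaluate $\ddashint_{B_\varepsilon}\langle\bm z,\bm\xi\rangle^{2k}\vert\bm z\vert^{-\nu}\,\mathrm d\bm z$. As $\langle\,\bm\cdot\,,\bm\xi\rangle^{2k}$ is a homogeneous polynomial of degree $2k$, its Laplacians at the origin vanish below order $k$ while $\Delta^k\langle\,\bm\cdot\,,\bm\xi\rangle^{2k}=(2k)!\,\vert\bm\xi\vert^{2k}$; substituting $P=\langle\,\bm\cdot\,,\bm\xi\rangle^{2k}$ and $\delta=\varepsilon$ into Lemma~\ref{lem:meromorphic_aux_lemma2} (equivalently, integrating in polar coordinates and using Pizetti's formula, Lemma~\ref{lem:poly_laplacian}) leaves only the single term $k$, a constant multiple of $\vert\bm\xi\vert^{2k}\,\varepsilon^{-\nu+(2k+d)}/(\nu-(2k+d))$, the denominator being nonzero because $\nu\notin d+2\mathds N$. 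Re-interpreting $\vert\bm\xi\vert^{2k}$ as the operator $\Delta^k$, restoring the prefactor $1/(2k)!$ from the operator definitions, and summing over $k=0,\dots,\ell$ yields the asserted correction term; letting $\ell\to\infty$ gives the stated identity. The step I expect to need the most care is the middle one --- the splitting $\ddashint_{\mathds R^d}=\ddashint_{B_\varepsilon}+\int_{\mathds R^d\setminus\bar B_\varepsilon}$ and the interchange of the limit $\beta\to0$ with the finite-part functional --- but both rest on continuity properties of the Hadamard integral already in hand from the derivation of Theorem~\ref{th:free_space_sem}; the final evaluation is routine.
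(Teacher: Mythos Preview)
Your argument is correct and is exactly the approach the paper has in mind: the one-line proof in the paper cites Theorem~\ref{th:meromorphic_continuation_by_sum_integral} (whose proof contains precisely your splitting $\SumDDInt_{\mathds R^d,\Lambda}=\SumInt_{\mathds R^d\setminus B_\varepsilon,\Lambda}-V_\Lambda^{-1}\ddashint_{B_\varepsilon}$ after the $\beta$-limit) together with Lemma~\ref{lem:meromorphic_aux_lemma2} for the explicit evaluation of $\ddashint_{B_\varepsilon}\langle\bm z,\bm\xi\rangle^{2k}\vert\bm z\vert^{-\nu}\,\mathrm d\bm z$, and you have simply written these two ingredients out in full. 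Your remark that the identity is naturally an identity of finite-order operators (the sum on the right runs only up to $\ell$, so the superscript $(\ell)$ is implicit on both $\dhsem$'s) is also well taken.
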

\begin{proof}
  The corollary is a direct consequence of Theorem~\ref{th:meromorphic_continuation_by_sum_integral} and of the representation of the Hadamard integral in Lemma~\ref{lem:meromorphic_aux_lemma2}.
\end{proof}
The observation that the Hadamard sum-integral generates meromorphic continuations of multidimensional Dirichlet series provides a connection of our work to  analytic number theory. This connection is fruitful in numerical practice, as allows us to utilise the vast theory of multi-dimensional lattice sums (see \cite{borwein2013lattice} and references therein) in order to compute the coefficients of the HSEM operator efficiently and precisely. In the following section, we first lay out how the coefficients of the HSEM operator can be calculated and then analyse the numerical performance of the expansion.

\section{Numerical application}
\label{sec:num_application}
\subsection{Model description}
In the following, we implement the HSEM expansion and analyse the approximation error. Instead of focussing on one particular physical application, we investigate a prototypical multidimensional sum that appears in a wide range of physically relevant systems, and demonstrate that it can be precisely and efficiently reproduced. We choose to approximate 
\[
  \sideset{}{'}\sum_{\bm y\in \mathds Z^2} f_{\bm x}(\bm y)
\]
for $\bm x\in \mathds Z^2$ and 
\[
f_{\bm x}(\bm y)=\frac{g(\bm y)}{\vert \bm y-\bm x\vert^\nu}.
\]
 and $\nu \in \mathds C$. We furthermore take a Gaussian function with width $\lambda>0$ as the interpolating function $g$,
 \begin{equation}
 g(\bm y)= e^{-\vert \bm y \vert^2/\lambda^2}.
 \label{eq:test_func}
 \end{equation}

  Sums of this kind can be found in many different areas of condensed matter and quantum physics. Among others, they appear in the computation of forces and energies in lattices of long-range interacting atoms, which critically determine the properties of the resulting materials (see \cite{french2010long} for a recent review on long-range interacting nanoscale systems). They can arise in the simulations of spin-waves, also called magnons, which can be used as information carriers \cite{gibertini2019magnetic}. Finally, they can be found in the evaluation of certain partition functions in quantum mechanics and solid state physics, from which the thermodynamical properties of the systems in question can be extracted  \cite{campa2014physics}. 

 We have made the choice for a square lattice in two dimensions as to allow for a basic proof-of-principle implementation that can be easily verified and modified. A reference implementation of the HSEM expansion in Mathematica is provided online alongside with this article\footnote{\url{https://github.com/andreasbuchheit/hsem}}. The method can readily be applied to higher dimensions after the technical task of implementing the Epstein zeta function and its derivatives in higher dimensions, following~\cite{elizalde2000zeta}.

 \subsection{Efficient computation of HSEM operator coefficients}
 We briefly discuss how to determine the coefficients of the HSEM operator in multidimensional lattices. The general approach in $d$ dimensions is to implement the function $\mathcal Z_{\Lambda^*,\nu}^{(0)}$ from Proposition~\ref{th:hsem_alternative_representation}
 and compute the HSEM operator coefficients from its Taylor series at $\bm 0$,
   \[
   \dhsem_{\Lambda,\nu} = \mathcal Z_{\Lambda^*,\nu}^{(0)}\bigg(-\frac{\nabla}{2\pi i}\bigg)=\sum_{k=0}^\infty \frac{1}{(2k)!}\lim_{\beta\to 0 }\SumDDInt_{\bm z \in \mathds R^d,\Lambda} \hat \chi_\beta(\bm z) \frac{\langle \bm z, \nabla \rangle^k}{\vert \bm z\vert^{\nu}}, 
   \]
 using the exponentially convergent summation formulas in \cite{elizalde2000zeta} and their derivatives. 
  From the the meromorphic continuation theorem~\ref{th:meromorphic_continuation_by_sum_integral}, we know that if the Hadamard sum-integral converges without $\beta$-regularisation it is equal to its associated Dirichlet series,
 \[
   \lim_{\beta\to 0 }\SumDDInt_{\bm z \in \mathds R^d,\Lambda} \hat \chi_\beta(\bm z) \frac{\langle \bm z, \nabla \rangle^k}{\vert \bm z\vert^{\nu}}= \sideset{}{'}\sum_{\bm z \in \Lambda}\frac{\langle \bm z, \nabla \rangle^k}{\vert \bm z\vert^{\nu}}.
 \]
 Otherwise the regularised Hadamard sum-integral generates the meromorphic continuation of the Dirichlet series. The case $k=0$ yields the zero order HSEM operator coefficient, which is in many applications the dominant contribution. It corresponds to a simplified Epstein zeta function,
 \[
   \lim_{\beta\to 0}\SumDDInt_{\mathds R^d,\Lambda} \frac{\hat \chi_\beta}{\vert \bm \cdot\vert^{\nu}}=Z_0\big(M_\Lambda^\top M_\Lambda;\nu\big),  
 \]
 with $\Lambda=M_\Lambda \mathds Z^d$, which can be efficiently evaluated in arbitrary dimensions (see \cite{elizalde2000zeta}), and has been analytically determined for cubic lattices in some dimensions.

 \subsection{Results and discussion}
 
 \begin{figure}
  \centering
  \includegraphics[width=0.8\textwidth]{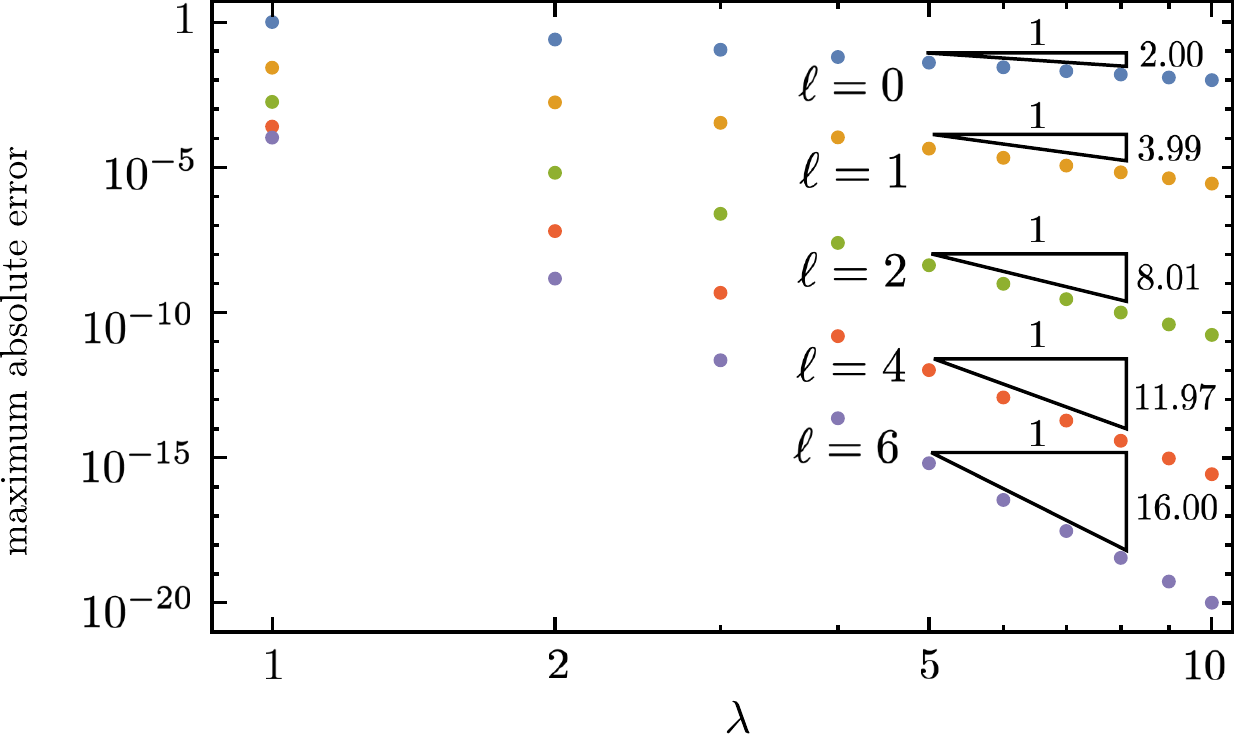}
  \caption{Absolute error of HSEM expansion in the maximum norm for an interaction function $s(\bm y)=\vert \bm y\vert^{-2}$ and an interpolating function $g$ as in \eqref{eq:test_func} as a function of the scaling parameter $\lambda$ for different expansion orders $\ell$.}
  \label{fig:2d_sem_error}
\end{figure}

We now approximate the two-dimensional sum of $f_{\bm x}$ over $\mathds Z^2\setminus\{\bm x\}$ by the respective Hadamard integral plus the HSEM operator of order $\ell$,
\[
  \sideset{}{'}\sum_{\bm y\in \mathds Z^2}  f_{\bm x}(\bm y)\approx \dhsem_{\Lambda,\nu}^{(\ell)} g(\bm x)+ \ddashint \limits_{\mathds R^d} \frac{g(\bm y)}{\vert \bm x-\bm y \vert^\nu}\,\mathrm d \bm y.
\]
and analyse the error for different widths $\lambda$ of the interpolating function. Here, the implementation of the HSEM operator in two dimensions is discussed in Appendix~\ref{appendix:lattice_sums}. For details on the computation of the Hadamard integral, see Appendix~\ref{appendix:hadamard}.

In this formulation an additional feature of the HSEM becomes apparent.
Whereas the approximate computation of the sum on the left hand side requires summation over a large number of particles $N$ with a computational time that increases linearly with the number of terms,
the computation of the right hand side is practically indepent of $N$ or at most scales sublinearly with $N$.
The first term is a local differential operator whose coefficients depend on the lattice and can be reduced to evaluations of derivatives of the Epstein zeta function.
For a large class of interpolating functions, the Hadamard integral over the whole space may be computed analytically.
If this is not possible then we can exploit the convolution structure of the integral and the fact that in the case of a band-limited interpolating function, quadrature rules converge exponentially in the number of quadrature nodes as these functions are entire and band-limited.

As the scaling coefficient of the interaction, we choose the inverse square interaction\footnote{We include a numerical offset of $10^{-3}$ in order to facilitate the implementation.} with $\nu=2=d$, which appears in many different quantum mechanical models (see Ref.~\cite{gupta2003quantum} and references therein), and which forms the most numerically challenging model, as both short and long-range contributions to the sum in general remain relevant and neither can be discarded.   The absolute error in the maximum norm over $\mathds Z^2$ is displayed in Fig.~\ref{fig:2d_sem_error} for different orders $\ell$ of the HSEM operator as a function of the width $\lambda$ of the interpolating function $g$. The same error scaling is found for the Coulomb interaction with $\nu=1$ and the dipole interaction with $\nu=3$. The corresponding plots are provided online in our github repository. The convergence graphs for other $\nu \in \mathds C$ can be efficiently generated by adapting the respective parameter for $\nu$ in the Mathematica code.

We find that, to good approximation, the error scales as 
\[
  E_\ell(\lambda)\sim \lambda^{-2(\ell+1)},
\]
where the exact scaling coefficients obtained from a linear fit are given in Fig.~\ref{fig:2d_sem_error}. This scaling law is identical to the one that we predict for the EM expansion applied to a band-limited function of sufficiently small bandwidth. Hence, the singularity of $f_{\bm x}$ has been well absorbed in the coefficients of the HSEM operator, restoring the convergence properties of the expansion.
Thus, as $\lambda \to \infty$, the easily computable zero order HSEM contribution already yields a good approximation to the sum. On the other hand, approximations that simply replace the sum by an integral yield an error that is independent of $\lambda$, and are therefore unreliable. For increasing HSEM orders $\ell$, the EM scaling law for band-limited functions is well obeyed. Already for $\ell=6$ and $\lambda= 10$, an absolute error of less than $10^{-20}$ is reached. The good convergence properties can be explained by the fact that the Fourier transform of $\Delta^{\ell+1}g$ has its mass concentrated inside the unit ball, with only a small high frequency correction. Thus, the function essentially behaves as if it belonged to $E_\sigma$ with $\sigma<a_{\Lambda^*}=1$ and only for very large orders $\ell$, the convergence breaks down. The HSEM works well, as long as the interpolating function $g$, which in many cases is an interpolation of discrete particle positions, does not exhibit oscillations in space whose inverse wavelengths exceed $a_{\Lambda^*}$. These oscillations would exhibit wavelengths that are smaller than the unit lattice cell and would therefore be unphysical. Thus, the HSEM converges for physically meaningful interpolation functions $g$, allowing us to approximate singular sums in arbitrary dimensions independently of the particle number.

\section{Conclusions and outlook}
\label{sec:conclusions_and_outlook}
In this work, we have extended the Euler--Maclaurin (EM) summation formula to  lattices in higher dimensions.  We have then subsequently used the EM expansion as a tool in the proof of the multidimensional singular Euler--Maclaurin (SEM) expansion that allows for the inclusion of singular factors in the summand function and thus makes the formula applicable to interaction functions that appear in physical applications. This is for instance the case in a solid state system with Coulomb or dipolar particle interactions. In order to avoid the evaluation of oscillatory surface integrals and make the expansion easy to use in practice, we have gone one step further and have introduced the hypersingular Euler--Maclaurin (HSEM) expansion, where the differential operator is local with coefficients that can be evaluated using standard techniques from number theory.  We have designed the SEM and HSEM expansion as mathematical tools that can immediately be applied to open questions in physics. It is our hope that the SEM and HSEM expansion will find use in the precise evaluation of long-range forces and energies in crystal and spin lattices, in the evaluation of high-dimensional partition functions in statistical physics, and in the quantification of discreteness effects in fundamental physics. 

We expect that the summation formulas developed in this paper can be further generalised to quasi-crystals, like the Penrose lattice, and to statistical distributions of particles. We also consider it to be worthwhile to investigate solution techniques for the integro-differential equations that follow from the HSEM expansion, for instance in the context of spin waves.

{
\appendix

\section*{Acknowledgements}
We would like to thank our colleagues Daniel Seibel, Christian Michel, and Peter Schuhmacher for proof-reading the manuscript and for helpful suggestions. We thank Prof.\,Sergej Rjasanow for insightful discussions and for his support.  

\section{HSEM expansion in two dimensions}
\subsection{HSEM operator coefficients}
\label{appendix:lattice_sums}
For a square lattice in $d=2$ dimensions, a simple approach for generating the HSEM operator coefficients is available that avoids derivatives of Epstein zeta functions and uses efficient summation formulas that have been found in the analysis of the Riemann hypothesis in higher dimensions \cite{mcphedran2008riemann}. For a two-dimensional square lattice, it has been shown that \cite[Eq.\,(9)]{zucker2017exact}
\[
  \sideset{}{'}\sum_{\bm z \in \mathds Z^2}\frac{1}{\vert \bm z\vert^{\nu}}=4 \zeta(\nu/2) \beta_D(\nu/2),
\]
where $\beta_D$ is the Dirichlet beta function and where the Dirichlet series can be extended to $\nu\in \mathds C\setminus \{2\}$. Furthermore, the meromorphic continuation of the lattice sum 
\[
  C_n(\nu)=\sideset{}{'}\sum_{\bm z \in \mathds Z^2} \frac{z_1^{2n}}{\vert \bm z\vert^{\nu+2n}},\quad \nu \in \mathds C\setminus \{2\},
\] has been shown to be computable for $n\in \mathds N_+$ via the formula \cite[Eq.\,(2.3)]{mcphedran2008riemann}
\begin{align*}
  C_n(\nu)&=\frac{2\sqrt{\pi}\,\Gamma(\nu/2+n-1/2)\zeta(\nu-1)}{\Gamma(\nu/2+n)}\\&+\frac{8\pi^{\nu/2}}{\Gamma(\nu/2+n)} \sum_{z_1=1}^\infty \sum_{z_2=1}^\infty \bigg(\frac{z_2}{z_1}\bigg)^{(\nu-1)/2} (z_1 z_2 \pi)^n K_{(\nu-1)/2+n}(2\pi z_1 z_2),
\end{align*}
with $K_\nu(x)$ the modified Bessel function of the second kind. As the double sum converges exponentially fast in both variables, the lattice sum can be efficiently approximated. We now show that, using the above two lattice sums, we can generate the whole HSEM operator in $d=2$ dimensions by using an expansion in solid harmonics.

First note that for $k>0$, only even $k$ lead to a nonzero contribution due to symmetry of the interaction. Setting $k=2n$, with $n\in \mathds N$, we then find 
\begin{align*}
&\lim_{\beta \to 0}\SumDDInt_{\bm z \in \mathds R^2,\mathds Z^2} \hat \chi_\beta(\bm z) \frac{\langle \bm z,\nabla \rangle^{2n}}{\vert \bm z\vert^{\nu}}\\ &=\sum_{m=0}^n a^{(2n)}_{2m} \lim_{\beta \to 0}\SumDDInt_{\bm z \in \mathds R^2,\mathds Z^2}\hat \chi_\beta(\bm z) \frac{\vert \bm z\vert^{2(n-m)} A_{2m}(\bm z)}{\vert \bm z\vert^{\nu}}
 A_{2(n-m)}(\nabla)\Delta^{n-m},
\end{align*}
 with the solid harmonic $A_k:\mathds R^2\to \mathds R$, 
 \[
 A_k(\bm y)=\mathrm{Re}\Big((y_1+i y_2)^k\Big),
 \]
 and
 \[
 a_0^{(k)}= \frac{1}{2\pi} \int \limits_{0}^{2\pi} \cos(\phi)^k \, \text d \phi,
  \qquad a_n^{(k)}=\frac{1}{\pi} \int \limits_{0}^{2\pi} \cos(n\phi)\cos(\phi)^k \, \text d \phi.
 \]
 Finally,
 \begin{equation}\label{eq:lattice-sum-chebeyshev}
  \,\sideset{}{'}\sum_{\bm z \in\mathds Z^2}  \frac{ A_{2m}(\bm z)}{\vert \bm z\vert^{\nu+2m}}=Z_0(I_2,\nu)+\sum_{k=1}^m \frac{1}{(2k)!}T_{2m}^{(2k)}(0)\, C_k(\nu),
 \end{equation}
 where $I_2 \in \mathds R^{2 \times 2}$ denotes the identity matrix and 
 $T_m$ is the Chebyshev polynomial of the first kind of order $m$. 
 To prove this, we use that
 \[
 A_{2 m}(\bm z) = |\bm z|^{2 m} \cos(2 m \phi)
 \]
 for the polar angle $\phi$, $\bm z = |\bm z|(\cos \phi, \sin \phi)$.
 Now, $\cos(2 m \phi)$ can be expanded into powers of $\cos \phi$ by means of the Chebyshev polynomial $T_{2m}$,
 \[
 \cos(2 m \phi) = T_{2 m}(\cos \phi) = \sum_{k=0}^{2 m} T_{2m}^{(k)} \cos(\phi)^k.
 \]
 Inserting this into the right hand side of~\eqref{eq:lattice-sum-chebeyshev},
 observing that odd orders vanish due to the symmetry of the lattice and furthermore
 \[
 |\bm z|^{2 k} (\cos \phi)^{2 k}  = z_1^{2 k},
 \]
 yields the desired equality.

 \subsection{Evaluation of the Hadamard integral}
 \label{appendix:hadamard}
 We briefly discuss how the Hadamard integral is evaluated. For the special choice of $g$ in \eqref{eq:test_func} and $d=2$, we can determine the Hadamard integral analytically. As $g\in S(\mathds R^d)$, we have that
\[
\ddashint \limits_{\mathds R^d} \frac{g(\bm y)}{\vert \bm x-\bm y \vert^\nu}\,\mathrm d \bm y =\mathcal F\Big((\mathcal F \vert \bm \cdot\vert^{-\nu})(\mathcal F g) \Big)  (\bm x),
\] 
where we have applied the convolution theorem for distributions. We then find
\[
 \ddashint \limits_{\mathds R^d}\frac{g(\bm y)}{\vert \bm x-\bm  y\vert^\nu}\,\mathrm d \bm y =\frac{\pi \Gamma(1-\nu/2)}{\lambda^{\nu-2}} M\big(\nu/2,1,-\vert \bm x /\lambda\vert^2\big),
\]
with $M$ the Kummer confluent hypergeometric function \cite[Eq.\,(13.2.2)]{nist2010}.

}

\providecommand{\bysame}{\leavevmode\hbox to3em{\hrulefill}\thinspace}
\providecommand{\MR}{\relax\ifhmode\unskip\space\fi MR }
% \MRhref is called by the amsart/book/proc definition of \MR.
\providecommand{\MRhref}[2]{%
  \href{http://www.ams.org/mathscinet-getitem?mr=#1}{#2}
}
\providecommand{\href}[2]{#2}

\end{document}